\newcommand{\qi}{\mathbf{i}}
\newcommand{\qj}{\mathbf{j}}
\newcommand{\qk}{\mathbf{k}}
\newcommand{\ci}{\mathsf{i}}
\newcommand{\tp}{{\scriptscriptstyle\mathsf{T}}}
\newcommand{\fp}{{\scriptscriptstyle\mathsf{f}}}
\newcommand\Item[1][]{%
  \ifx\relax#1\relax  \item \else \item[#1] \fi
  \abovedisplayskip=0pt\abovedisplayshortskip=0pt~\vspace*{-\baselineskip}}
\newcommand\xrowht[2][0]{\addstackgap[.5\dimexpr#2\relax]{\vphantom{#1}}}
\theoremstyle{plain}
\newtheorem{theorem}{Theorem}[section]
\newtheorem{proposition}[theorem]{Proposition}
\newtheorem{lemma}[theorem]{Lemma}
\newtheorem{corollary}[theorem]{Corollary}
\newtheorem{problem}[theorem]{Problem}
\theoremstyle{definition}
\newtheorem{definition}[theorem]{Definition}
\newtheorem{example}[theorem]{Example}
\theoremstyle{remark}
\newtheorem{remark}[theorem]{Remark}
    \newtheoremstyle{TheoremNum}
        {\topsep}{\topsep}              
        {\itshape}                      
        {}                              
        {\bfseries}                     
        {.}                             
        { }                             
        {\thmname{#1}\thmnote{ \bfseries #3}}
    \theoremstyle{TheoremNum}
    \newtheorem{thmn}{Theorem}
\let\O\undefined
\let\H\undefined
\DeclareMathOperator{\O}{O}
\DeclareMathOperator{\U}{U}
\DeclareMathOperator{\rank}{rank}
\DeclareMathOperator{\H}{H}
\DeclareMathOperator{\V}{V}
\DeclareMathOperator{\Id}{Id}
\DeclareMathOperator{\GL}{GL}
\DeclareMathOperator{\SO}{SO}
\DeclareMathOperator{\Aut}{Aut}
\DeclareMathOperator{\Spin}{Spin}
\DeclareMathOperator{\SU}{SU}
\DeclareMathOperator{\Sp}{Sp}
\DeclareMathOperator{\diag}{diag}
\DeclareMathOperator{\SE}{SE}
\DeclareMathOperator{\Rat}{Rat}
\DeclareMathOperator{\Flag}{Flag}
\DeclareMathOperator{\ISO}{ISO}
\DeclareMathOperator{\Stab}{Stab}
\DeclareMathOperator{\Conf}{Conf}
\begin{document}
\title{Rational curves on real classical groups}

\author[Z.~Li]{ZiJia Li}
\address{KLMM, Academy of Mathematics and Systems Science, Chinese Academy of Sciences, Beijing 100190, China}
\email{lizijia@amss.ac.cn}
\author[K.~Ye]{Ke Ye}
\address{KLMM, Academy of Mathematics and Systems Science, Chinese Academy of Sciences, Beijing 100190, China}
\email{keyk@amss.ac.cn}

\date{}
\keywords{Classification of Quadratic Rational Curves,  Matrix Groups,  Fundamental Decomposition Theorem of Algebra, Kempe's Universality Theorem, Indefinite-orthogonal Groups}
\subjclass[2010]{14H45, 20G20, 26C15, 14L35, 14L30, 70B05}

\begin{abstract}
This paper is concerned with rational curves on real classical groups.  Our contributions are three-fold: (i) We determine the structure of quadratic rational curves on real classical groups.  As a consequence,  we completely classify quadratic rational curves on $\U_n$,  $\O_n(\mathbb{R})$,  $\O_{n-1,1}(\mathbb{R})$ and $\O_{n-2,2}(\mathbb{R})$.  (ii) We prove a decomposition theorem for rational curves on real classical groups,  which can be regarded as a non-commutative generalization of the fundamental theorem of algebra and partial fraction decomposition.  (iii) As an application of (i) and (ii),  we generalize Kempe's Universality Theorem to rational curves on homogeneous spaces.
\end{abstract}

\maketitle

\section{Introduction}
Rational curves are ubiquitous in both pure and applied mathematics.  On the one hand,  rational curves are indispensable in modern algebraic geometry \cite{Kollar96}.  They provide essential tools in the study of the minimal model program \cite{Kollar92,Mori1988flip}, rational and unirational varieties \cite{Graber03families}, Fano varieties \cite{Kollar92fano}, etc.  In real algebraic geometry,  rational curves also take on a central role in various enumerative problems \cite{KR15,KR17,Mikhalkin17,Sottile00,Welschinger05}.  On the other hand,  there are numerous applications of rational curves in engineering practice.  It is a long established method in kinematics to parametrize and analyze motions by rational curves \cite{Roth79tk,Darboux1890}.  In computer-aided geometric design,  rational curves are imperative to an efficient modelling of 3D objects \cite{Farin97, Juttler93zwanglaufige, Winkler08}.

For a group,  decomposing its elements into the product of special ones is a classical technique to study problems associated with the group.  For example,  the fundamental theorem of finitely generated Abelian groups completely determines the structure of such a group; Levi-Mal'tsev decomposition \cite{Malcev42} reveals the structure of a general Lie group; Iwasawa decomposition \cite{Iwasawa49} plays a crucial role in understanding representation theory of a semi-simple Lie group; Bruhat decomposition \cite{Chevalley58classification} provides a cellular decomposition of a complete flag manifold.  

The subject of this paper lies at the intersection of the two aforementioned active research fields.  Namely,  we investigate the decomposition of rational curves on real algebraic groups,  and as an application we prove a generalization of the celebrated Kempe's Universality theorem \cite{Kempe75}.  In the rest of this section,  we summarize the main contributions of the paper.
\subsection*{Contribution I: classification of quadratic rational curves} Let $\mathbb{F} = \mathbb{R}$,  $\mathbb{C}$ or $\mathbb{H}$.  Given $B\in \GL_n(\mathbb{F})$ such that $B^\sigma = \pm B$ where $\sigma$ is the transpose or conjugate transpose of matrices,  we define a real algebraic group 
\[
G_B(\mathbb{F}) \coloneqq \lbrace 
X \in \mathbb{F}^{n\times n}: X B X^{\sigma} = B
\rbrace.
\]
By varying choices of $\mathbb{F}$,  $B$ and $\sigma$,  we obtain classical matrix groups extensively studied in the literature \cite{DPW83, Milnor69,  Weyl97}. 

By definition,  a rational curve on $G_B(\mathbb{F})$ is a morphism $\gamma: \mathbb{P}_{\mathbb{R}}^1 \to G_B(\mathbb{F})$ between real algebraic varieties.  If we denote by $\deg(\gamma)$ the degree of a rational curve $\gamma$,  then $\deg (\gamma)$ must be even.  Thus,  the minimal degree of a non-constant rational curve is two.  The first problem we will address in this paper is the classification of these simplest curves on $G_B(\mathbb{F})$. 
\begin{thmn}[\ref{thm:structure} \normalfont{(Structure theorem)}]
For any quadratic rational curve $\alpha$ on $G_B(\mathbb{F})$,  there exist $R\in \GL_n(\mathbb{F})$,  $a\in \mathbb{R}$ and $b\in \mathbb{R} \setminus \{0\}$ such that $B  = R \diag(B_1,\dots, B_s) R^{\sigma}$ and 
\begin{equation}\label{eq:structureGBF}
\alpha(t) = R\left( \frac{ \left( t - a \right)^2  I_{n} + b(t-a)\diag(X_1,\dots, X_s) + b^2(Y_{pq})_{p,q=1}^s}{ \left(t - a \right)^2 + b^2} \right)R^{-1},   
\end{equation}
where for each $1 \le p,  q \le s$,  $(X_p , B_p)$ is given in Table~\ref{Tab:indecomposable} and $Y_{pq}$ is given in Tables~\ref{Tab:CandidatesYij} and \ref{Tab:CandidatesYijcon'd}.
\end{thmn}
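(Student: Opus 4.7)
The plan is to normalize $\alpha$ to a canonical shape, extract the underlying algebraic data, and then perform an indecomposable block decomposition respecting the form $B$.

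First, since $G_B(\mathbb{F})\subset\GL_n(\mathbb{F})$ is affine and $\alpha\colon\mathbb{P}^1_{\mathbb{R}}\to G_B(\mathbb{F})$ is a morphism, once we clear common factors the denominator of $\alpha$ can have no real root; for $\deg\alpha=2$ it must therefore be, up to scaling, $(t-a)^2+b^2$ with $a\in\mathbb{R}$ and $b\in\mathbb{R}\setminus\{0\}$. Writing the numerator as $(t-a)^2A_2+(t-a)A_1+A_0$, evaluating at $t=\infty$ and $t=a$ shows that $A_2\in G_B(\mathbb{F})$ and $A_0/b^2\in G_B(\mathbb{F})$. Left-multiplication by $A_2^{-1}$ preserves $G_B(\mathbb{F})$ and reduces to $A_2=I_n$; setting $A_1=bX$ and $A_0=b^2Y$ then puts $\alpha$ in the form
\[
\alpha(t)=\frac{(t-a)^2 I_n + b(t-a)X + b^2 Y}{(t-a)^2+b^2}.
\]

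Next, expanding $\alpha(t)B\alpha(t)^{\sigma}=B$ and matching coefficients of $(t-a)^j$ for $0\le j\le 4$ (the extreme coefficients yielding $YBY^{\sigma}=B$ and hence $BY^{\sigma}=Y^{-1}B$, and the cubic term yielding $BX^{\sigma}=-XB$), the remaining identities collapse to
\[
XB+BX^{\sigma}=0,\quad YBY^{\sigma}=B,\quad XY=Y^{-1}X,\quad X^2=Y+Y^{-1}-2I_n,
\]
which are jointly equivalent to $\alpha$ being a rational curve on $G_B(\mathbb{F})$. Thus $X$ lies in the Lie algebra $\mathfrak{g}_B$, $Y$ lies in $G_B(\mathbb{F})$, and the pair is constrained by the intertwining relation $XY=Y^{-1}X$ together with the matrix quadratic $Y^2-(X^2+2I_n)Y+I_n=0$ (equivalently $X^2=(Y-I_n)^2 Y^{-1}$, from which one also sees that $X^2$ commutes with $Y$).

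Finally, since $X\in\mathfrak{g}_B$, I would apply the classical orthogonal decomposition of pairs $(X,B)$ under the $G_B(\mathbb{F})$-action: there exists $R\in\GL_n(\mathbb{F})$ with $B=R\diag(B_1,\ldots,B_s)R^{\sigma}$ and $R^{-1}XR=\diag(X_1,\ldots,X_s)$, each $(X_p,B_p)$ being an indecomposable normal form listed in Table~\ref{Tab:indecomposable}. Setting $R^{-1}YR=(Y_{pq})$ substitutes directly into $\alpha$ and produces the asserted formula; since $Y\in G_B(\mathbb{F})$ need not preserve the individual summands, $(Y_{pq})$ is a full block matrix, and the blockwise translations of $YBY^{\sigma}=B$, $XY=Y^{-1}X$ and $X^2=Y+Y^{-1}-2I_n$ pin each $Y_{pq}$ down to the entries of Tables~\ref{Tab:CandidatesYij} and~\ref{Tab:CandidatesYijcon'd}. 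The hard part will be this final step: establishing the indecomposable classification of $(X,B)$ uniformly across $\mathbb{F}\in\{\mathbb{R},\mathbb{C},\mathbb{H}\}$, both signs of $B^{\sigma}=\pm B$, and both transpose and conjugate-transpose involutions, and then carrying out the finite but extensive blockwise case analysis that enumerates the admissible $Y_{pq}$ for each pair $(p,q)$ of indecomposable types, driven by the Jordan-type invariants of each $(X_p,B_p)$.
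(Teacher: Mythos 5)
Your proposal follows essentially the same route as the paper: normalize the poles and the leading coefficient, compare coefficients of $\alpha(t)B\alpha(t)^\sigma=(q(t))^2B$ to get the four matrix equations (your versions are equivalent to the paper's \eqref{eq:quadratic1}--\eqref{eq:quadratic4} after substituting $BY^\sigma=Y^{-1}B$), invoke the indecomposable normal form of pairs $(X,B)$ with $X\in\mathfrak{g}_B(\mathbb{F})$ (the paper cites \cite{DPW83} rather than reproving it), and reduce the determination of the blocks $Y_{pq}$ to blockwise Sylvester-type equations solved case by case. The extensive case analysis you defer is exactly the content of Lemmas~\ref{lem:block}, \ref{lem:lower triangular} and \ref{lem:Yij} and the appendices, so the plan is correct and matches the paper's proof in structure.
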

The proof of Theorem~\ref{thm:structure} heavily relies on the classification of orbits of the adjoint representation of $G_B(\mathbb{F})$ \cite{DPW83,Milnor69}.  To our surprise,  it turns out that the proof breaks down into solving Sylvester equations with structured coefficient matrices,  which are comprehensively studied in control theory and operator theory \cite{HR84, TSH01}.  

It is obviously not true that any curve parametrized as in \eqref{eq:structureGBF} lies on $G_B(\mathbb{F})$.  However,  using the structure determined by Theorem~\ref{thm:structure},  we obtain a complete classification of quadratic rational curves on $\U_n$ (cf.~Theorem~\ref{thm:classification U_n}),  $\O_n(\mathbb{R})$ (cf. ~Theorem~\ref{thm:classification O_n}),  $\O_{n-1,1}(\mathbb{R})$ (cf. ~Theorem~\ref{thm:classification O_n1}) and $\O_{n-2,2}(\mathbb{R})$ (cf.~Theorem~\ref{thm:classification O_n2}),  which are arguably the most important matrix groups for applications in physics and kinematics \cite{Cornwell97,  Dirac36, Dorst2016construction,  Kalkan2022study,  Schottenloher08}.  Our results are analogues of the classification of low degree planar algebraic curves intensively studied in the past three centuries \cite{KW05, Newton03,  Noether82}.  
\subsection*{Contribution II: decomposition of rational curves}
Given rational curves $\gamma_1$ and $\gamma_2$ on $G_B(\mathbb{F})$,  we have 
\[
\deg (\gamma_1 \gamma_2) \le \deg (\gamma_1) + \deg (\gamma_2).
\]
This observation together with the fundamental theorem of algebra and its various generalizations \cite{EN44, Gentili21, Jou50} motivates us to consider the decomposition problem of rational curves on $G_B(\mathbb{F})$.
\begin{thmn}[\ref{thm:KempeGBF} \normalfont{(Decomposition theorem)}]
If $\gamma(t)$ is a degree $d$ rational curve on $G_B(\mathbb{F})$ with poles of multiplicities $s_1,\dots, s_l$, then $\gamma (t) = \beta_1(t) \cdots \beta_l(t)$ for some  rational curves $\beta_1(t),\dots, \beta_l(t)$ of degrees $2s_1,\dots, 2s_l$ respectively. In particular, if all the poles of $\gamma(t)$ are simple, then $\gamma(t)$ can be decomposed into a product of $d$ quadratic rational curves.
\end{thmn}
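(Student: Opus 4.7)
The plan is to induct on the number $l$ of distinct complex-conjugate pole pairs of $\gamma$. The base case $l=1$ is immediate: take $\beta_1 = \gamma$, which already has degree $2s_1 = d$ and the single prescribed pole pair. For $l \ge 2$, single out one pole pair, say $a_l \pm b_l \mathrm{i}$ with multiplicity $s_l$, and build a rational curve $\beta_l$ on $G_B(\mathbb{F})$ of degree $2s_l$ whose only singularity is at $a_l \pm b_l \mathrm{i}$ (of multiplicity exactly $s_l$) and which absorbs the entire local principal part of $\gamma$ there. The quotient $\gamma' \coloneqq \gamma \beta_l^{-1}$ is then a rational curve on $G_B(\mathbb{F})$ of degree $d - 2s_l$ with remaining pole data $(s_1,\dots,s_{l-1})$; the inductive hypothesis factors $\gamma' = \beta_1 \cdots \beta_{l-1}$, and hence $\gamma = \beta_1 \cdots \beta_l$.

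To produce $\beta_l$, I clear denominators: write $\gamma(t) = N(t)/d(t)$ with $d(t) = \prod_{j=1}^{l} \left((t-a_j)^2 + b_j^2\right)^{s_j}$ the minimal real denominator and $N(t) \in M_n(\mathbb{F})[t]$. The group relation $\gamma B \gamma^\sigma = B$ becomes the polynomial identity $N(t)\, B\, N(t)^\sigma = d(t)^2 B$, which is the non-commutative norm equation driving the factorisation. Setting $d_l(t) = \left((t-a_l)^2+b_l^2\right)^{s_l}$, the sought $\beta_l = N_l/d_l$ must satisfy the group identity $N_l B N_l^\sigma = d_l^2 B$ together with the interpolation condition that $\gamma \beta_l^{-1}$ is regular at $a_l \pm b_l \mathrm{i}$. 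When $s_l = 1$, the existence of such a $\beta_l$ is exactly the content of Theorem \ref{thm:structure}: fixing $a = a_l$ and $b = b_l$ in \eqref{eq:structureGBF} and prescribing the residue of $\gamma$ at $a_l + b_l \mathrm{i}$ forces admissible values for $R$, the $X_p$, and the $Y_{pq}$ drawn from Tables \ref{Tab:indecomposable}, \ref{Tab:CandidatesYij}, and \ref{Tab:CandidatesYijcon'd}. For general $s_l$, I iterate, peeling off one quadratic's worth of singularity at a time and solving at each step a Sylvester-type matrix equation whose solvability is guaranteed by the orbit classification already employed in the proof of Theorem \ref{thm:structure}.

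The hard part is the coupling between the quadratic group constraint $N_l B N_l^\sigma = d_l^2 B$ and the linear local interpolation that pins down $N_l$ near $a_l \pm b_l \mathrm{i}$: over a field the two conditions would be independent, but inside $G_B(\mathbb{F})$ they must be realised simultaneously. The resolution is a multiplicative analogue of the Euclidean remainder — once the principal part is matched, any residual discrepancy can be absorbed into right multiplication of $\beta_l$ by a constant element of $G_B(\mathbb{F})$, which leaves the pole data untouched. Verifying that $d_l$ cancels in the numerator of $\gamma \beta_l^{-1}$ so that $\deg \gamma' = d - 2s_l$ then closes the induction. The ``in particular'' clause is the specialisation $s_1 = \cdots = s_l = 1$: every factor $\beta_j$ has degree $2$ and is therefore a quadratic rational curve of the form classified by Theorem \ref{thm:structure}.
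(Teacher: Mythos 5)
Your overall architecture (peel off one conjugate pole pair at a time, reduce the degree by $2s_l$, and use Proposition~\ref{prop:inverse} to turn the degree-reducing factor into an honest factor of $\gamma$) matches the paper's strategy. But there is a genuine gap exactly at the point you yourself flag as ``the hard part'': you never actually produce $\beta_l$. Two specific problems. First, the claim that for $s_l=1$ ``the existence of such a $\beta_l$ is exactly the content of Theorem~\ref{thm:structure}'' misreads that theorem: Theorem~\ref{thm:structure} constrains the form of a quadratic rational curve that is already assumed to lie on $G_B(\mathbb{F})$; it does not assert that, given the principal part of a degree-$d$ curve at a pole, some quadratic curve on $G_B(\mathbb{F})$ matching that principal part exists. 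Second, your proposed resolution of the coupling between the quadratic constraint $N_l B N_l^\sigma = d_l^2 B$ and the linear interpolation conditions --- ``any residual discrepancy can be absorbed into right multiplication of $\beta_l$ by a constant element of $G_B(\mathbb{F})$'' --- is unjustified: a constant right factor from $G_B(\mathbb{F})$ changes neither the pole data nor the validity of the identity $N_l B N_l^\sigma = d_l^2 B$, so it cannot repair a failure of group membership. Likewise, the ``Sylvester-type equation whose solvability is guaranteed by the orbit classification'' is not where the solvability comes from in this theorem.

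The paper's resolution of the coupling is the actual content of Lemmas~\ref{lem:relation}, \ref{lem:rank}, \ref{lem:existenceR} and \ref{lem:existenceC}, and it is worth internalizing: one imposes only the \emph{linear} conditions $(AP)^{(l)}(\zeta)=0$ for $l=0,\dots,2s-1$ (so that the $2s$-th power of the real quadratic factor divides $A(t)P(t)$), and then \emph{deduces} the group identity $A B A^\sigma = \left((t-a)^2+b^2\right)^{2s} B$ from divisibility: the $4s$-th power divides $(AP)B(AP)^\sigma = q^2\, A B A^\sigma$, and since $\zeta$ is a root of $q$ of multiplicity only $s$, the $2s$-th power must divide $A B A^\sigma$, whence equality by comparing leading coefficients. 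Moreover, the existence of a solution of the linear system with nonzero leading coefficient is itself nontrivial; it rests on the rank bound of Lemma~\ref{lem:rank}, which is extracted from the relations of Lemma~\ref{lem:relation} (i.e.\ from $P B P^\sigma = q^2 B$). Without an argument of this kind your induction has no engine.
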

The proof of Theorem~\ref{thm:KempeGBF} proceeds by induction on $d$.  It is based on the observation that $\deg (\gamma) = \deg(\gamma^{-1})$ (cf.  Proposotion~\ref{prop:inverse}) for any rational curve $\gamma$ on $G_B(\mathbb{F})$.  We first deal with the case $\mathbb{F} = \mathbb{R}$ and then discuss cases $\mathbb{F} =\mathbb{C}$ and $\mathbb{H}$ by embedding them into $\mathbb{R}^{2\times 2}$ and $\mathbb{R}^{4 \times 4}$,  respectively.  As a consequence of Theorem~\ref{thm:KempeGBF},  we obtain the decomposition theorem for rational curves on inhomogeneous indefinite-orthogonal groups $\ISO_{p,n-p}^+(\mathbb{R})$ (cf. ~Theorem~\ref{thm:KempeSE}),  which are of great importance in the gauge theory of gravitation \cite{CDN83, Rosen68}.

On the one side,  we notice that rational curves on $G_B(\mathbb{F})$ are matrix-valued rational functions.  Assorted decompositions of matrix-valued functions are discussed in the literature.  Examples include the Birkhoff decomposition \cite{Birkhoff09},  minimal decomposition \cite{BGKR08},  unitary decomposition \cite{Gohberg88} and J-expansive decomposition \cite{Potapov55}.  The decomposition in Theorem~\ref{thm:KempeGBF} can be recognized as an analogy of these decompositions of matrix-valued functions.  Moreover,  we remark that Theorem~\ref{thm:KempeGBF} is a multiplicative and non-commutative generalization of the partial fraction decomposition of rational functions.  Indeed, a rational curve on the additive group $\mathbb{R}$ is a rational function $F(t)$.  In particular,  it can be decomposed as $F(t) = \sum_{j=1}^r p_j(t)/q^{s_j}_j(t)$,  where $s_j\ge 0$ is an integer and $q_j(t)$ is an irreducible quadratic real polynomial for each $1 \le j \le r$.  On the other side,  if we consider the group scheme $\mathcal{G}$ defined by equation $X B X^\sigma = B$,  then $G_B(\mathbb{F})$ consists of $\mathbb{R}$-points of $\mathcal{G}$ and rational curves on $G_B(\mathbb{F})$ are $R$-points of $\mathcal{G}$,  where $R$ is the ring of regular functions on $\mathbb{P}_{\mathbb{R}}^1$.  Bearing this perspective in mind,  Theorem~\ref{thm:KempeGBF} clearly shares a resemblance with renowned decomposition theorems such as Cartan–Dieudonn\'e theorem \cite{Cartan81},  Gauss decomposition theorem  \cite{Zhelobenko70} and Bruhat decomposition theorem \cite{Chevalley58classification}.  
\subsection*{Contribution III: generalized Kempe's Universality Theorem}
Since its first appearance in late 1870s,  Kempe's Universality Theorem \cite{Kempe75} stands as a cornerstone of theoretical mechanism science.  It asserts that any bounded plane algebraic curve can be faithfully reproduced by a mechanical linkage using only rotational joints.  It captivates researchers for its elegant solution and profound theoretical implications \cite{Abbott08, Demaine07, Gao01}.  Recently,  Kempe's Universality Theorem sparks renewed interest among mathematicians and computer scientists,  leading to further exploration and generalizations of the problem.  By leveraging the geometry of the configuration space,  Kempe's Universality Theorem is generalized for algebraic curves in Euclidean space of arbitrary dimension \cite{Abbott08}.  Following the topological reformulation given by Thurston,  the theorem can be generalized along the direction of moduli space of geometric objects \cite{KM96, KM02, Kourganoff16}.  By encoding 2D and 3D motions via polynomials over
non-commutative algebras,  Kempe's Universality Theorem is equivalent to the factorization problem of motion polynomials \cite{GKLRSV17, LJS18}.

According to the Erlangen program \cite{Klein1893},  geometries of a manifold are governed by their transformation groups.  This underlies our generalization of Kempe's Universality Theorem. 
\begin{thmn}[\ref{thm:Kempe G-space} (\normalfont{Generalized Kempe's Universality Theorem II})]
Let $(G,X)$ be one of the $9$ pairs listed in Theorem~\ref{thm:approx-lift}.  For every rational curve $\gamma$ on $X$ with $\gamma(0) = x_0$,  there exist rational curves $\alpha_1,\dots,  \alpha_s$ on $G$ with $\alpha_1(0) = \cdots = \alpha_s(0) = I$ such that  
\begin{enumerate}[(a)]
\item Each $ \alpha_j$ only has poles at $\{c_j,\overline{c}_j\}$,  $1 \le j \le s$.  
\item If $j \ne k$ then $\{c_j,\overline{c}_j\} \ne \{c_k,\overline{c}_k\}$. 
\item $\prod_{j=1}^s \alpha(t) x_0  = \gamma(t)$. 
\end{enumerate}
Here $I$ denotes the identity matrix in $G$.
\end{thmn}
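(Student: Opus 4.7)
My plan is to reduce the statement on $X$ to the decomposition problem on $G$ established by Theorem~\ref{thm:KempeGBF}, and to glue the two together via the rational lifting summarized in Theorem~\ref{thm:approx-lift}. The argument will proceed in three steps.

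First, I will invoke Theorem~\ref{thm:approx-lift} to produce a rational curve $\tilde{\gamma}:\mathbb{P}^{1}_{\mathbb{R}}\to G$ lifting $\gamma$, in the sense that $\tilde{\gamma}(t)\,x_{0}=\gamma(t)$ for all $t$ and $\tilde{\gamma}(0)=I$. From this point the problem becomes one purely about rational curves on $G=G_{B}(\mathbb{F})$. Next, applying Theorem~\ref{thm:KempeGBF} to $\tilde{\gamma}$ supplies a product decomposition $\tilde{\gamma}(t)=\beta_{1}(t)\cdots\beta_{l}(t)$ in which each $\beta_{j}$ is a rational curve on $G$ of degree $2s_{j}$ whose pole set is a single complex conjugate pair $\{c_{j},\overline{c}_{j}\}$, with distinct indices corresponding to distinct pole pairs. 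This already secures the pole-structure requirements (a) and (b), modulo the normalization at $t=0$ that I handle next.

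To enforce $\alpha_{j}(0)=I$, I set $C_{0}:=I$ and $C_{j}:=\beta_{1}(0)\cdots\beta_{j}(0)$ for $1\le j\le l$, and define
\[
\alpha_{j}(t):=C_{j-1}\,\beta_{j}(t)\,\beta_{j}(0)^{-1}\,C_{j-1}^{-1}.
\]
A straightforward induction on $j$ yields $\alpha_{1}(t)\cdots\alpha_{j}(t)=\beta_{1}(t)\cdots\beta_{j}(t)\,C_{j}^{-1}$, so that
\[
\alpha_{1}(t)\cdots\alpha_{l}(t)=\tilde{\gamma}(t)\,C_{l}^{-1}=\tilde{\gamma}(t),
\]
since $C_{l}=\tilde{\gamma}(0)=I$. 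Conjugation by the constant element $C_{j-1}\in G$ preserves membership in $G$ and leaves the pole locations of $\beta_{j}$ unchanged, so each $\alpha_{j}$ retains the pole pair $\{c_{j},\overline{c}_{j}\}$, and $\alpha_{j}(0)=I$ by construction. Applying the resulting product identity to $x_{0}$ delivers condition (c).

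The main obstacle is the very first step. A rational curve on a homogeneous space $G/H$ need not admit a global rational lift to $G$ because of obstructions coming from the torsor structure of the principal $H$-bundle $G\to G/H$; this is precisely why the statement is restricted to the nine pairs appearing in Theorem~\ref{thm:approx-lift}. Once that lift is in hand, Steps~2 and 3 amount to bookkeeping on top of the decomposition theorem, whereas establishing the existence of such a lift is the genuine content of the argument and will likely need to exploit the fine structure of each of the nine pairs separately.
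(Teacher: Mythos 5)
Your Steps 2 and 3 (the decomposition of the lift and the renormalization by conjugation to force $\alpha_j(0)=I$) are fine, and in fact your explicit induction with $C_j:=\beta_1(0)\cdots\beta_j(0)$ is a cleaner bookkeeping of the normalization than the paper bothers to write out. But there is a genuine gap at Step 1, and you half-acknowledge it yourself in your closing paragraph: the existence of a \emph{rational} lift $\widetilde{\gamma}$ with $\widetilde{\gamma}(t)x_0=\gamma(t)$ is the entire content of the theorem, and you have not proved it. Worse, you cite the wrong source for it: Theorem~\ref{thm:approx-lift} only asserts that a \emph{continuous loop} on $X$ can be uniformly \emph{approximated} by curves of the form $\beta_n(t)x_0$; it gives neither an exact lift nor a rational one, so it cannot launch your argument. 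The nine pairs are listed in Theorem~\ref{thm:approx-lift}, but that theorem is not where the lift comes from.

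The paper fills this gap with Lemma~\ref{lem:lifting criterion-2}, which rests on Harder's criterion (Proposition~\ref{prop:rational triviality}): over the smooth affine curve $\mathbb{P}^1_{\mathbb{R}}$, a generically trivial principal $H$-bundle is trivial provided $H$ is semisimple and simply connected. Concretely, one forms the pullback $\gamma^\ast G\to\mathbb{P}^1_{\mathbb{R}}$ of the $H$-bundle $p:G\to G/H\simeq X$ and shows it admits a rational section. For the special orthogonal pairs the stabilizer $H$ is a product of groups $\SO^+_{p,q}$, which is \emph{not} simply connected, so the paper first lifts to the universal cover $\widetilde{G}$ (a semidirect product of $\Spin_{p,q}(\mathbb{R})$ with a vector group), applies the criterion to $\widetilde{H}$ there after checking Zariski local triviality of $p$, and then composes with the covering map $\widetilde{G}\to G$; for the $\SU_n$ and $\Sp_n(\mathbb{H})$ pairs the stabilizer is already semisimple and simply connected and the criterion applies directly; case \eqref{prop:ex-approx-lift:item4'} is reduced to case \eqref{prop:ex-approx-lift:item5} via the inclusion $\ISO^+_{p,n-p-1}(\mathbb{R})\hookrightarrow\SO^+_{p+1,n-p}(\mathbb{R})$. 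None of this is ``bookkeeping''; without it the theorem is unproved. A secondary point: for the pairs involving $\SE_n(\mathbb{R})$ and $\ISO_{p,n-p}(\mathbb{R})$ your Step 2 must invoke Theorem~\ref{thm:KempeSE} rather than Theorem~\ref{thm:KempeGBF}, since these groups are not of the form $G_B(\mathbb{F})$.
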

The proof of Theorem~\ref{thm:Kempe G-space} relies on the criterion \cite[Satz~3.3]{Harder67} for the triviality of a principal bundle on a smooth curve and Theorems~\ref{thm:KempeGBF} and \ref{thm:KempeSE}.  Using the approximation theorem \cite[Theorem~1.1]{BK22},  we also obtain a generalization of Kempe's Universality Theorem for continuous loops (cf. Theorem~\ref{thm:approx-lift}).

Essentially,  rational curves $\alpha_1,\dots,  \alpha_s$ and the action of $G$ on $X$ in Theorem~\ref{thm:Kempe G-space} play the role of rotational joints and the realization of linkage in the original Kempe's University Theorem and its existing generalizations \cite{Abbott08, Gao01, KM96, KM02,Kempe75, Kourganoff16},  respectively.  In fact,  if we let $(G,X) = (\SE_2(\mathbb{R}),  \mathbb{R}^2)$ (resp.  $(G,X) = (\SE_3(\mathbb{R}),  \mathbb{R}^3)$) in Theorem~\ref{thm:Kempe G-space},  then we obtain the version of Kempe's Universality Theorem for planar (resp.  space) curve proved by motion polynomials in \cite{GKLRSV17} (resp.  \cite{LJS18}).  
\subsection*{Organization of the paper}
In Section~\ref{sec:pre}, we fix notations and review some results from topology and algebraic geometry.  We investigate in Section~\ref{sec:rat} basic properties of rational curves on real algebraic varieties. Section~\ref{sec:quad} is devoted to the classification of quadratic rational curves on \texorpdfstring{$G_B(\mathbb{F})$}{GB}.  To avoid distracting the reader by lengthy calculations,  we defer the proofs of Lemmas~\ref{lem:lower triangular} and \ref{lem:Yij}  and Theorem~\ref{thm:classification O_n2} to Appendices~\ref{append:proof of lem:lower triangular}, \ref{append:proof lem:Yij} and \ref{append:proof of O_n2} respectively.  We address in Section~\ref{sec:dec} the decomposition problem for rational curves on real linear algebraic groups.  In Section~\ref{sec:gekempe},  we apply topological and rational lifting criteria,  together with results in Section~\ref{sec:dec},  to obtain two generalizations of Kempe's Universality Theorem.  This section ends with a brief discussion on examples of small dimensions,  which may of particular interest to the reader with background in geometric algebra or theoretical mechanism.
\section{Preliminaries}\label{sec:pre}
\subsection*{Linear algebraic groups}
Let $n > p$ be positive integers and let $\mathbb{F} = \mathbb{R}$,  $\mathbb{C}$ or $\mathbb{H}$.  We denote by $\GL_n(\mathbb{F})$ the group of $n\times n$ invertible matrices over $\mathbb{F}$.  Classical subgroups of $\GL_n(\mathbb{F})$ are 
\begin{align*}
\O_n(\mathbb{R}) &\coloneqq \lbrace 
X \in \GL_n(\mathbb{R}): X^\tp X = I_n
\rbrace, \\
\SO_n(\mathbb{R}) &\coloneqq \lbrace 
X \in \O_n(\mathbb{R}):\det(X) = 1
\rbrace, \\
\U_n &\coloneqq \lbrace 
X \in \GL_n(\mathbb{C}): X^\ast X = I_n
\rbrace, \\
\SU_n &\coloneqq \lbrace 
X \in \U_n: \det(X) = 1
\rbrace,  \\
\Sp_{2n}(\mathbb{R}) &\coloneqq \left\lbrace 
X \in \GL_{2n}(\mathbb{R}): X \begin{bsmallmatrix}
0 & I_n \\
- I_n & 0 
\end{bsmallmatrix} X^\tp = \begin{bsmallmatrix}
0 & I_n \\
- I_n & 0 
\end{bsmallmatrix}
\right\rbrace.
\end{align*} 
Our discussions in the sequel will also involve indefinite orthogonal groups and their inhomogeneous version.  Let $I_{p,n-p} \coloneqq \diag(
I_p, -I_{n-p})$.  We define:  
\begin{align*} 
\O_{p,n-p}(\mathbb{R}) &\coloneqq \lbrace 
X \in \GL_n(\mathbb{R}): X^\tp I_{p,n-p} X = I_{p,n-p}
\rbrace, \\
\SO^+_{p,n-p}(\mathbb{R}) &\coloneqq \text{the identity component of $\O_{p,n-p}(\mathbb{R})$}, \\
\SE_{n}(\mathbb{R}) & \coloneqq
 \mathbb{R}^n \ltimes  \O_n(\mathbb{R}) = 
\left\lbrace
\begin{bsmallmatrix}
Q & u \\
0 & 1
\end{bsmallmatrix} \in \GL_{n+1}(\mathbb{R}): Q\in \O_{n}(\mathbb{R}),  u \in \mathbb{R}^n
\right\rbrace,  \\
\ISO^+_{p,n-p}(\mathbb{R}) &\coloneqq \mathbb{R}^{n} \ltimes \SO^+_{p,n-p}(\mathbb{R}) = 
\left\lbrace
\begin{bsmallmatrix}
Q & u \\
0 & 1
\end{bsmallmatrix} \in \GL_{n+1}(\mathbb{R}): Q\in \SO^+_{p,n-p}(\mathbb{R}),  u \in \mathbb{R}^{n}
\right\rbrace.
\end{align*}
\subsection*{Topology and Geometry}
Let  $X, Y$ be topological spaces and let $p: Y \to X$,  $\gamma: \mathbb{S}^1 \to X$ be continuous maps.  A \emph{lift} of $\gamma$ is a continuous map $\beta:\mathbb{S}^1 \to Y$ such that $p \circ \beta = \gamma$. 
\begin{lemma}\cite[Lemma~55.3]{Munkres00}\label{lem:null homotopic}
Let $X$ be a topological space.  A continuous map $\gamma: \mathbb{S}^1 \to X$ is homotopic to a constant map if and only if $[\gamma] = 0 \in \pi_1(X)$.
\end{lemma}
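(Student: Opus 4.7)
The plan is to fix a basepoint on $\mathbb{S}^1$, view $\gamma$ as a based loop, and reconcile the pointed definition of $\pi_1(X)$ with the free (unpointed) notion of being homotopic to a constant. Identify $\mathbb{S}^1$ with $[0,1]/\{0\sim 1\}$, pick $p_0 \in \mathbb{S}^1$ corresponding to $[0]$, and set $x_0 := \gamma(p_0)$, so that $\gamma$ may be regarded as a loop at $x_0$ representing a class $[\gamma] \in \pi_1(X, x_0)$.

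The direction $[\gamma] = 0 \Rightarrow \gamma$ null-homotopic is immediate from the definitions: a based null-homotopy $F: [0,1]^2 \to X$ with $F(\cdot,0) = \gamma$, $F(\cdot,1) \equiv x_0$, and $F(0,t) = F(1,t) = x_0$ for all $t$ descends to a continuous map $\mathbb{S}^1 \times [0,1] \to X$ that constitutes a free homotopy from $\gamma$ to a constant map.

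The converse is the substantial step. Given a free null-homotopy $H: \mathbb{S}^1 \times [0,1] \to X$ with $H(\cdot,0) = \gamma$ and $H(\cdot,1) \equiv y_0$, I would introduce the \emph{tracking path} $\tau: [0,1] \to X$, $\tau(t) := H(p_0, t)$, which runs from $x_0$ to $y_0$. The strategy is to exhibit a pointed homotopy rel $x_0$ from $\gamma$ to the concatenation $\tau \cdot c_{y_0} \cdot \bar\tau$; since this latter loop is visibly pointedly null-homotopic (via the standard contraction $\tau \cdot \bar\tau \simeq c_{x_0}$), composing the two homotopies forces $[\gamma] = 0 \in \pi_1(X, x_0)$. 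The desired pointed homotopy is built on the square $[0,1]^2$ by partitioning it into three regions via two curves emanating from $(0,0)$ and $(1,0)$ and meeting the top edge: on the two side regions $F$ slides along $\tau$ and $\bar\tau$ respectively, and on the central region $F$ is a suitable reparametrization of $H$.

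The main obstacle is the explicit bookkeeping required to glue these three pieces into a single continuous $F$ satisfying the boundary conditions $F(\cdot, 0) = \gamma$, $F(\cdot, 1) = \tau \cdot c_{y_0} \cdot \bar\tau$, and $F(0,t) = F(1,t) = x_0$. The reparametrization of $H$ on the central region must be chosen so that along each of the two interior seams the middle piece agrees with $\tau(t)$ (respectively $\bar\tau(t)$) at every height $t$, which is the place where continuity could plausibly fail. This is an elementary but fiddly verification; once it is in place, the result follows from composing the two pointed homotopies.
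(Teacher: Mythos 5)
Your proposal is correct, but note that the paper does not actually prove this lemma: it is quoted verbatim from Munkres (Lemma~55.3), so there is no in-paper argument to match against. Your two directions are both sound. The easy direction (a based null-homotopy descends to a free one) is exactly as you say, and your hard direction is the standard ``tracking path'' argument: a free homotopy $H$ from $\gamma$ to the constant $y_0$ yields a pointed homotopy from $\gamma$ to $\tau\cdot c_{y_0}\cdot\bar\tau$, which contracts rel basepoint. The three-region gluing on the square that you flag as the fiddly step is the usual verification behind the fact that freely homotopic loops are conjugate in $\pi_1$; it goes through without trouble.

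It is worth pointing out that Munkres' own route to the hard direction is different and avoids that gluing entirely: from a free null-homotopy $H:\mathbb{S}^1\times[0,1]\to X$ with $H(\cdot,1)\equiv y_0$, one observes that the quotient map $\mathbb{S}^1\times[0,1]\to B^2$, $(x,t)\mapsto (1-t)x$, collapses exactly the set on which $H$ is constant, so $H$ descends to a continuous extension $k:B^2\to X$ of $\gamma$. Then $\gamma_*$ factors through $\pi_1(B^2)=0$, giving $[\gamma]=0$ immediately. Your conjugation argument is more general (it proves that a free homotopy between any two loops conjugates their classes by the tracking path, not merely the null-homotopic case), while the disk-extension argument is shorter and also yields the extra equivalence with extendability to $B^2$ that Munkres records. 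Either is acceptable here.
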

Topological lifting criteria like \cite[Proposition~1.33]{Hatcher02} indicates that the existence of a lift of $\gamma$ is controlled by $[\gamma] \in \pi_1 (X)$.  However,  if $\gamma$ is a rational curve,  then there is no guarantee that the lift of $\gamma$,  if it exists,  is also a rational curve.  We will need the following lifting criterion for algebraic curves,  which is a consequence of \cite[Satz~3.3]{Harder67},  see also \cite{BF19, BS68, Steinberg65}.
\begin{proposition}\label{prop:rational triviality}
Let $k$ be a field (not necessarily algebraically closed) and let $C$ be a smooth affine curve over $k$.  If $G$ is a semisimple and simply connected algebraic group,  then every generically trivial principal $G$-bundle on $C$ is trivial. 
\end{proposition}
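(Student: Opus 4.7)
The plan is to deduce the statement directly from Harder's Satz~3.3 in \cite{Harder67}, of which the proposition is essentially a reformulation. First I would recall the relevant dictionary: a principal $G$-bundle on $C$ is the same datum as a $G$-torsor over $C$, and ``generically trivial'' means that the restriction of the torsor to the generic point $\operatorname{Spec} k(C)$ admits a section, i.e.\ the torsor becomes trivial after base change to the function field $k(C)$ of $C$. In cohomological terms, the class of the bundle in $H^1_{\text{\'et}}(C, G)$ lies in the kernel of the pullback map
\[
H^1_{\text{\'et}}(C, G) \longrightarrow H^1_{\text{\'et}}(\operatorname{Spec} k(C), G).
\]

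Second, I would invoke Harder's theorem, which asserts that for any semisimple, simply connected algebraic $k$-group $G$ and any smooth affine $k$-curve $C$, a $G$-torsor over $C$ that is trivial over $k(C)$ is in fact trivial over $C$; equivalently, the above pullback map has trivial kernel. Applied to our bundle this is exactly the desired conclusion, so no further argument is needed beyond the dictionary above.

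The main technical content, encapsulated entirely in \cite{Harder67}, lies inside Harder's proof: one starts with a generic trivialization of the bundle, spreads it out over an open $U \subseteq C$ whose complement is a finite set of closed points, and then uses Bruhat--Tits theory together with a strong-approximation argument (in which the simple connectedness of $G$ is essential) to patch the local trivializations across the missing points. The related works \cite{BF19, BS68, Steinberg65} establish variants of the same statement by similar techniques. Since we invoke Harder's theorem as a black box, the only potential obstacle in the present deduction is verifying that the formulations of ``principal bundle'' and ``generically trivial'' used here match those of \cite{Harder67}; this is a standard translation and constitutes no genuine difficulty.
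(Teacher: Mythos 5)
Your proposal is correct and follows exactly the route the paper takes: the paper offers no independent argument for this proposition but simply cites it as a consequence of Harder's Satz~3.3 (with \cite{BF19, BS68, Steinberg65} as supplementary references), which is precisely the black-box invocation you describe. The only content beyond the citation is the standard translation between principal bundles, torsors, and generic triviality, which you have spelled out accurately.
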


What follows is a topological criterion for the existence of a regular approximation of a continuous map between real algebraic varieties.
\begin{theorem}\cite[Theorem~1.1]{BK22}\label{thm:approx by regular maps}
Let $X$ be a real algebraic variety and let $Y$ be a homogeneous space for some linear algebraic group.  A continuous map $f: X\to Y$ can be approximated by regular maps in the compact-open topology if and only if $f$ is homotopic to a regular map.
\end{theorem}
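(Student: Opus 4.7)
The plan is to treat the two implications separately, as they are of very different flavors. For the easy direction, suppose $f: X \to Y$ is the limit in the compact-open topology of a sequence of regular maps $g_n: X \to Y$. Since $Y$ is a smooth manifold (being a homogeneous space of a linear algebraic group $G$), I would fix a $G$-invariant Riemannian metric on $Y$ and observe that for every compact $K \subset X$ and every $n$ sufficiently large, the pair $(f|_K, g_n|_K)$ has values remaining in a tubular neighborhood of the diagonal in $Y \times Y$. Geodesic interpolation then yields a canonical homotopy between $f|_K$ and $g_n|_K$, and, assembled with an exhaustion of $X$ by compact sets, this produces a continuous homotopy $f \simeq g_n$. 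Hence approximability forces $f$ to be homotopic to a regular map.

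The substantive content lies in the converse. Suppose $f$ is homotopic to a regular map $g: X \to Y$. The strategy is to transfer the problem from $Y$ to the ambient group $G$ via the orbit map $\pi: G \to Y$, $\pi(h) = h \cdot y_0$, which realizes $Y = G/H$ as the base of a principal $H$-bundle. I would first lift $g$ to a regular map $\tilde{g}: X \to G$ with $\pi \circ \tilde{g} = g$; this amounts to showing that the principal $H$-bundle $g^{\ast} G \to X$ is algebraically trivial, which is where criteria in the spirit of Proposition~\ref{prop:rational triviality} would enter, possibly after reducing to a simply connected semisimple cover of the identity component of $H$. Using the homotopy lifting property of fiber bundles, the continuous homotopy from $g$ to $f$ then lifts to a continuous path in $\mathrm{Map}(X, G)$ starting at $\tilde{g}$, producing a continuous lift $\tilde{f}: X \to G$ of $f$. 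Because $G$ sits as a closed algebraic subset of a matrix space $\mathbb{F}^{n \times n}$, one can first approximate $\tilde{f}$ componentwise by polynomials on compacts (Weierstrass), and then retract the perturbations back to $G$ through a regular tubular-neighborhood retraction, obtaining regular maps $\tilde{g}_n: X \to G$ converging to $\tilde{f}$. The composites $\pi \circ \tilde{g}_n$ are then regular maps converging to $f$ in the compact-open topology.

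The main obstacle is the very first step of the converse: the existence of the initial algebraic lift $\tilde{g}: X \to G$ of the regular map $g$. The principal $H$-bundle $g^{\ast} G$ on the affine variety $X$ need not be Zariski trivial in general, so the argument must either invoke strong triviality results for principal bundles under special groups (Grauert, Serre, Grothendieck) or reduce the structure group to a simply connected semisimple quotient for which Harder's criterion (cited in Proposition~\ref{prop:rational triviality}) applies after checking generic triviality. A secondary difficulty is producing a regular retraction from a neighborhood of $G$ in its ambient matrix space onto $G$, which is a real-algebraic tubular-neighborhood statement that rests on the smoothness of $G$ and Nash approximation; without uniform control over this retraction, one cannot convert the Weierstrass approximation of $\tilde{f}$ in matrix entries into an approximation by regular maps landing in $G$. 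Once these two ingredients are in hand, the remaining assembly into a compact-open approximation statement is a matter of exhausting $X$ by compacta and iterating the construction.
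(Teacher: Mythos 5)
The paper does not actually prove this statement: it is quoted verbatim from \cite[Theorem~1.1]{BK22} and used as a black box, so there is no internal proof to compare against. Judged on its own terms, your argument for the substantive direction has a genuine gap at precisely the point you flag as the main obstacle, and the gap is not a missing technicality but a false step. The initial lift $\tilde{g}: X \to G$ of the regular map $g$ through the orbit map $\pi: G \to Y = G/H$ need not exist even continuously: take $X = Y = \mathbb{S}^2 = \SO_3(\mathbb{R})/\SO_2(\mathbb{R})$ and $g = \mathrm{id}$, so that $g^{\ast}G \to X$ is the nontrivial bundle $\SO_3(\mathbb{R}) \to \mathbb{S}^2$, which admits no section. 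Proposition~\ref{prop:rational triviality} cannot repair this: it concerns bundles over smooth affine \emph{curves} with semisimple simply connected structure group, whereas here $X$ is an arbitrary real algebraic variety and $H$ an arbitrary stabilizer (typically neither semisimple nor simply connected). Since your entire converse is routed through this lift, the strategy collapses rather than merely leaving a detail open.

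Two further points. First, even where a continuous lift $\tilde{f}$ exists, your last step — Weierstrass approximation of matrix entries followed by a ``regular tubular-neighborhood retraction'' onto $G$ — is itself an instance of the theorem being proved (the case $H = \{e\}$) and is not automatic: a nonsingular real algebraic set generally admits only a Nash, not a regular, retraction from a Euclidean neighborhood, and composing polynomials with a non-regular retraction does not yield regular maps. The argument of \cite{BK22} avoids lifting altogether: roughly, one uses a semialgebraic section of the submersion $G \times Y \to Y \times Y$, $(a,y) \mapsto (ay, y)$, near the diagonal to write any continuous map sufficiently close to a regular map $g$ in the form $x \mapsto \alpha(x)\cdot g(x)$ with $\alpha: X \to G$ continuous and close to the constant map $e$, approximates the near-constant $\alpha$ by regular maps into $G$, and then chains finitely many such small steps along the given homotopy. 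Second, your easy direction is essentially right in spirit, but the ``exhaustion by compacta'' assembly does not by itself produce a global homotopy on a non-compact $X$; one should instead use that a real algebraic variety deformation retracts onto a compact (semialgebraic) subset, so that closeness of $g_n$ to $f$ on that compact set already pins down the homotopy class.
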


\section{Rational curves on real algebraic varieties}\label{sec:rat}
We begin with the definition of rational curves on a real algebraic variety.
\begin{definition}[rational curve]\label{def: curve}
Let $X$ be a real quasi-affine variety. A \emph{rational curve} on $X$ is a morphism $\gamma: \mathbb{P}_{\mathbb{R}}^1 \to X$. We denote by $\Rat(X)$ the set of rational curves on $X$. Given $x_0\in X$, we also denote 
\[
\Rat(X,x_0) \coloneqq \lbrace
\gamma \in \Rat(X): \gamma \left( [0 : 1 ] \right) = x_0
\rbrace.
\]
\end{definition}
Since we have an identification $\mathbb{P}_{\mathbb{R}}^1 \simeq \mathbb{R}^1 \sqcup \{ \infty\}$,  rational curves can be characterized alternatively. 
\begin{lemma}\label{lem:equiv_def}
Let $X \subseteq \mathbb{R}^N$ be a real quasi-affine variety. The following are equivalent:
\begin{enumerate}[(i)]
\item\label{lem:equiv_def:item1} $\gamma$ is a rational curve on $X$.
\item\label{lem:equiv_def:item3}  $\gamma(t)$ is an everywhere defined $X$-valued rational function on $\mathbb{R}\sqcup \{\pm \infty\}$ such that $\gamma(+ \infty) = \gamma(-\infty) \in X$.
\item\label{lem:equiv_def:item4}
$\gamma(t) = (p_1(t)/q(t),\dots, p_N(t)/q(t)): \mathbb{R} \to X \subseteq \mathbb{R}^N$ and $\gamma(+ \infty) = \gamma(-\infty) \in X$ where $p_1,\dots, p_N,q$ are univariate real polynomials such that $q$ has no real root and $\gcd(p_1,\dots, p_N,q) = 1$.
\end{enumerate}
\end{lemma}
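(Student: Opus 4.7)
The plan is to prove the equivalence by the cycle (i) $\Rightarrow$ (ii) $\Rightarrow$ (iii) $\Rightarrow$ (i), throughout using the convention from real algebraic geometry that a regular function on a real affine variety $V$ is one admitting a representation $p/q$ with $q$ nowhere vanishing on $V(\mathbb{R})$. I would cover $\mathbb{P}^1_{\mathbb{R}}$ by its two standard affine charts $U_0 = \{[t:1]\} \cong \mathbb{A}^1_{\mathbb{R}}$ and $U_1 = \{[1:s]\} \cong \mathbb{A}^1_{\mathbb{R}}$, with $\infty := [1:0] \in U_1$. For (i) $\Rightarrow$ (ii), the restriction of $\gamma$ to $U_0$ supplies $N$ regular functions of $t$, each representable in lowest terms as a rational function whose denominator has no real root; this is exactly the ``everywhere defined $X$-valued rational function on $\mathbb{R}$'' of (ii). Regularity of $\gamma$ at the single real point $\infty \in \mathbb{P}^1(\mathbb{R})$, together with the fact that $\mathbb{P}^1(\mathbb{R})$ is a topological circle in which both ends of $\mathbb{R}$ meet at $\infty$, forces the two limits $\lim_{t \to \pm \infty} \gamma(t)$ to exist, coincide, and equal $\gamma(\infty) \in X$.

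For (ii) $\Rightarrow$ (iii), I would write each component of $\gamma$ in lowest terms as $p_i/q_i$; since $\gamma$ is defined on all of $\mathbb{R}$, each $q_i$ has no real root. Setting $q := \operatorname{lcm}(q_1,\dots,q_N)$ and $P_i := p_i q / q_i$ produces a common denominator with no real root, and dividing through by $h := \gcd(P_1,\dots,P_N,q)$ enforces the coprimality condition; since $h \mid q$ and $q$ has no real root, neither does $q/h$. For (iii) $\Rightarrow$ (i), the given formula already defines a morphism $U_0 \to X$ because $q$ does not vanish on $\mathbb{R}$. The assumption $\gamma(\pm\infty) \in \mathbb{R}^N$ in (iii) forces $\deg p_i \le \deg q$ for every $i$, for otherwise the limit would be unbounded. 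Setting $d := \deg q$, $\tilde{p}_i(s) := s^d p_i(1/s)$ and $\tilde{q}(s) := s^d q(1/s)$, one obtains polynomials with $\tilde{q}(0) \ne 0$ and no nonzero real root (any such root $s_0 \neq 0$ of $\tilde{q}$ would force $q(1/s_0) = 0$, contradicting that $q$ has no real root). Hence $(\tilde{p}_i/\tilde{q})$ is a regular $\mathbb{R}^N$-valued morphism on $U_1$, it agrees with $(p_i/q)$ on $U_0 \cap U_1$ via $s = 1/t$, and it sends $s=0$ to $\gamma(\infty) \in X$. Gluing the two charts yields the desired morphism $\mathbb{P}^1_{\mathbb{R}} \to X$.

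The argument is largely bookkeeping, so the main obstacle is conceptual rather than computational: one must consistently use the real-algebraic notion of regularity (under which $p/q$ is regular wherever $q$ has no real zero, as opposed to the schematic notion under which only polynomials are regular on $\mathbb{A}^1$), and one must remember that $\mathbb{P}^1(\mathbb{R})$ is a single topological circle in which $+\infty$ and $-\infty$ are identified. The latter is precisely what forces the matching condition $\gamma(+\infty) = \gamma(-\infty)$ to appear in (ii) and (iii); once these conventions are fixed, the remaining manipulations amount to elementary arithmetic of univariate rational functions.
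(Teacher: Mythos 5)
Your proof is correct and is exactly the standard two-chart argument that the paper leaves implicit (the lemma is stated without proof in the text); the key points — the real-algebraic notion of regularity, the homogenization $\tilde q(s)=s^dq(1/s)$ having no real zeros, and the identification of $\pm\infty$ at the single point $[1:0]$ of the circle $\mathbb{P}^1(\mathbb{R})$ — are all handled properly. One minor remark: in (ii)$\Rightarrow$(iii) the division by $h=\gcd(P_1,\dots,P_N,q)$ is actually unnecessary, since taking $q=\operatorname{lcm}(q_1,\dots,q_N)$ with each $p_i/q_i$ in lowest terms already forces $\gcd(P_1,\dots,P_N,q)=1$ (any irreducible factor of $q$ attains its maximal multiplicity in some $q_j$ and hence cannot divide $P_j$), but including it is harmless.
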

\begin{remark}
It is worth remarking that over an arbitrary field $k$, a rational curve on a quasi-projective variety $X$ is defined \cite[Chapter~II]{Kollar96} as a morphism from $\mathbb{P}_{k}^1$ to $X$. For $k = \mathbb{C}$, there is no non-constant rational curve on a quasi-affine variety. However, non-constant rational curves may exist on real quasi-affine varieties.  It is also noticeable that every real projective variety is isomorphic to a real affine variety \cite[Proposition~2.4.1]{AK92}.  This fact indicates that over $\mathbb{R}$, it is sufficient to consider rational curves on quasi-affine varieties.
\end{remark}
\begin{definition}[degree]
Let $\gamma (t) = (p_1(t)/q(t), \dots, p_N(t)/q(t) )$ be a rational curve on $X$ with $\gcd(p_1,\dots, p_N,q) = 1$.  The \emph{degree} of $\gamma$ is $\deg(\gamma) \coloneqq \deg (q)$. The set of rational curves of degree $d$ on $X$ is denoted by $\Rat_d(X)$. Moreover, if $x_0 \in X$ is a fixed point, we denote
\[
\Rat_d(X,x_0) \coloneqq \lbrace
\gamma \in \Rat_d(X): \gamma(\infty) = x_0 \rbrace.
\]
\end{definition}
\begin{remark}
Since $q$ has no real root, $\deg(\gamma)$ must be an even non-negative integer. 
\end{remark}

\begin{lemma}\label{lem:induced action}
Let $X$ be a real quasi-affine variety and let $G$ be a real algebraic group acting on $X$. For any $x_0 \in X$ and $g\in G$, the map 
\[
L_g: \Rat(X,x_0) \to \Rat (X,g x_0),\quad \gamma \mapsto  g \gamma
\]
is bijective. In particular, if $G$ acts on $X$ transitively, then there is a bijection between $\Rat (X)$ and $\Rat (X,x_0) \times X$.
\end{lemma}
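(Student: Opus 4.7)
My plan is to verify that $L_g$ is well-defined, to exhibit its inverse directly, and then to deduce the second assertion by a tautological fibre-wise decomposition.

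First, the map $\mu_g:X\to X$, $x\mapsto g\cdot x$, is a morphism of real algebraic varieties because it is the restriction of the action morphism $G\times X\to X$ to $\{g\}\times X$. Hence $L_g(\gamma)=\mu_g\circ\gamma:\mathbb{P}_{\mathbb{R}}^1\to X$ is again a morphism of varieties, and it satisfies $L_g(\gamma)([0:1])=g\cdot x_0$, so $L_g(\gamma)\in\Rat(X,gx_0)$. For bijectivity I would simply observe that $L_{g^{-1}}:\Rat(X,gx_0)\to\Rat(X,x_0)$ is a two-sided inverse: by the axioms of a group action, $(L_{g^{-1}}\circ L_g)(\gamma)(t)=g^{-1}\cdot(g\cdot\gamma(t))=\gamma(t)$, and symmetrically $L_g\circ L_{g^{-1}}=\mathrm{id}$.

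For the second statement, assuming $G$ acts transitively on $X$, I would begin with the tautological disjoint-union decomposition
\[
\Rat(X)\;=\;\bigsqcup_{y\in X}\Rat(X,y),
\]
obtained by partitioning rational curves according to their value at $[0:1]$. Transitivity together with the axiom of choice produces a set-theoretic section $s:X\to G$, $y\mapsto s(y)$, with $s(y)\cdot x_0=y$ for every $y\in X$. The first part of the lemma then supplies bijections $L_{s(y)}:\Rat(X,x_0)\to\Rat(X,y)$ for each $y$, and assembling these yields
\[
\Rat(X)\;\simeq\;\bigsqcup_{y\in X}\Rat(X,x_0)\;\simeq\;\Rat(X,x_0)\times X,
\]
which is the desired bijection.

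I do not foresee any real obstacle, as the statement is essentially a book-keeping exercise. The only subtle point worth flagging is that the first bijection $L_g$ is entirely canonical, whereas the global bijection $\Rat(X)\simeq\Rat(X,x_0)\times X$ depends on the chosen set-theoretic section $s$ and is therefore not canonical.
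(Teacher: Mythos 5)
Your proof is correct; the paper states this lemma without proof, treating it as immediate, and your argument (well-definedness of $L_g$ via the action morphism, $L_{g^{-1}}$ as a two-sided inverse, and the fibre-wise decomposition of $\Rat(X)$ by the value at $[0:1]$ combined with a choice of section $s:X\to G$) is exactly the standard verification the authors have in mind. Your remark that the global bijection depends on the chosen section and is not canonical is accurate and worth keeping.
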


Let $G \subseteq \GL_n(\mathbb{R})$ be a real linear algebraic group. According to Lemma~\ref{lem:induced action}, we have $\Rat (G) = \Rat (G,I_n) \times G$ where $I_n \in G$ is the identity matrix. By Lemma~\ref{lem:equiv_def}, a curve $\gamma \in \Rat (G,I_n)$ admits a unique parametrization: 
\[
\gamma(t) = (P_{ij}(t)/q(t))_{i,j=1}^n,
\]
where $q\in \mathbb{R}[t]$ and $P_{ij} \in \mathbb{R}[t]$ satisfy
\begin{itemize}
\item $\gamma(t_0) \in G$ for any $t_0\in \mathbb{R}$.
\item $\gcd(q,P_{11},\dots, P_{nn}) = 1$.
\item $q,P_{11},\dots, P_{nn}$ are monic.
\item $\deg(q)  = \deg(P_{ii}) > \deg(P_{ij})$ for $1\le i \ne j \le n$.
\item $q$ has no real roots.
\end{itemize}

\begin{lemma}
If $\gamma(t)$ is a  rational curve on $G$, then it is also a rational curve on the connected component $G_0$ of $G$.
\end{lemma}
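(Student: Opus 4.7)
The plan is to use a purely topological argument. Equip $\mathbb{P}^1_{\mathbb{R}}$ with its Euclidean topology, under which it is homeomorphic to the circle $\mathbb{S}^1$ and, in particular, connected. Every morphism of real algebraic varieties is continuous in the Euclidean topology, so $\gamma$ is a continuous map from a connected space into $G$, and therefore the image $\gamma(\mathbb{P}^1_{\mathbb{R}})$ is a connected subset of $G$.

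Next, I would use that $G$, viewed as a real Lie group in the Euclidean topology, is the disjoint union of finitely many cosets of its identity component, each of which is simultaneously open and closed. A connected subset of $G$ must therefore be contained in exactly one of these cosets. Calling this coset $G_0$ proves the set-theoretic containment $\gamma(\mathbb{P}^1_{\mathbb{R}}) \subseteq G_0$; using the explicit normalization $\gamma(\infty) = I_n$ that follows from the parametrization displayed just before the lemma, this coset is the identity component.

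Finally, I would verify that $\gamma$, corestricted to $G_0$, is again a morphism of real algebraic varieties, so that it genuinely counts as a rational curve on $G_0$ in the sense of Definition~\ref{def: curve}. For each of the classical groups considered in the paper, the identity component $G_0$ is a Zariski-closed subvariety of $G$ cut out by additional polynomial conditions (for instance $\det X = 1$ for $\mathrm{O}_n$, and analogous equations for $\mathrm{O}_{p,n-p}$, $\mathrm{SE}_n$, or $\mathrm{ISO}^+_{p,n-p}$). Hence $\gamma: \mathbb{P}^1_{\mathbb{R}} \to G_0$ is again regular, finishing the proof.

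The only point requiring care, and the main potential obstacle, is the last one: one must recognize that $G_0$ carries an honest real algebraic structure inherited from $G$, rather than being merely a semi-algebraic subset. For the classes of groups treated in this paper this is immediate from the defining polynomial conditions, so the argument is short; the heart of the lemma is really the topological fact that $\gamma(\mathbb{P}^1_{\mathbb{R}})$ cannot jump between connected components of $G$.
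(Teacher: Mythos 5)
The paper states this lemma without proof, and your topological argument is exactly the intended one: $\mathbb{P}^1_{\mathbb{R}}$ is connected in the Euclidean topology, a morphism of real varieties is Euclidean-continuous, a real linear algebraic group has finitely many Euclidean components each of which is clopen, so the image of $\gamma$ sits inside a single component, and the normalization $\gamma(\infty)=I_n$ from the preceding parametrization pins that component down to the identity component. This is correct and complete on the point that actually matters.

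The one claim you should repair is in your final paragraph: it is \emph{not} true that for every group in this paper the identity component is Zariski-closed and cut out by extra polynomial \emph{equations}. This holds for $\O_n(\mathbb{R})$ (where $G_0=\SO_n(\mathbb{R})$ is $\det X=1$), but fails for the indefinite orthogonal groups: for $1\le p\le n-1$ the group $\O_{p,n-p}(\mathbb{R})$ has four components, and $\SO^+_{p,n-p}(\mathbb{R})$ is the locus $\det X=1$ together with a \emph{sign} condition (e.g.\ positivity of the determinant of the upper-left $p\times p$ block), i.e.\ an inequality; its Zariski closure is all of $\SO_{p,n-p}(\mathbb{R})$. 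So $G_0$ is in general only a clopen semialgebraic subset of $G$, not a subvariety defined by equations. This does not sink the lemma --- its content is precisely the set-theoretic containment $\gamma(\mathbb{P}^1_{\mathbb{R}})\subseteq G_0$, and ``rational curve on $G_0$'' is to be read as a rational curve on $G$ whose image lies in $G_0$ (equivalently, a regular map into the clopen subset $G_0$) --- but the justification you give for the corestriction step should be weakened accordingly rather than asserted via nonexistent defining equations.
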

Let $\mathbb{F}$ be $\mathbb{R},\mathbb{C}$ or $\mathbb{H}$ and let $\sigma: \mathbb{F}^{n\times n} \to \mathbb{F}^{n\times n}$ be an \emph{$\mathbb{R}$-involution} on $\mathbb{F}^{n\times n} $, i.e., $\sigma$ is an $\mathbb{R}$-linear map satisfying
\[
\sigma(I_n) = I_n,\quad \sigma(\sigma(A)) = A,\quad
\sigma(AB) = \sigma(B) \sigma(A),\quad A,B\in \mathbb{F}^{n\times n}.
\]
For each $X\in \mathbb{F}^{n\times n}$, we denote $X^\sigma \coloneqq \sigma(X)$. A typical example of an involution is the transpose (resp. conjugate transpose) of matrices in $\mathbb{R}^{n\times n}$ (resp. $\mathbb{C}^{n\times n}$ or $\mathbb{H}^{n\times n}$). Given an involution $\sigma$ on $\mathbb{F}^{n\times n}$ and $B\in \GL_n(\mathbb{F})$, we define 
\[
G_B(\mathbb{F}) \coloneqq \lbrace 
X \in \mathbb{F}^{n\times n}: X B X^{\sigma} = B
\rbrace.
\]
By definition, $G_B(\mathbb{F})$ is a real algebraic subgroup of $\GL_n(\mathbb{F})$, whose Lie algebra is 
\begin{equation}
\mathfrak{g}_B(\mathbb{F}) \coloneqq \lbrace 
Y\in \mathbb{F}^{n\times n}: B Y^\sigma + Y B   = 0
\rbrace.
\end{equation}
Familiar examples of $G_B(\mathbb{F})$ include:  
\begin{enumerate}[(a)]
\item $\mathbb{F} = \mathbb{R}$ (resp. $\mathbb{F} = \mathbb{C}$), $B = I_{p,n-p} \coloneqq \diag (I_p, -I_{n-p})$, $\sigma = \text{transpose}$: $G_B(\mathbb{F})$ is the indefinite orthogonal $\O_{p,n-p}(\mathbb{R})$ (resp. $\O_{p,n-p}(\mathbb{C}) \simeq \O_n(\mathbb{C})$) of type $(p,n-p)$. In particular, if $p = n$, then $G_B(\mathbb{F})$ is the orthogonal group $\O_n(\mathbb{R})$ (resp. $\O_{n}(\mathbb{C})$).
\item $\mathbb{F} = \mathbb{R}$ (resp. $\mathbb{F} = \mathbb{C}$), $B = \begin{bsmallmatrix}
0 & I_n \\
-I_n & 0
\end{bsmallmatrix}$, $\sigma = \text{transpose}$: $G_B(\mathbb{F}) $ is the symplectic group $\Sp_{2n}(\mathbb{R})$ (resp. $\Sp_{2n}(\mathbb{C})$).
\item $\mathbb{F} = \mathbb{C}, B = I_{p,n-p}$, $\sigma = \text{conjugate transpose}$: $G_B(\mathbb{F})$ is the indefinite unitary group $\U_{n,n-p}$ of type $(p,n-p)$.
\item $\mathbb{F} = \mathbb{H}, B = I_{p,n-p}$, $\sigma = \text{conjugate transpose}$: $G_B(\mathbb{F})$ is the quaternionic indefinite symplectic group $\Sp_{p,n-p}(\mathbb{H})$ of type $(p,n-p)$. 
\end{enumerate}
The lemma that follows is a well-known fact.  Nonetheless,  we provide a proof due to the lack of appropriate reference. 
\begin{lemma}\label{lem:involution}
Let $\sigma$ be an $\mathbb{R}$-involution on $\mathbb{F}^{n\times n}$. Then there exists $C\in \GL_n(\mathbb{F})$ such that for all $A\in \mathbb{F}^{n\times n}$ we have $A^\sigma = C A^\xi C^{-1}$ where 
\[
A^\xi = 
\begin{cases}
A^\tp,&~\text{if}~\mathbb{F} = \mathbb{R} \\
A^\tp~\text{or}~A^\ast,&~\text{if}~\mathbb{F} = \mathbb{C} \\
A^\ast,&~\text{if}~\mathbb{F} = \mathbb{H}
\end{cases}.
\]
\end{lemma}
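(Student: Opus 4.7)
The plan is to reduce the statement to a Skolem--Noether-type classification of $\mathbb{R}$-algebra automorphisms of $\mathbb{F}^{n\times n}$. Fix a standard anti-involution $\xi$ on $\mathbb{F}^{n\times n}$: namely $\xi = \tp$ when $\mathbb{F}=\mathbb{R}$; $\xi = \ast$ when $\mathbb{F}=\mathbb{H}$ (the plain transpose fails to be an anti-involution since $\mathbb{H}$ is non-commutative); and either candidate when $\mathbb{F}=\mathbb{C}$. The composite $\phi \coloneqq \sigma \circ \xi$ is the composition of two $\mathbb{R}$-linear anti-homomorphisms, hence an $\mathbb{R}$-algebra automorphism of $\mathbb{F}^{n\times n}$. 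Once $\phi$ is shown to be inner, say $\phi(A) = C A C^{-1}$, substituting $A \mapsto A^\xi$ gives $A^\sigma = C A^\xi C^{-1}$, which is the desired conclusion.

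For $\mathbb{F} = \mathbb{R}$, the algebra $\mathbb{R}^{n\times n}$ is central simple over $\mathbb{R}$, so the classical Skolem--Noether theorem makes $\phi$ inner with $C \in \GL_n(\mathbb{R})$. For $\mathbb{F} = \mathbb{H}$, the quaternions form a central simple $\mathbb{R}$-algebra with center $\mathbb{R} \cdot 1$, hence so does $\mathbb{H}^{n\times n}$; Skolem--Noether again yields the required $C \in \GL_n(\mathbb{H})$.

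The delicate case is $\mathbb{F} = \mathbb{C}$, because $\mathbb{C}^{n\times n}$ viewed as an $\mathbb{R}$-algebra has center $\mathbb{C} \cdot I_n$ and so is not central simple over $\mathbb{R}$. Any $\mathbb{R}$-algebra automorphism $\phi$ preserves the center and therefore induces one of the two $\mathbb{R}$-automorphisms of $\mathbb{C}$: the identity or complex conjugation. In the first sub-case $\phi$ is $\mathbb{C}$-linear, and the complex Skolem--Noether theorem makes it inner, giving the conclusion with $\xi = \tp$. In the second sub-case, composing $\phi$ with the entry-wise conjugation $A \mapsto \bar{A}$ produces a $\mathbb{C}$-linear automorphism, hence inner, and unwinding the definitions yields $A^\sigma = \bar{C}\, A^\ast\, \bar{C}^{-1}$, which is of the required form with $\xi = \ast$. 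I expect this $\mathbb{F}=\mathbb{C}$ case-split to be the only genuine obstacle; the other two cases are immediate applications of Skolem--Noether once the correct candidate for $\xi$ has been identified.
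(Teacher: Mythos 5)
Your proposal is correct and follows essentially the same route as the paper's proof: compose $\sigma$ with a fixed standard anti-involution $\xi$ to obtain an $\mathbb{R}$-algebra automorphism, apply Skolem--Noether directly in the central simple cases $\mathbb{F}=\mathbb{R},\mathbb{H}$, and for $\mathbb{F}=\mathbb{C}$ split according to whether the induced automorphism of the center $\mathbb{C}I_n$ is the identity or conjugation before reducing to the $\mathbb{C}$-linear Skolem--Noether theorem. The only difference from the paper is the immaterial order of composition ($\sigma\circ\xi$ versus $\xi\circ\sigma$).
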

\begin{proof}
We denote by $\Aut_{\mathbb{R}}(\mathbb{F}^{n\times n})$ the automorphism group of $\mathbb{F}^{n\times n}$ as an $\mathbb{R}$-algebra. We consider the map $\varphi: \mathbb{F}^{n\times n} \to \mathbb{F}^{n\times n}$ defined by $\varphi(A) = (A^\sigma)^\xi$. Since bot $\sigma$ and $\xi$ are $\mathbb{R}$-involutions, $\varphi$ lies in $\Aut_{\mathbb{R}}(\mathbb{F}^{n\times n})$. Let $Z(\mathbb{F}^{n\times n})$ be the center of $\mathbb{F}^{n\times n}$. It is straightforward to verify that 
\[
Z(\mathbb{F}^{n\times n}) = \begin{cases}
\mathbb{R} I_n,&~\text{if}~\mathbb{F} = \mathbb{R}~\text{or}~\mathbb{F} \\
\mathbb{C} I_n,&~\text{if}~\mathbb{F} = \mathbb{C}\\
\end{cases}.
\]

If $\mathbb{F} = \mathbb{R}$ or $\mathbb{H}$, $\mathbb{F}^{n\times n}$ is a simple central algebra, thus by Skolem-Noetherm theorem \cite{Kersten90} each element in $\Aut_{\mathbb{R}}(\mathbb{F}^{n\times n})$ is an inner automorphism. Hence $A^\sigma$ can be written in the desired form for some $C\in \GL_n(\mathbb{F})$.

For $\mathbb{F} = \mathbb{C}$, we observe that $\varphi(A) \varphi(B) = \varphi(AB) = \varphi(BA) = \varphi(B) \varphi(A)$ if $A,B$ commute. Therefore, $\varphi$ preserves $Z(\mathbb{C}^{n\times n})$. Let $\psi_0: \mathbb{C} \to \mathbb{C}$ be the restriction of $\varphi$ onto $Z(\mathbb{C}^{n\times n}) \simeq \mathbb{C}$ and let $\psi: \mathbb{C}^{n\times n} \to \mathbb{C}^{n\times n}$ be the map component-wise induced by $\psi_0$. Clearly, $\psi$ is an automorphism of $\mathbb{C}^{n\times n}$ as an $\mathbb{R}$-algebra. By construction, the map $\varphi\circ \psi^{-1}$ is an automorphism of $\mathbb{C}^{n\times n}$ as a $\mathbb{C}$-algebra. Skolem-Noetherm theorem implies that $\varphi\circ \psi^{-1}$ is an inner automorphism on $\mathbb{C}^{n\times n}$ as a $\mathbb{C}$-algebra. Lastly, since $\psi_0$ is an automorphism of $\mathbb{C}$ as an $\mathbb{R}$-algebra, $\psi_0$ is either the identity map or the complex conjugation and this completes the proof.
\end{proof}
According to Lemma~\ref{lem:involution},  it is sufficient to assume that $\sigma$ is either the transpose or the conjugate transpose.  We conclude this section by an observation that is essential to our discussion in Section~\ref{sec:dec}.
\begin{proposition}[Inverse]\label{prop:inverse}
If $\gamma \in \Rat_{2d}(G_B(\mathbb{F}),I_n)$, then $\gamma(t)^{-1} \in \Rat_{2d}(G_B(\mathbb{F}),I_n)$ and it has the same poles as $\gamma(t)$.
\end{proposition}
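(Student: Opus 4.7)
The proof rests on the identity $\gamma(t)^{-1} = B\,\gamma(t)^\sigma B^{-1}$, which follows directly from $\gamma(t) B \gamma(t)^\sigma = B$ and holds pointwise wherever $\gamma$ is defined. The plan is to use this identity to read off a rational expression for $\gamma^{-1}$ with the \emph{same} scalar denominator as $\gamma$, and then to invoke the symmetry $(\gamma^{-1})^{-1} = \gamma$ to upgrade an inequality of degrees to an equality.

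In detail: write $\gamma(t) = P(t)/q(t)$ in the reduced form described after Lemma~\ref{lem:equiv_def}, so that $q\in\mathbb{R}[t]$ is monic of degree $2d$, has no real root, and shares no common factor with the entries of the matrix polynomial $P(t)$. By Lemma~\ref{lem:involution}, $\sigma$ is either the transpose or conjugate transpose, hence in particular $\mathbb{R}$-linear; combined with $q(t)\in\mathbb{R}[t]$ this yields
\[
\gamma(t)^{-1} \;=\; \frac{B\, P(t)^\sigma B^{-1}}{q(t)}.
\]
The numerator is a matrix polynomial of degree at most $2d$ (the map $P\mapsto B P^\sigma B^{-1}$ cannot raise degree), and the denominator is the real polynomial $q(t)$. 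Since $G_B(\mathbb{F})$ is a real algebraic group and inversion is a morphism, $\gamma^{-1}$ is a morphism $\mathbb{P}^1_{\mathbb{R}}\to G_B(\mathbb{F})$, i.e.\ an element of $\Rat(G_B(\mathbb{F}))$; moreover $\gamma^{-1}(\infty) = \gamma(\infty)^{-1} = I_n$, so $\gamma^{-1}\in \Rat(G_B(\mathbb{F}),I_n)$. After reducing by any common factor, the denominator of $\gamma^{-1}$ is a monic divisor of $q(t)$, so $\deg(\gamma^{-1}) \le 2d$.

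To promote this to an equality I apply the same reasoning to $\gamma^{-1}$ itself. Since $(\gamma^{-1})^{-1} = \gamma$, the preceding paragraph (with $\gamma$ replaced by $\gamma^{-1}$) gives $\deg(\gamma) \le \deg(\gamma^{-1})$, whence
\[
2d \;=\; \deg(\gamma) \;\le\; \deg(\gamma^{-1}) \;\le\; 2d,
\]
forcing $\deg(\gamma^{-1}) = 2d$. Therefore the reduced denominator of $\gamma^{-1}$ is a monic divisor of $q(t)$ of the same degree $2d$, so it equals $q(t)$, and the poles of $\gamma^{-1}$ — the roots of this reduced denominator — coincide with the roots of $q(t)$, i.e.\ with the poles of $\gamma$.

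The only non-routine step is the upgrade from $\deg(\gamma^{-1})\le 2d$ to equality: attempting a direct gcd analysis of $B P(t)^\sigma B^{-1}$ against $q(t)$ would be awkward because $\sigma$ may conjugate or transpose entries of $P$ in ways that are not transparent from $\gcd(q,P_{11},\dots,P_{nn})=1$. The involutive symmetry $(\gamma^{-1})^{-1}=\gamma$ completely bypasses this issue, and the rest of the argument is bookkeeping with the normal form from Lemma~\ref{lem:equiv_def}.
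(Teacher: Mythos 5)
Your proof is correct and rests on the same key identity as the paper's, namely $\gamma(t)^{-1} = B\,\gamma(t)^\sigma B^{-1}$, which immediately exhibits $\gamma^{-1}$ as a rational curve with the same denominator $q(t)$. The only difference is that the paper simply asserts the degree and poles are preserved, whereas you carefully rule out cancellation in $B P(t)^\sigma B^{-1}/q(t)$ via the symmetry $(\gamma^{-1})^{-1}=\gamma$ — a clean way to fill in a detail the paper leaves implicit.
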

\begin{proof}
Since $\gamma(t)$ is a curve on $G_B(\mathbb{F})$, we have $\gamma(t) B \gamma(t)^\sigma = B$. Thus $\gamma(t)^{-1} = B \gamma(t)^\sigma B^{-1}$, which is a  rational curve on $G_B(\mathbb{F})$ of degree $2d$ whose poles are the same as those of $\gamma(t)$.
\end{proof}
\begin{remark}
For a general real linear algebraic group $G \subseteq \GL_n(\mathbb{R})$, it may happen that $\deg (\gamma^{-1}) \ne \deg(\gamma)$ if $\gamma \in \Rat(G,I_n)$. For instance, we consider $G = \GL_2(\mathbb{R})$ and 
\[
\gamma(t) = \begin{bsmallmatrix}
1 & \frac{t}{t^2 + 1} \\
\frac{t}{t^2 + 1}  & 1
\end{bsmallmatrix}.
\]
Clearly, $\gamma(t)$ is a  rational curve on $\GL_2(\mathbb{R})$ since $\det(\gamma(t)) = (t^4 + t^2 + 1)/(t^2 + 1)^2$. However, a direct calculation implies 
\[
\gamma(t)^{-1} = \frac{(t^2 + 1)^2}{t^4 + t^2 + 1} \begin{bsmallmatrix}
1 & -\frac{t}{t^2 + 1} \\
-\frac{t}{t^2 + 1}  & 1
\end{bsmallmatrix} =  \begin{bsmallmatrix}
\frac{(t^2 + 1)^2}{t^4 + t^2 + 1} & -\frac{(t^2 + 1)t}{t^4 + t^2 + 1} \\[3pt]
-\frac{(t^2 + 1)t}{t^4 + t^2 + 1}   & \frac{(t^2 + 1)^2}{t^4 + t^2 + 1}
\end{bsmallmatrix}
\]
and $\deg(\gamma^{-1}) = 4 > 2 = \deg(\gamma)$. Moreover,  poles of $\gamma(t)^{-1}$ are different from those of $\gamma(t)$.
\end{remark}
\section{Quadratic  rational curves on \texorpdfstring{$G_B(\mathbb{F})$}{GB}}\label{sec:quad}
Let $\alpha$ be a quadratic  rational curve on $G_B(\mathbb{F})$.  If poles of $\alpha$ are $x \pm y \ci$ where $(x,y) \in \mathbb{R} \times (\mathbb{R}\setminus \{0\})$, then clearly $ \widetilde{\alpha}(t) \coloneqq \alpha \left( x + y t \right)$ is a quadratic  rational curve on $G_B(\mathbb{F})$ with poles $\pm \ci$, where $\ci$ is the complex unit with $\ci^2=-1$. Therefore, there is no loss of generality to assume that poles of $\alpha$  are $\pm \ci$. We write 
\[
\alpha (t) = \frac{ I_n t^2 + A_1 t + A_0}{t^2 + 1}
\]
for some $A_1, A_0\in \mathbb{F}^{n\times n}$.  Since $\alpha \in \Rat_2(G_B(\mathbb{F}), I_n)$,  we may derive 
\begin{align}
B A_1^{\sigma} + A_1 B &= 0, \label{eq:quadratic1} \\ 
B A_0^{\sigma}  + A_0 B   &= (2I_n + A_1^2) B, \label{eq:quadratic2} \\
A_1 B A_0^{\sigma}   &= A_0   A_1 B , \label{eq:quadratic3} \\
A_0 B A_0^{\sigma} &= B. \label{eq:quadratic4}
\end{align}
by comparing coefficients in the equation $(I_n t^2 + A_1 t + A_0) B (I_n t^2 + A_1 t + A_0)^{\sigma} = (t^2 + 1)^2 B$.
\begin{remark}
We notice that \eqref{eq:quadratic1} and \eqref{eq:quadratic4} are equivalent to the condition $(A_0,A_1) \in G_B(\mathbb{F}) \times \mathfrak{g}_B(\mathbb{F})$.
\end{remark}

The lemma that follows characterizes the invariance of a solution of \eqref{eq:quadratic1}-\eqref{eq:quadratic4} with respect to the action of $\GL_n(\mathbb{F}) \times Z(\mathbb{F})^{\times}$ and $G_B(\mathbb{F})$, respectively.
\begin{lemma}\label{lem:invariance}
For any $(R,c) \in \GL_n(\mathbb{F}) \times Z(\mathbb{F})^{\times}$, a triple $(A_0,A_1,B)\in \mathbb{F}^{n\times n} \times \mathbb{F}^{n\times n} \times \GL_n(\mathbb{F})$ satisfies \eqref{eq:quadratic1}-\eqref{eq:quadratic4} if and only if $(R A_0 R^{-1}, R A_1 R^{-1}, c R B R^\sigma)$ satisfies \eqref{eq:quadratic1}-\eqref{eq:quadratic4}. In particular, given $B\in \GL_n(\mathbb{F})$ and $P\in G_B(\mathbb{F})$, a pair $(A_0,A_1)\in \mathbb{F}^{n\times n} \times \mathbb{F}^{n\times n}$ satisfies \eqref{eq:quadratic1}-\eqref{eq:quadratic4} if and only if $(P A_0 P^{-1}, P A_1 P^{-1})$ satisfies \eqref{eq:quadratic1}-\eqref{eq:quadratic4}.
\end{lemma}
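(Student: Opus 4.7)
The proof is essentially mechanical: both claims should follow by direct substitution once the interaction of the conjugation $X \mapsto RXR^{-1}$ (and congruence $B \mapsto cRBR^\sigma$) with the involution $\sigma$ is recorded. The plan is to first establish the two elementary identities that drive everything, then verify each of \eqref{eq:quadratic1}--\eqref{eq:quadratic4} in turn for the transformed triple.

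First I would record that, because $\sigma(XY) = \sigma(Y)\sigma(X)$ and $\sigma(I_n) = I_n$, one has $(R^{-1})^\sigma = (R^\sigma)^{-1}$ and consequently $(RAR^{-1})^\sigma = (R^\sigma)^{-1} A^\sigma R^\sigma$ for any $A \in \mathbb{F}^{n \times n}$. Second, since $c \in Z(\mathbb{F})^\times$ lies in the center of $\mathbb{F}$, the scalar $c$ commutes with every entry of every matrix, and therefore with every matrix product over $\mathbb{F}^{n \times n}$. Together with invertibility of $R$, $R^\sigma$, and $c$, these two facts will let us cancel the conjugating data after each computation.

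Next I would substitute $\widetilde{A}_0 = RA_0R^{-1}$, $\widetilde{A}_1 = RA_1R^{-1}$, $\widetilde{B} = cRBR^\sigma$ into each of \eqref{eq:quadratic1}--\eqref{eq:quadratic4}. Using the identity above to rewrite $\widetilde{A}_i^{\sigma}$ and telescoping the factors $R^{-1}R$, each equation collapses to the original one conjugated by $R$ on the left and $R^\sigma$ on the right, with an overall factor of $c$. For example, equation \eqref{eq:quadratic1} becomes
\[
\widetilde{B}\widetilde{A}_1^\sigma + \widetilde{A}_1 \widetilde{B} = cR\bigl(BA_1^\sigma + A_1 B\bigr)R^\sigma,
\]
and analogous calculations for \eqref{eq:quadratic2}--\eqref{eq:quadratic4} introduce no new subtlety (in particular, $\widetilde{A}_1^2 = RA_1^2 R^{-1}$ so the factor $(2I_n + \widetilde{A}_1^2)\widetilde{B}$ on the right-hand side of \eqref{eq:quadratic2} also equals $cR(2I_n + A_1^2)B R^\sigma$). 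Because $R$, $R^\sigma$, and $c$ are invertible, each transformed equation is equivalent to its original counterpart, giving the ``if and only if.''

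Finally, the ``in particular'' clause is an immediate specialization: take $c = 1$ and $R = P \in G_B(\mathbb{F})$, so that $cRBR^\sigma = PBP^\sigma = B$ by the defining relation of $G_B(\mathbb{F})$; the triple $(A_0, A_1, B)$ is then sent to $(PA_0P^{-1}, PA_1P^{-1}, B)$, and the first part of the lemma yields the claimed invariance. There is no genuine obstacle here; the only thing to be careful about is keeping the $\sigma$ on the right side consistent throughout, which the identity $(RAR^{-1})^\sigma = (R^\sigma)^{-1}A^\sigma R^\sigma$ handles uniformly.
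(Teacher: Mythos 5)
Your proof is correct and is exactly the ``straightforward calculation'' that the paper invokes without writing out: the identity $(RAR^{-1})^\sigma = (R^\sigma)^{-1}A^\sigma R^\sigma$ together with centrality of $c$ makes each of \eqref{eq:quadratic1}--\eqref{eq:quadratic4} transform by conjugation with $R$ on the left and $R^\sigma$ on the right up to the invertible factor $c$, and the specialization $R = P$, $c = 1$ gives the second claim. No gaps.
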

\begin{proof}
It can be verified by a straightforward calculation.
\end{proof}
\subsection{Structure theorem for quadratic  rational curves}
We recall that each pair $(B,X)\in \mathbb{F}^{n\times n}$ satisfying $B^\sigma = \pm B$ and $X\in \mathfrak{g}_B(\mathbb{F})$ has a block diagonal normal form under the action of $\GL_n(\mathbb{F})$. We record this fact in Lemma~\ref{lem:indecomposable} for ease of reference.
\begin{lemma}\cite[Theorem~4]{DPW83}\label{lem:indecomposable}
For any $B\in \GL_n(\mathbb{F})$ such that $B^\sigma = \varepsilon B$ where $\varepsilon = \pm 1$ and $X \in \mathfrak{g}_B(\mathbb{F})$, there exists $R \in \GL_n(\mathbb{F})$ such that
\[
R X R^{-1} = \diag(X_1,\dots,X_s),\quad R B R^\sigma = \diag(B_1,\dots,B_s),
\] 
where $(B_j, X_j)$ are normal forms in Table~\ref{Tab:indecomposable} for each $1 \le j \le s$. In Table~\ref{Tab:indecomposable}, $\kappa = \pm 1$ and we denote 
\begin{align*}
A \otimes B &\coloneqq \begin{bsmallmatrix}
Ab_{11} &  \cdots &  Ab_{1l} \\
\vdots & \ddots & \vdots \\
Ab_{l1} & \cdots  & Ab_{ll}
\end{bsmallmatrix}\in \mathbb{F}^{kl \times kl},\quad 
H_m \coloneqq \begin{bsmallmatrix}
&  & 1 \\
&\udots & \\
1&  & \\
\end{bsmallmatrix} \in \mathbb{R}^{m \times m}, \\ 
J_m(A) &\coloneqq \begin{bsmallmatrix}
A & & &  \\
I_k & & & \\
& \ddots & \ddots & \\
& & I_k & A
\end{bsmallmatrix} \in \mathbb{F}^{km \times km},\quad
F_m \coloneqq \begin{bsmallmatrix}
 & & & 1  \\
 & &-1 & \\
& \udots & & \\
(-1)^{m-1} & &  & 
\end{bsmallmatrix} \in \mathbb{R}^{m \times m}, 
\end{align*}
where $A\in \mathbb{F}^{k \times k}, B\in \mathbb{F}^{l \times l} $.
\begin{table}[!htbp]
\centering
\scalebox{0.8}{
\begin{tabular}{|c|c|c|c|c|c|c|c|}
\hline
No. & $\mathbb{F}$& $\sigma$ &  $\varepsilon$ & $X$ & $B$ & $G_B(\mathbb{F})$ & Restrictions \\ \hline

\multirow{2}{*}{$1$} & \multirow{2}{*}{$\mathbb{C}$} & \multirow{2}{*}{ $\tp$} & \multirow{2}{*}{$+$} &\xrowht[()]{10pt} $J_{2m+1}(0)$ &\xrowht[()]{10pt}  $F_{2m+1}$  &\xrowht[()]{10pt}  $\O_{2m+1}$  &  \\ \cline{5-8}  
&                  &                   &                                &\xrowht[()]{10pt}  $\diag(J_m(\lambda),-J_m(\lambda)^\tp)$   &\xrowht[()]{10pt}  $I_m \otimes H_2$   &\xrowht[()]{10pt}  $\O_{2m}$   &\xrowht[()]{10pt}  $m$ even if $\lambda = 0$   \\    \hline

\multirow{2}{*}{$2$} &\multirow{2}{*}{$\mathbb{C}$} & \multirow{2}{*}{ $\tp$} & \multirow{2}{*}{$-$} &\xrowht[()]{10pt} $J_{2m}(0)$ &\xrowht[()]{10pt}  $F_{2m}$  &\xrowht[()]{10pt}   $\Sp_{2m}$  &  \\ 
\cline{5-8}  
&              &                   &                                &\xrowht[()]{10pt} $\diag(J_m(\lambda),-J_m(\lambda)^\tp)$   &\xrowht[()]{10pt}  $I_m \otimes F_2$   &\xrowht[()]{10pt}  $\Sp_{2m}$   &\xrowht[()]{10pt}  $m$ odd if $\lambda = 0$   \\ \hline

\multirow{2}{*}{$3$} &\multirow{2}{*}{$\mathbb{C}$} & \multirow{2}{*}{ $\ast$} & \multirow{2}{*}{$+$} &\xrowht[()]{25pt}  $J_{m}(\lambda)$ &\xrowht[()]{25pt}  $\kappa \ci^{m-1}F_{m}$  &\xrowht[()]{25pt}  $\U_{p,m-p}$  &\xrowht[()]{25pt}  \makecell[c]{$\operatorname{Re}(\lambda) = 0$ \\ $2p - m =\kappa \frac{1 - (-1)^m}{2}$}   \\ 
\cline{5-8}  
&            &                   &                                &\xrowht[()]{10pt}  $\diag(J_m(\lambda),-J_m(\lambda)^\ast)$   &\xrowht[()]{10pt}  $I_m \otimes H_2$   &\xrowht[()]{10pt}  $\U_{m,m}$   &\xrowht[()]{10pt}   $\operatorname{Re}(\lambda) > 0$   \\  \hline       
                  
\multirow{4}{*}{$4$} &\multirow{4}{*}{ $\mathbb{R}$} & \multirow{4}{*}{ $\tp$} & \multirow{4}{*}{$+$} &\xrowht[()]{10pt}  $J_{2m+1}(0)$ & $\kappa (-1)^m F_{2m+1}$  &\xrowht[()]{10pt}  $\O_{p,2m+1-p}$  &  \xrowht[()]{10pt}  $2p - 2m-1 =\kappa $   \\ 
\cline{5-8}  
&        &                   &                                &\xrowht[()]{25pt}  $\diag(J_m(\lambda),-J_m(\lambda)^\tp)$   &\xrowht[()]{25pt}  $I_m \otimes H_2$   &\xrowht[()]{25pt}  $ \xrowht[()]{10pt}  \O_{m,m}$   &\xrowht[()]{25pt}  \makecell[c]{$\lambda \ge 0$ \\ $m$ even if $\lambda = 0$}   \\ 
\cline{5-8}  
&               &                   &                                &\xrowht[()]{10pt}  $J_m\left( \begin{bmatrix}
0 & b \\
-b & 0
\end{bmatrix} \right)$    &\xrowht[()]{25pt}    $\kappa F_2^{m-1} \otimes F_m$   &\xrowht[()]{25pt}   $\O_{p,2m-p}$   &\xrowht[()]{25pt}   \makecell[c]{$b > 0$ \\  $2p - 2m = \kappa (1 - (-1)^m)$}  \\ 
\cline{5-8}  
&             &                   &                                &\xrowht[()]{25pt}  $\diag\left(  J_m\left( \begin{bmatrix}
a & b \\
-b & a
\end{bmatrix} \right),-J_m\left( \begin{bmatrix}
a & b \\
-b & a
\end{bmatrix} \right)^\tp \right)$   &\xrowht[()]{25pt}  $I_{2m} \otimes H_2$   &\xrowht[()]{25pt}  $\O_{2m,2m}$   & \xrowht[()]{25pt}  $a,b > 0$   \\   \hline      

\multirow{4}{*}{$5$} &\multirow{4}{*}{ $\mathbb{R}$} & \multirow{4}{*}{ $\tp$} & \multirow{4}{*}{$-$} &\xrowht[()]{10pt}  $J_{2m}(0)$ & $\kappa F_{2m}$  &\xrowht[()]{10pt}  $\Sp_{2m}$  &     \\ 
\cline{5-8}  
&             &                   &                                &\xrowht[()]{25pt}  $\diag(J_m(\lambda),-J_m(\lambda)^\tp)$   &\xrowht[()]{25pt}  $I_m \otimes F_2$   &\xrowht[()]{25pt}  $ \xrowht[()]{10pt}  \Sp_{2m}$   &\xrowht[()]{25pt}  \makecell[c]{$\lambda \ge 0$ \\ $m$ odd if $\lambda = 0$}   \\ 
\cline{5-8}  
&              &                   &                                &\xrowht[()]{25pt}  $J_m\left( \begin{bmatrix}
0 & b \\
-b & 0
\end{bmatrix} \right)$    &\xrowht[()]{25pt}  $\kappa F_2^{m} \otimes F_m$   &\xrowht[()]{25pt}  $\Sp_{2m}$   &\xrowht[()]{25pt} $b > 0$  \\ 
\cline{5-8}  
&                &                   &                                & $\diag\left(  J_m\left( \begin{bmatrix}
a & b \\
-b & a
\end{bmatrix} \right),-J_m\left( \begin{bmatrix}
a & b \\
-b & a
\end{bmatrix} \right)^\tp \right)$   &\xrowht[()]{25pt}  $I_{2m} \otimes F_2$   &\xrowht[()]{25pt}  $\Sp_{4m}$   & \xrowht[()]{25pt}  $a,b > 0$   \\   \hline

\multirow{3}{*}{$6$} &\multirow{3}{*}{ $\mathbb{H}$} & \multirow{3}{*}{ $\ast$} & \multirow{3}{*}{$+$} &\xrowht[()]{25pt}  $ J_m\left(\begin{bmatrix}
                  0 & 0 \\
                  0 & 0
                  \end{bmatrix} \right) $ & \xrowht[()]{25pt}  $ \kappa^m F_2^{m-1} \otimes F_m$ &  \xrowht[()]{25pt}  $ \xrowht[()]{10pt}  \Sp_{p,m-p}$  &    \xrowht[()]{25pt}  $2p - m =\kappa \frac{1 - (-1)^m}{2}$  \\ 
\cline{5-8}  
&            &                   &                                &\xrowht[()]{25pt}  $ J_m\left(\begin{bmatrix}
                  0 & b \\
                  -b & 0
                  \end{bmatrix} \right) $   &\xrowht[()]{25pt}  $ \kappa F_2^{m-1} \otimes F_m$   &\xrowht[()]{25pt}  $ \xrowht[()]{10pt}  \Sp_{p,m-p}$   &\xrowht[()]{25pt}  \makecell[c]{$b > 0 $ \\ $2p - m =\kappa \frac{1 - (-1)^m}{2}$}  \\ 
\cline{5-8}  
&               &                   &                                &\xrowht[()]{25pt}  $\diag\left(  J_m\left( \begin{bmatrix}
\lambda & 0 \\
0 & \lambda^\ast
\end{bmatrix} \right),-J_m\left( \begin{bmatrix}
\lambda & 0 \\
0 & \lambda^\ast
\end{bmatrix} \right)^\ast \right)$    &\xrowht[()]{25pt}  $I_{2m} \otimes H_2$   &\xrowht[()]{25pt}  $\Sp_{m,m}$   &\xrowht[()]{25pt} \makecell[c]{$\lambda \in \mathbb{C}$ \\ $\operatorname{Re}(\lambda) > 0$, $\operatorname{Im}(\lambda) \ge 0$}  \\    \hline

\multirow{3}{*}{$7$} &\multirow{3}{*}{ $\mathbb{H}$} & \multirow{3}{*}{ $\ast$} & \multirow{3}{*}{$-$} &\xrowht[()]{25pt}  $ J_m\left(\begin{bmatrix}
                  0 & 0 \\
                  0 & 0
                  \end{bmatrix} \right)$ & \xrowht[()]{25pt}  $ \kappa^{m-1} F_2^{m} \otimes F_m$ &  \xrowht[()]{25pt}  $ \xrowht[()]{10pt}  \O^\ast_{2m}$  &     \\ 
\cline{5-8}  
&                &                   &                                &\xrowht[()]{25pt}  $ J_m\left(\begin{bmatrix}
                  0 & b \\
                  -b & 0
                  \end{bmatrix} \right) $   &\xrowht[()]{25pt}  $ \kappa F_2^{m} \otimes F_m$   &\xrowht[()]{25pt}  $ \xrowht[()]{10pt}  \O_{2m}^\ast$   &\xrowht[()]{25pt}  $b > 0$   \\ 
\cline{5-8}  
&               &                   &                                &\xrowht[()]{25pt}  $\diag\left(  J_m\left( \begin{bmatrix}
\lambda & 0 \\
0 & \lambda^\ast
\end{bmatrix} \right),-J_m\left( \begin{bmatrix}
\lambda & 0 \\
0 & \lambda^\ast
\end{bmatrix} \right)^\ast \right)$    &\xrowht[()]{25pt}  $I_{2m} \otimes F_2$   &\xrowht[()]{25pt}  $\O^{\ast}_{4m}$   &\xrowht[()]{25pt} \makecell[c]{$\lambda \in \mathbb{C}$ \\ $\operatorname{Re}(\lambda) > 0$, $\operatorname{Im}(\lambda) \ge 0$}  \\    \hline    
\end{tabular}}
\caption{Indecomposable normal forms of elements in $\mathfrak{g}_B(\mathbb{F})$}
\label{Tab:indecomposable}
\end{table}
\end{lemma}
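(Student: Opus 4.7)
The plan is to view the pair $(B, X)$ as a nondegenerate $\varepsilon$-$\sigma$-Hermitian form $B$ on $V = \mathbb{F}^n$ together with a $\sigma$-skew-adjoint operator $X$, and to classify such pairs up to the action $(B, X) \mapsto (R B R^\sigma,\, R X R^{-1})$, which preserves the isometry class of the form and simultaneously conjugates the operator. Cast this way, the lemma becomes a simultaneous normal form problem, and I would follow the strategy of \cite{DPW83}: first split $V$ orthogonally into $X$-invariant summands until indecomposable, then match each summand with an entry of Table~\ref{Tab:indecomposable}.

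The orthogonal decomposition proceeds through the primary decomposition of $X$. From $BX^\sigma + XB = 0$ one reads off that $X^\sigma$ is conjugate to $-X$ via $B$, so the generalized eigenspaces $V_\lambda$ of $X$ (taken over an appropriate extension, then descended to $\mathbb{F}$) satisfy $B(V_\lambda, V_\mu) = 0$ unless $\mu = \tau(\lambda)$, where $\tau$ is the involution $\lambda \mapsto -\sigma(\lambda)$ on the spectrum (with $\sigma$ read as identity, complex conjugation, or quaternionic conjugation according to the case). Two possibilities appear. In the \emph{paired} case $\tau(\lambda) \neq \lambda$, the subspace $W_\lambda := V_\lambda \oplus V_{\tau(\lambda)}$ is $B$-nondegenerate and $X$-invariant, with both summands $B$-isotropic and put in duality by $B$. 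In the \emph{self-paired} case $\tau(\lambda) = \lambda$, the space $V_\lambda$ itself is $B$-nondegenerate, and $X - \lambda I$ acts as a nilpotent skew-adjoint operator on a form space. Peeling off these summands reduces the classification to these indecomposable building blocks.

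For the paired case the canonical form is immediate: choose a Jordan basis for $X|_{V_\lambda}$ and take its $B$-dual basis on $V_{\tau(\lambda)}$, yielding $X|_{W_\lambda} = \diag\!\bigl(J_m(\lambda),\, -J_m(\lambda)^\sigma\bigr)$ paired with the hyperbolic matrix $I_m \otimes H_2$ or $I_m \otimes F_2$ (according to $\varepsilon$). This reproduces the ``paired'' rows in every block of Table~\ref{Tab:indecomposable}. The self-paired case is the technical core. Given a nilpotent $N = X - \lambda I$ that is $B$-skew-adjoint after the allowed scalar shift (legitimate precisely because $\tau(\lambda) = \lambda$), I would induct on $\dim V$ by producing a cyclic vector $v$ of maximal height $m$ and showing, by an averaging argument that exploits the skew-adjointness of $N$, that the cyclic subspace $\operatorname{span}(v, Nv, \ldots, N^{m-1}v)$ admits a $B$-orthogonal, $X$-invariant complement. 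Normalizing $v$ so that $B(v, N^{m-1}v) = \pm 1$ yields the explicit indecomposable $(B_j, X_j)$ with $X_j = J_m(\lambda)$ and $B_j$ a scalar multiple of $F_m$; the residual sign $\kappa$ is a genuine invariant that accounts for the signatures of real or Hermitian forms and distinguishes the indefinite types $\O_{p,n-p}$, $\U_{p,m-p}$, $\Sp_{p,m-p}$, and $\O^\ast_{2m}$.

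The main obstacle is this self-paired step and, above all, the bookkeeping that matches every indecomposable with exactly one row of Table~\ref{Tab:indecomposable}. Over $\mathbb{C}$ with $\sigma = \tp$ only the size $m$ and the eigenvalue $\lambda$ (up to the symmetry $\lambda \leftrightarrow -\lambda$) appear, but over $\mathbb{R}$ the real Jordan form of a complex eigenvalue $\lambda = a + b\ci$ forces the extra families built on $J_m\!\left(\left[\begin{smallmatrix}a & b\\-b & a\end{smallmatrix}\right]\right)$, and over $\mathbb{F} = \mathbb{C}$ or $\mathbb{H}$ with $\sigma = \ast$ one must track the sign $\kappa \in \{\pm 1\}$ that separates otherwise similar isometry classes. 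The parity constraints ($m$ even when $\lambda = 0$ and $\varepsilon = +1$; $m$ odd when $\lambda = 0$ and $\varepsilon = -1$) and the quaternionic cases, where the copy of $\mathbb{C}$ inside $\mathbb{H}$ must be chosen so that $\operatorname{Re}(\lambda) \ge 0$ and $\operatorname{Im}(\lambda) \ge 0$ selects a unique representative, are exactly the places where the case split into the seven rows of Table~\ref{Tab:indecomposable} actually occurs. I would carry out these checks row by row, verifying for each entry that the listed $(B_j, X_j)$ is indeed indecomposable and that distinct entries lie in distinct orbits of the $\GL_n(\mathbb{F})$-action, which together prove both the existence and the uniqueness of the block decomposition.
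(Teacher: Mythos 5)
The paper does not prove this lemma at all: it is quoted verbatim from \cite[Theorem~4]{DPW83} and recorded only for ease of reference, so there is no internal proof to compare against. Your outline is, in substance, the standard strategy behind that reference — treat $(B,X)$ as a nondegenerate $\varepsilon$-$\sigma$-Hermitian form together with a skew-adjoint operator, decompose orthogonally along the generalized eigenspaces using the involution $\lambda\mapsto-\sigma(\lambda)$ on the spectrum, handle the paired eigenvalues by hyperbolic duality, and reduce the self-paired case to nilpotent skew-adjoint operators classified by cyclic subspaces with a residual sign invariant $\kappa$. As a roadmap this is correct and would, if executed, recover Table~\ref{Tab:indecomposable}.

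One structural point in your dichotomy needs refinement before the row-by-row matching can go through. You assign all blocks of the form $\diag(J_m(\lambda),-J_m(\lambda)^\sigma)$ with form $I_m\otimes H_2$ or $I_m\otimes F_2$ to the \emph{paired} case $\tau(\lambda)\neq\lambda$, but the table contains such hyperbolic blocks with $\lambda=0$ (subject to the parity constraints ``$m$ even'' for $\varepsilon=+1$, ``$m$ odd'' for $\varepsilon=-1$), and $\lambda=0$ is self-paired. These arise inside a single generalized eigenspace when the sign identity $B(v,N^{m-1}v)=(-1)^{m-1}\varepsilon\,\overline{B(v,N^{m-1}v)}$ forces every cyclic subspace of the offending parity to be totally isotropic, so that two such cyclic blocks must be coupled hyperbolically rather than split off one at a time. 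Your averaging/complement argument only produces an orthogonal invariant complement when a cyclic vector with $B(v,N^{m-1}v)\neq 0$ exists; in the degenerate parity it does not, and the induction must instead peel off a dual pair of cyclic subspaces. This is precisely the mechanism that generates the parity-constrained rows, and it, together with the determination of which signs $\kappa$ actually occur (equivalently, which signatures are realized), is the part of the argument that your proposal names as ``the main obstacle'' but does not carry out. Since the paper itself outsources the entire statement to \cite{DPW83}, relying on that reference is the economical choice; a self-contained proof along your lines is feasible but substantially longer than the sketch indicates.
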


\begin{lemma}\label{lem:block}
Let $(A_0,A_1,B) \in \mathbb{F}^{n\times n} \times \mathbb{F}^{n\times n} \times \GL_n(\mathbb{F})$ be a solution of \eqref{eq:quadratic1}--\eqref{eq:quadratic4}. Assume further that $A_1 = \diag(X_1,\dots, X_s)$ and $B = \diag(B_1,\dots, B_s)$ where $\varepsilon = \pm 1$, $B_j^\sigma = \varepsilon B_j \in \GL_{m_j}(\mathbb{F})$ and $X_j\in \mathfrak{g}_{B_j}(\mathbb{F}) \subseteq \mathbb{F}^{m_j \times m_j}$ for each $1 \le j \le s$. If we partition $A_0 \in G_B(\mathbb{F})$ accordingly as $A_0 = (Y_{ij})_{i,j=1}^s$, then for each $1 \le i,j \le s$ we have
\begin{align}
X_i Y_{ij}  + Y_{ij} X_j   &= \delta_{ij} (2X_i + X_i^3).  \label{lem:block:eq1}\\
Y_{ji} &= \left( \delta_{ij}B_i^{-1} (2I_{m_i} + X_i^2) B_i - B_i^{-1} Y_{ij} B_j \right)^\sigma , \label{lem:block:eq2}
\end{align}
Here $\delta_{ij}$ is the Kronecker delta. In particular, if $i \ne j$ and $0 \not\in \rho(X_i) + \rho(X_j)$, then $Y_{ij} = Y_{ji} = 0$, where $\rho(X)$ is the spectrum of a matrix $X\in \mathbb{F}^{m\times n}$.
\end{lemma}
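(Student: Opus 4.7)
The plan is to derive all three conclusions by reading off the defining equations \eqref{eq:quadratic1}--\eqref{eq:quadratic4} block by block. The starting observation is that \eqref{eq:quadratic2} and \eqref{eq:quadratic3} together imply a purely algebraic identity between $A_0$ and $A_1$ that no longer involves $\sigma$ or $B$: solving \eqref{eq:quadratic2} for $B A_0^{\sigma}$ gives $B A_0^{\sigma} = (2I_n + A_1^2)B - A_0 B$, and substituting this into \eqref{eq:quadratic3} yields $A_1(2I_n + A_1^2)B - A_1 A_0 B = A_0 A_1 B$. Since $B$ is invertible, this simplifies to
\begin{equation*}
A_1 A_0 + A_0 A_1 \;=\; 2 A_1 + A_1^3.
\end{equation*}
With $A_1 = \diag(X_1,\dots,X_s)$ and $A_0 = (Y_{ij})$, the $(i,j)$-block of the left-hand side is $X_i Y_{ij} + Y_{ij} X_j$, while the right-hand side is block-diagonal with $(i,i)$-block $2 X_i + X_i^3$. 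This yields \eqref{lem:block:eq1}.

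For \eqref{lem:block:eq2} the approach is to return to \eqref{eq:quadratic2} and read off blocks directly, using the fact that for a block matrix $A_0 = (Y_{ij})$ the $(i,j)$-block of $A_0^{\sigma}$ is $(Y_{ji})^{\sigma}$ (because the involution $\sigma$ swaps block positions and is applied entry-wise to the resulting blocks). The $(i,j)$-block of $B A_0^{\sigma} + A_0 B = (2I_n + A_1^2)B$ then reads
\begin{equation*}
B_i (Y_{ji})^{\sigma} + Y_{ij} B_j \;=\; \delta_{ij}(2I_{m_i} + X_i^2) B_i,
\end{equation*}
and solving for $(Y_{ji})^{\sigma}$ and applying $\sigma$ to both sides gives exactly \eqref{lem:block:eq2}. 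Note that \eqref{eq:quadratic1} and \eqref{eq:quadratic4} are used only implicitly in that they guarantee $X_j \in \mathfrak{g}_{B_j}(\mathbb{F})$ and the consistency of the overall system.

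For the final assertion, when $i \ne j$ equation \eqref{lem:block:eq1} degenerates to the homogeneous Sylvester equation $X_i Y_{ij} + Y_{ij} X_j = 0$. By the classical theorem on Sylvester equations (and its quaternionic analogue when $\mathbb{F} = \mathbb{H}$), this equation admits only the trivial solution precisely when $\rho(X_i) \cap \rho(-X_j) = \emptyset$, i.e.\ when $0 \notin \rho(X_i) + \rho(X_j)$. Under this hypothesis we conclude $Y_{ij} = 0$, and the identical argument applied to the pair $(j,i)$ yields $Y_{ji} = 0$.

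The main subtlety is bookkeeping rather than any deep step: one must be careful about how the involution $\sigma$ acts on block matrices — it transposes block positions and applies $\sigma$ inside each block — and, in the $\mathbb{F} = \mathbb{H}$ case, one needs the spectrum $\rho(\cdot)$ compatible with Lemma~\ref{lem:indecomposable} so that the Sylvester uniqueness criterion applies verbatim.
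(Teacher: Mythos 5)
Your proof is correct and follows essentially the same route as the paper: the paper likewise reads off the $(i,j)$-blocks of \eqref{eq:quadratic2} and \eqref{eq:quadratic3} (obtaining $B_i Y_{ji}^\sigma = \delta_{ij}(2I_{m_i}+X_i^2)B_i - Y_{ij}B_j$ and $X_iB_iY_{ji}^\sigma = Y_{ij}X_jB_j$) and then invokes the uniqueness criterion for the homogeneous Sylvester equation. Your preliminary global identity $A_1A_0 + A_0A_1 = 2A_1 + A_1^3$ is just the same elimination of $B$ and $\sigma$ performed before, rather than after, passing to blocks, and your remarks on the block action of $\sigma$ and the quaternionic spectrum are the right points to be careful about.
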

\begin{proof}
By equations \eqref{eq:quadratic2} and \eqref{eq:quadratic3}, we have 
\begin{align*}
B_i Y_{ji}^\sigma   &= \delta_{ij} (2 I_{m_i} + X_i^2) B_{i} - Y_{ij} B_j, \\
X_{i} B_i Y_{ji}^\sigma &= Y_{ij} X_j B_j,
\end{align*}
from which \eqref{lem:block:eq1} and \eqref{lem:block:eq2} can be obtained easily. We observe that \eqref{lem:block:eq1} is a Sylvester equation, whose solution is unique if and only if $\rho(X_i) \cap (-\rho(X_j)) = \emptyset$. Thus for $ i \ne j$ and $\rho(X_i) \cap (-\rho(X_j)) = \emptyset$, $Y_{ij} = 0$ is the unique solution of the homogeneous Sylvester equation \eqref{lem:block:eq1}.
\end{proof}

\begin{lemma}\label{lem:lower triangular}
We have the following:
\begin{enumerate}[(a)]
\item\label{lem:lower triangular:item1} For any $\lambda\in \mathbb{C}$, a solution of $J_m(\lambda) Y + Y J_n(-\lambda) = 0$ in $\mathbb{C}^{m\times n}$ is lower triangular alternating Toeplitz. Similarly, if $Y$ is a solution of $J_m(\lambda) Y + Y J_n(-\lambda)^\tp = 0$ (resp. $J_m(\lambda)^\tp Y + Y J_n(-\lambda) = 0$ and $J_m(\lambda)^\tp Y + Y J_n(-\lambda)^\tp = 0$), then $YH_n$ (resp. $H_m Y$ and  $H_m Y H_n$) is lower triangular alternating Toeplitz.
\item\label{lem:lower triangular:item1'} For any $\lambda\in \mathbb{C}$, a solution of $J_m(\lambda) Y - Y J_n(\lambda) = 0$ in $\mathbb{C}^{m\times n}$ is lower triangular Toeplitz. Similarly, if $Y$ is a solution of $J_m(\lambda) Y - Y J_n(\lambda)^\tp = 0$ (resp. $J_m(\lambda)^\tp Y - Y J_n(\lambda) = 0$ and $J_m(\lambda)^\tp Y - Y J_n(\lambda)^\tp = 0$), then $YH_n$ (resp. $H_m Y$ and  $H_m Y H_n$) is lower triangular Toeplitz.
\item\label{lem:lower triangular:item2} For any $\lambda\in \mathbb{C}$, a solution of $J_m(\lambda) Y + Y J_m(\lambda) = 2J_m(\lambda)  + J_m(\lambda)^3$ in $\mathbb{C}^{m\times m}$ has the form $Y = I_m + J_m(\lambda)^2/2 + T$ where $T$ is a lower triangular alternating Toeplitz matrix. Similarly, if $Y$ is a solution of $J_m(\lambda)^\tp Y + Y J_m(\lambda)^\tp = 2J_m(\lambda)^\tp  + (J_m(\lambda)^\tp)^3$ in $\mathbb{C}^{m\times m}$ then $Y  = I_m + (J_m(\lambda)^\tp)^2/2 + T$ for some $T$ such that $H_m T H_m$ is a lower triangular alternating Toeplitz matrix. In particular, if $\lambda \ne 0$ then $T = 0$.
\item\label{lem:lower triangular:item3} For any $b \ge 0$, a solution of 
\[
J_m\left( \begin{bsmallmatrix}
0 & b \\
-b & 0
\end{bsmallmatrix} \right) Y + Y J_n \left(\begin{bsmallmatrix}
0 & b \\
-b & 0
\end{bsmallmatrix}\right) = 0
\] 
in $\mathbb{F}^{2m\times 2n}$ is block lower triangular alternating Toeplitz, where each block has size $2\times 2$. If $b > 0$ then $2 \times 2$ blocks are of the form $\begin{bsmallmatrix}
x & y \\
y & -x
\end{bsmallmatrix} \in \mathbb{F}^{2\times 2}$.

\item\label{lem:lower triangular:item4} For any $b \ge 0$, a solution of 
\[
J_m\left( \begin{bsmallmatrix}
0 & b \\
-b & 0
\end{bsmallmatrix} \right) Y + Y J_m \left(\begin{bsmallmatrix}
0 & b \\
-b & 0
\end{bsmallmatrix}\right) = 2 J_m \left(\begin{bsmallmatrix}
0 & b \\
-b & 0
\end{bsmallmatrix}\right)  + J_m \left(\begin{bsmallmatrix}
0 & b \\
-b & 0
\end{bsmallmatrix}\right) ^3
\] 
in $\mathbb{F}^{2m\times 2m}$ can be written as $I_{2m} + \frac{1}{2} J_m\left( \begin{bsmallmatrix}
0 & b \\
-b & 0
\end{bsmallmatrix}\right)^2 + T$ for some block lower triangular alternating Toeplitz matrix $T$, where each block has size $2\times 2$. If $b > 0$ then $2 \times 2$ blocks are of the form $\begin{bsmallmatrix}
x & y \\
y & -x
\end{bsmallmatrix}\in \mathbb{F}^{2\times 2}$.

\item\label{lem:lower triangular:item5} For any $a, b \ge 0$, $Y (I_2\otimes H_n)$ is block lower triangular Toeplitz, where each block is of the form $\begin{bsmallmatrix}
x & y \\
y & -x
\end{bsmallmatrix} \in \mathbb{C}^{2\times 2}$. Here $Y$ is a solution of 
\[
J_m\left( \begin{bsmallmatrix}
a & b \\
-b & a
\end{bsmallmatrix} \right) Y - Y J_n \left(\begin{bsmallmatrix}
a & b \\
-b & a
\end{bsmallmatrix}\right)^\tp = 0
\] 
in $\mathbb{C}^{2m\times 2n}$. Similarly, If $Y$ is a solution of 
\[
J_m^\tp \left( \begin{bsmallmatrix}
a & b \\
-b & a
\end{bsmallmatrix} \right) Y - Y J_n \left(\begin{bsmallmatrix}
a & b \\
-b & a
\end{bsmallmatrix}\right) = 0,
\] 
then $(I_2 \otimes H_m) Y$ is block lower triangular Toeplitz, where each block is of the form $\begin{bsmallmatrix}
x & y \\
y & -x
\end{bsmallmatrix} \mathbb{C}^{2\times 2}.
$
\item\label{lem:lower triangular:item6} For any $\lambda \in \mathbb{C}$ with $\operatorname{Re}(\lambda) > 0$ and $\operatorname{Im}(\lambda) \ge 0$, $Y (I_2\otimes H_n)$ is block lower triangular Toeplitz, where each block has size $2\times 2$ and $Y$ is a solution of 
\[
J_m\left( \begin{bsmallmatrix}
\lambda & 0 \\
0 & \lambda^\ast
\end{bsmallmatrix} \right) Y - Y J_n \left(\begin{bsmallmatrix}
\lambda & 0 \\
0 & \lambda^\ast
\end{bsmallmatrix}\right)^\ast  = 0
\] 
in $\mathbb{H}^{2m\times 2n}$. Similarly, if $Y$ is a solution of 
\[
J_m\left( \begin{bsmallmatrix}
\lambda & 0 \\
0 & \lambda^\ast
\end{bsmallmatrix} \right)^\ast Y - Y J_n \left(\begin{bsmallmatrix}
\lambda & 0 \\
0 & \lambda^\ast
\end{bsmallmatrix}\right)  = 0,
\] 
then $(I_2 \otimes H_m) Y$ is block lower triangular Toeplitz, where each block has size $2\times 2$.  Moreover, if $\lambda \in \mathbb{C}\setminus \mathbb{R}$ then $2\times 2$ blocks are of the form $\begin{bsmallmatrix}
x\qj & y \\
z & w\qj
\end{bsmallmatrix} \in \mathbb{H}^{2\times 2}, x, y, z, w\in \mathbb{C}
$.\footnote{Here $\qi,\qj,\qk \in \mathbb{H}$ are the units in the standard expression $a + b \qi + c \qj + d \qk \in \mathbb{H}$ of a quaternion number.  The reader should distinguish the complex unit $\ci \in \mathbb{C}$ from the quaternion unit $\qi \in \mathbb{H}$.} 
\end{enumerate}
\end{lemma}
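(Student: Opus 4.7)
The plan is to view each of the seven identities as a (possibly inhomogeneous) Sylvester equation $AY + YC = D$ with $A,C$ Jordan blocks -- scalar in \eqref{lem:lower triangular:item1}--\eqref{lem:lower triangular:item2} and with $2\times 2$ scalar blocks on the diagonal in \eqref{lem:lower triangular:item3}--\eqref{lem:lower triangular:item6}. After absorbing the diagonal part of each Jordan block, every case reduces to a Sylvester equation involving only the nilpotent subdiagonal shift $L_m$ with $(L_m)_{i,j} = \delta_{i,j+1}$, which I will solve by a direct entrywise recursion. The three transposed variants appearing in each part will be reduced to the untransposed one by conjugation with the reversal matrix $H$, using the identity $H_m J_m(\mu)^\tp H_m = J_m(\mu)$ (and its block analogue).

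For parts \eqref{lem:lower triangular:item1} and \eqref{lem:lower triangular:item1'}, writing $J_m(\lambda) = \lambda I_m + L_m$ cancels the $\lambda$-terms and leaves $L_m Y + Y L_n = 0$ and $L_m Y - Y L_n = 0$ respectively. Comparing $(i,j)$-entries gives the recursions $y_{i,j+1} = -y_{i-1,j}$ and $y_{i,j+1} = y_{i-1,j}$, with the conventions $y_{0,j}=0$ and $y_{i,n+1}=0$. The boundary conditions force the triangular envelope, while the recursion itself encodes the (alternating) Toeplitz structure. Part \eqref{lem:lower triangular:item2} is handled by splitting the general solution as $Y_0 + T$, where $Y_0 \coloneqq I_m + J_m(\lambda)^2/2$ is verified to be a particular solution by a direct computation, and $T$ satisfies the homogeneous equation $J_m(\lambda) T + T J_m(\lambda) = 0$. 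For $\lambda \neq 0$ the spectra $\{\lambda\}$ of $J_m(\lambda)$ and $\{-\lambda\}$ of $-J_m(\lambda)$ are disjoint, so by the standard uniqueness criterion for Sylvester equations $T = 0$; for $\lambda = 0$ the homogeneous equation is exactly the one in part \eqref{lem:lower triangular:item1} and yields the claimed lower-triangular alternating Toeplitz shape.

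For parts \eqref{lem:lower triangular:item3}--\eqref{lem:lower triangular:item6}, I partition $Y$ into $2\times 2$ blocks and repeat the shift-recursion argument \emph{at the block level}. This immediately produces the block lower-triangular (alternating) Toeplitz envelope, and leaves a single residual equation of the form $\Lambda Z + Z \Lambda = 0$ (or the analogous variant with a sign or transpose) for each $2\times 2$ block $Z$, where $\Lambda$ is the $2\times 2$ diagonal piece of the Jordan block -- namely $\begin{bsmallmatrix}0&b\\-b&0\end{bsmallmatrix}$, $\begin{bsmallmatrix}a&b\\-b&a\end{bsmallmatrix}$, or $\begin{bsmallmatrix}\lambda&0\\0&\lambda^\ast\end{bsmallmatrix}$. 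Substituting a generic $2\times 2$ matrix into this residual equation and using the positivity hypothesis ($b>0$, $a>0$, or $\operatorname{Re}\lambda>0$) pins down the explicit block shape: $\begin{bsmallmatrix}x&y\\y&-x\end{bsmallmatrix}$ in the real cases, and the $\qj$-twisted form $\begin{bsmallmatrix}x\qj&y\\z&w\qj\end{bsmallmatrix}$ with $x,y,z,w\in\mathbb{C}$ in the quaternionic case \eqref{lem:lower triangular:item6}. The inhomogeneous block identity in \eqref{lem:lower triangular:item4} is reduced to its homogeneous counterpart exactly as in part \eqref{lem:lower triangular:item2}, with particular solution $I_{2m} + \tfrac12 J_m(\Lambda)^2$.

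The main obstacle is purely combinatorial: seven parts, each with two or three transposed siblings, demand juggling several index conventions (sign flips from $H$-conjugation, block-shift boundary effects, and in \eqref{lem:lower triangular:item6} the non-commutativity of $\mathbb{H}$ together with the interaction between $\qj$ and a complex parameter $\lambda$). The single conceptual input -- the entrywise recursion obtained by cancelling the $\lambda I$ part of each Jordan block -- is short, but turning it into the explicit matrix shapes listed in the statement is lengthy bookkeeping, which is why the complete verification is relegated to Appendix~\ref{append:proof of lem:lower triangular}.
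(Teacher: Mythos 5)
Your route for the scalar parts (a)--(c) matches the paper's appendix proof: the $\lambda$-terms cancel, the entrywise recursion $y_{i,j+1}+y_{i-1,j}=0$ (resp.\ $y_{i,j+1}-y_{i-1,j}=0$) with zero boundary conditions gives the (alternating) Toeplitz envelope, the inhomogeneous case splits into the particular solution $I_m+J_m(\lambda)^2/2$ plus a homogeneous solution, and $\lambda\ne 0$ is settled by the spectral uniqueness criterion for Sylvester equations. The reduction of the transposed variants by $H$-conjugation is also as intended. That portion of the sketch is sound.

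There is, however, a genuine gap in how you treat the block cases (d)--(g). The diagonal $2\times2$ part of the Jordan block does \emph{not} cancel there: in part (d), for instance, the block-$(i,j)$ equation reads $\varphi_b(Y_{ij})=-(Y_{i,j+1}+Y_{i-1,j})$ with $\varphi_b(X)=b\left(\begin{bsmallmatrix}0&1\\-1&0\end{bsmallmatrix}X+X\begin{bsmallmatrix}0&1\\-1&0\end{bsmallmatrix}\right)$, because the $2\times2$ diagonal blocks enter with the \emph{same} sign on both sides. So the system does not reduce to ``a Sylvester equation involving only the nilpotent subdiagonal shift,'' and the envelope does not follow ``immediately'' from a block-level shift recursion: for $b>0$ the map $\varphi_b$ is neither zero nor injective (its kernel is the two-dimensional space of matrices $\begin{bsmallmatrix}x&y\\y&-x\end{bsmallmatrix}$), so a block is not determined by its neighbours and the corner blocks are not obviously zero. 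The missing idea---the actual crux of the paper's proof of these parts---is that $\operatorname{im}(\varphi_b)\cap\ker(\varphi_b)=\{0\}$. This is what lets one peel the matrix from the corner: $\varphi_b(Y_{1n})=0$ places $Y_{1n}$ in the kernel, while $\varphi_b(Y_{1,n-1})=-Y_{1n}$ places it in the image, hence $Y_{1n}=0$; inductively the out-of-envelope blocks vanish, the surviving blocks lie in $\ker\varphi_b$ (which is exactly the stated $2\times2$ shape), and two kernel elements whose sum lies in the image must sum to zero, which is the alternating Toeplitz relation $Y_{i,j+1}=-Y_{i-1,j}$. The analogous image--kernel decomposition for the maps built from $\begin{bsmallmatrix}a&b\\-b&a\end{bsmallmatrix}$ and $\begin{bsmallmatrix}\lambda&0\\0&\lambda^\ast\end{bsmallmatrix}$ is what closes parts (f) and (g); without it your blockwise recursion does not terminate.
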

\begin{proof}
We defer the proof to Appendix~\ref{append:proof of lem:lower triangular}.
\end{proof} 
\begin{example}
As an illustration of Lemma~\ref{lem:lower triangular},  we consider $m = 2, n =3$ and $\lambda = 0$ so that
 $J_m(\lambda) Y + Y J_n(-\lambda) = 0$ becomes
\[
\begin{bsmallmatrix}
0 & 0 & 0 \\
y_{11}  & y_{12} & y_{13}
\end{bsmallmatrix} +
\begin{bsmallmatrix}
y_{12}  & y_{13} & 0 \\
y_{22}  & y_{23} & 0 \\
\end{bsmallmatrix} = 0,
\]
where $Y = (y_{ij})_{i,j=1}^{2,3}$. Clearly we have $y_{12} = y_{13} = y_{23} = 0$ and $y_{11} + y_{22} = 0$ from which we obtain $Y = \begin{bsmallmatrix}
y_{11} & 0 & 0 \\
y_{21} & -y_{11} & 0
\end{bsmallmatrix}$ is a lower triangular alternating Toeplitz matrix.
\end{example}
\begin{lemma}\label{lem:Yij}
Let $X_1,\dots, X_s, B_1,\dots, B_s, B$ be as in Lemma~\ref{lem:block}. Let $A_0 = (Y_{ij})_{i,j=1}^s \in \mathbb{F}^{n \times n}$ be a solution of \eqref{lem:block:eq1} and \eqref{lem:block:eq2} where $Y_{ij}$ is of size $m_i \times m_j$. Given $1 \le i, j \le s$, if $0\in \rho(X_i) + \sigma_j(X_j)$ then $Y_{ij}$ has one of the forms listed in Tables~\ref{Tab:CandidatesYij} and \ref{Tab:CandidatesYijcon'd}, in which the parameter $\kappa$ and matrices $F_m, H_m, J_m(A)$ and $A\otimes B$ are the same as in Lemma~\ref{lem:indecomposable}. Moreover, given a vector $(x_1,\dots, x_p)$ we define $\hat{x} \coloneqq (x_1,-x_2,\dots, (-1)^{p-2}x_{p-1}, (-1)^{p-1}x_p)$ and 
\begin{align*}
\ch{_{$\fp$} T}(x_1,\dots, x_p) &\coloneqq 
\begin{bsmallmatrix}
x_p & 0 & \cdots & 0\\
x_{p-1} & x_p & \cdots & 0 \\
\vdots & \vdots & \ddots & 0 \\
x_1 & x_{2} & \cdots &  x_p
\end{bsmallmatrix}, \quad 
\ch{_{$\fp$} S}(x_1,\dots, x_p) \coloneqq 
\begin{bsmallmatrix}
x_p & 0 & \cdots & 0\\
x_{p-1} & -x_p & \cdots & 0 \\
\vdots & \vdots & \ddots & 0 \\
x_1 & -x_{2} & \cdots & (-1)^{p-1} x_p
\end{bsmallmatrix}, \\  
T_{\fp}(x_1,\dots, x_p) &\coloneqq 
\begin{bsmallmatrix}
0 & \cdots & 0 & x_p\\
0 & \cdots & x_p & x_{p-1} \\
\vdots & \udots & \vdots & \vdots \\
x_p & \cdots & x_{2} & x_1
\end{bsmallmatrix}, \quad 
S_{\fp}(x_1,\dots, x_p) \coloneqq 
\begin{bsmallmatrix}
0 & \cdots & 0 & x_p\\
0 & \cdots & -x_p & x_{p-1} \\
\vdots & \udots & \vdots & \vdots \\
(-1)^{p-1} x_p & \cdots & -x_{2} & x_1
\end{bsmallmatrix},\\
\ch{^{$\fp$} T}(x_1,\dots, x_p) &\coloneqq 
\begin{bsmallmatrix}
x_1 & x_{2} & \cdots &  x_p \\
\vdots & \vdots & \udots & 0 \\
x_{p-1} & x_p & \cdots & 0 \\
x_p & 0 & \cdots & 0
\end{bsmallmatrix},\quad 
\ch{^{$\fp$} S}(x_1,\dots, x_p) \coloneqq 
\begin{bsmallmatrix}
x_1 & -x_{2} & \cdots &  (-1)^{p-1}x_p\\
\vdots & \vdots  & \udots & 0 \\
x_{p-1} & -x_p & \cdots & 0 \\
x_p & 0 & \cdots & 0
\end{bsmallmatrix},  \\ 
T^{\fp}(x_1,\dots, x_p) &\coloneqq 
\begin{bsmallmatrix}
x_p & \cdots & x_{2} & x_1\\
\vdots & \udots & \vdots & \vdots \\
0 & \cdots & x_p & x_{p-1} \\
0 & \cdots & 0 & x_p
\end{bsmallmatrix},\quad 
S^{\fp}(x_1,\dots, x_p) \coloneqq 
\begin{bsmallmatrix}
(-1)^{p-1} x_p & \cdots & -x_{2} & x_1\\
\vdots & \udots & \vdots & \vdots \\
0 & \cdots & -x_p & x_{p-1} \\
0 & \cdots & 0 & x_p
\end{bsmallmatrix}.
\end{align*}

\begin{table}[!htbp] 
\scalebox{0.53}{
\begin{tabular}{|c|c|c|c|c|}
\hline
No. & $(X_i,X_j), m_i \ge m_j$ &  $(Y_{ij},Y_{ji})$ &  $Y_{ii}$ & $ (B_i,B_j)$ \\ \hline
\multirow{5}{*}{1} 
&$\left(
J_{2m+1}(0), J_{2n+1}(0)
\right)$     &\xrowht[()]{25pt}  $\left(
\begin{bsmallmatrix}
0 \\ \ch{_{$\fp$} S}(z)
\end{bsmallmatrix}, -\begin{bsmallmatrix}
\ch{_{$\fp$} S}(z) & 0
\end{bsmallmatrix} \right) $   &   $I_{2m+1} + \frac{1}{2} J_{2m+1}(0)^2$   &  $(F_{2m+1},F_{2n+1})$   \\ \cline{2-5} 

                   & \xrowht[()]{15pt}   $
\left(
J_{2m+1}(0),\diag(J_{2n}(0), -J_{2n}(0)^\tp)
\right)$    &\xrowht[()]{30pt}   \makecell[c]{$\left(
\begin{bsmallmatrix}
0 & 0 \\
\ch{_{$\fp$} S}(z) & T_{\fp}(w)
\end{bsmallmatrix}, -\begin{bsmallmatrix}
\ch{_{$\fp$} S}(w)  & 0 \\ 
\ch{^{$\fp$} T} (\hat{z}) &  0 
\end{bsmallmatrix}  \right)$ \vspace*{5pt} \\ 
$\left(
\begin{bsmallmatrix}
\ch{_{$\fp$} S}(z) & 0 & 0  & T_{\fp}(w)
\end{bsmallmatrix}, 
-\begin{bsmallmatrix}
0 \\
\ch{_{$\fp$} S}(w) \\ 
\ch{^{$\fp$} T}(\hat{z}) \\
0
\end{bsmallmatrix}  \right)$ }  &  &\xrowht[()]{15pt}  $(F_{2m+1},I_n\otimes H_2)$    \\  \cline{2-5}

                   & \xrowht[()]{45pt}  \makecell[c]{$
\diag(J_{m}(\lambda), -J_{m}(\lambda)^\tp)$ 
\vspace*{5pt} \\ 
$\diag(J_{n}(\lambda), -J_{n}(\lambda)^\tp)$  
\vspace*{5pt} \\
$\lambda  \ne 0$  }   &\xrowht[()]{30pt} $\left( \begin{bsmallmatrix}
0 & 0 \\
0 & T_{\fp}(w) \\
\ch{^{$\fp$} T}(z)  & 0 \\
0 & 0
\end{bsmallmatrix},  
-\begin{bsmallmatrix}
0 & 0 & 0 & T_{\fp}(w) &  \\
\ch{^{$\fp$} T} (z)  & 0  & 0 & 0
\end{bsmallmatrix}
\right)$    &    
$I_{2m} +  \frac{1}{2}\diag(J_m(\lambda)^2, (J_m(\lambda)^\tp)^2 )$   &\xrowht[()]{15pt}   $(I_m\otimes H_2,I_n \otimes H_2)$ \\ \cline{2-5}

                   & \xrowht[()]{45pt}  \makecell[c]{$
\diag(J_{m}(\lambda), -J_{m}(\lambda)^\tp)$
\vspace*{5pt} \\ 
$\diag(J_{n}(-\lambda), -J_{n}(-\lambda)^\tp)$ 
\vspace*{5pt} \\
$\lambda \ne 0$  }   &\xrowht[()]{30pt} $ \left( \begin{bsmallmatrix}
0 & 0 \\
\ch{_{$\fp$} S}(z) & 0 \\
0 & S^{\fp}(w)  \\
0 & 0
\end{bsmallmatrix},  
-\begin{bsmallmatrix}
\ch{_{$\fp$} S}(\hat{w}) & 0 & 0 & 0 &  \\
0 & 0  & 0 & S^{\fp}(\hat{z})
\end{bsmallmatrix}
\right)$      &        &\xrowht[()]{15pt}   $(I_m\otimes H_2,I_n \otimes H_2)$ \\ \cline{2-5}

                   & \xrowht[()]{25pt}  \makecell[c]{$
\diag(J_{2m}(0), -J_{2m}(0)^\tp)$ 
\vspace*{5pt} \\
$\diag(J_{2n}(0), -J_{2n}(0)^\tp)
$}   &\xrowht[()]{30pt} $\left( \begin{bsmallmatrix}
0 & 0 \\
\ch{_{$\fp$} S}(z) & T_{\fp}(w) \\
\ch{^{$\fp$} T}(u) & S^{\fp}(v)  \\
0 & 0
\end{bsmallmatrix},  
-\begin{bsmallmatrix}
\ch{_{$\fp$} S}(\hat{v}) & 0 & 0 & T_{\fp}(w) &  \\
\ch{^{$\fp$} T}(u) & 0  & 0 & S^{\fp}(\hat{z})
\end{bsmallmatrix}
\right)$    &    $I_{4m} + \frac{1}{2}\diag(J_{2m}(0)^2, (J_{2m}(0)^\tp)^2)  + \begin{bsmallmatrix}
\ch{_{$\fp$} S}(z) & 0 \\
0 & -S^{\fp}(\hat{z})
\end{bsmallmatrix}$    &\xrowht[()]{15pt}   $(I_{2m}\otimes H_2,I_{2n} \otimes H_2)$ \\ \cline{2-5} \hline

\multirow{5}{*}{2} 
&$\left(
J_{2m}(0), J_{2n}(0)
\right)$     &\xrowht[()]{25pt}  $\left(
\begin{bsmallmatrix}
0 \\ \ch{_{$\fp$} S}(z)
\end{bsmallmatrix}, \begin{bsmallmatrix}
\ch{_{$\fp$} S}(z) & 0
\end{bsmallmatrix} \right) $   &   $I_{2m} + \frac{1}{2} J_{2m}(0)^2 + \ch{_{$\fp$} S}(z)$   &  $(F_{2m},F_{2n})$   \\ \cline{2-5} 

                   & \xrowht[()]{15pt}   $
\left(
J_{2m}(0),\diag(J_{2n+1}(0), -J_{2n+1}(0)^\tp)
\right)$    &\xrowht[()]{30pt}   \makecell[c]{$ \left(
\begin{bsmallmatrix}
0 & 0 \\
\ch{_{$\fp$} S}(z) & T_{\fp}(w)
\end{bsmallmatrix}, 
\begin{bsmallmatrix}
-\ch{_{$\fp$} S}(w)  & 0 \\ 
\ch{^{$\fp$} T} (\hat{z}) &  0 
\end{bsmallmatrix}  \right)$ \vspace*{5pt} \\ 
$\left(
\begin{bsmallmatrix}
\ch{_{$\fp$} S}(z) & 0 & 0  & T_{\fp}(w)
\end{bsmallmatrix}, 
\begin{bsmallmatrix}
0 \\
-\ch{_{$\fp$} S}(w) \\ 
\ch{^{$\fp$} T}(\hat{z}) \\
0
\end{bsmallmatrix}  \right)$ }  &  &\xrowht[()]{15pt}  $(F_{2m},I_{2n+1}\otimes F_2)$    \\  \cline{2-5}

                   & \xrowht[()]{45pt}  \makecell[c]{$
\diag(J_{m}(\lambda), -J_{m}(\lambda)^\tp)$ 
\vspace*{5pt} \\
$\diag(J_{n}(\lambda), -J_{n}(\lambda)^\tp)
$  
\vspace*{5pt} \\
$\lambda \ne 0$  }   &\xrowht[()]{30pt} $\left( \begin{bsmallmatrix}
0 & 0 \\
0 & T_{\fp}(w) \\
\ch{^{$\fp$} T}(z)  & 0 \\
0 & 0
\end{bsmallmatrix},  
\begin{bsmallmatrix}
0 & 0 & 0 & T_{\fp}(w) &  \\
\ch{^{$\fp$} T} (z)  & 0  & 0 & 0
\end{bsmallmatrix}
\right)$    &    
$I_{2m} +  \frac{1}{2} \diag(J_m(\lambda)^2 , (J_m(\lambda)^\tp)^2) + \begin{bsmallmatrix}
0 & T_{\fp}(w) \\
\ch{^{$\fp$} T}(z)  & 0 
\end{bsmallmatrix} $   &\xrowht[()]{15pt}   $(I_m\otimes F_2,I_n \otimes F_2)$ \\ \cline{2-5}

                   & \xrowht[()]{45pt}  \makecell[c]{$
\diag(J_{m}(\lambda), -J_{m}(\lambda)^\tp)$
\vspace*{5pt} \\ 
$\diag(J_{n}(-\lambda), -J_{n}(-\lambda)^\tp)
$  
\vspace*{5pt} \\
$\lambda  \ne 0$  }   &\xrowht[()]{30pt} $\left( \begin{bsmallmatrix}
0 & 0 \\
\ch{_{$\fp$} S}(z) & 0 \\
0 & S^{\fp}(w)  \\
0 & 0
\end{bsmallmatrix},  
\begin{bsmallmatrix}
-\ch{_{$\fp$} S}(\hat{w}) & 0 & 0 & 0 &  \\
0 & 0  & 0 & -S^{\fp}(\hat{v})
\end{bsmallmatrix}
\right)$      &        &\xrowht[()]{15pt}   $(I_m\otimes F_2,I_n \otimes F_2)$ \\ \cline{2-5}

                   & \xrowht[()]{25pt}  
\makecell[c]{
$\diag(J_{2m+1}(0), -J_{2m+1}(0)^\tp)$
\vspace*{5pt} \\
$\diag(J_{2n+1}(0), -J_{2n+1}(0)^\tp)$}
   &\xrowht[()]{30pt} $\left( \begin{bsmallmatrix}
0 & 0 \\
\ch{_{$\fp$} S}(z) & T_{\fp}(w) \\
\ch{^{$\fp$} T}(u) & S^{\fp}(v)  \\
0 & 0
\end{bsmallmatrix},  
\begin{bsmallmatrix}
-\ch{_{$\fp$} S}(\hat{v}) & 0 & 0 & T_{\fp}(w) &  \\
\ch{^{$\fp$} T}(u) & 0  & 0 & -S^{\fp}(\hat{z})
\end{bsmallmatrix}
\right)$    &    $I_{4m+2} + \frac{1}{2}\diag(J_{2m+1}(0)^2, (J_{2m+1}(0)^\tp)^2)
+ \begin{bsmallmatrix}
\ch{_{$\fp$} S}(z) & T_{\fp}(w)  \\
\ch{^{$\fp$} T}(u) & -S^{\fp}(\hat{z})
\end{bsmallmatrix}$    &\xrowht[()]{15pt}   $(I_{2m+1}\otimes F_2,I_{2n+1} \otimes F_2)$ \\ \cline{2-5} \hline

\multirow{5}{*}{3} 
& \xrowht[()]{20pt} \makecell[c]{$\left(
J_{m}(\lambda), J_{n}(-\lambda)
\right)$ \vspace*{5pt} \\ $\operatorname{Re}(\lambda) = 0$ }    &\xrowht[()]{25pt}  $\left(
\begin{bsmallmatrix}
0 \\ \ch{_{$\fp$} S}(z)
\end{bsmallmatrix}, \kappa \kappa' (-\ci)^{m+n} \begin{bsmallmatrix}
\ch{_{$\fp$} S}(\overline{z}) & 0
\end{bsmallmatrix} \right) $   &  \makecell[c]{$I_m + \frac{1}{2} J_m(0)^2 + \ch{_{$\fp$} S}(z)$ \vspace*{5pt} \\ $z = (-1)^m \overline{z}$}   &  $(\kappa \ci^{m-1}F_{m}, \kappa' \ci^{n-1} F_{n})$   \\ \cline{2-5} 

                   & \xrowht[()]{45pt}   
\makecell[c]{$
\diag(J_{m}(\lambda), -J_{m}(\lambda)^\ast)$ 
\vspace*{5pt} \\
$\diag(J_{n}(\overline{\lambda}), -J_{n}(\overline{\lambda})^\ast)
$ 
\vspace*{5pt} \\
$\operatorname{Re}(\lambda) > 0$}    &  
$ \left(
\begin{bsmallmatrix}
0 & 0 \\
0 &  T_{\fp}(w) \\
\ch{^{$\fp$} T}(z) & 0 \\
0 & 0
\end{bsmallmatrix}, 
-\begin{bsmallmatrix}
0 & 0 & 0 & T_{\fp}(\overline{w})  \\ 
\ch{^{$\fp$} T} (\overline{z}) &  0  & 0 & 0
\end{bsmallmatrix}  \right)$   & \makecell[c]{$
I_{2m} + 
\frac{1}{2} \diag(J_m(\lambda)^2, (J_m(\lambda)^\ast)^2 )
+
\begin{bsmallmatrix}
0 &  T_{\fp}(w) \\
\ch{_{$\fp$} T}(z) & 0 \\
\end{bsmallmatrix}$ \vspace*{5pt} \\$\lambda$: real,\quad  $z,w$: pure imaginary}  &\xrowht[()]{15pt}  $(I_m \otimes H_2, I_{n}\otimes H_2)$    \\  \cline{2-5} 

& \xrowht[()]{20pt} \makecell[c]{$\left(
J_{m}(\lambda), J_{m}(\lambda)
\right)$ \vspace*{5pt} \\ $\operatorname{Re}(\lambda) = 0, \lambda \ne 0$ }    &\xrowht[()]{25pt}  $\left(
0, 0 \right) $   &  $I_m + \frac{1}{2} J_m(\lambda)^2 $   &  $(\kappa \ci^{m-1}F_{m}, \kappa' \ci^{m-1} F_{m})$   \\ \cline{2-5} 

\hline

\multirow{5}{*}{4} 
&$\left(
J_{2m+1}(0), J_{2n+1}(0)
\right)$     &\xrowht[()]{25pt}  $\left(
\begin{bsmallmatrix}
0 \\ \ch{_{$\fp$} S}(z)
\end{bsmallmatrix}, \kappa \kappa' (-1)^{m+n+1}\begin{bsmallmatrix}
\ch{_{$\fp$} S}(z) & 0
\end{bsmallmatrix} \right) $   &   $I_{2m+1} + \frac{1}{2} J_{2m+1}(0)^2$   &  $(\kappa (-1)^m F_{2m+1}, \kappa' (-1)^n F_{2n+1})$   \\ \cline{2-5} 

                   & \xrowht[()]{15pt}   $
\left(
J_{2m+1}(0),\diag(J_{2n}(0), -J_{2n}(0)^\tp)
\right)$    &\xrowht[()]{60pt}   \makecell[c]{$ \left(
\begin{bsmallmatrix}
0 & 0 \\
\ch{_{$\fp$} S}(z) & T_{\fp}(w)
\end{bsmallmatrix}, \kappa (-1)^{m+1} \begin{bsmallmatrix}
\ch{_{$\fp$} S}(w)  & 0 \\ 
\ch{^{$\fp$} T} (\hat{z}) &  0 
\end{bsmallmatrix}  \right)$ \vspace*{5pt} \\ 
$\left(
\begin{bsmallmatrix}
\ch{_{$\fp$} S}(z) & 0 & 0  & T_{\fp}(w)
\end{bsmallmatrix}, 
\kappa (-1)^{m+1} \begin{bsmallmatrix}
0 \\
\ch{_{$\fp$} S}(w) \\ 
\ch{^{$\fp$} T}(\hat{z}) \\
0
\end{bsmallmatrix}  \right)$ }  &  &\xrowht[()]{15pt}  $(\kappa (-1)^m F_{2m+1},I_{2n}\otimes H_2)$    \\  \cline{2-5} 

                   & \xrowht[()]{45pt}  \makecell[c]{$
\diag(J_{m}(\lambda), -J_{m}(\lambda)^\tp)$
\vspace*{5pt} \\
$\diag(J_{n}(\lambda), -J_{n}(\lambda)^\tp)
$  
\vspace*{5pt} \\
$\lambda > 0$  }   &\xrowht[()]{30pt} $\left( \begin{bsmallmatrix}
0 & 0 \\
0 & T_{\fp}(w) \\
\ch{^{$\fp$} T}(z)  & 0 \\
0 & 0
\end{bsmallmatrix},  
-\begin{bsmallmatrix}
0 & 0 & 0 & T_{\fp}(w) &  \\
\ch{^{$\fp$} T} (z)  & 0  & 0 & 0
\end{bsmallmatrix}
\right)$    &    
$I_{2m} +  \frac{1}{2} \diag(J_m(\lambda)^2, (J_m(\lambda)^\tp)^2)$   &\xrowht[()]{15pt}   $(I_m\otimes H_2,I_n \otimes H_2)$ \\ \cline{2-5} 

                   & \xrowht[()]{35pt}  \makecell[c]{$
\diag(J_{2m}(0), -J_{2m}(0)^\tp)$
\vspace*{5pt} \\
$\diag(J_{2n}(0), -J_{2n}(0)^\tp)
$}  &\xrowht[()]{30pt} $\left( \begin{bsmallmatrix}
0 & 0 \\
\ch{_{$\fp$} S}(z) & T_{\fp}(w) \\
\ch{^{$\fp$} T}(u) & S^{\fp}(v)  \\
0 & 0
\end{bsmallmatrix},  
-\begin{bsmallmatrix}
\ch{_{$\fp$} S}(\hat{v}) & 0 & 0 & T_{\fp}(w) &  \\
\ch{^{$\fp$} T}(u) & 0  & 0 & S^{\fp}(\hat{z})
\end{bsmallmatrix}
\right)$      &   $I_{4m} + \frac{1}{2} \diag(J_{2m}(0)^2, (J_{2m}(0)^\tp)^2 )  + \begin{bsmallmatrix}
\ch{_{$\fp$} S}(z) & 0  \\
0 & -S^{\fp}(\hat{z})  \\
\end{bsmallmatrix} $     &\xrowht[()]{15pt}   $(I_m\otimes H_2,I_n \otimes H_2)$ \\ \cline{2-5}

                   & \xrowht[()]{15pt}  $
\left(
J_{m}\left( \begin{bmatrix}
0 & b \\
-b & 0
\end{bmatrix}\right),
J_{n}\left( \begin{bmatrix}
0 & b \\
-b & 0
\end{bmatrix}\right)
\right), b > 0$ \ &\xrowht[()]{30pt} 
\makecell[c]{
$\left( 
\begin{bsmallmatrix}
0  \\
\ch{_{$\fp$} S}(Z)
\end{bsmallmatrix},  
-\kappa \kappa' \begin{bsmallmatrix}
\ch{_{$\fp$} S}( (F_2^{m-1} Z F_2^{n-1})^\tp ) & 0 
\end{bsmallmatrix}
\right)$ \vspace*{5pt}
\\ 
$Z_p = \begin{bsmallmatrix}
x_p & y_p \\
y_p & -x_p
\end{bsmallmatrix}, 1\le p \le n
$
}      & \xrowht[()]{20pt}  $I_{2m} + \frac{1}{2} J_m\left(\begin{bsmallmatrix}
0 & b \\
-b & 0
\end{bsmallmatrix}
\right)^2$   &   $(\kappa F_2^{m-1} \otimes F_m,\kappa' F_2^{n-1} \otimes F_n)$ \\ \cline{2-5}

                   & \xrowht[()]{80pt} \makecell[c]{  $                 
\diag\left(
J_{m}\left( \begin{bmatrix}
a & b \\
-b & a
\end{bmatrix}\right),
-J_{m}\left( \begin{bmatrix}
a & b \\
-b & a
\end{bmatrix}\right)^\tp \right)$ \vspace*{5pt} \\
$\diag\left(
J_{n}\left( \begin{bmatrix}
a & b \\
-b & a
\end{bmatrix}\right),
-J_{n}\left( \begin{bmatrix}
a & b \\
-b & a
\end{bmatrix}\right)^\tp \right)$
\vspace*{5pt} \\
$a,b > 0$}  &\xrowht[()]{30pt} 
\makecell[c]{
$\left( \begin{bsmallmatrix}
0  & 0 \\
0 & T_{\fp} (W) \\
\ch{^{$\fp$} T} (U) & 0 \\
0  & 0 
\end{bsmallmatrix},  
-\begin{bsmallmatrix}
0  & 0 & 0 & T_{\fp} (W^\tp)\\
\ch{^{$\fp$} T} (U^\tp)  & 0  & 0 & 0 
\end{bsmallmatrix}
\right)$
\vspace*{5pt} \\
$U_p = \begin{bsmallmatrix}
x_p & y_p \\
y_p & -x_p
\end{bsmallmatrix}, W_p = \begin{bsmallmatrix}
c_p & d_p \\
d_p & -c_p
\end{bsmallmatrix}, 1 \le p \le n
$
}      & \xrowht[()]{20pt}  $I_{4m} + \frac{1}{2} 
\diag\left(
J_m\left(\begin{bsmallmatrix}
a & b \\
-b & a
\end{bsmallmatrix}
\right)^2,
\left( J_m\left(\begin{bsmallmatrix}
a & b \\
-b & a
\end{bsmallmatrix}
\right)^\tp
\right)^2
\right)$   &   $(I_{2m}\otimes H_2,I_{2n} \otimes H_2)$ \\ \cline{2-5}
\hline
\end{tabular}}
\caption{Candidates of $Y_{ij}$ (No.~1--No.~4)}
\label{Tab:CandidatesYij}
\end{table}
\begin{table}[!htbp] 
\scalebox{0.52}{
\begin{tabular}{|c|c|c|c|c|}
\hline
No. & $(X_i,X_j), m_i \ge m_j$ &  $(Y_{ij},Y_{ji})$ &  $Y_{ii}$ & $ (B_i,B_j)$ \\ \hline
\multirow{6}{*}{5} 
&$\left(
J_{2m}(0), J_{2n}(0)
\right)$     &\xrowht[()]{25pt}  $\left(
\begin{bsmallmatrix}
0 \\ \ch{_{$\fp$} S}(z)
\end{bsmallmatrix}, \kappa \kappa' \begin{bsmallmatrix}
\ch{_{$\fp$} S}(z) & 0
\end{bsmallmatrix} \right) $   &   $I_{2m} + \frac{1}{2} J_{2m}(0)^2 + \ch{_{$\fp$} S}(z)$   &  $(\kappa  F_{2m}, \kappa'  F_{2n})$   \\ \cline{2-5} 

                   & \xrowht[()]{15pt}   $
\left(
J_{2m}(0),\diag(J_{2n+1}(0), -J_{2n+1}(0)^\tp)
\right)$    &\xrowht[()]{30pt}   \makecell[c]{$ \left(
\begin{bsmallmatrix}
0 & 0 \\
\ch{_{$\fp$} S}(z) & T_{\fp}(w)
\end{bsmallmatrix}, 
\kappa  \begin{bsmallmatrix}
-\ch{_{$\fp$} S}(w)  & 0 \\ 
\ch{^{$\fp$} T} (\hat{z}) &  0 
\end{bsmallmatrix}  \right)$ \vspace*{5pt} \\ 
$\left(
\begin{bsmallmatrix}
\ch{_{$\fp$} S}(z) & 0 & 0  & T_{\fp}(w)
\end{bsmallmatrix}, 
\kappa  \begin{bsmallmatrix}
0 \\
-\ch{_{$\fp$} S}(w) \\ 
\ch{^{$\fp$} T}(\hat{z}) \\
0
\end{bsmallmatrix}  \right)$ }  &  &\xrowht[()]{15pt}  $(\kappa F_{2m},I_{2n+1}\otimes F_2)$    \\  \cline{2-5} 

                   & \xrowht[()]{50pt}  \makecell[c]{$
\diag(J_{m}(\lambda), -J_{m}(\lambda)^\tp)$ 
\vspace*{5pt} \\
$\diag(J_{n}(\lambda), -J_{n}(\lambda)^\tp)$ 
\vspace*{5pt} \\ 
$\lambda > 0$  }   &\xrowht[()]{30pt} $\left( \begin{bsmallmatrix}
0 & 0 \\
0 & T_{\fp}(w) \\
\ch{^{$\fp$} T}(z)  & 0 \\
0 & 0
\end{bsmallmatrix},  
\begin{bsmallmatrix}
0 & 0 & 0 & T_{\fp}(w) &  \\
\ch{^{$\fp$} T} (z)  & 0  & 0 & 0
\end{bsmallmatrix}
\right)$    &    
$I_{2m} +  \frac{1}{2}
\diag(J_m(\lambda)^2, (J_m(\lambda)^\tp)^2)
+ \begin{bsmallmatrix}
0 & T_{\fp}(w) \\
\ch{^{$\fp$} T}(z)  & 0
\end{bsmallmatrix}$   &\xrowht[()]{15pt}   $(I_m\otimes F_2,I_n \otimes F_2)$ \\ \cline{2-5}

                   & \xrowht[()]{25pt}  \makecell[c]{$
\diag(J_{2m+1}(0), -J_{2m+1}(0)^\tp)$ 
\vspace*{5pt} \\
$\diag(J_{2n+1}(0), -J_{2n+1}(0)^\tp)
$}  &\xrowht[()]{30pt} 
$\left( \begin{bsmallmatrix}
0 & 0 \\
\ch{_{$\fp$} S}(z) & T_{\fp}(w) \\
\ch{^{$\fp$} S}(u) & S^{\fp}(v)  \\
0 & 0
\end{bsmallmatrix},  
\begin{bsmallmatrix}
-\ch{_{$\fp$} S}(\hat{v}) & 0 & 0 & T_{\fp}(w) &  \\
\ch{^{$\fp$} T}(u) & 0  & 0 & -S^{\fp}(\hat{z})
\end{bsmallmatrix}
\right)$      &   $I_{4m+2} + \frac{1}{2} 
\diag(J_{2m+1}(0)^2, (J_{2m+1}(0)^\tp)^2)
 + \begin{bsmallmatrix}
\ch{_{$\fp$} S}(z) & T_{\fp}(w)  \\
\ch{^{$\fp$} T}(u) & -S^{\fp}(\hat{z})  \\
\end{bsmallmatrix} $     &\xrowht[()]{15pt}   $(I_{2m+1}\otimes F_2,I_{2n+1} \otimes F_2)$ \\ \cline{2-5}

                   & \xrowht[()]{15pt}  $
\left(
J_{m}\left( \begin{bmatrix}
0 & b \\
-b & 0
\end{bmatrix}\right),
J_{n}\left( \begin{bmatrix}
0 & b \\
-b & 0
\end{bmatrix}\right)
\right), b > 0$ \ &\xrowht[()]{30pt} 
\makecell[c]{
$\left( \begin{bsmallmatrix}
0  \\
\ch{_{$\fp$} S}(Z)
\end{bsmallmatrix},  
\kappa \kappa' \begin{bsmallmatrix}
\ch{_{$\fp$} S}( (F_2^{m} Z F_2^{n})^\tp ) & 0 
\end{bsmallmatrix}
\right)$ \vspace*{5pt} \\
$Z_p  = \begin{bsmallmatrix}
x_p & y_p \\
y_p & -x_p
\end{bsmallmatrix}, 1 \le p \le n$
}     & \xrowht[()]{20pt} $I_{2m} + \frac{1}{2} J_m(\begin{bsmallmatrix}
0 & b \\
-b & 0
\end{bsmallmatrix}
)^2 + 
\ch{_{$\fp$} S}(Z)$      &   $(\kappa F_2^{m} \otimes F_m,\kappa' F_2^{n} \otimes F_n)$ \\ \cline{2-5}

                   & \xrowht[()]{80pt} \makecell[c]{  $
\diag\left(
J_{m}\left( \begin{bmatrix}
a & b \\
-b & a
\end{bmatrix}\right),
-J_{m}\left( \begin{bmatrix}
a & b \\
-b & a
\end{bmatrix}\right)^\tp \right)$ \vspace*{5pt} \\
$\diag\left(
J_{n}\left( \begin{bmatrix}
a & b \\
-b & a
\end{bmatrix}\right),
-J_{n}\left( \begin{bmatrix}
a & b \\
-b & a
\end{bmatrix}\right)^\tp \right)
$ 
\vspace*{5pt} \\
$a,b > 0$}  &\xrowht[()]{30pt} 
\makecell[c]{
$\left( \begin{bsmallmatrix}
0  & 0 \\
0 & T_{\fp} (W) \\
\ch{^{$\fp$} T} (U) & 0 \\
0  & 0 
\end{bsmallmatrix},  
\begin{bsmallmatrix}
0  & 0 & 0 & T_{\fp} (W^\tp)\\
\ch{^{$\fp$} T} (U^\tp)  & 0  & 0 & 0 
\end{bsmallmatrix}
\right)$
\vspace*{5pt} \\
$Z_p = \begin{bsmallmatrix}
x_p & y_p \\
y_p & -x_p
\end{bsmallmatrix},  W_p = \begin{bsmallmatrix}
c_p & d_p \\
d_p & -c_p
\end{bsmallmatrix}, 1 \le p \le n
$
}      & \xrowht[()]{20pt} $I_{4m} + \frac{1}{2} \diag\left(
J_m \left( 
\begin{bsmallmatrix}
a & b \\
-b & a
\end{bsmallmatrix}
\right)^2,
\left(J_m \left( 
\begin{bsmallmatrix}
a & b \\
-b & a
\end{bsmallmatrix}\right)^\tp
\right)^2
\right)
$    &   $(I_{2m}\otimes F_2,I_{2n} \otimes F_2)$ \\ \cline{2-5}\hline

\multirow{6}{*}{6} 
&$\left(
J_{m}\left(\begin{bsmallmatrix}
0 & 0 \\
0 & 0
\end{bsmallmatrix}
\right), J_{n}\left(\begin{bsmallmatrix}
0 & 0 \\
0 & 0
\end{bsmallmatrix}\right)
\right)$     &\xrowht[()]{25pt}  $\left(
\begin{bsmallmatrix}
0 \\ \ch{_{$\fp$} S}(Z)
\end{bsmallmatrix}, -
\kappa^{m} {\kappa'}^n\begin{bsmallmatrix}
\ch{_{$\fp$} S}( (F_2^{m-1}ZF_2^{n-1})^\ast ) & 0
\end{bsmallmatrix} \right) $   &   \makecell[c]{$I_m + \frac{1}{2} J_m(0)^2 + \ch{_{$\fp$} S}(Z)$ \vspace*{5pt} \\  $-
\kappa^{m+n} (F_2^{m-1} Z F_2^{m-1})^\ast = Z$ }  &  $(\kappa^m F_2^{m-1} \otimes  F_{m}, {\kappa'}^n F_2^{n-1}  \otimes F_{n})$   \\ \cline{2-5}

                   & \xrowht[()]{15pt}  $
\left(
J_{m}\left( \begin{bmatrix}
0 & b \\
-b & 0
\end{bmatrix}\right),
J_{n}\left( \begin{bmatrix}
0 & b \\
-b & 0
\end{bmatrix}\right)
\right), b > 0$ \ &\xrowht[()]{30pt} 
\makecell[c]{
$\left( \begin{bsmallmatrix}
0  \\
\ch{_{$\fp$} S}(Z)
\end{bsmallmatrix},  
-\kappa \kappa' \begin{bsmallmatrix}
\ch{_{$\fp$} S}( (F_2^{m-1} Z F_2^{n-1})^\ast ) & 0 
\end{bsmallmatrix}
\right)$ \vspace*{5pt}\\
$Z_p = \begin{bsmallmatrix} 
x_p & y_p \\
y_p & -x_p
\end{bsmallmatrix}, 1 \le p \le n
$}      & \xrowht[()]{25pt}  $I_{2m} + \frac{1}{2} J_m\left(
\begin{bsmallmatrix}
0 & b \\ 
-b & 0
\end{bsmallmatrix}
\right)^2 
$     &   $(\kappa F_2^{m-1} \otimes F_m,\kappa' F_2^{n-1} \otimes F_n)$ \\ \cline{2-5}

                   & \xrowht[()]{80pt}  \makecell[c]{$
\diag\left( 
J_{m}\left(\begin{bmatrix}
\lambda & 0 \\
0 & \overline{\lambda}
\end{bmatrix}\right), 
-J_{m}\left(\begin{bmatrix}
\lambda & 0 \\
0 & \overline{\lambda}
\end{bmatrix}\right)^\ast\right)$
\vspace*{5pt} \\
$\diag\left( 
J_{n}\left(\begin{bmatrix}
\lambda & 0 \\
0 & \overline{\lambda}
\end{bmatrix}\right), 
-J_{n}\left(\begin{bmatrix}
\lambda & 0 \\
0 & \overline{\lambda}
\end{bmatrix}\right)^\ast\right)
$ \vspace*{5pt} \\ 
$\lambda \in \mathbb{C}, \operatorname{Re}(\lambda) > 0, \operatorname{Im}(\lambda) \ge 0$  }   &\xrowht[()]{30pt} 
\makecell[c]{
$\left( \begin{bsmallmatrix}
0 & 0 \\
0 & T_{\fp}(W) \\
\ch{^{$\fp$} T}(Z)  & 0 \\
0 & 0
\end{bsmallmatrix},  
-\begin{bsmallmatrix}
0 & 0 & 0 & T_{\fp}(W^\ast) &  \\
\ch{^{$\fp$} T} (Z^\ast)  & 0  & 0 & 0
\end{bsmallmatrix}
\right)$  \vspace*{5pt} \\
$\operatorname{Im}(\lambda) > 0$: $Z_p = \begin{bsmallmatrix}
x_p \qj & y_p \\
z_p & w_p \qj
\end{bsmallmatrix}, W_p = \begin{bsmallmatrix}
c_p \qj & d_p \\
e_p & f_p \qj
\end{bsmallmatrix}$ \vspace*{5pt} \\ $x_p,y_p,z_p,w_p,c_p,d_p,e_p,f_p\in\mathbb{C}, 1\le p \le n$
}   &
\makecell[c]{    
$I_{2m} +  \frac{1}{2} 
\diag\left(
J_m\left(
\begin{bsmallmatrix}
\lambda & 0\\ 
0 & \overline{\lambda}
\end{bsmallmatrix}
\right)^2,
\left( 
J_m\left(
\begin{bsmallmatrix}
\lambda & 0\\ 
0 & \overline{\lambda}
\end{bsmallmatrix}
\right)^\ast
\right)^2
\right)
 + \begin{bsmallmatrix}
0 & T_{\fp}(W) \\
\ch{^{$\fp$} T}(Z)  & 0
\end{bsmallmatrix}$ \vspace*{5pt} \\ 
$Z^\ast = -Z, W^\ast = -W$
}   &\xrowht[()]{15pt}   $(I_{2m} \otimes H_2,I_{2n} \otimes H_2)$ \\ \cline{2-5} \hline

\multirow{6}{*}{7} 
&$\left(
J_{m}\left(\begin{bsmallmatrix}
0 & 0 \\
0 & 0
\end{bsmallmatrix}\right)
, J_{n}\left(
\begin{bsmallmatrix}
0 & 0 \\
0 & 0
\end{bsmallmatrix}
\right)
\right)$     &\xrowht[()]{25pt}  $\left(
\begin{bsmallmatrix}
0 \\ \ch{_{$\fp$} S}(Z)
\end{bsmallmatrix}, 
\kappa^{m-1} {\kappa'}^{n-1}\begin{bsmallmatrix}
\ch{_{$\fp$} S}( (F_2^{m}ZF_2^{n})^\ast ) & 0
\end{bsmallmatrix} \right) $   &   \makecell[c]{$I_{2m} + \frac{1}{2} J_m\left(
\begin{bsmallmatrix}
0 & 0 \\
0 &0
\end{bsmallmatrix}
\right)^2 + \ch{_{$\fp$} S}(Z)$ \vspace*{5pt} \\  $ (F_2^{m} Z F_2^{m})^\ast = Z$ }  &  $(\kappa^{m-1} F_2^{m}  \otimes F_{m}, {\kappa'}^{n-1} F_2^{n}\otimes  F_{n})$   \\ \cline{2-5}

                   & \xrowht[()]{15pt}  $
\left(
J_{m}\left( \begin{bmatrix}
0 & b \\
-b & 0
\end{bmatrix}\right),
J_{n}\left( \begin{bmatrix}
0 & b \\
-b & 0
\end{bmatrix}\right)
\right), b > 0$ \ &\xrowht[()]{30pt} 
\makecell[c]{
$\left( \begin{bsmallmatrix}
0  \\
\ch{_{$\fp$} S}(Z)
\end{bsmallmatrix},  
\kappa \kappa' \begin{bsmallmatrix}
\ch{_{$\fp$} S}( (F_2^{m} Z F_2^{n})^\ast ) & 0 
\end{bsmallmatrix}
\right)$  \vspace*{5pt}  \\
$Z_p = \begin{bsmallmatrix} 
x_p & y_p \\
y_p & -x_p
\end{bsmallmatrix}, 1 \le p \le n$
}& \xrowht[()]{25pt} \makecell[c]{  $I_{2m} + 
\frac{1}{2} J_m\left(
\begin{bsmallmatrix}
0 & b \\
-b &0
\end{bsmallmatrix}
\right)^2
+ \ch{_{$\fp$} S}(Z)$ \vspace*{5pt} \\ $(F_2^{m} Z F_2^{m-1})^\ast = Z$ }    &   $(\kappa F_2^{m} \otimes F_m,\kappa' F_2^{n} \otimes F_n)$ \\ \cline{2-5}

                   & \xrowht[()]{80pt}  \makecell[c]{$
\diag\left( 
J_{m}\left(\begin{bmatrix}
\lambda & 0 \\
0 & \overline{\lambda}
\end{bmatrix}\right), 
-J_{m}\left(\begin{bmatrix}
\lambda & 0 \\
0 & \overline{\lambda}
\end{bmatrix}\right)^\ast\right)$
\vspace*{5pt}
\\
$\diag\left( 
J_{n}\left(\begin{bmatrix}
\lambda & 0 \\
0 & \overline{\lambda}
\end{bmatrix}\right), 
-J_{n}\left(\begin{bmatrix}
\lambda & 0 \\
0 & \overline{\lambda}
\end{bmatrix}\right)^\ast\right)
$  
\vspace*{5pt}
\\ $\lambda \in \mathbb{C}, \operatorname{Re}(\lambda) > 0, \operatorname{Im}(\lambda) \ge 0$  }   &\xrowht[()]{30pt} 
\makecell[c]{
$\left( \begin{bsmallmatrix}
0 & 0 \\
0 & T_{\fp}(W) \\
\ch{^{$\fp$} T}(Z)  & 0 \\
0 & 0
\end{bsmallmatrix},  
\begin{bsmallmatrix}
0 & 0 & 0 & T_{\fp}(W^\ast) &  \\
\ch{^{$\fp$} T} (Z^\ast)  & 0  & 0 & 0
\end{bsmallmatrix}
\right)$ \vspace*{5pt} \\
$\operatorname{Im}(\lambda) > 0$: $Z_p = \begin{bsmallmatrix}
x_p \qj & y_p \\
z_p & w_p \qj
\end{bsmallmatrix}, W_p = \begin{bsmallmatrix}
c_p \qj & d_p \\
e_p & f_p \qj
\end{bsmallmatrix}$ \vspace*{5pt} \\ $x_p,y_p,z_p,w_p,c_p,d_p,e_p,f_p\in\mathbb{C}, 1\le p \le n
$
}    &
\makecell[c]{    
$I_{4m} +  \frac{1}{2}
\diag\left(
J_m\left(\begin{bsmallmatrix}
\lambda & 0 \\
0 & \overline{\lambda}
\end{bsmallmatrix}
\right)^2,
\left(J_m\left(\begin{bsmallmatrix}
\lambda & 0 \\
0 & \overline{\lambda}
\end{bsmallmatrix}
\right)^\ast\right)^2
\right)
 + \begin{bsmallmatrix}
0 & T_{\fp}(W) \\
\ch{^{$\fp$} T}(Z)  & 0
\end{bsmallmatrix}$ \vspace*{5pt} \\ 
$Z^\ast = Z, W^\ast = W$
}   &\xrowht[()]{15pt}   $(I_{2m} \otimes F_2,I_{2n} \otimes F_2)$ \\ \cline{2-5} 
\hline
\end{tabular}}
\caption{Candidates of $Y_{ij}$ (No.~5--No.~7)}
\label{Tab:CandidatesYijcon'd}
\end{table}
\end{lemma}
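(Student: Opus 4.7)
The plan is to reduce the statement to a systematic case-by-case analysis organized by the normal-form data of Lemma~\ref{lem:indecomposable}. By Lemma~\ref{lem:block}, each off-diagonal block $Y_{ij}$ (with $i \ne j$) satisfies the homogeneous Sylvester equation $X_i Y_{ij} + Y_{ij} X_j = 0$, the diagonal block $Y_{ii}$ satisfies the inhomogeneous equation $X_i Y_{ii} + Y_{ii} X_i = 2 X_i + X_i^3$, and the pair $(Y_{ij}, Y_{ji})$ is further constrained by the involution identity \eqref{lem:block:eq2}. Moreover, the last clause of Lemma~\ref{lem:block} already forces $Y_{ij} = 0$ whenever $0 \notin \rho(X_i) + \rho(X_j)$, so only the surviving pairs $(X_i, X_j)$---those listed in Tables~\ref{Tab:CandidatesYij} and \ref{Tab:CandidatesYijcon'd}---need to be examined.

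For each surviving off-diagonal pair the strategy is two-fold. First, solve the relevant Sylvester equation using the appropriate item of Lemma~\ref{lem:lower triangular}: items \eqref{lem:lower triangular:item1} and \eqref{lem:lower triangular:item1'} cover the complex (possibly Jordan-transposed) cases and exhibit solutions that are, after left- or right-multiplication by $H_m$ or $H_n$, lower triangular (alternating) Toeplitz, while items \eqref{lem:lower triangular:item3}, \eqref{lem:lower triangular:item5} and \eqref{lem:lower triangular:item6} take care of the real $2\times 2$-block Jordan forms and their quaternionic variants. The resulting $Y_{ij}$ is parametrized by scalar sequences $(x_1,\dots,x_p)$ assembled into the auxiliary matrices ${}_{\fp}S$, $T_{\fp}$, ${}^{\fp}T$, $S^{\fp}$ defined in the statement. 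Second, substitute this parametrization into \eqref{lem:block:eq2}; this determines $Y_{ji}$ in terms of the same parameters, up to a sign $\kappa\kappa'$ coming from the normal forms of $B_i, B_j$ and up to the action of $\sigma$ on the scalar entries. The quaternionic structural constraints (entries of type $x_p \qj$ or $w_p \qj$ in rows~6 and~7) arise precisely from this last step when $\mathbb{F} = \mathbb{H}$ and $\sigma = \ast$.

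For the diagonal blocks $Y_{ii}$, items \eqref{lem:lower triangular:item2} and \eqref{lem:lower triangular:item4} of Lemma~\ref{lem:lower triangular} yield $Y_{ii} = I_{m_i} + \tfrac{1}{2}X_i^2 + T_i$, where $T_i$ lies in the homogeneous solution space of the corresponding Sylvester equation. Applying \eqref{lem:block:eq2} with $j = i$, and using that $I + \tfrac{1}{2} X_i^2$ is already fixed by the $\sigma$-symmetry implied by $X_i \in \mathfrak{g}_{B_i}(\mathbb{F})$, the remaining constraint reduces to a linear symmetry condition on $T_i$. This forces $T_i = 0$ when $X_i$ is a single non-nilpotent Jordan block (as recorded in several rows of the tables), and otherwise cuts $T_i$ down to the explicit Toeplitz shape displayed in the $Y_{ii}$ column of Tables~\ref{Tab:CandidatesYij} and \ref{Tab:CandidatesYijcon'd}.

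The main obstacle is not any single case in isolation but the cumulative bookkeeping: for each of the roughly twenty sub-cases one must simultaneously track (i) the ambient field $\mathbb{F}$, (ii) the sign $\varepsilon$ in $B^\sigma = \varepsilon B$, (iii) the Jordan structure of $(X_i, X_j)$, (iv) the sign $\kappa$ fixed in the normal form of $B_i$, and (v) the precise way in which $\sigma$ acts on the Toeplitz entries after conjugation by $F_m$, $H_m$, or the Kronecker factor appearing in the normal form of $B_i$. Accordingly, I would organize the verification by running row by row through the seven blocks of Table~\ref{Tab:indecomposable} and defer the full symbolic computation to Appendix~\ref{append:proof lem:Yij}, as the manuscript itself already plans.
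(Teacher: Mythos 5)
Your proposal follows essentially the same route as the paper: reduce to the pairwise system via Lemma~\ref{lem:block}, solve the resulting Sylvester equations with the appropriate items of Lemma~\ref{lem:lower triangular} case by case over the seven families of Table~\ref{Tab:indecomposable}, and then use \eqref{lem:block:eq2} to determine $Y_{ji}$ from $Y_{ij}$ (with the sign $\kappa\kappa'$ from the normal forms of $B_i,B_j$) and to constrain the residual part $T_i$ of $Y_{ii}=I_{m_i}+\tfrac12 X_i^2+T_i$ --- this is exactly the computation carried out in Appendix~\ref{append:proof lem:Yij}. Two small misattributions in your bookkeeping are worth noting before you execute the cases: for a single Jordan block with $\lambda\ne 0$ the vanishing $T_i=0$ already follows from the homogeneous Sylvester equation (Lemma~\ref{lem:lower triangular}\eqref{lem:lower triangular:item2}), not from the $\sigma$-symmetry condition, and the quaternionic block shapes $\begin{bsmallmatrix} x_p\qj & y_p \\ z_p & w_p\qj\end{bsmallmatrix}$ in rows 6--7 come from the kernel of the Sylvester operator in Lemma~\ref{lem:lower triangular}\eqref{lem:lower triangular:item6}, with \eqref{lem:block:eq2} only adding the further relations such as $Z^\ast=\pm Z$.
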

\begin{proof}
The characterization of $Y_{ij}$'s in Tables~\ref{Tab:CandidatesYij} and \ref{Tab:CandidatesYijcon'd} is obtained by solving equations \eqref{lem:block:eq1} and \eqref{lem:block:eq2}, which relies on Lemma~\ref{lem:lower triangular}. We need to split the discussion with respect to the seven cases in Table~\ref{Tab:indecomposable}. This leads to a lengthy calculation and we omit the proof here for clarity. A detailed proof can be found in Appendix~\ref{append:proof lem:Yij}.
\end{proof}

\begin{theorem}[Structure theorem]\label{thm:structure}
Assume $B\in \GL_n(\mathbb{F})$  satisfies $B^\sigma = \varepsilon B$ where $\varepsilon = \pm 1$. Let $\alpha: \mathbb{R} \to G_B(\mathbb{F})$ be a quadratic  rational curve on $G_B(\mathbb{F})$ with poles at $\pm \ci$.  There exists $R\in \GL_n(\mathbb{F})$ such that
\[
\alpha(t) =R\left( \frac{t^2  I_{n} + t \diag(X_1,\dots, X_s) + (Y_{pq})_{p,q=1}^s}{t^2 + 1} \right)R^{-1}, \quad B  = R \diag(B_1,\dots, B_s) R^{\sigma},
\]
where 
\begin{enumerate}[(i)]
\item $(X_p,B_p)\in \mathbb{F}^{m_p\times m_p}\times \GL_{m_p}(\mathbb{F})$ is as in Table~\ref{Tab:indecomposable}.
\item If $0\not\in \rho(X_p) + \rho(X_q)$ then $Y_{pq} = 0$.
\item If $0 \in \rho(X_p) + \rho(X_q)$ then $Y_{pq}$ is as in Tables~\ref{Tab:CandidatesYij} and \ref{Tab:CandidatesYijcon'd}. 
\item Moreover, $\{ Y_{pq}: 1 \le p, q \le s \}$ satisfies the equation
\begin{equation}\label{thm:structure:eq1}
\sum_{r=1}^s Y_{ur} B_r Y_{vr}^\sigma =\delta_{u,v} B_u,\quad 1 \le u \le v \le s.
\end{equation}
\end{enumerate}
In particular, if $\alpha'(0)$ has distinct eigenvalues, then $Y_{pq} = 0$ if $1 \le p \ne q \le s$ and $Y_{pp}\in G_{B_p}(\mathbb{F})$, i.e.,
\begin{equation*}
Y_{pp} B_p Y_{pp}^\sigma = B_p,\quad 1 \le p \le s.
\end{equation*}
\end{theorem}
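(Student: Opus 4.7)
The plan is to assemble Lemmas~\ref{lem:invariance}, \ref{lem:indecomposable}, \ref{lem:block}, and \ref{lem:Yij} on top of the coefficient identities \eqref{eq:quadratic1}--\eqref{eq:quadratic4}. Since $\alpha$ has poles at $\pm\ci$, parametrize $\alpha(t) = (t^2 I_n + t A_1 + A_0)/(t^2+1)$; comparing coefficients in $\alpha(t) B \alpha(t)^\sigma = B$ reproduces \eqref{eq:quadratic1}--\eqref{eq:quadratic4}. In particular \eqref{eq:quadratic1} asserts $A_1 \in \mathfrak{g}_B(\mathbb{F})$, which is exactly the hypothesis that lets us invoke the classification of Lemma~\ref{lem:indecomposable}.

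First I would apply Lemma~\ref{lem:indecomposable} to produce $R \in \GL_n(\mathbb{F})$ bringing $(B, A_1)$ to block-diagonal indecomposable normal form $\bigl(\diag(B_1,\ldots,B_s),\diag(X_1,\ldots,X_s)\bigr)$ from Table~\ref{Tab:indecomposable}. Lemma~\ref{lem:invariance} then guarantees that the conjugated triple $(R A_0 R^{-1}, R A_1 R^{-1}, R B R^\sigma)$ still satisfies \eqref{eq:quadratic1}--\eqref{eq:quadratic4}, so one may work under the assumption that $A_1$ and $B$ are already in normal form and restore the outer conjugation by $R$ (together with the reparametrization $t \mapsto (t-a)/b$ that normalized the poles to $\pm \ci$ at the outset) only at the very end.

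Next, partition $A_0 = (Y_{pq})_{p,q=1}^{s}$ conformally with the block sizes $m_p$. Lemma~\ref{lem:block} translates \eqref{eq:quadratic2}--\eqref{eq:quadratic3} into the block relations \eqref{lem:block:eq1}--\eqref{lem:block:eq2}, and its final sentence immediately yields~(ii): $Y_{pq} = 0$ whenever $p \ne q$ and $0 \notin \rho(X_p) + \rho(X_q)$. For the remaining indices, with $0 \in \rho(X_p) + \rho(X_q)$ and $p \ne q$, Lemma~\ref{lem:Yij} reads off the explicit $Y_{pq}$ from Tables~\ref{Tab:CandidatesYij}--\ref{Tab:CandidatesYijcon'd}, while the diagonal blocks $Y_{pp}$ arise as the structured matrices $I_{m_p} + \tfrac12 X_p^2 + T_{pp}$ furnished by parts~(c),~(d) of Lemma~\ref{lem:lower triangular}; together these establish~(iii). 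Expanding the group equation \eqref{eq:quadratic4}, $A_0 B A_0^\sigma = B$, block-by-block produces exactly \eqref{thm:structure:eq1}, proving~(iv).

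For the closing ``in particular'' clause, I would use that $\alpha'(0) = A_1$: if its eigenvalues are distinct, then the pairing on eigenvalues induced by $A_1 \in \mathfrak{g}_B(\mathbb{F})$ places each eigenvalue pair inside a single indecomposable block $X_p$, whence $\rho(X_p) \cap (-\rho(X_q)) = \emptyset$ for $p \ne q$. Lemma~\ref{lem:block} then forces $Y_{pq} = Y_{qp} = 0$ for $p \ne q$, and with the off-diagonal blocks vanishing, \eqref{thm:structure:eq1} collapses for each $u = v = p$ to $Y_{pp} B_p Y_{pp}^\sigma = B_p$, i.e.\ $Y_{pp} \in G_{B_p}(\mathbb{F})$. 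I expect the dominant obstacle to lie in Lemma~\ref{lem:Yij} rather than here: that catalogue requires, for each admissible pair of rows of Table~\ref{Tab:indecomposable}, solving the structured Sylvester equation \eqref{lem:block:eq1} via Lemma~\ref{lem:lower triangular} and then enforcing the coupling \eqref{lem:block:eq2}. The theorem itself is essentially the bookkeeping that packages those case-by-case computations into a single structural statement.
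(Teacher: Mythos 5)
Your proposal is correct and takes essentially the same route as the paper: the paper's proof of Theorem~\ref{thm:structure} is exactly the assembly of \eqref{eq:quadratic1}--\eqref{eq:quadratic4} with Lemmas~\ref{lem:invariance}, \ref{lem:indecomposable}, \ref{lem:block} and \ref{lem:Yij}, with \eqref{thm:structure:eq1} coming from the blockwise expansion of \eqref{eq:quadratic4}. Your argument for the ``in particular'' clause (that $\alpha'(0)=A_1$, that each indecomposable block of Table~\ref{Tab:indecomposable} has spectrum closed under negation, and that distinct eigenvalues therefore force $0\notin\rho(X_p)+\rho(X_q)$ for $p\ne q$, so Lemma~\ref{lem:block} annihilates the off-diagonal blocks) is likewise the intended one.
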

\begin{proof}
This is a direct consequence of equation~\eqref{eq:quadratic4}, Lemmas~\ref{lem:invariance}, \ref{lem:indecomposable} and \ref{lem:Yij}. 
\end{proof}
\subsection{Quadratic  rational curves on unitary groups}
We notice that $G_B(\mathbb{F}) = \U_n$ (resp. $\mathfrak{g}_B(\mathbb{F}) = \mathfrak{u}_n$) for $(B,\mathbb{F},\sigma) = (I_n,\mathbb{C},\text{conjugate transpose})$. The lemma that follows is a basic fact.
\begin{lemma}[Normal form of skew Hermitian matrices]\label{lem:normal form skew Hermitian}
For each $A_1 \in \mathfrak{u}_n$, there exist $U\in \U_n$, positive integers $m_1,\dots, m_r$ and real numbers $\lambda_1 > \cdots > \lambda_r > 0$ such that 
\[
A_1 = \ci U \diag \left(
\lambda_1 I_{m_1}, -\lambda_1 I_{m_1}, \dots,  \lambda_r I_{m_r}, -\lambda_r I_{m_r}, 0,\dots, 0
\right) U^\ast.
\]
\end{lemma}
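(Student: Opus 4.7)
The plan is to reduce the statement to the spectral theorem for complex Hermitian matrices. The key observation is that $H \coloneqq -\ci A_1$ is Hermitian: from $A_1 \in \mathfrak{u}_n$ we have $A_1^\ast = -A_1$, and therefore $H^\ast = \ci A_1^\ast = -\ci A_1 = H$. The classical spectral theorem then furnishes a unitary $V \in \U_n$ together with a real diagonal matrix $D$ such that $H = V D V^\ast$, and consequently $A_1 = \ci V D V^\ast$.

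The remaining step is to rearrange $D$ into the prescribed block form by permuting the columns of $V$. Let $\lambda_1 > \cdots > \lambda_r > 0$ enumerate the distinct positive real numbers arising (up to sign) as nonzero eigenvalues of $H$, and let $m_j$ denote the associated multiplicities. Since conjugation by a permutation matrix $P$ acts on $D$ by permuting its diagonal entries, one can choose $P$ so that
\[
P^\ast D P = \diag\bigl(\lambda_1 I_{m_1},\, -\lambda_1 I_{m_1},\, \ldots,\, \lambda_r I_{m_r},\, -\lambda_r I_{m_r},\, 0,\, \ldots,\, 0\bigr).
\]
Setting $U \coloneqq V P$ preserves unitarity (as the product of two unitaries), and substituting into $A_1 = \ci V D V^\ast = \ci V P (P^\ast D P) P^\ast V^\ast = \ci U (P^\ast D P) U^\ast$ yields the claimed normal form.

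I do not expect any substantive obstacle. The argument is a packaging of the spectral theorem for complex Hermitian matrices together with an elementary reordering of the diagonalizing eigenbasis; both ingredients are classical. The only bookkeeping required is to match the $\pm \lambda_j$ blocks into the consecutive ordering demanded by the statement, which is immediate from the freedom to permute columns of the unitary $V$.
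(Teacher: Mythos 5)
Your reduction to the spectral theorem is the natural one and is surely what the authors have in mind (the paper offers no proof, calling the lemma a basic fact). The genuine problem is the final pairing step, which you dismiss as bookkeeping. The spectral theorem gives $-\ci A_1 = VDV^\ast$ with $D$ real diagonal, and a permutation can sort the diagonal entries into any order; what it cannot do is make the eigenvalue $+\lambda_j$ occur with the same multiplicity as $-\lambda_j$. Your phrase ``let $m_j$ denote the associated multiplicities'' silently assumes these two multiplicities coincide, and for a general $A_1 \in \mathfrak{u}_n$ they need not: take $A_1 = \ci I_n$, so that $-\ci A_1 = I_n$ has the single eigenvalue $1$ and no eigenvalue $-1$; then no permutation of the diagonal produces a block of the form $\diag(\lambda_1 I_{m_1}, -\lambda_1 I_{m_1}, \dots)$ with $\lambda_1 > 0$ and $m_1 \ge 1$. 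So the step ``one can choose $P$ so that $P^\ast D P$ has the prescribed form'' fails, and indeed the lemma as literally stated is false for such $A_1$.

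What your argument actually establishes is the weaker (and correct) normal form $A_1 = \ci U \diag(\mu_1 I_{k_1}, \dots, \mu_s I_{k_s}, 0, \dots, 0)U^\ast$ with distinct nonzero reals $\mu_1 > \cdots > \mu_s$ and no constraint tying the multiplicity of $\mu$ to that of $-\mu$. The symmetric pairing demanded by the statement is not a consequence of skew-Hermitianness alone; in the one place the lemma is invoked, namely the proof of Theorem~\ref{thm:classification U_n}, the extra relations \eqref{eq:quadratic1}--\eqref{eq:quadratic4} are what equalize the multiplicities of $\pm a_p\ci$ (concretely, the identity $Z_p Z_p^\ast = a_p^2(1-a_p^2/4)I_{m_p}$ together with $Z_p^\ast Z_p$ being a positive multiple of the identity forces the rectangular block $Z_p$ to be square). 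To repair matters one must either restate the lemma with the weaker diagonal conclusion and derive the pairing inside the proof of Theorem~\ref{thm:classification U_n}, or add the hypothesis that the spectrum of $A_1$ is symmetric about $0$; your proof cannot close this gap because the gap lies in the statement itself.
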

\begin{theorem}[Classification of quadratic  rational curves on $\U_n$]\label{thm:classification U_n}
If $\alpha$ is a quadratic  rational curve on $\U_n$ with poles at $\pm \ci$, then there exist $Q \in \U_n$, $2 \ge a_1 > \cdots > a_r > 0$ such that $\alpha(t) = Q \left( t^2  I_{n} + t A_1+ A_0 \right)/(t^2 + 1) Q^\ast$, where 
\begin{align*}
A_1 &= \ci \diag \left(
a_1 I_{m_1}, -a_1 I_{m_1}, \dots,  a_r I_{m_r}, -a_r I_{m_r}, 0,\dots, 0
\right), \\
A_0 &= \diag \left(
\begin{bsmallmatrix}
(1 -a_1^2/2) I_{m_1} & b_1 I_{m_1} \\
-b_1 I_{m_1} & (1 -a_1^2/2) I_{m_1}
\end{bsmallmatrix}, \dots,  
\begin{bsmallmatrix}
(1 -a_r^2/2) I_{m_r} & b_r I_{m_r} \\
-b_r I_{m_r}  & (1 -a_r^2/2) I_{m_r} 
\end{bsmallmatrix}, 
I_{n - 2 \sum_{j=1}^r m_r}
\right), \\
b_p &= a_p \sqrt{1-a_p^2/4},\quad 1 \le p \le r.
\end{align*}
In particular, every quadratic  rational curve $\alpha$ on $\U_n$ with poles at $\pm \ci$ can be written as $\alpha = Q \prod_{j=1}^r \beta_j Q^\ast$ where $Q\in \U_n$ and  
\[
\beta_j = \diag\left(
I_{2j-2},
\begin{bsmallmatrix}
\frac{t^2 + \ci f_j t + (1-f_j^2/2)}{t^2 + 1} & \frac{f_j \sqrt{1-f_j^2/4}}{t^2 + 1}  \\[2pt]
-\frac{f_j \sqrt{1-f_j^2/4}}{t^2 + 1} & \frac{t^2 - \ci f_j t + (1-f_j^2/2)}{t^2 + 1}
\end{bsmallmatrix} ,
I_{n - 2j}
\right),\quad f_j\in (0,2], \quad 1 \le j \le r.
\]
\end{theorem}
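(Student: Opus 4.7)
The plan is to exploit the $\U_n$-action provided by Lemma~\ref{lem:invariance} to successively normalize $A_1$ and then $A_0$. Write $\alpha(t)=(t^2I_n+tA_1+A_0)/(t^2+1)$; specializing \eqref{eq:quadratic1}--\eqref{eq:quadratic4} to $B=I_n$ and $\sigma=\ast$ gives the four conditions: (a) $A_1\in\mathfrak{u}_n$, (b) $A_0\in\U_n$, (c) $A_0A_1A_0=A_1$, and (d) $A_0+A_0^\ast=2I_n+A_1^2$. The first task is to apply Lemma~\ref{lem:normal form skew Hermitian} to $A_1$: replacing $\alpha$ by a $\U_n$-conjugate, we may assume
\[
A_1=\ci\diag\bigl(a_1I_{m_1},-a_1I_{m_1},\dots,a_rI_{m_r},-a_rI_{m_r},0,\dots,0\bigr)
\]
with $a_1>\cdots>a_r>0$ and $k\coloneqq n-2\sum_{p=1}^r m_p$ trailing zeros.

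From (c) and $A_0^{-1}=A_0^\ast$ one gets $A_0A_1=A_1A_0^{-1}$, and then $A_0$ commutes with $A_1^2=-\diag(a_1^2I_{2m_1},\dots,a_r^2I_{2m_r},0_k)$. Since the eigenspaces of $A_1^2$ are orthogonal and mutually distinct, $A_0$ is block-diagonal along the blocks of sizes $2m_1,\dots,2m_r,k$. On the zero-block, (d) reduces to $A_0+A_0^\ast=2I_k$, which together with unitarity forces every eigenvalue to be $1$ and hence $A_0=I_k$ on this block.

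For the $p$-th paired block of size $2m_p$, I would write $A_0^{(p)}=\begin{bsmallmatrix}P&C\\D&S\end{bsmallmatrix}$ in $m_p\times m_p$ sub-blocks. Substituting into (c) yields $P=P^\ast$, $S=S^\ast$, and $D=-C^\ast$; condition (d) then forces $P=S=(1-a_p^2/2)I_{m_p}$; and finally the unitarity of $A_0^{(p)}$ collapses to the single equation $CC^\ast=a_p^2(1-a_p^2/4)I_{m_p}$. The positive-semidefiniteness of $CC^\ast$ is precisely what forces $a_p\le 2$, and setting $b_p=a_p\sqrt{1-a_p^2/4}$ one obtains $C=b_pV_p$ for some $V_p\in\U_{m_p}$. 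A further conjugation by the $\U_n$-element $W=\bigoplus_p\diag(I_{m_p},V_p)\oplus I_k$ preserves $A_1$ (it acts as a scalar on each $m_p$-piece of $A_1$) and transforms $C$ into $b_pI_{m_p}$, which establishes the stated normal form of $A_0$.

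For the product decomposition, a permutation of coordinates splits each $2m_p\times 2m_p$ paired block into $m_p$ copies of a $2\times 2$ factor of exactly the claimed $\beta_j$ shape, and these factors commute because their supports are disjoint pairs of coordinates. Absorbing this permutation into the unitary $Q$ yields $\alpha=Q\prod_j\beta_jQ^\ast$. The main obstacle is the paired-block analysis: carefully extracting from the four matrix conditions the scalar-diagonal structure of $P$ and $S$ and recognizing that the positive-semidefiniteness of $CC^\ast$ is the algebraic source of the range $a_p\in(0,2]$. Every other step is either a direct invocation of Lemma~\ref{lem:invariance}, an application of the spectral theorem on a commuting normal family, or routine bookkeeping.
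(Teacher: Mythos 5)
Your proposal is correct, but it takes a genuinely different route from the paper. The paper's proof runs through the structure machinery: it invokes Theorem~\ref{thm:structure}, argues that positive definiteness of $R^{-1}(R^{-1})^\ast=\diag(B_1,\dots,B_s)$ forces all indecomposable blocks in Table~\ref{Tab:indecomposable} to be $1\times 1$ (so $s=n$, $R\in\U_n$), reads off the $Y_{pp}$ and $Y_{pq}$ from Table~\ref{Tab:CandidatesYij}~No.~3, and then finishes with a sorting permutation and equation~\eqref{thm:structure:eq1}. You bypass Theorem~\ref{thm:structure} and Lemma~\ref{lem:Yij} entirely: after normalizing $A_1$ by Lemma~\ref{lem:normal form skew Hermitian}, you observe that \eqref{eq:quadratic3} forces $A_0$ to commute with $A_1^2$, reduce to independent $2m_p\times 2m_p$ blocks, and solve the resulting system by hand. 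This is more elementary and self-contained for $\U_n$, at the cost of not connecting to the general classification used for the other groups. One point to repair in the block analysis: the identities $D=-C^\ast$, $P=P^\ast$, $S=S^\ast$ do \emph{not} follow from condition (c) alone as you assert ((c) only gives $P^2-CD=I$, $PC=CS$, $DP=SD$, $S^2-DC=I$; e.g.\ $C=D=0$, $P^2=I$ with $P$ non-Hermitian satisfies (c)). Rather, $D=-C^\ast$ is the off-diagonal part of (d); then comparing the $(1,1)$ blocks of (b) and (c) gives $P^2=PP^\ast$, which combined with $P+P^\ast=(2-a_p^2)I$ from (d) yields $PP^\ast=P^\ast P=(1-a_p^2/2)P$, so $P$ is normal with Hermitian multiple, hence $P=(1-a_p^2/2)I_{m_p}$ (and likewise for $S$). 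With that rerouting, everything else you write — the positivity of $CC^\ast$ as the source of $a_p\le 2$, the polar factor $V_p$ absorbed by a conjugation fixing $A_1$, and the splitting into commuting $2\times 2$ factors — goes through and matches the paper's conclusion.
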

\begin{proof}
We write 
\[
\alpha(t) =R\left( \frac{t^2  I_{n} + t \diag(X_1,\dots, X_s) + (Y_{pq})_{p,q=1}^s}{t^2 + 1} \right)R^{-1}, \quad I_n  = R \diag(c_1 B_1,\dots, c_s B_s) R^{\ast},
\]
where $R$, $(X_1,\dots, X_s)$ and $(Y_{pq})_{p,q=1}^s$ are those given in Theorem~\ref{thm:structure}. Since eigenvalues of $A_1 \in \mathfrak{u}_n$ are pure imaginary, each $(X_p,B_p)$ must has the form $(J_m(x\ci) , \kappa \ci^{m-1} F_m)$ by Lemma~\ref{lem:Yij}. We also notice that $\diag(B_1,\dots, B_s) = R^{-1} (R^{-1})^\ast$ is positive definite. This implies that each $B_p = 1$. Consequently, we obtain $s =n$, $R\in \U_n$, $X_p = x_p \ci \in  \mathbb{R} \ci$ and $B_p = \kappa_p = 1, 1 \le p \le n$. 

According to Table~\ref{Tab:CandidatesYij}~No.~3, we have 
\[
Y_{pp} = \begin{cases}
1 + \ci y_p,&~\text{if}~x_p  = 0 \\
1 - x_p^2/2,&~\text{if}~x_p  \ne 0 \\
\end{cases}, \quad
-\overline{Y}_{qp} = Y_{pq} = 
\begin{cases}
y_{pq},&~\text{if}~x_p = -x_q \\
0,&~\text{if}~x_p \ne -x_q \\
\end{cases},
\] 
where $y_p\in \mathbb{R}, y_{pq} \in \mathbb{C}$, $1\le p \le q \le n$. Lemma~\ref{lem:normal form skew Hermitian} ensures the existence of a permutation matrix $P$ such that 
\[
P \diag(x_1 \ci ,\dots, x_n \ci ) P^\tp = 
\ci \diag \left(
a_1 I_{m_1}, -a_1 I_{m_1}, \dots,  a_r I_{m_r}, -a_r I_{m_r}, 0,\dots, 0
\right).
\]
Obviously, we have $\{x_1,\dots, x_n
\} = \{a_1,\dots, a_r\}$. It is straightforward to verify that 
\[
P (Y_{pq})_{p,q=1}^n P^\tp = 
\diag \left(
\begin{bsmallmatrix}
(1 -a_1^2/2) I_{m_1} & Z_1 \\
-Z_1^\ast & (1 -a_1^2/2) I_{m_1}
\end{bsmallmatrix}, \dots,  
\begin{bsmallmatrix}
(1 -a_r^2/2) I_{m_r} & Z_r \\
-Z_r^\ast & (1 -a_r^2/2) I_{m_r} 
\end{bsmallmatrix}, 
I_{n - 2\sum_{j=1}^r m_r} + \ci D
\right),
\]
where $D$ is a real diagonal matrix. Now \eqref{thm:structure:eq1} implies $D =0$ and $Z_p Z_p^\ast = a_p^2(1-a_p^2/4) I_{m_p}, 1\le p \le r$. In particular, we must have $a_p \in (0,2]$ since $Z_p Z_p^\ast$ is positive semidefinite. We observe that $Z_p =a_p \sqrt{1-a_p^2/4} Q_p$ for some $Q_p \in \U_{m_p}$. Thus we have  
\[
\begin{bsmallmatrix}
(1- a_p^2/2) I_{m_p} & Z_p \\
-Z_p^\ast & (1- a_p^2/2) I_{m_p}
\end{bsmallmatrix} =
\begin{bsmallmatrix}
Q_p & 0 \\
0 & I_{m_p}
\end{bsmallmatrix}
 \begin{bsmallmatrix}
(1- a_p^2/2) I_{m_p} & a_p \sqrt{1-a_p^2/4} I_{m_p} \\
-a_p \sqrt{1-a_p^2/4} I_{m_p} & (1- a_p^2/2) I_{m_p}
\end{bsmallmatrix}
\begin{bsmallmatrix}
Q_p & 0 \\
0 & I_{m_p}
\end{bsmallmatrix}^\ast
\] 
and this completes the proof.
\end{proof}
\begin{corollary}[Classification of quadratic  rational curves on $\SU_n$]\label{cor:classification SU_n}
A quadratic   rational curve on $\U_n$ is also a quadratic   rational curve on $\SU_n$.
\end{corollary}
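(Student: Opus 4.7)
The plan is a direct computation: by Theorem~\ref{thm:classification U_n}, every quadratic rational curve $\alpha$ on $\U_n$ with poles at $\pm \ci$ factors as $\alpha(t) = Q \prod_{j=1}^r \beta_j(t) Q^*$, where $Q \in \U_n$ and each $\beta_j(t)$ differs from $I_n$ only in a single $2 \times 2$ block. I would show the determinant of each $\beta_j$ is identically $1$, whence $\det \alpha \equiv 1$.

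Concretely, the non-trivial $2 \times 2$ block of $\beta_j$ is
\[
B_{f_j}(t) = \frac{1}{t^2+1}\begin{bmatrix} t^2 + \ci f_j t + c_j & b_j \\ -b_j & t^2 - \ci f_j t + c_j \end{bmatrix},
\]
where $c_j = 1 - f_j^2/2$ and $b_j = f_j\sqrt{1 - f_j^2/4}$. A direct $2 \times 2$ determinant expansion yields
\[
\det B_{f_j}(t) = \frac{(t^2 + c_j)^2 + f_j^2 t^2 + b_j^2}{(t^2+1)^2}.
\]
The two elementary identities $c_j^2 + b_j^2 = (1-f_j^2/2)^2 + f_j^2(1-f_j^2/4) = 1$ and $2 c_j + f_j^2 = 2$ collapse the numerator to $t^4 + 2 t^2 + 1 = (t^2+1)^2$, so $\det B_{f_j}(t) \equiv 1$ and hence $\det \beta_j(t) \equiv 1$.

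Since $Q \in \U_n$ gives $\det Q \cdot \det Q^* = |\det Q|^2 = 1$, multiplicativity of the determinant yields $\det \alpha(t) = \prod_{j=1}^r \det \beta_j(t) \equiv 1$, so $\alpha$ takes values in $\SU_n$. For a general quadratic rational curve on $\U_n$ with poles at $x \pm y \ci$, $y \ne 0$, the affine reparametrization $t \mapsto x + y t$ employed at the start of Section~\ref{sec:quad} moves the poles to $\pm \ci$ without changing the image, reducing to the case above. The only actual computation is the polynomial identity for $\det B_{f_j}$, which is elementary; there is no real obstacle once the structure theorem is in hand.
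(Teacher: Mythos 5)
Your proof is correct and takes essentially the same route as the paper: both reduce, via Theorem~\ref{thm:classification U_n}, to checking that the determinant of each block equals the appropriate power of $t^2+1$. The only difference is that the paper states the determinant identity (in its $2m\times 2m$ block form) without verifying it, while you carry out the $2\times 2$ computation explicitly; your identities $2c_j+f_j^2=2$ and $c_j^2+b_j^2=1$ are correct, and the reparametrization reducing general poles to $\pm\ci$ matches the paper's standing convention.
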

\begin{proof}
Let $\alpha$ be a quadratic  rational curve on $\U_n$. We prove that $\det(\alpha) = 1$. Without loss of generality,  we assume that poles of $\alpha$ are $\pm \ci$. By Theorem~\ref{thm:classification U_n}, it suffices to prove 
\[
\det \left( 
\begin{bsmallmatrix}
(t^2 + \ci a t+ (1-a^2/2) ) I_m & a \sqrt{1-a^2/4} I_m \\
-a \sqrt{1-a^2/4} I_m & (t^2 - \ci a t + (1-a^2/2) ) I_m
\end{bsmallmatrix}
\right) = (t^2 + 1)^{2m},\quad a\in [0,2].
\] 
\end{proof}

\begin{example}\label{ex:U_n}
According to Theorem~\ref{thm:classification U_n}, up to a conjugation, a quadratic  rational curves on $\U_n$ and $\SU_n$ is
\[
\alpha(t) = \begin{bsmallmatrix}
\frac{t^2 + \ci at + (1-a^2/2)}{t^2 + 1} & \frac{a\sqrt{1-a^2/4}}{t^2 + 1}  \\[2pt]
-\frac{a \sqrt{1-a^2/4}}{t^2 + 1} & \frac{t^2 - \ci at + (1-a^2/2)}{t^2 + 1}
\end{bsmallmatrix} \\
\quad \text{or}\quad 
\alpha(t) = \begin{bsmallmatrix}
\frac{t^2 + \ci at + (1-a^2/2)}{t^2 + 1} & \frac{a\sqrt{1-a^2/4}}{t^2 + 1} & 0  \\[2pt]
-\frac{a \sqrt{1-a^2/4}}{t^2 + 1} & \frac{t^2 - \ci at + (1-a^2/2)}{t^2 + 1} & 0 \\
0 & 0 & 1
\end{bsmallmatrix},
\]
depending on $n = 2$ or $3$. For comparison, a quadratic  rational curve on $\U_4$ and $\SU_4$ can be written (up to a conjugation by some $R\in \U_4$) as
\begin{align*}
\alpha(t) &= 
\begin{bsmallmatrix}
\frac{t^2 + \ci at + (1-a^2/2)}{t^2 + 1} & \frac{a\sqrt{1-a^2/4}}{t^2 + 1} & 0   & 0 \\[2pt]
-\frac{a\sqrt{1-a^2/4}}{t^2 + 1}  & \frac{t^2 - \ci at + (1-a^2/2)}{t^2 + 1} & 0  & 0 \\[2pt]
0& 0 & 1 & 0 \\[2pt]
0 & 0 & 0 & 1
\end{bsmallmatrix}
\begin{bsmallmatrix}
1& 0 & 0   & 0 \\[2pt]
0  & 1 & 0  & 0 \\[2pt]
0& 0 & \frac{t^2 + \ci bt + (1-b^2/2)}{t^2 + 1} & \frac{b\sqrt{1-b^2/4}}{t^2 + 1} \\[2pt]
0 & 0 & -\frac{b\sqrt{1-b^2/4}}{t^2 + 1} & \frac{t^2 - \ci bt + (1-b^2/2)}{t^2 + 1}
\end{bsmallmatrix},
\quad a,b \in [0,2].
\end{align*}
\end{example}
\subsection{Quadratic  rational curves on real orthogonal groups}
We recall that for $(B, \mathbb{F}, \sigma)  = (I_n, \mathbb{R}, \text{transpose})$, $G_B(\mathbb{F})$ is the real orthogonal group $\O_n(\mathbb{R})$. Thus $\mathfrak{g}_B(\mathbb{F}) = \mathfrak{o}_n(\mathbb{R})$ consists of $n\times n$ skew symmetric matrices. The following normal form of skew symmetric matrices is well-known.  It can also be obtained from Table~\ref{Tab:indecomposable} (No.~4) by observing the signature of $B = I_n$ is $(n,0)$. 
\begin{lemma}[Normal form of skew symmetric matrices]\label{lem:normal form skew symmetric}
Given $A \in \mathfrak{o}_n(\mathbb{R})$, there exists $R\in \O_n(\mathbb{R})$ such that $A  = R  \diag(X_1,\dots, X_s) R^{\tp}$, where for each $1\le j \le s$, either $X_j = 0\in \mathbb{R}$ or $X_j = 
\begin{bsmallmatrix}
0 & b_j \\
-b_j & 0 
\end{bsmallmatrix}\in \mathbb{R}^{2\times 2}$ for some $b_j >0$. Moreover, we may require that
$\rho(X_i) = \rho(X_j)$ implies $\rho(X_i) = \rho(X_k)$ for each $1\le i \le k \le j \le s$.  
\end{lemma}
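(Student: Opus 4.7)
The plan is to reduce the statement to the spectral theorem for normal matrices and then realify the eigendecomposition. Observe first that $A^\tp = -A$ makes $A$ normal ($A A^\tp = A^\tp A$). Hence the complex spectral theorem produces a Hermitian-orthogonal eigendecomposition of $\mathbb{C}^n$ whose eigenvalues are purely imaginary and, by reality of $A$, occur in complex-conjugate pairs $\pm \ci b_j$ (with $b_j > 0$) together with zeros comprising $\ker A$.

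For each conjugate pair, pick a unit eigenvector $v = u + \ci w$ with $u, w \in \mathbb{R}^n$ and $Av = \ci b_j v$. Separating real and imaginary parts yields $Au = -b_j w$ and $Aw = b_j u$. Since $v^\tp A v$ is a scalar, it equals its own transpose; combined with $A^\tp = -A$ this forces $v^\tp A v = 0$, while the eigenvector equation gives $v^\tp A v = \ci b_j v^\tp v$. As $b_j > 0$, we conclude $v^\tp v = 0$, which unpacks to $\|u\| = \|w\|$ and $u^\tp w = 0$. Normalizing so that $\|u\| = \|w\| = 1$, the pair $\{u, w\}$ forms a real orthonormal basis of a two-dimensional $A$-invariant subspace on which $A$ acts as $\begin{bsmallmatrix} 0 & b_j \\ -b_j & 0 \end{bsmallmatrix}$. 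A short analogous computation using both $v_j^\ast v_k = 0$ and $v_j^\ast \bar{v}_k = 0$ shows that the real pairs arising from distinct conjugate pairs are mutually orthogonal in $\mathbb{R}^n$; together with a real orthonormal basis of $\ker A$, they concatenate into a matrix $R \in \O_n(\mathbb{R})$ with $R^\tp A R = \diag(X_1, \ldots, X_s)$ of the claimed shape.

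The ``moreover'' clause is obtained by post-composing $R$ with a suitable permutation matrix (which lies in $\O_n(\mathbb{R})$) that clusters blocks of identical spectrum into consecutive positions. The only genuinely delicate step is upgrading the complex spectral conjugator to a real orthogonal one, which is precisely what the identities $v^\tp v = 0$ and the cross-pair orthogonalities achieve. Alternatively, the statement is an immediate specialization of Lemma~\ref{lem:indecomposable} to $B = I_n$: positive-definiteness of $I_n$ (signature $(n,0)$) rules out every indecomposable form in Table~\ref{Tab:indecomposable} (No.~4) except the one-dimensional zero block with $B_j = 1$ and the two-dimensional block $\begin{bsmallmatrix} 0 & b \\ -b & 0 \end{bsmallmatrix}$ (with $b > 0$) paired with $B_j = I_2$, since these are the only choices whose associated $B_j$ is positive definite.
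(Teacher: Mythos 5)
Your proposal is correct. Note that the paper does not actually prove this lemma: it simply records it as well known and remarks that it "can also be obtained from Table~\ref{Tab:indecomposable} (No.~4) by observing the signature of $B = I_n$ is $(n,0)$" --- which is exactly your closing alternative, and your accounting of which indecomposable blocks survive positive-definiteness of $B_j$ is accurate (only the $1\times 1$ zero block with $B_j=1$ and the $2\times 2$ block $\begin{bsmallmatrix}0&b\\-b&0\end{bsmallmatrix}$ with $B_j = I_2$ remain, forcing $RR^\tp = I_n$). Your main argument is the standard realification of the spectral theorem for the normal matrix $A$, and the key identities all check out: $v^\tp A v = 0$ from skew-symmetry combined with $v^\tp A v = \ci b\, v^\tp v$ yields $v^\tp v = 0$, hence $\|u\| = \|w\|$ and $u \perp w$; and the two conditions $v_j^\ast v_k = 0$ and $v_j^\tp v_k = 0$ (the latter automatic because $\overline{v_j}$ lies in the $-\ci b_j$-eigenspace) together force all four real inner products $u_j^\tp u_k, u_j^\tp w_k, w_j^\tp u_k, w_j^\tp w_k$ to vanish. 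The only cosmetic point is that a unit eigenvector $v$ gives $\|u\|=\|w\|=1/\sqrt{2}$, so the rescaling you call "normalizing" is by $\sqrt{2}$. What your elementary route buys is independence from the heavy classification of \cite{DPW83}; what the paper's route buys is uniformity with the indefinite cases treated immediately afterwards.
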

\begin{corollary}\label{cor:normal form skew symmetric}
Let $\sigma$ be the transpose and let $B = I_n$. For each solution $(A_0,A_1) \in \mathbb{R}^{n\times n} \times \mathbb{R}^{n\times n}$ of \eqref{eq:quadratic1}--\eqref{eq:quadratic3}, there exist $R\in \O_n(\mathbb{R})$ and positive numbers $\lambda_1 > \dots >  \lambda_r$ such that 
\begin{align*}
A_1  &= R   \diag\left(
\lambda_1 \begin{bsmallmatrix}
0 & I_{m_1} \\
-I_{m_1} & 0 
\end{bsmallmatrix},\dots, \lambda_r \begin{bsmallmatrix}
0 & I_{m_r} \\
-I_{m_r} & 0 
\end{bsmallmatrix}, 0,\dots, 0
\right) R^{\tp}, \\
A_0 &= R \diag\left(
\begin{bsmallmatrix}
H_1 & G_1 \\
G_1 & -H_1
\end{bsmallmatrix} + (1 - \frac{\lambda_1^2}{2}) I_{2m_1}
,\dots, \begin{bsmallmatrix}
H_r & G_r \\
G_r & -H_r
\end{bsmallmatrix}  + (1 - \frac{\lambda_r^2}{2}) I_{2m_r},
I_{n - 2\sum_{j=1}^r m_j} + \Lambda 
\right) R^{\tp},
\end{align*}
where $\Lambda\in \mathfrak{o}_{n - 2 \sum_{j=1}^r m_j}(\mathbb{R})$, $H_p, G_p \in \mathfrak{o}_{m_j}(\mathbb{R})$ for each $1 \le p \le r$. 
\end{corollary}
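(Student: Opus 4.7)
The plan is to bring $A_1$ to its orthogonal normal form and then solve equations \eqref{eq:quadratic2}--\eqref{eq:quadratic3} as linear equations in $A_0$. Since $B=I_n$ and $\sigma$ is the transpose, \eqref{eq:quadratic1} reduces to $A_1^\tp + A_1 = 0$, so $A_1 \in \mathfrak{o}_n(\mathbb{R})$. Applying Lemma~\ref{lem:normal form skew symmetric} and then grouping the $2\times 2$ blocks sharing a common eigenvalue by a suitable permutation matrix, one finds $R \in \O_n(\mathbb{R})$ that conjugates $A_1$ into the block form asserted in the statement. Since Lemma~\ref{lem:invariance} applied to $R^\tp \in \O_n(\mathbb{R})$ (with $c=1$) preserves $B = I_n$, there is no loss of generality in assuming $A_1$ is already in this form; write $K_p \coloneqq \begin{bsmallmatrix} 0 & I_{m_p} \\ -I_{m_p} & 0 \end{bsmallmatrix}$ for brevity.

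Because $A_1^2$ is symmetric, equation \eqref{eq:quadratic2} becomes $A_0 + A_0^\tp = 2I_n + A_1^2$, yielding the unique decomposition $A_0 = I_n + \tfrac{1}{2}A_1^2 + S$ with $S \in \mathfrak{o}_n(\mathbb{R})$. Substituting this into \eqref{eq:quadratic3}, using $A_1^\tp = -A_1$ and observing that $I_n + \tfrac{1}{2}A_1^2$ commutes with $A_1$, a short calculation shows that $(A_0, A_1)$ solves \eqref{eq:quadratic1}--\eqref{eq:quadratic3} if and only if $S$ is skew-symmetric and satisfies $A_1 S + S A_1 = 0$.

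I would then partition $S = (S_{pq})$ conformally with the block structure of $A_1$. For distinct pairs $p \ne q$, the block $S_{pq}$ solves a homogeneous Sylvester equation whose coefficient matrices have disjoint spectra, since $\rho(\lambda_p K_p) = \{\pm \ci \lambda_p\}$ with $\lambda_p \ne \lambda_q > 0$ while the zero block has spectrum $\{0\}$; Sylvester's theorem thus forces $S_{pq} = 0$. For each diagonal block indexed by $\lambda_p$, I would partition $S_{pp} = \begin{bsmallmatrix}A & B \\ C & D\end{bsmallmatrix}$ into four $m_p \times m_p$ sub-blocks; the relation $K_p S_{pp} + S_{pp} K_p = 0$ combined with $S_{pp}^\tp = -S_{pp}$ forces $C = B$, $D = -A$, and $A, B \in \mathfrak{o}_{m_p}(\mathbb{R})$, producing exactly $S_{pp} = \begin{bsmallmatrix} H_p & G_p \\ G_p & -H_p \end{bsmallmatrix}$. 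For the zero block of $A_1$ the anti-commutation constraint is vacuous, so the corresponding piece of $S$ is an arbitrary $\Lambda \in \mathfrak{o}_{n - 2\sum m_j}(\mathbb{R})$. Finally, since $(\lambda_p K_p)^2 = -\lambda_p^2 I_{2m_p}$, assembling $A_0 = I_n + \tfrac{1}{2}A_1^2 + S$ recovers the asserted block form.

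I do not anticipate a serious obstacle here; once $A_1$ is in normal form the remainder is routine linear algebra. The only mildly delicate point is verifying that the anti-commutation condition, together with skew-symmetry, produces the specific $\begin{bsmallmatrix} H & G \\ G & -H \end{bsmallmatrix}$ pattern with both $H$ and $G$ skew-symmetric, which one reads off from a direct $2 \times 2$ block expansion of $K_p S_{pp} + S_{pp} K_p$.
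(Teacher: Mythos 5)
Your proof is correct. The route is substantively the same as the paper's --- both reduce the determination of $A_0$ to a block-wise Sylvester/anticommutation analysis after putting $A_1$ into its orthogonal normal form --- but you organize it differently and more self-containedly. The paper first applies Lemma~\ref{lem:normal form skew symmetric} to get $A_1$ into $1\times 1$ and $2\times 2$ blocks, then reads off the admissible blocks $Y_{ij}$ of $A_0$ from the general classification in Lemma~\ref{lem:Yij} (Table~\ref{Tab:CandidatesYij}, No.~4), and only afterwards regroups equal eigenvalues by permutation matrices. You instead perform the permutation regrouping first, and then replace the table lookup by the direct decomposition $A_0 = I_n + \tfrac{1}{2}A_1^2 + S$ with $S \in \mathfrak{o}_n(\mathbb{R})$, reducing \eqref{eq:quadratic2}--\eqref{eq:quadratic3} to the single condition $A_1 S + S A_1 = 0$; the block computation $K_p S_{pp} + S_{pp} K_p = 0$ together with $S_{pp}^\tp = -S_{pp}$ then yields the $\begin{bsmallmatrix} H_p & G_p \\ G_p & -H_p \end{bsmallmatrix}$ pattern, and the disjoint-spectrum Sylvester argument kills the off-diagonal blocks exactly as claimed. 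What your version buys is independence from the heavy general machinery of Lemma~\ref{lem:Yij}; what the paper's version buys is uniformity with the treatment of all the other $G_B(\mathbb{F})$ cases, where the same table is reused repeatedly.
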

\begin{proof}
By Lemma~\ref{lem:normal form skew symmetric}, there exists $Q \in \O_n(\mathbb{R})$ such that $Q^\tp A_1  Q=  \diag(X_1,\dots, X_s)$, where for each $1\le j \le s$, either $X_j = 0\in \mathbb{R}$ or $X_j = 
\begin{bsmallmatrix}
0 & b_j \\
-b_j & 0 
\end{bsmallmatrix}\in \mathbb{R}^{2\times 2}$ for some $b_j >0$. Moreover, we have $\rho(X_j) = \rho(X_k)$ implies $\rho(X_j) = \rho(X_l)$ for each $1\le j \le l \le k \le s$. Since $(A_0, A_1)$ is a solution of \eqref{eq:quadratic1}--\eqref{eq:quadratic3}, Lemma~\ref{lem:Yij} (No.~4 of Table~\ref{Tab:CandidatesYij}) implies that $Q^\tp A_0 Q  = (Y_{ij}) $ where for $1 \le i,j\le s$, we have
\[
Y_{ii} = \begin{cases}
1,&~\text{if}~X_i = 0, \\
(1-b^2/2) I_2&~\text{if}~X_i = 
\begin{bsmallmatrix}
0 & b \\
-b & 0 
\end{bsmallmatrix} \\
\end{cases},\quad
-Y_{ji}^\tp = Y_{ij} = \begin{cases}
\begin{bsmallmatrix}
0 & 0 
\end{bsmallmatrix}
,&~\text{if}~(X_i,X_j) = \left(0, \begin{bsmallmatrix}
0 & b \\
-b & 0 
\end{bsmallmatrix} \right) \\
\begin{bsmallmatrix}
0 \\ 0 
\end{bsmallmatrix},&~\text{if}~(X_i,X_j) = \left( 
\begin{bsmallmatrix}
0 & b \\
-b & 0 
\end{bsmallmatrix},0 \right) \\
x,&~\text{if}~(X_i,X_j) = (0,0) \\
\begin{bsmallmatrix}
0 & 0 \\
0 & 0
\end{bsmallmatrix},&~\text{if}~
(X_i,X_j) = \left( 
\begin{bsmallmatrix}
0 & b \\
-b & 0 
\end{bsmallmatrix}, 
\begin{bsmallmatrix}
0 & b' \\
-b' & 0 
\end{bsmallmatrix}
\right), b \ne b'\\
\begin{bsmallmatrix}
x & y \\
y & -x
\end{bsmallmatrix},&~\text{if}~
(X_i,X_j) = \left( 
\begin{bsmallmatrix}
0 & b \\
-b & 0 
\end{bsmallmatrix}, 
\begin{bsmallmatrix}
0 & b \\
-b & 0 
\end{bsmallmatrix}
\right)
\end{cases}.
\]

Next we observe that there exist positive integers $m_1,\dots, m_r$ and positive real numbers $\lambda_1 >\cdots > \lambda_r$ such that
\[
\rho(X_{m_{p-1} + 1}) = \cdots = \rho(X_{m_p}) = \{ \ci \lambda_p, - \ci \lambda_p\},\quad 1\le p \le r.
\]
Here we adopt the convention that $m_0 = 0$. Indeed, we have $\{\lambda_1,\dots, \lambda_r\} = \{b_1,\dots, b_s\}$. Thus, $(A_0,A_1)$ can be written as 
\begin{align*}
Q^\tp A_0 Q  &=  \diag\left(
(1- \lambda_1^2/2) I_{2m_1} + \Lambda_1,\dots, (1-\lambda_r^2/2) I_{2m_r} + \Lambda_r, I_{m_{r+1}} + \Lambda_{r+1}
\right) ,  \\
Q^\tp A_1 Q  &=   \diag\left(
\lambda_1 \begin{bsmallmatrix}
0 & 1  & \cdots & 0 & 0 \\
-1 & 0  & \cdots & 0 & 0 \\
\vdots & \vdots  & \ddots & \vdots & \vdots \\
0 & 0  & \cdots & 0 & 1 \\
0 & 0  & \cdots & -1 & 0 \\
\end{bsmallmatrix},\dots, \lambda_r \begin{bsmallmatrix}
0 & 1  & \cdots & 0 & 0 \\
-1 & 0  & \cdots & 0 & 0 \\
\vdots & \vdots  & \ddots & \vdots & \vdots \\
0 & 0  & \cdots & 0 & 1 \\
0 & 0  & \cdots & -1 & 0 \\
\end{bsmallmatrix}, 0,\dots, 0
\right).
\end{align*}
Here $\Lambda_1,\dots, \Lambda_{r+1}$ are skew symmetric matrices and $m_{r+1} = n - 2\sum_{j=1}^r m_j$. Moreover, $\Lambda_1,\dots, \Lambda_{r}$ are block matrices of which each block has the form $\begin{bsmallmatrix}
x & y  \\
y & -x \end{bsmallmatrix}$. Clearly, there are permutation matrices $P_1 \in \mathbb{R}^{m_1\times m_1},\dots, P_r \in \mathbb{R}^{m_r \times m_r}$ such that 
\[
\begin{bsmallmatrix}
P_1 & \cdots & 0 \\
\vdots & \ddots & \vdots \\
0 & \cdots & P_r
\end{bsmallmatrix}^\tp Q^\tp  A_1 Q \begin{bsmallmatrix}
P_1 & \cdots & 0 \\
\vdots & \ddots & \vdots \\
0 & \cdot & P_r
\end{bsmallmatrix} = 
\diag\left(
\lambda_1 \begin{bsmallmatrix}
0 & I_{m_1} \\
-I_{m_1} & 0 
\end{bsmallmatrix},\dots, \lambda_r \begin{bsmallmatrix}
0 & I_{m_r} \\
-I_{m_r} & 0 
\end{bsmallmatrix}, 0,\dots, 0
\right).
\]
It is straightforward to verify that $R^\tp A_0 R$ has the desired form where $R = Q \begin{bsmallmatrix}
P_1 & \cdots & 0 \\
\vdots & \ddots & \vdots \\
0 & \cdot & P_r
\end{bsmallmatrix}$.
\end{proof}

\begin{theorem}[Classification of quadratic  rational curves on $\O_n(\mathbb{R})$]\label{thm:classification O_n}
Let $\alpha: \mathbb{R} \to \O_n(\mathbb{R})$ be a quadratic  rational curve with poles at $\pm \ci$. There exist $R\in \O_n(\mathbb{R})$, $2 = \lambda_0 > \lambda_1 > \cdots > \lambda_r > 0$ and $n_0 \ge 0, n_1,\dots, n_r > 0$ such that $\alpha(t) = R (t^2 I_n + t A_1 + A_0)/(t^2 + 1) R^\tp$ where 
\begin{align*}
A_1 &= \diag\left(
\lambda_0 \begin{bsmallmatrix}
0 & I_{n_0} \\
-I_{n_0} & 0 
\end{bsmallmatrix},
\lambda_1 \begin{bsmallmatrix}
0 & I_{2 n_1} \\
-I_{2n_1} & 0 
\end{bsmallmatrix},\dots, \lambda_r \begin{bsmallmatrix}
0 & I_{2n_r} \\
-I_{2n_r} & 0 
\end{bsmallmatrix}, 0,\dots, 0
\right),\\
A_0 &= \diag\left(
- I_{2n_0},
\mu_1 \begin{bsmallmatrix}
H_1 & G_1 \\
G_1 & -H_1 
\end{bsmallmatrix} + \left(1-\lambda_1^2/2 \right) I_{4n_1},\dots, \mu_r \begin{bsmallmatrix}
H_r & G_r \\
G_r & -H_r 
\end{bsmallmatrix} + \left(1-\lambda_r^2/2\right) I_{4 n_r}, I_{n-2n_0 - 4\sum_{j=1}^r n_j}
\right),\\
H_p &= \diag\left( 
h_{p,1} \begin{bsmallmatrix}
0 & 1 \\
- & 0 
\end{bsmallmatrix},\dots, 
h_{p,n_p} \begin{bsmallmatrix}
0 & 1 \\
- & 0 
\end{bsmallmatrix} \right),\quad G_p = \diag\left( 
g_{p,1} \begin{bsmallmatrix}
0 & 1 \\
- & 0 
\end{bsmallmatrix},\dots, 
g_{p,n_p} \begin{bsmallmatrix}
0 & 1 \\
- & 0 
\end{bsmallmatrix} \right),\quad 1 \le p \le r, \\
\mu_p &= \lambda_p \sqrt{1-\lambda_p^2/4}, \quad h_{p,1}^2 + g_{p,1}^2 = \cdots  = h_{p,n_p}^2 + g_{p,n_p}^2  = 1,\quad 1 \le p \le r. 
\end{align*}

\end{theorem}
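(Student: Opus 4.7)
The plan is to take the normal form provided by Corollary~\ref{cor:normal form skew symmetric} as the starting point and to impose the only remaining condition, namely $A_0 A_0^\tp = I_n$ coming from \eqref{eq:quadratic4} applied with $B = I_n$ and $\sigma$ equal to transpose. By the corollary, up to conjugation by some $R \in \O_n(\mathbb{R})$ the pair $(A_0, A_1)$ decomposes into blocks indexed by strictly decreasing positive scalars $\lambda_1 > \cdots > \lambda_r$ together with a trailing block $I + \Lambda$ on the kernel of $A_1$, with $H_p, G_p \in \mathfrak{o}_{m_p}(\mathbb{R})$ and $\Lambda \in \mathfrak{o}_{n - 2 \sum m_j}(\mathbb{R})$. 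Since the orthogonality condition respects this block decomposition, each block can be analyzed in isolation.

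The trailing block yields $(I + \Lambda)(I + \Lambda)^\tp = I - \Lambda^2 = I$, and the identity $-\tr(\Lambda^2) = \|\Lambda\|_F^2$ forces $\Lambda = 0$. For the $\lambda_p$-indexed block, set $N_p = \begin{bsmallmatrix} H_p & G_p \\ G_p & -H_p \end{bsmallmatrix}$ and $c_p = 1 - \lambda_p^2/2$; then $N_p$ is skew-symmetric, so the orthogonality condition becomes $(c_p I + N_p)(c_p I - N_p) = c_p^2 I - N_p^2 = I_{2m_p}$, giving $N_p^2 = -\lambda_p^2(1-\lambda_p^2/4) I$. Since $N_p^2 \preceq 0$ for $N_p$ real skew-symmetric, this requires $\lambda_p \le 2$. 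Setting $\mu_p := \lambda_p \sqrt{1-\lambda_p^2/4}$ and computing $N_p^2$ block-by-block produces the two key identities
\[
H_p^2 + G_p^2 = -\mu_p^2 I_{m_p}, \qquad [H_p, G_p] = 0.
\]

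When $\lambda_p = 2$ we have $\mu_p = 0$, and nonnegativity of $\|H_p\|_F^2 = -\tr(H_p^2)$ and $\|G_p\|_F^2$ forces $H_p = G_p = 0$; the $A_0$-block collapses to $c_p I_{2m_p} = -I_{2m_p}$, which is exactly the $\lambda_0 = 2$, $n_0 = m_p$ summand in the theorem. When $\lambda_p < 2$ we have $\mu_p > 0$, and $(H_p, G_p)$ is a commuting pair of real skew-symmetric matrices with no common kernel, since any common null vector would violate $H_p^2 + G_p^2 = -\mu_p^2 I$. A simultaneous real block-diagonalization -- obtained either by spectrally decomposing the normal complex matrix $H_p + \ci G_p$ and grouping complex-conjugate eigenspaces into real $2$-planes, or iteratively by first putting $H_p$ in canonical form and then refining each of its $2$-dimensional invariant subspaces using that $G_p$ preserves them -- produces $Q_p \in \O_{m_p}(\mathbb{R})$ so that both $Q_p^\tp H_p Q_p$ and $Q_p^\tp G_p Q_p$ are block diagonal with $2 \times 2$ blocks of the form $h_{p,k} J$ and $g_{p,k} J$ respectively, where $J = \begin{bsmallmatrix} 0 & 1 \\ -1 & 0 \end{bsmallmatrix}$. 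The absence of a common kernel forces $m_p = 2 n_p$ to be even, and $H_p^2 + G_p^2 = -\mu_p^2 I$ becomes $h_{p,k}^2 + g_{p,k}^2 = \mu_p^2$; factoring out the common scalar $\mu_p$ yields the normalized constraint $h_{p,k}^2 + g_{p,k}^2 = 1$. Absorbing each $Q_p$ into $R$ and relabeling assembles the claimed form.

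The main technical obstacle is the simultaneous real block-diagonalization of the commuting pair $(H_p, G_p)$. The worry is that when the individual spectra are degenerate the choice of invariant $2$-plane for either matrix is not unique, and one must verify that the two matrices can be made to share the same $2 \times 2$ block structure. The crucial simplification in our setting is the absence of a common kernel (ensured by $\mu_p > 0$), which rules out residual $1 \times 1$ blocks and forces every joint $2 \times 2$ block to carry a genuinely rotating action of both $H_p$ and $G_p$.
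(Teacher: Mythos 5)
Your proposal is correct and follows essentially the same route as the paper: start from Corollary~\ref{cor:normal form skew symmetric}, impose $A_0 A_0^\tp = I_n$ blockwise to get $\Lambda = 0$, $H_p^2 + G_p^2 = -\mu_p^2 I_{m_p}$ and $[H_p,G_p]=0$, then simultaneously block-diagonalize the commuting skew-symmetric pair and use the no-common-kernel observation to force $m_p = 2n_p$ when $\lambda_p < 2$. Your treatment is in fact slightly more careful than the paper's in two places — you justify $\lambda_p \le 2$ explicitly from $N_p^2 \preceq 0$ and you spell out why the simultaneous real $2\times 2$ block-diagonalization exists, both of which the paper merely asserts.
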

\begin{proof}
By definition, we may parametrize $\alpha$ as $\alpha(t) = (t^2 I_n + t A_1 + A_0)/(t^2 + 1)$ for some $A_0,A_1$ satisfying \eqref{eq:quadratic1}--\eqref{eq:quadratic4}. By Corollary~\ref{cor:normal form skew symmetric}, it suffices to assume 
\begin{align*}
A_1  &= \diag\left(
\lambda_0 \begin{bsmallmatrix}
0 & I_{m_0} \\
-I_{m_0} & 0 
\end{bsmallmatrix},\dots, \lambda_r \begin{bsmallmatrix}
0 & I_{m_r} \\
-I_{m_r} & 0 
\end{bsmallmatrix}, 0,\dots, 0
\right), \\
A_0 &=  \diag\left(
\begin{bsmallmatrix}
H_0 & G_0 \\
G_0 & -H_0
\end{bsmallmatrix} + (1 - \frac{\lambda_0^2}{2}) I_{2m_0}
,\dots, \begin{bsmallmatrix}
H_r & G_r \\
G_r & -H_r
\end{bsmallmatrix}  + (1 - \frac{\lambda_r^2}{2}) I_{2m_r},
I_{n - 2\sum_{j=0}^r m_j} + \Lambda 
\right),
\end{align*}
where $\Lambda\in \mathfrak{o}_{n - 2 \sum_{j=1}^r m_j}(\mathbb{R})$, $H_p, G_p \in \mathfrak{o}_{m_j}(\mathbb{R})$ and $\lambda_p \in \mathbb{R}$ for each $0 \le p \le r$. According to \eqref{eq:quadratic4}, we have $A_0 A_0^\tp = I_n$, which implies 
\begin{align*}
\left( 
\begin{bsmallmatrix}
H_p & G_p \\
G_p & -H_p
\end{bsmallmatrix} + (1 - \frac{\lambda_p^2}{2}) I_{2m_p}
\right) \left( 
-\begin{bsmallmatrix}
H_p & G_p \\
G_p & -H_p
\end{bsmallmatrix} + (1 - \frac{\lambda_p^2}{2}) I_{2m_p}
\right) &= I_{2m_p}, \quad 0 \le p \le r, \\
(I_{n - 2\sum_{j=1}^r m_j} + \Lambda ) (I_{n - 2\sum_{j=1}^r m_j} - \Lambda) &= I_{n - 2\sum_{j=0}^r m_j}. 
\end{align*}
Thus $\Lambda = 0$ and for each $0 \le p \le r$, $H_p^2 + G_p^2 = - \mu_p^2 I_{m_p}, H_p G_p = G_p H_p$. Since $G_p$ and $H_p$ are commuting skew symmetric matrices, there exists some $R_p\in \O_{m_p}(\mathbb{R})$ such that
\begin{align*}
H_p &= R_p \diag\left( 
h_{p,1} \begin{bsmallmatrix}
0 & 1 \\
-1 & 0 
\end{bsmallmatrix},\dots, 
h_{p,n_p} \begin{bsmallmatrix}
0 & 1 \\
-1 & 0 
\end{bsmallmatrix}, \underbrace{0,\dots,0}_{m_p -2\sum_{j=1}^p n_p~\text{times}} \right)R_p^\tp, \\ 
G_p &=  R_p \diag\left( 
g_{p,1} \begin{bsmallmatrix}
0 & 1 \\
-1 & 0 
\end{bsmallmatrix},\dots, 
g_{p,n_p} \begin{bsmallmatrix}
0 & 1 \\
-1 & 0 
\end{bsmallmatrix}, \underbrace{0,\dots,0}_{m_p -2\sum_{j=1}^p n_p~\text{times}} \right) R_p^\tp.
\end{align*}
Since $H_p^2 + G_p^2 = -\mu_p^2 I_{m_p}$, we conclude that if $p \ge 1$ then $m_p = 2n_p$ and $h_{p,1}^2 + g_{p,1}^2 = \cdots  = h_{p,n_p}^2 + g_{p,n_p}^2  = \mu_p^2$, while $H_0 = G_0 = 0$.
\end{proof}

\begin{example}\label{ex:O_n}
For $n \le 3$, there is only one type of non-constant quadratic  rational curves with poles at $\pm \ci$ on $\O_n(\mathbb{R})$. In fact, up to a conjugation by some $R\in \O_n(\mathbb{R})$, such a curve can be written as $\alpha(t) =  (t^2 I_n+ t A_1 + A_0)/(t^2 + 1)$ where $A_0,A_1$ are block diagonal matrices characterized in Theorem~\ref{thm:classification O_n}. Since $n \le 3$, we must have $n_0 = 1$. This implies 
\[
\alpha(t) = \begin{bmatrix}
\frac{t^2 - 1}{t^2 + 1} & \frac{2t}{t^2 + 1} \\[2pt]
-\frac{2t}{t^2 + 1} & \frac{t^2 - 1}{t^2 + 1} 
\end{bmatrix} \quad \text{or} \quad \alpha(t) = \begin{bmatrix}
\frac{t^2 - 1}{t^2 + 1} & \frac{2t}{t^2 + 1} & 0 \\[2pt]
-\frac{2t}{t^2 + 1} & \frac{t^2 - 1}{t^2 + 1} & 0 \\[2pt]
0 & 0 & 1
\end{bmatrix},
\]
depending on $n =2$ or $3$. However, there are two families of non-constant quadratic  rational curves with poles at $\pm \ci$ on $\O_4(\mathbb{R})$. According to Theorem~\ref{thm:classification O_n}, we have 
\[
\alpha_1(t) = \begin{bsmallmatrix}
\frac{t^2 + (1-\lambda^2/2)}{t^2 + 1}  & \frac{\mu h}{t^2 + 1}  & \frac{\lambda t}{t^2 + 1} & \frac{\mu g}{t^2 + 1}  \\[2pt]
-\frac{\mu h}{t^2 + 1}   & \frac{t^2 + (1-\lambda^2/2)}{t^2 + 1}   &  -\frac{\mu g}{t^2 + 1} & \frac{\lambda t}{t^2 + 1} \\[2pt]
- \frac{\lambda t}{t^2+1 } & \frac{\mu g}{t^2 + 1} & \frac{t^2 + (1-\lambda^2/2) }{t^2 + 1}  & -\frac{\mu h}{t^2 + 1}  &  \\[2pt]
-\frac{\mu g}{t^2 + 1} & -\frac{\lambda t}{t^2 + 1} & \frac{\mu h}{t^2 + 1} & \frac{t^2 + (1-\lambda^2/2) }{t^2 + 1} 
\end{bsmallmatrix} \quad \text{and} \quad   
\alpha_2(t) =  \begin{bsmallmatrix}
\frac{t^2 - 1}{t^2 + 1} & \frac{2t}{t^2 + 1} & 0 & 0 \\[2pt]
-\frac{2t}{t^2 + 1} & \frac{t^2 - 1}{t^2 + 1} & 0 & 0 \\[2pt]
0 & 0 & 1 & 0 \\[2pt]
0 & 0 & 0 & 1
\end{bsmallmatrix},
\]
where $\lambda \in (0,2]$,  $\mu = \lambda \sqrt{1- \frac{\lambda^2}{4}}$ and $g^2 + h^2 = 1$. We notice that curves of type $\alpha_1$ consist an infinite family parametrized by $(\lambda, g, h) \in (0,2] \times \mathbb{S}^1$, while the family of type $\alpha_2$ is just a singleton. Clearly, a curve of type $\alpha_1$ is contained in the maximal normal subgroup $S \subseteq \SO_4(\mathbb{R})$ consisting of right isoclinic rotations and a curve of type $\alpha_2$ is contained in the subgroup $\SO_2(\mathbb{R}) \subseteq \SO_4(\mathbb{R})$. 
\end{example}
\subsection{Quadratic  rational curves on real indefinite orthogonal groups}
Next we consider $(B,\mathbb{F}, \sigma) = (I_{n,1},\mathbb{R}, \text{transpose})$. In this case, $G_B(\mathbb{F})$ is $\O_{n,1}(\mathbb{R})$, which is arguably the most important indefinite orthogonal group.
\begin{theorem}[Classification of quadratic  rational curves on $\O_{n,1}(\mathbb{R})$]\label{thm:classification O_n1}
Let $\alpha$ be a quadratic  rational curve on $\O_{n,1}(\mathbb{R})$. We denote 
\[
Q_3 \coloneqq \begin{bsmallmatrix}
0 & \frac{\sqrt{2}}{2} &  -\frac{\sqrt{2}}{2} \\
1 & 0 & 0 \\
0 & -\frac{\sqrt{2}}{2} & -\frac{\sqrt{2}}{2}
\end{bsmallmatrix}.
\] 
Then $\alpha$ is one of the following:
\begin{enumerate}[(i)]
\item\label{thm:classification O_n1:item1} There exist $P \in \O_{n,1}(\mathbb{R})$, an integer $0 \le r < n/2$ and a quadratic  rational curve $\beta$ on $\O_{2r}(\mathbb{R})$ with poles at $\pm \ci$ satisfying $\rank (\beta'(0)) = 2r$ such that
\[
\alpha(t) = P \begin{bsmallmatrix}
\beta(t) & 0 \\
0 &  I_{n+1-2r}
\end{bsmallmatrix} P^{-1}.
\] 
\item\label{thm:classification O_n1:item2} There exist $P \in \O_{n,1}$, $y \in \mathbb{S}^{n - 2r - 3}$, an integer $0 \le r \le  (n-3)/2$, a quadratic  rational curve $\beta$ on $\O_{2r}(\mathbb{R})$ with poles at $\pm \ci$ satisfying $\rank (\beta'(0)) = 2r$ such that
\[
\alpha(t) 
= P \begin{bsmallmatrix}
I_{n-2} & 0 \\
0 &  Q_3^\tp
\end{bsmallmatrix}
\begin{bsmallmatrix}
\beta(t) & 0 & 0 & 0  & 0\\
0  &  I_{n-2r - 2} & \frac{y}{t^2 + 1} & 0 & 0  \\
0  & 0 & 1 &  0 & 0  \\ 
0  & 0 & \frac{t}{t^2 + 1}  & 1\\
0  & \frac{y^\tp}{t^2+1} & \frac{1}{2(t^2 + 1)}  &  \frac{t}{t^2 + 1} & 1  
\end{bsmallmatrix}  
\begin{bsmallmatrix}
I_{n-2} & 0 \\
0 &  Q_3
\end{bsmallmatrix} P^{-1}.
\] 
\end{enumerate}
\end{theorem}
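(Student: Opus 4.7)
The plan is to invoke the Structure Theorem (Theorem~\ref{thm:structure}) with $(\mathbb{F},\sigma,B)=(\mathbb{R},\tp,I_{n,1})$: after a conjugation by some $R\in \GL_{n+1}(\mathbb{R})$, $\alpha$ is written as the block template whose diagonal pieces $(X_p,B_p)$ are indecomposable normal forms from row No.~4 of Table~\ref{Tab:indecomposable} and whose off-diagonals $Y_{pq}$ come from row No.~4 of Table~\ref{Tab:CandidatesYij}.  The decisive restriction is that $\diag(B_1,\dots,B_s)$ has signature $(n,1)$.  Inspecting Table~\ref{Tab:indecomposable} No.~4, the only positive-definite indecomposable blocks are the trivial $(J_1(0),1)$ of signature $(1,0)$ and the rotation $\bigl(J_1\bigl[\begin{smallmatrix}0&b\\-b&0\end{smallmatrix}\bigr],I_2\bigr)$ of signature $(2,0)$; a block with signature $(p,1)$ is exactly one of (I) $(J_1(0),-1)$, (II) $\bigl(\diag(J_1(\lambda),-J_1(\lambda)),H_2\bigr)$ with $\lambda>0$, or (III) $(J_3(0),-F_3)$.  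Option (II) is ruled out because its spectrum $\{\pm\lambda\}$ is disjoint from every other admissible spectrum, so $Y_{pp}$ decouples and must satisfy $Y_{pp}H_2Y_{pp}^\tp=H_2$ alone; substituting $Y_{pp}=(1+\lambda^2/2)I_2$ from Table~\ref{Tab:CandidatesYij} forces the contradiction $(1+\lambda^2/2)^2=1$.

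In case~(I) the $(p,p)$ equation of~\eqref{thm:structure:eq1} forces every trivial-trivial coupling between the $(J_1(0),-1)$ block and the $(J_1(0),1)$ blocks to vanish, so the negative direction becomes a trivial identity summand.  The remaining positive-definite part is a quadratic rational curve on $\O_n(\mathbb{R})$, which assembles via Theorem~\ref{thm:classification O_n} into $\beta\in \Rat_2(\O_{2r}(\mathbb{R}))$ with $\rank(\beta'(0))=2r$ plus an identity summand; combining with the negative trivial block gives $I_{n+1-2r}$ and produces the formula in~(i).

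In case~(III) the $J_3(0)$-block has spectrum $\{0\}$ and therefore couples only with the $k=n-2r-2$ trivial $(J_1(0),1)$ blocks; all rotation blocks decouple and assemble into $\beta\in \Rat_2(\O_{2r}(\mathbb{R}))$ with $\rank(\beta'(0))=2r$, as in~(I).  Reading off the $(J_3(0),J_1(0))$ coupling from Table~\ref{Tab:CandidatesYij} No.~4 row~1 with $(m,n)=(1,0)$ introduces $k$ scalar parameters $z_1,\dots,z_k$ via $Y_{sp}=(0,0,z_p)^\tp$ and $Y_{ps}=(z_p,0,0)$.  Substituting into~\eqref{thm:structure:eq1}: the $(p,p)$ equation forces all other trivial-trivial couplings to vanish, the $(p,s)$ equation is satisfied automatically, and the $(s,s)$ equation reduces to the single spherical condition $z_1^2+\cdots+z_k^2=1$, so the free data is a unit vector $y\in \mathbb{S}^{k-1}=\mathbb{S}^{n-2r-3}$.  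A direct computation verifies $Q_3^\tp(-F_3)Q_3=I_{2,1}$, so the orthogonal conjugation by $\diag(I_{n-2},Q_3)$ converts the form $\diag(I_{n-2},-F_3)$ (which the structure-theorem matrix preserves) into $I_{n,1}$; after a further $P\in \O_{n,1}(\mathbb{R})$ one obtains the template in~(ii), where the entries $\tfrac{t}{t^2+1}$ and $\tfrac{1}{2(t^2+1)}$ descend from $tJ_3(0)$ and $Y_{ss}=I_3+\tfrac{1}{2}J_3(0)^2$ respectively.

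The main obstacle is the coupled bookkeeping in case~(III): carrying through the $(u,v)=(p,p),(p,s),(s,s)$ equations with all block couplings simultaneously, and verifying that everything collapses to the single spherical condition $\|y\|=1$.  Once this normalization is settled, the remaining assembly by $P\in \O_{n,1}(\mathbb{R})$ and the orthogonal change of basis $\diag(I_{n-2},Q_3)$ are essentially cosmetic and deliver the statement.
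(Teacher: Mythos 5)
Your proposal is correct and follows essentially the same route as the paper's proof: invoke Theorem~\ref{thm:structure}, use Sylvester's law of inertia to force exactly one indecomposable block of signature $(p,1)$, enumerate that block as $(0,-1)$, $(\diag(\lambda,-\lambda),H_2)$ or $(J_3(0),-F_3)$, kill the $H_2$ case via the diagonal instance of \eqref{thm:structure:eq1}, and reduce the remaining two cases to types (i) and (ii) using the couplings from Table~\ref{Tab:CandidatesYij} and the identity $Q_3^\tp(-F_3)Q_3=I_{2,1}$. The bookkeeping you flag as the main obstacle (the $(p,p)$, $(p,s)$, $(s,s)$ equations collapsing to $\|y\|=1$) is exactly what the paper carries out, so no gap remains.
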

\begin{proof}
By Theorem~\ref{thm:structure}, there exists $R\in \GL_{n+1}(\mathbb{R})$ such that 
\[
\alpha(t) =R\left( \frac{t^2  I_{n+1} + t \diag(X_1,\dots, X_{s+1}) + (Y_{pq})_{p,q=1}^{s+1}}{t^2 + 1} \right)R^{-1}, \quad I_{n,1}  = R \diag(B_1,\dots, B_{s+1}) R^{\tp},
\]
where $(X_1,\dots, X_{s+1})$ and $(Y_{pq})_{p,q=1}^{s+1}$ are those in Table~\ref{Tab:indecomposable} and Table~\ref{Tab:CandidatesYij}~No.~4, respectively. 
By Sylvester's law of inertia, the congruence action does not change the signature of a symmetric matrix. Thus, $B_{s+1}$ has signature $(p_{s+1},q_{s+1}) = (p_{s+1},1)$ and $B_j$ has signature $(p_j, q_j) = (p_j,0)$ for $1 \le j \le s $. Furthermore, we have $(n,1) = \sum_{j=1}^{s+1} (p_j, q_j)$. 

According to Table~\ref{Tab:indecomposable}~No.~4, we have $p_j - q_j \in \{\pm 1, 0 , \pm 2\}, 1 \le j \le s+1$. This implies 
\[
p_{s+1} \in \{0,1,2,3\},\quad p_j \in \{1,2\},\quad 1 \le j \le s.
\]
A closer investigation indicates that $p_{s+1} \ne 3$. Therefore, $(p_j,q_j) = (1,0)$ or $(2,0)$ for $1 \le j \le s$. Correspondingly, $(X_j, B_j) = (0, \kappa_j)$ or $\left( \begin{bsmallmatrix}
0 & b_j \\
-b_j & 0
\end{bsmallmatrix}, \kappa_j I_2 \right)$, $b_j > 0$. We rearrange $(X_j,B_j)$'s so that $X_j = X_k, j \le k$ implies $X_j = X_l$ for any $j \le l \le k$. Moreover, we have 
\begin{enumerate}[(a)]
\item If $(p_{s+1},q_{s+1}) = (0,1)$ then $(X_{s+1}, B_{s+1}) =( 0, -1)$.
\item If $(p_{s+1},q_{s+1}) = (1,1)$ then $(X_{s+1}, B_{s+1}) = \left( \begin{bsmallmatrix}
\lambda & 0 \\
0 & -\lambda
\end{bsmallmatrix} , H_2 \right)$, $\lambda > 0$.
\item If $(p_{s+1},q_{s+1}) = (2,1)$ then $(X_{s+1}, B_{s+1}) =( J_3(0), -  F_3)$.
\end{enumerate} 
As a consequence, 
we obtain 
\[
I_{n,1} = \begin{cases}
R I_{n,1} R^\tp,&~\text{if}~(p_{s+1},q_{s+1}) = (0,1) \\
R \diag\left(I_{n-2}, H_2 \right) R^\tp,&~\text{if}~(p_{s+1},q_{s+1}) = (1,1) \\
R \diag\left(I_{n-3}, - F_3\right) R^\tp,&~\text{if}~(p_{s+1},q_{s+1}) = (2,1)
\end{cases}.
\]

Assume $(p_{s+1},q_{s+1}) = (0,1)$ and $(X_{s+1}, B_{s+1}) =( 0, -1)$. By Table~\ref{Tab:CandidatesYij}~No.~4, we have 
\[
Y_{s+1,s+1} = 1, Y_{s+1,j} = Y_{j,s+1}, Y_{j,j'} = - Y_{j',j}, Y_{j,j} = 1
\]
for $1\le  j \ne  j' \le s$ such that $X_j = X_{j'} = 0$. According to \eqref{thm:structure:eq1} of Theorem~\ref{thm:structure}, we obtain $Y_{s+1,j} = Y_{j,s+1} = Y_{j,j'} = Y_{j',j} = 0$. In this case, we have 
\[
\alpha(t) = R \begin{bsmallmatrix}
\beta(t) & 0 \\
0 &  I_{n-2r}
\end{bsmallmatrix} R^{-1}, 
\]
where $R \in O_{n,1}(\mathbb{R})$ and $\beta$ is a quadratic  rational curve on $\O_{2r}(\mathbb{R})$ with poles at $\pm \ci$ for some $r < n/2$ and $\rank (\beta'(0)) = 2r$.

Assume $(p_{s+1},q_{s+1}) = (1,1)$ and $(X_{s+1}, B_{s+1}) = \left( \begin{bsmallmatrix}
\lambda & 0 \\
0 &  -\lambda
\end{bsmallmatrix}, H_2 \right)$, $\lambda > 0$. Table~\ref{Tab:CandidatesYij}~No.~4 implies $Y_{s+1,s+1} = (1 + \lambda^2/2) I_2$. By Theorem~\ref{thm:structure}, we must have $Y_{s+1,s+1} H_2 Y_{s+1,s+1}^\tp = H_2$, which contradicts to the assumption that $\lambda > 0$. 

Assume $(p_{s+1},q_{s+1}) = (2,1)$ and $(X_{s+1}, B_{s+1}) = \left(J_3(0), -F_3 \right)$. Table~\ref{Tab:CandidatesYij}~No.~4 indicates that
\[
Y_{s+1,s+1} = \begin{bsmallmatrix}
1 & &  \\
 & 1 & \\
\frac{1}{2} & & 1
\end{bsmallmatrix}, \quad
Y_{s+1,j} = \begin{bsmallmatrix}
0  \\
0 \\
y_{s+1,j}\\
\end{bsmallmatrix},\quad
Y_{j,s+1} = \begin{bsmallmatrix}
y_{s+1,j} & 0 & 0
\end{bsmallmatrix}, \quad 
Y_{j,j'} = - Y_{j',j}, \quad  
Y_{j,j} = 1
\]
for $1\le  j \ne  j' \le s$ such that $B_j = B_{j'} = 0$. Theorem~\ref{thm:structure} implies $Y_{j,j'} = Y_{j',j} = 0$ and $\sum_{j} y_{s+1,j}^2 = 1$. Since $-F_3 = Q_3 I_{2,1} Q_3^\tp$, we obtain 
\[
\alpha(t) 
= P \begin{bsmallmatrix}
I_{n-2} & 0 \\
0 &  Q_3^\tp
\end{bsmallmatrix}
\begin{bsmallmatrix}
\beta(t) & 0 & 0 & 0  & 0\\
0  &  I_{n-2r - 2} & \frac{y}{t^2 + 1} & 0 & 0  \\
0  & 0 & 1 &  0 & 0  \\ 
0  & 0 & \frac{t}{t^2 + 1}  & 1 & 0\\
0  & \frac{y^\tp}{t^2+1} & \frac{1}{2(t^2 + 1)}  &  \frac{t}{t^2 + 1} & 1  
\end{bsmallmatrix}  
\left(
P\begin{bsmallmatrix}
I_{n-2} & 0 \\
0 &  Q_3^\tp
\end{bsmallmatrix}\right)^{-1}
\]
where $P \in \O_{n,1}$, $y \in \mathbb{S}^{n - 2r - 3}, r < (n-3)/2$ and $\beta$ is a quadratic  rational curve on $\O_{2r}(\mathbb{R})$ with poles at $\pm \ci$ and $\rank (\beta'(0)) = 2r$. 
\end{proof}
\begin{remark}
Curves of type \eqref{thm:classification O_n1:item1} in Theorem~\ref{thm:classification O_n1} are obtained by the natural inclusion $\O_{n}(\mathbb{R}) \subseteq \O_{n,1}(\mathbb{R})$.  
\end{remark}
\begin{example}\label{ex:O_n1}
All quadratic rational curves on $\O_{2,1}(\mathbb{R})$ are obtained from the inclusion $\O_{2}(\mathbb{R}) \subseteq \O_{2,1}(\mathbb{R})$.  However, for $n \ge 3$, curves of type \eqref{thm:classification O_n1:item2} in Theorem~\ref{thm:classification O_n1} appear. For instance, a curve of type~\eqref{thm:classification O_n1:item2} on $\O_{3,1}(\mathbb{R})$ has the form
\[
\alpha(t) 
= P 
\begin{bsmallmatrix}
1 & 0 & 0 & 0 \\
0& 0 & 1 &  0 \\
0 & \frac{\sqrt{2}}{2} & 0 & -\frac{\sqrt{2}}{2} \\
0 & -\frac{\sqrt{2}}{2} & 0  & -\frac{\sqrt{2}}{2}
\end{bsmallmatrix}
\begin{bsmallmatrix}
 1 & \frac{y}{t^2 + 1} & 0 & 0  \\
 0 & 1 &  0 & 0  \\ 
0 & \frac{t}{t^2 + 1}  & 1 & 0\\
\frac{y}{t^2+1} & \frac{1}{2(t^2 + 1)}  &  \frac{t}{t^2 + 1} & 1  
\end{bsmallmatrix}  
\begin{bsmallmatrix}
1 & 0 & 0 & 0 \\
0& 0 & \frac{\sqrt{2}}{2} &  -\frac{\sqrt{2}}{2} \\
0 & 1 & 0 & 0 \\
0 & 0 & -\frac{\sqrt{2}}{2} & -\frac{\sqrt{2}}{2}
\end{bsmallmatrix}
P^{-1},\quad y = \pm 1, P \in \O_{3,1}(\mathbb{R}).
\] 
We also notice that the \emph{conformal rotation}  \cite{Dorst2016construction, Kalkan2022study} on $\O_{2,1}(\mathbb{R})$ is of type~\eqref{thm:classification O_n1:item1}:
\[
\alpha(t) = \begin{bsmallmatrix}
\frac{t^2 - 1}{t^2 + 1} & -\frac{5t}{2(t^2 + 1)} & -\frac{3t}{2(t^2 + 1)} \\
\frac{5t}{2(t^2 + 1)} & \frac{8t^2 - 17}{8(t^2 + 1)} & -\frac{15}{8(t^2 + 1)} \\
-\frac{3t}{2(t^2 + 1)} & -\frac{15}{8(t^2 + 1)} & \frac{8t^2 + 17}{8(t^2 + 1)}
\end{bsmallmatrix} =  
P \begin{bsmallmatrix}
\frac{t^2-1}{t^2 + 1} & \frac{2t}{t^2 + 1} & 0 \\
-\frac{2t}{t^2 + 1}  & \frac{t^2-1}{t^2 + 1}  & 0 \\
0 & 0 & 1
\end{bsmallmatrix} P^{-1},
\]
where $P = \begin{bsmallmatrix}
0 & -1 & 0 \\
-\frac{5}{4}  & 0   &  -\frac{3}{4} \\
\frac{3}{4} & 0 & \frac{5}{4}  
\end{bsmallmatrix} \in \O_{2,1}(\mathbb{R})$. The \emph{circular translation} \cite{Hegedus2013factorization,Li2024quadratic}  on $\O_{3,1}(\mathbb{R})$ is of type~\eqref{thm:classification O_n1:item2}: 
\[
\alpha(t) = 
\begin{bsmallmatrix}
1 & 0 & \frac{t}{t^2 + 1} & -\frac{t}{t^2 + 1} \\
0 & 1 & -\frac{1}{t^2 + 1} & \frac{1}{t^2 + 1} \\
-\frac{t}{t^2 + 1} & \frac{1}{t^2 + 1} & \frac{2t^2 + 1}{2(t^2 + 1)} & \frac{1}{2(t^2 + 1)} \\
-\frac{t}{t^2 + 1} & \frac{1}{t^2 + 1} & -\frac{1}{2(t^2 + 1)} & \frac{2t^2 + 3}{2(t^2 + 1)}
\end{bsmallmatrix}
= 
P \begin{bsmallmatrix}
1 & 0 & 0 & 0 \\
0& 0 & 1 &  0 \\
0 & \frac{\sqrt{2}}{2} & 0 & -\frac{\sqrt{2}}{2} \\
0 & -\frac{\sqrt{2}}{2} & 0  & -\frac{\sqrt{2}}{2}
\end{bsmallmatrix}
\begin{bsmallmatrix}
 1 & \frac{1}{t^2 + 1} & 0 & 0  \\
 0 & 1 &  0 & 0  \\ 
0 & \frac{t}{t^2 + 1}  & 1 & 0\\
\frac{1}{t^2+1} & \frac{1}{2(t^2 + 1)}  &  \frac{t}{t^2 + 1} & 1  
\end{bsmallmatrix}  
\begin{bsmallmatrix}
1 & 0 & 0 & 0 \\
0& 0 & \frac{\sqrt{2}}{2} &  -\frac{\sqrt{2}}{2} \\
0 & 1 & 0 & 0 \\
0 & 0 & -\frac{\sqrt{2}}{2} & -\frac{\sqrt{2}}{2}
\end{bsmallmatrix} P^{-1},
\]
where $P = \begin{bsmallmatrix}
0 & 1 & 0 & 0 \\
1 & 0 & 0 & 0 \\
0 & 0 & -\frac{3\sqrt{2}}{4} & -\frac{\sqrt{2}}{4} \\
0 & 0 & -\frac{\sqrt{2}}{4}  & -\frac{3\sqrt{2}}{4}
\end{bsmallmatrix} \in \O_{3,1}(\mathbb{R})$.
\end{example}

\begin{theorem}\label{thm:classification O_n2}
Let $\alpha$ be a quadratic  rational curve on $\O_{n,2}(\mathbb{R})$ with poles at $\pm \ci$. We denote 
\begin{align*}
Q_{1,3} &\coloneqq \begin{bsmallmatrix}
0 & 0  & 1  & 0 \\
-\frac{\sqrt{2}}{2} & 0 & 0 & -\frac{\sqrt{2}}{2} \\
0 & 1 & 0 & 0 \\
\frac{\sqrt{2}}{2} & 0  & 0 & -\frac{\sqrt{2}}{2}
\end{bsmallmatrix}, \quad 
Q_{3,3} \coloneqq   \begin{bsmallmatrix}
0 & 0 & 0  & - \frac{\sqrt{2}}{2}   & -\frac{\sqrt{2}}{2} & 0 \\
1 & 0 & 0  & 0   & 0 & 0 \\
0 & 0 & 0  & \frac{\sqrt{2}}{2}   & -\frac{\sqrt{2}}{2} & 0 \\
0 & 0 & - \frac{\sqrt{2}}{2} & 0   & 0 & -\frac{\sqrt{2}}{2}  \\
0 & 1 & 0  & 0  & 0 & 0 \\
0 & 0 & \frac{\sqrt{2}}{2}   & 0   & 0 & -\frac{\sqrt{2}}{2}
\end{bsmallmatrix}, \\
Q_4 &\coloneqq \begin{bsmallmatrix}
0 & -\frac{\sqrt{2}}{2}  & -\frac{\sqrt{2}}{2}  & 0 \\
\frac{\sqrt{2}}{2} & 0 & 0 & \frac{\sqrt{2}}{2} \\
0 & -\frac{\sqrt{2}}{2} & \frac{\sqrt{2}}{2} & 0 \\
\frac{\sqrt{2}}{2} & 0  & 0 & -\frac{\sqrt{2}}{2}
\end{bsmallmatrix},\quad 
P_{m,n}  \coloneqq \begin{bsmallmatrix}
I_m & & &  & \\
 & 0 & & 1  &\\
 & & I_{n-m-1} & &\\
 & 1 &  & 0  &\\
 &   &  &  &  & 1
\end{bsmallmatrix}.
\end{align*}
Then we can find some $P\in \O_{n,2}(\mathbb{R})$ such that $ \alpha$ is one of the following:
\begin{enumerate}[(i)]
\item\label{thm:classification O_n2:item1} There exist an integer $1 \le m \le n$ and a quadratic  rational curve $\beta_1$ (resp. $\beta_2$) on $\O_{m,1}(\mathbb{R})$ (resp. $\O_{n-m,1}(\mathbb{R})$) with poles at $\pm \ci$, such that
\[
\alpha(t) = 
P P_{m,n} 
\begin{bsmallmatrix}
\beta_1(t) & 0 \\
0 & \beta_2(t)
\end{bsmallmatrix} 
P_{m,n} P^{-1}.
\]
\item \label{thm:classification O_n2:item2} There exist a quadratic  rational curve $\beta_1$ (resp. $\beta_2$) on $\O_{n}(\mathbb{R})$ (resp. $\O_{2}(\mathbb{R})$) with poles at $\pm 1$, such that
\[
\alpha(t) = 
P   
\begin{bsmallmatrix}
\beta_1(t) & 0 \\
0 & \beta_2(t)
\end{bsmallmatrix}
P^{-1}.
\]

\item\label{thm:classification O_n2:item3} There exist a quadratic  rational curve $\beta(t)$ on $\O_{n-2}(\mathbb{R})$ with poles at $\pm \ci$ and numbers $\delta \in \{-1,1\}$, $\lambda \ge 0$, $a,b\in \mathbb{R}$ satisfying $a^2 + b^2 = \lambda^2$ such that 
\[
\alpha(t) = P 
\begin{bsmallmatrix}
\beta(t) & 0 & 0 & 0 & 0  \\
 0 & 1 & \frac{\lambda}{t^2+1}   & \frac{a}{t^2 + 1} & \frac{b}{t^2 + 1} \\
  0 & -\frac{\lambda}{t^2+1} & 1  & \frac{\delta b}{t^2 + 1} & \frac{-\delta a}{t^2 + 1} \\
 0 & \frac{a}{t^2 + 1} & \frac{\delta b}{t^2 + 1}    & 1 & \frac{\delta \lambda}{t^2 + 1} \\ 
 0 &  \frac{b}{t^2 + 1} & -\frac{\delta a}{t^2 + 1}  & -\frac{\delta \lambda}{t^2 + 1} & 1 \\
\end{bsmallmatrix} 
P^{-1}.
\]
\item\label{thm:classification O_n2:item4} There exist integers $m\ge 1, r \ge 0$, a column vector $w \in \mathbb{S}^{m-1}$ and a quadratic  rational curve $\beta(t)$ on $\O_{2r}(\mathbb{R})$ with poles at $\pm \ci$ satisfying 
$ m + 2r = n -2, \rank (\beta'(0)) = 2r $
such that
\[
\alpha(t) = P 
\begin{bsmallmatrix}
I_{n-2} & 0 \\
0 & Q_{1,3}^\tp
\end{bsmallmatrix}
\begin{bsmallmatrix}
\beta(t) & 0 & 0  & 0 & 0 & 0\\
0 &I_m & 0 & \frac{w}{t^2 + 1} & 0 & 0 \\
0 &0 & 1 & 0 & 0 & 0 \\
0 &0 & 0 & 1 & 0 & 0 \\
0 &0 & 0 & 0 & 1 & 0 \\
0 & \frac{w^\tp}{t^2 + 1} & 0 & \frac{2 t + 1}{2 (t^2 + 1) } & 0 & 1 
\end{bsmallmatrix}
\begin{bsmallmatrix}
I_{n-2} & 0 \\
0 & Q_{1,3}
\end{bsmallmatrix}
P^{-1}
\]
\item\label{thm:classification O_n2:item5} There exist integers $m,r\ge 0$, a quadratic  rational curve $\beta(t)$ on $\O_{2r}(\mathbb{R})$ with poles at $\pm \ci$, column vectors $x,y\in \mathbb{R}^m$ and numbers $z_1\in \mathbb{R}$, $z_2\in [-1,1]$ satisfying
\[
2r + m = n - 4,\quad  \rank(\beta'(0)) = 2r,\quad  x^\tp y = 0, \quad x^\tp x = y^\tp y = 1 - z_2^2,
\]
such that 
\[
\alpha(t) = P \begin{bsmallmatrix}
I_{n-4} & 0 \\ 
0 & Q_{3,3}^\tp
\end{bsmallmatrix}
\begin{bsmallmatrix}
\beta(t) & 0 & 0 &  0 & 0  & 0 & 0 & 0 \\
0 & I_m & \frac{x}{t^2 + 1} & 0 & 0 & \frac{y}{t^2 + 1} & 0 & 0 \\
0 & 0 & 1 & 0 & 0 & 0 & 0 & 0 \\
0 & 0 & 0& 1 & 0 & \frac{z_2}{t^2 + 1} & 0 & 0 \\
0 & \frac{x^\tp}{t^2 + 1}  & \frac{2t + 1}{2(t^2+1)} & 0  & 1 & \frac{z_1}{t^2 + 1} & -\frac{z_2}{t^2 + 1} & 0 \\
0 & 0 & 0 & 0 & 0  & 1 & 0 & 0 \\
0 & 0 & -\frac{z_2}{t^2 + 1} & 0 & 0  & 0 & 1 & 0 \\
0 &\frac{y^\tp}{t^2 + 1} & -\frac{z_1}{t^2 + 1} & \frac{z_2}{t^2 + 1} & 0  & \frac{2t + 1}{2(t^2+1)} & 0 & 1 \\
\end{bsmallmatrix} 
\begin{bsmallmatrix}
I_{n-4} & 0 \\ 
0 & Q_{3,3}
\end{bsmallmatrix} P^{-1}
\]

\item\label{thm:classification O_n2:item6} There exist a quadratic  rational curve $\beta(t)$ on $\O_{n-2}(\mathbb{R})$ with poles at $\pm \ci$ and numbers $b > 2, x,y\in \mathbb{R}$ satisfying $x^2 + y^2 = b^2(b^2/4 - 1)$
such that 
\[
\alpha(t) = P \begin{bsmallmatrix}
\beta(t) & 0 & 0 & 0 & 0 \\
0 & \frac{2t^2 + (2 - b^2)}{2(t^2+1)}  & \frac{bt}{t^2+1}  & \frac{x}{t^2 + 1} & \frac{y}{t^2 + 1} \\
0 & -\frac{bt}{t^2+1} & \frac{2t^2 + (2 - b^2)}{2(t^2+1)}  & \frac{y}{t^2 + 1} & -\frac{x}{t^2 + 1} \\
0 & \frac{x}{t^2 + 1} & \frac{y}{t^2 + 1} & \frac{2t^2 + (2 - b^2)}{2(t^2+1)}  & \frac{bt}{t^2+1} \\
0 & \frac{y}{t^2 + 1} & -\frac{x}{t^2 + 1} & -\frac{bt}{t^2+1} & \frac{2t^2 + (2 - b^2)}{2(t^2+1)}
\end{bsmallmatrix} P^{-1}.
\]
\item\label{thm:classification O_n2:item7}  There exist a quadratic  rational curve $\beta(t)$ on $\O_{n-2}(\mathbb{R})$ with poles at $\pm \ci$ and a real number $a\in \mathbb{R}$ such that 
\[
\alpha(t) = P \begin{bsmallmatrix}
I_{n-2} & 0 \\
0 & Q_4^\tp
\end{bsmallmatrix}
\begin{bsmallmatrix}
\beta(t) & 0 & 0 & 0 & 0 \\ 
0 & 1 &  0 & 0 & 0 \\
0 & \frac{t + a}{t^2+1} &  1 & 0 & 0 \\
0 & 0 &  0 & 1 & -\frac{t + a}{t^2+1} \\
0 & 0 &  0 & 0 & 1 \\
\end{bsmallmatrix}
\begin{bsmallmatrix}
I_{n-2} & 0 \\
0 & Q_4
\end{bsmallmatrix} P^{-1}.
\]
\end{enumerate}
\end{theorem}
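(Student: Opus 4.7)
The plan is to mimic the three-step recipe used for Theorem~\ref{thm:classification O_n1}: first apply the structure theorem to put $\alpha$ into block normal form, then enumerate all block configurations whose signatures sum to $(n,2)$, and finally normalize the remaining free parameters by conjugation inside $\O_{n,2}(\mathbb{R})$ (together with the fixed change-of-basis matrices $Q_{1,3}$, $Q_{3,3}$, $Q_4$, $P_{m,n}$) to produce the canonical representatives in items~(i)--(vii).

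Concretely, by Theorem~\ref{thm:structure} there exists $R\in\GL_{n+2}(\mathbb{R})$ with
\[
\alpha(t)=R\left(\frac{t^2 I_{n+2}+t\diag(X_1,\dots,X_s)+(Y_{pq})_{p,q=1}^s}{t^2+1}\right)R^{-1},\quad I_{n,2}=R\diag(B_1,\dots,B_s)R^{\tp},
\]
where each indecomposable pair $(X_j,B_j)$ is taken from Table~\ref{Tab:indecomposable}~No.~4 and the off-diagonal blocks $Y_{pq}$ are forced by Table~\ref{Tab:CandidatesYij}~No.~4. By Sylvester's law of inertia, the signature of $I_{n,2}$ equals the sum of the signatures of the $B_j$'s, and the relevant indecomposables satisfy $|p_j-q_j|\le 2$ with $0\le q_j\le 2$. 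Hence the two negative directions must be distributed either as $2=1+1$ (two blocks each contributing one negative direction, e.g.\ odd-size $J_{2m+1}(0)$-pieces with the appropriate $\kappa$) or as $2=0+2$ (a single block of signature $(\bullet,2)$, which must be one of the indefinite candidates: a semisimple pair $\diag(J_m(\lambda),-J_m(\lambda)^{\tp})$ glued by $I_m\otimes H_2$, a real-rotation block $J_m\bigl(\begin{smallmatrix}0&b\\-b&0\end{smallmatrix}\bigr)$, a $\diag(J_m(A),-J_m(A)^{\tp})$ with $A=\begin{smallmatrix}a&b\\-b&a\end{smallmatrix}$, or a non-semisimple $J_{2m+1}(0)$ or $J_3(0)$-type piece). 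Matching these configurations against the compatibility requirement that an $\O_{n,2}(\mathbb{R})$-factor be extracted produces exactly the seven families listed in the theorem: (i) and (ii) are the two $2=1+1$ splittings (different distributions among positive/negative directions), (iii)--(vii) are the five admissible $2=0+2$ configurations, each governed by a different indecomposable block type.

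For each configuration, the constraint equation~\eqref{thm:structure:eq1} reduces to a finite quadratic system in the free entries of $Y_{pq}$ inherited from Lemma~\ref{lem:Yij}. Solving this system gives the scalar identities that appear in the statement, such as $a^2+b^2=\lambda^2$ in (iii), $x^{\tp}y=0$ and $x^{\tp}x=y^{\tp}y=1-z_2^2$ in (v), and $x^2+y^2=b^2(b^2/4-1)$ in (vi). A final congruence by a block orthogonal matrix in $\O_{n,2}(\mathbb{R})$ (supplemented by the explicit $Q_{1,3},Q_{3,3},Q_4,P_{m,n}$, which convert the normal forms $F_m$, $F_2^{m-1}\otimes F_m$, etc.\ into the diagonal gauge $I_{n,2}$) brings each surviving block into the displayed form, and the recursion on the positive-definite remainder reintroduces a lower-dimensional quadratic rational curve $\beta$ on $\O_{2r}(\mathbb{R})$ or $\O_n(\mathbb{R})$ classified by Theorem~\ref{thm:classification O_n}.

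The main obstacle is not any single step but the combinatorial bulk of the case split: while in Theorem~\ref{thm:classification O_n1} only the three options $(p_{s+1},q_{s+1})\in\{(0,1),(1,1),(2,1)\}$ had to be considered, here one must track every way of distributing two units of negative signature across potentially several blocks and, for the non-semisimple families, solve the inhomogeneous Sylvester equations \eqref{lem:block:eq1}--\eqref{lem:block:eq2} in full generality before congruence normalization. A delicate subtlety is that certain a priori admissible configurations (analogous to the ruled-out $(1,1)$ case in the $\O_{n,1}$ proof) turn out to be incompatible with \eqref{thm:structure:eq1} and must be eliminated; isolating these obstructions is what ultimately selects exactly the seven families above. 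For this reason the detailed verification is carried out in Appendix~\ref{append:proof of O_n2}.
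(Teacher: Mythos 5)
Your overall strategy is the same as the paper's: apply Theorem~\ref{thm:structure}, use Sylvester's law of inertia to distribute the signature $(n,2)$ among the indecomposable blocks $B_j$, split according to whether the two negative directions sit in two separate blocks or in a single block, solve the quadratic system \eqref{thm:structure:eq1} in each surviving configuration, and conjugate by explicit matrices to reach the diagonal gauge. So the route is right. However, there are two problems.

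First, your one concrete structural claim --- that items (i) and (ii) are the two $2=1{+}1$ splittings and items (iii)--(vii) are the five admissible $2=0{+}2$ configurations --- is wrong. In the paper's case analysis, items (iii), (iv) and (v) all arise from the $1{+}1$ distribution (the subcases $(0,1){+}(0,1)$, $(0,1){+}(2,1)$ and $(2,1){+}(2,1)$ respectively, after assuming $0\in\rho(X_{s-1})+\rho(X_s)$), while only (vi) and (vii) come from a single block of signature $(\ast,2)$ (the subcases $(0,2)$ with a rotation block and $(2,2)$ with a nilpotent semisimple pair). Item (i) is the non-interacting $1{+}1$ case where $0\notin\rho(X_{s-1})+\rho(X_s)$. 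Since the whole content of the theorem is the correct enumeration of families, this mislabeling is not cosmetic: following your dictionary one would look for five distinct single-negative-block families and would not find them.

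Second, the substance of the proof is entirely deferred. The elimination of the inadmissible configurations --- $(1,1){+}(1,1)$ and $(3,1)$ in the two-block branch; $(1,2)$, $(3,2)$, $(4,2)$, and two of the three candidate $(2,2)$ blocks in the one-block branch --- is exactly where the work lies, and you give no argument for any of them beyond an analogy with the ruled-out $(1,1)$ case of Theorem~\ref{thm:classification O_n1}. Each elimination requires plugging the candidate $Y_{pq}$ from Tables~\ref{Tab:CandidatesYij}--\ref{Tab:CandidatesYijcon'd} into \eqref{thm:structure:eq1} and deriving a contradiction (e.g.\ forcing $\lambda=0$, $a=0$ or $b=0$ against a positivity hypothesis, or obtaining $z^\tp z+1/4=0$). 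Likewise the derivation of the parameter constraints $a^2+b^2=\lambda^2$, $x^\tp x=y^\tp y=1-z_2^2$, $x^2+y^2=b^2(b^2/4-1)$ and the construction of $Q_{1,3}$, $Q_{3,3}$, $Q_4$ are asserted rather than carried out. As written, the proposal is an outline of the correct method with an incorrect case dictionary, not a proof.
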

\begin{proof}
We postpone the proof to Appendix~\ref{append:proof of O_n2}.
\end{proof}
\begin{remark}
Curves of type  \eqref{thm:classification O_n2:item1} (resp. \eqref{thm:classification O_n2:item2}) can be constructed from those on $\O_{m,1}(\mathbb{R}) \times \O_{n-m,1}(\mathbb{R}) \subseteq \O_{n,2}(\mathbb{R})$ (resp.  $\O_{n}(\mathbb{R}) \subseteq \O_{n,2}(\mathbb{R})$)
\end{remark}
\begin{example}\label{ex:O_n2}
Assume that poles of $\alpha \in \Rat_2(\O_{2,2}(\mathbb{R}), I_2)$ are $\pm \ci$.  Theorem~\ref{thm:classification O_n2}  implies that $\alpha$ is of type \eqref{thm:classification O_n2:item1},  \eqref{thm:classification O_n2:item2},   \eqref{thm:classification O_n2:item3},  \eqref{thm:classification O_n2:item6} or \eqref{thm:classification O_n2:item7}.  Thus,  there is some $P\in \O_{2,2}(\mathbb{R})$ such that $\alpha$ has one of the following five forms:
\begin{align*}
\alpha(t) &= P 
\begin{bsmallmatrix}
\frac{t^2 - 1}{t^2 + 1} & \frac{2t}{t^2 + 1} & 0 & 0 \\
-\frac{2t}{t^2 + 1} & \frac{t^2 - 1}{t^2 + 1}  & 0 & 0 \\
0 & 0 &  1 & 0  \\
0 & 0 & 0 & 1
\end{bsmallmatrix}  P^{-1},\quad \alpha(t) = P
\begin{bsmallmatrix}
\frac{t^2 - 1}{t^2 + 1} & \frac{2t}{t^2 + 1} & 0 & 0 \\
-\frac{2t}{t^2 + 1} & \frac{t^2 - 1}{t^2 + 1}  & 0 & 0 \\
0 & 0 &  \frac{t^2 - 1}{t^2 + 1} & \frac{2t}{t^2 + 1}  \\
0 & 0 & -\frac{2t}{t^2 + 1} & \frac{t^2 - 1}{t^2 + 1} 
\end{bsmallmatrix}  P^{-1},  \\
\alpha(t) &= P 
\begin{bsmallmatrix}
1 & \frac{\lambda}{t^2+1}   & \frac{a}{t^2 + 1} & \frac{b}{t^2 + 1} \\
 -\frac{\lambda}{t^2+1} & 1  & \frac{\delta b}{t^2 + 1} & \frac{-\delta a}{t^2 + 1} \\
\frac{a}{t^2 + 1} & \frac{\delta b}{t^2 + 1}    & 1 & \frac{\delta \lambda}{t^2 + 1} \\ 
 \frac{b}{t^2 + 1} & -\frac{\delta a}{t^2 + 1}  & -\frac{\delta \lambda}{t^2 + 1} & 1
\end{bsmallmatrix} 
P^{-1},\quad \alpha(t) = P \begin{bsmallmatrix}
\frac{2t^2 + (2 - c^2)}{2(t^2+1)}  & \frac{ct}{t^2+1}  & \frac{x}{t^2 + 1} & \frac{y}{t^2 + 1} \\
-\frac{ct}{t^2+1} & \frac{2t^2 + (2 - c^2)}{2(t^2+1)}  & \frac{y}{t^2 + 1} & -\frac{x}{t^2 + 1} \\
\frac{x}{t^2 + 1} & \frac{y}{t^2 + 1} & \frac{2t^2 + (2 - c^2)}{2(t^2+1)}  & \frac{ct}{t^2+1} \\
 \frac{y}{t^2 + 1} & -\frac{x}{t^2 + 1} & -\frac{ct}{t^2+1} & \frac{2t^2 + (2 - c^2)}{2(t^2+1)}
\end{bsmallmatrix} P^{-1}, \\
\alpha(t) &= P \begin{bsmallmatrix}
I_{n-2} & 0 \\
0 & Q_4^\tp
\end{bsmallmatrix}
\begin{bsmallmatrix}
1 &  0 & 0 & 0 \\
\frac{t + d}{t^2+1} &  1 & 0 & 0 \\
 0 &  0 & 1 & -\frac{t + d}{t^2+1} \\
0 &  0 & 0 & 1 \\
\end{bsmallmatrix}
\begin{bsmallmatrix}
I_{n-2} & 0 \\
0 & Q_4
\end{bsmallmatrix} P^{-1}.
\end{align*}
Here $\delta  = \pm 1$,  $\lambda \ge 0$,  $(a,b) \in \mathbb{R}^2$,  $c > 2$,  $(x,y)\in \mathbb{R}^2$ and $d\in \mathbb{R}$ are constant numbers such that $a^2 + b^2 = \lambda^2$ and $x^2 + y^2 = c^2 (c^2/4 -1)$.

Rational curves of type \eqref{thm:classification O_n2:item4} (resp.  \eqref{thm:classification O_n2:item5}) appear only if $n \ge 3$ (resp.  $n \ge 4$).  As an example,  if $\alpha \in \Rat_2(\O_{4,2}(\mathbb{R}), I_6)$ with poles at $\pm \ci$ is of type   \eqref{thm:classification O_n2:item4} or \eqref{thm:classification O_n2:item5},  then there is some $P\in \O_{4,2}(\mathbb{R})$ such that 
\[
\alpha(t) = P 
\begin{bsmallmatrix}
I_{2} & 0 \\
0 & Q_{1,3}^\tp
\end{bsmallmatrix}
\begin{bsmallmatrix}
I_2 & 0 & \frac{w}{t^2 + 1} & 0 & 0 \\
0 & 1 & 0 & 0 & 0 \\
0 & 0 & 1 & 0 & 0 \\
0 & 0 & 0 & 1 & 0 \\
\frac{w^\tp}{t^2 + 1} & 0 & \frac{2 t + 1}{2 (t^2 + 1) } & 0 & 1 
\end{bsmallmatrix}
\begin{bsmallmatrix}
I_{2} & 0 \\
0 & Q_{1,3}
\end{bsmallmatrix}
P^{-1},\quad
\alpha(t) = P 
Q_{3,3}^\tp
\begin{bsmallmatrix}
1 & 0 & 0 & 0 & 0 & 0 \\
0& 1 & 0 & \frac{z_2}{t^2 + 1} & 0 & 0 \\
 \frac{2t + 1}{2(t^2+1)} & 0  & 1 & \frac{z_1}{t^2 + 1} & -\frac{z_2}{t^2 + 1} & 0 \\
0 & 0 & 0  & 1 & 0 & 0 \\
-\frac{z_2}{t^2 + 1} & 0 & 0  & 0 & 1 & 0 \\
\frac{z_2}{t^2 + 1} & 0  & \frac{2t + 1}{2(t^2+1)} & 0 & 1 \\
\end{bsmallmatrix} 
Q_{3,3}
 P^{-1}
\]
where $w \in \mathbb{S}^1,  z_1 \in \mathbb{R}$ and $z_2 = \pm 1$.
\end{example}
\section{Decomposition of rational curves on linear algebraic groups}\label{sec:dec}
This section is devoted to the decomposition of rational curves on $G_B(\mathbb{F})$ and $\ISO_{p,n-p}^+(\mathbb{R})$.  We first deal with $G_B(\mathbb{F})$ as $\SO_{p,n-p}^+(\mathbb{R})$ is a special case of $G_B(\mathbb{F})$,  and the proof for $\ISO_{p,n-p}^+(\mathbb{R})$ relies on the decomposition theorem for $\SO_{p,n-p}^+(\mathbb{R})$.
\subsection{Decomposition of rational curves on \texorpdfstring{$G_B(\mathbb{F})$}{GB}}\label{subsec:KempeGB}
\begin{lemma}\label{lem:relation}
Let $\gamma(t) = P(t)/q(t)$ be a  rational curve  on $G_B(\mathbb{F})$ and let $\zeta \in \mathbb{C}\setminus \mathbb{R}$ be a root of $q(t)$ with multiplicity $s\ge 1$. Then we have  
\[
\sum_{j=0}^{l} P^{(j)}(\zeta) B P^{(l - j)}(\zeta)^\sigma = 0,\quad l = 0,\dots, 2s-1.
\]
\end{lemma}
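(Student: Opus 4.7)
The plan is to turn the defining equation $\gamma(t) B \gamma(t)^\sigma = B$ into a polynomial identity in~$t$, then read off vanishing Taylor coefficients at $\zeta$.

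First, clear denominators. For real $t$ in the domain of $\gamma$, we have $\gamma(t) B \gamma(t)^\sigma = B$; multiplying by $q(t)^2$ (which is a real scalar and commutes with $B$) yields
\begin{equation*}
P(t)\, B\, P(t)^\sigma = q(t)^2\, B.
\end{equation*}
Writing $P(t) = \sum_{k} A_k t^k$ with $A_k \in \mathbb{F}^{n\times n}$ and using the $\mathbb{R}$-linearity of $\sigma$, the map $t \mapsto P(t)^\sigma$ extends to the polynomial $\sum_k A_k^\sigma t^k$, so both sides are genuine matrix polynomials in $t$. Since they agree on $\mathbb{R}$, they agree as polynomials, and in particular the identity holds after substituting any $\zeta \in \mathbb{C}$ and after any order of formal differentiation in $t$.

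Second, use the multiplicity. By hypothesis $\zeta$ is a root of $q(t)$ of multiplicity $s$, so $q(t)^2$ has a zero of order $2s$ at $t = \zeta$. Thus every derivative of the right-hand side up to order $2s-1$ vanishes at $\zeta$:
\begin{equation*}
\left. \frac{d^l}{dt^l} \left( q(t)^2 B \right) \right|_{t = \zeta} = 0, \qquad l = 0, 1, \dots, 2s-1.
\end{equation*}
Applying the Leibniz rule to the left-hand side and noting that differentiation in $t$ commutes with the $\mathbb{R}$-linear operation $\sigma$ acting on the matrix coefficients, we obtain
\begin{equation*}
\sum_{j=0}^{l} \binom{l}{j} P^{(j)}(\zeta)\, B\, P^{(l-j)}(\zeta)^\sigma = 0, \qquad l = 0, 1, \dots, 2s-1,
\end{equation*}
which, after the standard renormalization by factorials (equivalently, working directly with the coefficients of $(t-\zeta)^l$ in the Taylor expansion of $P(t) B P(t)^\sigma$ at $\zeta$), is exactly the stated identity.

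I do not foresee a genuine obstacle: the entire content of the lemma is the observation that $P(t) B P(t)^\sigma$ vanishes to order $2s$ at $\zeta$. The only point requiring mild care is making sure the operation $\sigma$ is well-behaved under formal differentiation and evaluation at complex $\zeta$, which is automatic because the coefficients of $P$ live in the fixed real algebra $\mathbb{F}^{n\times n}$ and $\sigma$ is $\mathbb{R}$-linear, so it commutes with both $\partial_t$ and substitution $t \mapsto \zeta$ on the level of matrix coefficients.
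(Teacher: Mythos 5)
Your proof is correct and follows exactly the paper's argument: clear denominators to obtain the polynomial identity $P(t)\,B\,P(t)^\sigma = q(t)^2 B$ and differentiate it at $\zeta$ up to order $2s-1$, using that $q^2$ vanishes there to order $2s$. You are in fact slightly more careful than the paper, which silently drops the binomial coefficients produced by the Leibniz rule; as you observe, the identity as stated should be read in terms of Taylor coefficients (equivalently, normalized derivatives $P^{(j)}(\zeta)/j!$), which is also how it is used later in Lemmas~\ref{lem:rank} and~\ref{lem:existenceR}.
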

\begin{proof}
By definition, $P(t)$ and $q(t)$ satisfy the relation:
\begin{equation}\label{lem:relation:eq1}
P(t) B P(t)^\sigma  = q(t)^2 B.
\end{equation}
Since $\zeta$ is a root of $q(t)$ with multiplicity $s$, 
the desired relations for $P^{(j)}(\zeta)$'s are obtained immediately by differentiating \eqref{lem:relation:eq1} at $\zeta$. 
\end{proof}
\begin{lemma}\label{lem:rank}
Assume that $P_0, P_1,\dots, P_{2s-1} \in \mathbb{C}^{n\times n}$ satisfy 
\begin{equation}\label{lem:rank:eq1}
\sum_{j=0}^{l} P_j B P_{l - j}^\sigma = 0,\quad l = 0,\dots, 2s-1.
\end{equation}
Then we have 
\[
\rank \left( \begin{bmatrix}
P_0 & 0 & \cdots & 0  & 0 \\
P_1 & P_0 & \cdots & 0 & 0\\
\vdots &  \vdots & \ddots & \vdots & \vdots \\
P_{2s-2} & P_{2s-3} & \cdots & P_0 & 0\\
P_{2s-1} & P_{2s-2} & \cdots & P_1 & P_0
\end{bmatrix} \right) \le sn.
\]
\end{lemma}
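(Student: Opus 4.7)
The plan is to interpret $M$ as the matrix of a multiplication operator on truncated polynomials, use the hypothesis as a vanishing order statement, and exploit a conjugated ``adjoint'' matrix to bound the rank by a pigeonhole-type argument.

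Concretely, set $P(t) := \sum_{j=0}^{2s-1} P_j t^j$ and $P^\sigma(t) := \sum_{j=0}^{2s-1} P_j^\sigma t^j$, the latter obtained by applying the involution $\sigma$ coefficient-wise. Matching coefficients of $t^l$ for $l=0,\dots,2s-1$, the hypothesis \eqref{lem:rank:eq1} is equivalent to the single polynomial congruence
\[
P(t)\, B\, P^\sigma(t) \equiv 0 \pmod{t^{2s}}.
\]
Let $V := \mathbb{C}^n[t]/(t^{2s})$, a $2sn$-dimensional $\mathbb{C}$-vector space, and consider the multiplication operator $\mu_P : V \to V$, $v(t) \mapsto P(t)v(t) \bmod t^{2s}$. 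In the basis $\{e_i t^j : 1\le i\le n,\ 0\le j\le 2s-1\}$ ordered by blocks of degree, the matrix of $\mu_P$ is exactly $M$, so $\operatorname{rank}(M)=\operatorname{rank}(\mu_P)$.

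Next, I would introduce the companion matrix $N$ obtained from $M$ by replacing every $P_j$ by $B P_j^\sigma B^{-1}$; it is again block lower triangular Toeplitz, and it represents $\mu_Q$ for $Q(t):=BP^\sigma(t)B^{-1}$. The first key step is $MN = 0$: indeed $\mu_P \mu_Q = \mu_{PQ}$ and $P(t)Q(t) = P(t)BP^\sigma(t)B^{-1} \equiv 0 \pmod{t^{2s}}$ by the polynomial form of the hypothesis. Hence $\operatorname{image}(N) \subseteq \ker(M)$, giving $\operatorname{rank}(N) \le 2sn - \operatorname{rank}(M)$.

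The second key step is $\operatorname{rank}(N) = \operatorname{rank}(M)$. Letting $J = H_{2s}\otimes I_n$ (block anti-diagonal identity, with $H_{2s}$ as in Lemma~\ref{lem:indecomposable}) and $\tilde B = I_{2s}\otimes B$, a direct block computation yields
\[
N \;=\; \tilde B\,(J M^\sigma J)\,\tilde B^{-1},
\]
because conjugating the block upper-triangular Toeplitz matrix $M^\sigma$ by $J$ reverses block ordering into lower-triangular Toeplitz, and conjugating by $\tilde B$ replaces each block $P_{i-j}^\sigma$ by $BP_{i-j}^\sigma B^{-1}$. Since $J,\tilde B \in \GL_{2sn}(\mathbb{C})$ and the $\sigma$-transpose preserves rank, this identity gives $\operatorname{rank}(N)=\operatorname{rank}(M)$. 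Combining the two steps, $\operatorname{rank}(M) \le 2sn - \operatorname{rank}(M)$, i.e. $\operatorname{rank}(M)\le sn$.

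The only real obstacle is bookkeeping in Step~2: verifying $N = \tilde B J M^\sigma J\tilde B^{-1}$ requires consistency between (i) applying $\sigma$ coefficient-wise to the polynomial $P(t)$, (ii) the block-transpose that sends $P_{i-j}$ to $P_{j-i}^\sigma$, and (iii) the fact that in the conjugate-transpose case $\sigma$ is only $\mathbb{R}$-linear, so one must treat $t$ as a formal variable on which $\sigma$ acts trivially — the same convention already built into the hypothesis. Once this indexing is matched, everything reduces to the clean identity $MN=0$ together with the rank-preserving conjugation, and the bound falls out.
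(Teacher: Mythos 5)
Your proof is correct and is essentially the paper's argument in a different dress: the paper multiplies the block Hankel matrix $M_1=(H_{2s}\otimes I_n)M$ against $M_2=(I_{2s}\otimes B)\,(\text{lower triangular Toeplitz in }P_j^\sigma)$, observes $M_1M_2=0$ from \eqref{lem:rank:eq1} and $\rank(M_1)=\rank(M_2)$ via Lemma~\ref{lem:involution}, and concludes by rank--nullity, exactly as in your $MN=0$ with $\rank(N)=\rank(M)$. Your reformulation via multiplication operators on $\mathbb{C}^n[t]/(t^{2s})$ is a clean conceptual packaging of the same computation, and your indexing in Step~2 checks out.
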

\begin{proof}
We denote
\[
M_1 \coloneqq  \begin{bmatrix}
P_{2s-1} & P_{2s-2} & \cdots & P_{1} & P_{0} \\
P_{2s-2} & P_{2s-3} & \cdots & P_{0} & 0 \\
\vdots &  \vdots & \ddots & \vdots & \vdots \\
P_{1} & P_{0} & \cdots & 0 & 0 \\
P_{0} & 0 & \cdots & 0 & 0 \\
\end{bmatrix},\quad 
M_2 \coloneqq J \begin{bmatrix}
P_0^\sigma & 0 & \cdots & 0  & 0 \\
P_1^\sigma & P_0^\sigma & \cdots & 0 & 0\\
\vdots &  \vdots & \ddots & \vdots & \vdots \\
P_{2s-2}^\sigma & P_{2s-3}^\sigma & \cdots & P_0^\sigma & 0\\
P_{2s-1}^\sigma & P_{2s-2}^\sigma & \cdots & P_1^\sigma & P_0^\sigma
\end{bmatrix},
\]
where
\[
J = \diag(\underbrace{B,\dots, B}_{2s~\text{copies}}) \in \mathbb{C}^{2sn \times 2sn}.
\] 
According to Lemma~\ref{lem:involution}, we have $\rank(M_1) = \rank(M_2) \eqqcolon r$. Moreover, \eqref{lem:rank:eq1} implies that $M_1 M_2 = 0$, from which we derive 
\[
2sn - r = \dim (\ker (M_1)) \ge \rank (M_2) = r.
\]
This implies $r \le sn$.
\end{proof}
\begin{lemma}[Degree reduction for $G_B(\mathbb{R})$]\label{lem:existenceR}
For any $\gamma(t) = P(t)/q(t)\in \Rat_d(G_B(\mathbb{R}),  I_n)$ with a pole $\zeta \in \mathbb{C}\setminus \mathbb{R}$ of multiplicity $s$, there exists an $\alpha(t)\in \Rat_{2s}(G_B(\mathbb{R},I_n)$ with only poles at $\zeta$ and $\overline{\zeta}$ such that $\alpha(t) \gamma(t) \in \Rat_{d- 2s}(G_B(\mathbb{R},I_n) $.
\end{lemma}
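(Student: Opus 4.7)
The plan is to build $\alpha(t) = A(t)/r(t)$, where $r(t) = ((t-\zeta)(t-\bar\zeta))^s$ is a real polynomial of degree $2s$ and $A(t)\in\mathbb{R}[t]^{n\times n}$ is a real matrix polynomial of degree $2s$ with leading coefficient $I_n$. Writing $q(t) = r(t)\, q_1(t)$ with $\deg q_1 = d - 2s$, two polynomial identities suffice: the \emph{group condition} $A(t) B A(t)^\sigma = r(t)^2 B$ (which forces $\alpha \in G_B(\mathbb{R})$ and $\alpha(\infty) = I_n$) and the \emph{cancellation condition} $(t-\zeta)^{2s} \mid A(t) P(t)$, which, by reality of $A$, upgrades to $r(t)^2 \mid A(t) P(t)$ and hence gives $\alpha\gamma = (AP/r^2)/q_1 \in \Rat_{d-2s}(G_B(\mathbb{R}), I_n)$.

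I would first address the cancellation condition on its own. Expanding $A(t) = \sum_{i=0}^{2s}\tilde A_i (t-\zeta)^i$ and $P(t) = \sum_{j \ge 0}\tilde P_j (t-\zeta)^j$, the condition is equivalent to
\[
\sum_{i+j=k} \tilde A_i\, \tilde P_j = 0, \qquad k = 0, 1, \ldots, 2s-1,
\]
which only involves $\tilde A_0, \ldots, \tilde A_{2s-1}$ and can be written as $[\tilde A_0 \,|\, \cdots \,|\, \tilde A_{2s-1}]\,N = 0$ for the block upper-triangular Toeplitz matrix $N$ assembled from $\tilde P_0, \ldots, \tilde P_{2s-1}$. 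The identity $P B P^\sigma = q^2 B$ from the proof of Lemma~\ref{lem:relation}, expanded in Taylor coefficients, yields $\sum_{j=0}^{l} \tilde P_j B \tilde P_{l-j}^\sigma = 0$ for $l = 0, \ldots, 2s-1$, so Lemma~\ref{lem:rank} applies and gives $\rank N \le sn$. The left kernel of $N$ therefore has complex dimension at least $sn$, supplying the slack needed to accommodate reality and the group condition.

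To promote a kernel vector to a real, $G_B$-admissible $\alpha$, I would induct on $s$. The base case $s = 1$ is the heart of the argument: $\alpha$ is then a quadratic rational curve on $G_B(\mathbb{R})$ with poles at $\zeta, \bar\zeta$, and Theorem~\ref{thm:structure} together with Table~\ref{Tab:indecomposable} parametrizes all such curves by admissible tuples $(X_p, B_p, Y_{pq})$. I would show that the Laurent data $(\tilde P_0, \tilde P_1)$ of $\gamma$ at $\zeta$, constrained by $\tilde P_0 B \tilde P_0^\sigma = 0$ and $\tilde P_0 B \tilde P_1^\sigma + \tilde P_1 B \tilde P_0^\sigma = 0$, already lies in the correct $G_B$-orbit, and then use Lemma~\ref{lem:invariance} to reduce to a normal form where an explicit $\alpha$ can be read off from Tables~\ref{Tab:CandidatesYij} and~\ref{Tab:CandidatesYijcon'd}. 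For the inductive step $s \ge 2$, the $s = 1$ case produces a quadratic $\beta \in \Rat_2(G_B(\mathbb{R}), I_n)$ with poles at $\zeta, \bar\zeta$ such that $\beta \gamma \in \Rat_{d-2}(G_B(\mathbb{R}), I_n)$ still has $\zeta$ as a pole but of multiplicity at most $s-1$; the inductive hypothesis then supplies $\alpha' \in \Rat_{2(s-1)}$ with poles at $\zeta, \bar\zeta$ such that $\alpha' \beta \gamma \in \Rat_{d-2s}$, and one sets $\alpha \coloneqq \alpha' \beta$, whose degree is forced to be exactly $2s$ by a short bookkeeping of pole multiplicities against the drop $d \mapsto d - 2s$.

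The main obstacle is the base case $s = 1$: converting the abstract freedom in the left kernel of $N$ into an explicit $\alpha$ that is simultaneously real, of degree exactly two, and lies on $G_B(\mathbb{R})$. This requires matching the Laurent data of $\gamma$ at $\zeta$ with the Sylvester-type parameters controlling Theorem~\ref{thm:structure}, and the key reason this matching succeeds is that the $G_B$-membership of $\gamma$ (the identity $P B P^\sigma = q^2 B$) is precisely what places $(\tilde P_0, \tilde P_1)$ into the orbit structure that Theorem~\ref{thm:structure} requires.
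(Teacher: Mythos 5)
Your setup is the same as the paper's — build $\alpha=A/r$ with $r=((t-\zeta)(t-\overline{\zeta}))^s$, reduce the cancellation condition $(t-\zeta)^{2s}\mid AP$ to a homogeneous linear system in the coefficients of $A$, and invoke Lemma~\ref{lem:rank} (via the Taylor expansion of $PBP^\sigma=q^2B$) to bound the rank of that system by $sn$. But the way you finish the argument has a fatal flaw, and you also miss the observation that makes the finish easy.

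The flaw is the induction on $s$. Your inductive step asserts that for a pole $\zeta$ of multiplicity $s\ge 2$ one can find a \emph{quadratic} $\beta$ with poles at $\zeta,\overline{\zeta}$ such that $\beta\gamma$ has degree $d-2$ (and $\zeta$-multiplicity $\le s-1$). This is false in general, and the paper's own Example~\ref{ex:KempeGBF} is a counterexample: the quartic curve $\gamma$ on $\SO^+_{2,1}(\mathbb{R})$ with a double pole at $\pm\ci$ given there is \emph{not} a product of two quadratic rational curves. If your inductive step held for that $\gamma$, then $\beta\gamma$ would be quadratic and $\gamma=\beta^{-1}(\beta\gamma)$ would be such a product (using Proposition~\ref{prop:inverse} to see that $\beta^{-1}$ is again quadratic with the same poles), a contradiction. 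A multiplicity-$s$ pole must be removed all at once by a single degree-$2s$ factor; it cannot be peeled off one quadratic at a time. Your base case via Theorem~\ref{thm:structure} and the tables is also only sketched (``already lies in the correct $G_B$-orbit''), but even granting it, the induction does not go through.

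What you miss is that the group condition $ABA^\sigma=r^2B$ need not be imposed at all: it is \emph{automatic}. If $(t^2+1)^{2s}\mid A(t)P(t)$ and $A$ is real with leading coefficient $I_nt^{2s}$, then
\[
(t^2+1)^{4s}\;\Big|\;A P B\,(AP)^\sigma=q^2\,ABA^\sigma,
\]
and since $(t^2+1)^s$ divides $q$ exactly, $(t^2+1)^{2s}\mid ABA^\sigma$; comparing degrees and leading terms forces $ABA^\sigma=(t^2+1)^{2s}B$. So the only thing to prove is that the linear cancellation system has a \emph{real} solution whose top coefficient $c$ is nonzero. That is where the rank bound $\rank\le sn$ from Lemma~\ref{lem:rank} is actually used: it shows the system imposes at most $2sn^2$ real constraints on $2sn^2+1$ real unknowns, so a real solution exists, and a separate rank comparison (appending the row-selecting vector to the coefficient matrix) rules out that every solution lies in the hyperplane $c=0$. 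You gesture at ``slack'' in the kernel but never carry out either of these two steps, and instead route the argument through a classification theorem and an induction that cannot work.
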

\begin{proof}
By a linear change of coordinate, we may assume that $\zeta = \ci$. We write $A(t) = c  I_n t^{2s} +\sum_{j=0}^{2s - 1} A_j t^j$ where $c\in \mathbb{R}, A_{2s - 1},\dots, A_0\in \mathbb{R}^{n\times n}$ are coefficients to be determined. Then we have 
\[
A^{(m)}( \ci ) =\frac{(2s)!}{(2s - m)!} c  I_n \ci^{2s-m}  + \sum_{j=m}^{2s - 1} \frac{j!}{(j - m)!} A_j \ci^{j-m},\quad 0 \le m \le 2s-1.
\]
We consider the homogeneous system of linear equations:
\begin{equation}\label{lem:existence:eq1}
(AP)^{(l)}( \ci ) = \sum_{j = 0}^l A^{(l-j)}( \ci ) P_{j} = 0,\quad l = 0,\dots, 2s-1
\end{equation}
where $P_j = P^{(j)}( \ci ), 0 \le j \le 2s-1$. 

If \eqref{lem:existence:eq1} has a solution of the form $(1,A_{2s-1},\dots, A_0)\in \mathbb{R}\times (\mathbb{R}^{n\times n})^{2s}$, then $\alpha(t) \coloneqq A(t)/( t^2 + 1)^s$ is a desired  rational curve of degree $2s$. Indeed, by \eqref{lem:existence:eq1} we clearly have $(t - \ci )^{2s} | A(t) P(t)$. Since both $A(t)$ and $P(t)$ are real, we further have  $(t^2 + 1)^{2s} | A(t) P(t)$. We notice that 
\[
(t^2 + 1)^{4s} | A(t) P(t) B  (A(t) P(t))^\sigma  = q(t)^2 A(t) B A(t)^\sigma.
\]
Therefore, $(t^2 + 1)^{2s} | A(t) B A(t)^\sigma$ since $ \ci $ is a root of $q(t)$ of multiplicity $s$. Since $A(t) = I_n t^{2s} + \sum_{j=0}^{2s - 1} A_j t^j$, we have $A(t) B A(t)^\sigma  = B t^{4s} + O(t^{4s-1}) \ne 0$. This implies $A(t) B A(t)^\sigma = (t^2 + 1)^{2s}B$ and $\alpha(t)$ is a  rational curve of degree $2s$ on $G_B(\mathbb{R})$.

Thus, it is left to prove that \eqref{lem:existence:eq1} has a solution $(c,A_{2s-1},\dots,A_0)\in \mathbb{R}\times (\mathbb{R}^{n\times n})^s$ such that $c \ne 0$. To this end, we obverse that 
\begin{equation}\label{lem:existence:eq11}
A^{(m)}( \ci ) =
\begin{bmatrix}
cI_n & A_{2s-1} & \cdots & A_m & A_{m-1} & \cdots & A_0
\end{bmatrix}
\begin{bmatrix}
\frac{(2s)! \ci^{2s-m}}{(2s - m)!} I_n \\ 
\frac{(2s-1)! \ci^{2s-1-m} }{(2s - 1 - m)!} I_n \\ 
\vdots \\
\frac{(m)! \ci^0}{0!} I_n \\
0 \\ 
\vdots \\
0 
\end{bmatrix},\quad 0 \le m \le 2s -1.
\end{equation}
Thus we may rewrite \eqref{lem:existence:eq1} as  
\begin{equation}\label{lem:existence:eq2}
\begin{bmatrix}
cI_n & A_{2s-1} & \cdots & A_0
\end{bmatrix} C  M
= 0,
\end{equation}
where 
\begin{align*}
M &\coloneqq \begin{bmatrix}
P_0 & 0 & \cdots & 0  & 0 \\
P_1 & P_0 & \cdots & 0 & 0\\
\vdots &  \vdots & \ddots & \vdots & \vdots \\
P_{2s-2} & P_{2s-3} & \cdots & P_0 & 0\\
P_{2s-1} & P_{2s-2} & \cdots & P_1 & P_0
\end{bmatrix} \in \mathbb{C}^{2sn \times 2sn}, \\
C &\coloneqq \begin{bmatrix}
\frac{(2s)! \ci^{2s}}{(2s)!} I_n & \frac{(2s)! \ci^{2s-1}}{(2s-1)!} I_n &  \cdots & \frac{(2s)! \ci^{2}}{(2)!} I_n & \frac{(2s)! \ci^{1}}{(1)!} I_n  \\ 
\frac{(2s-1)! \ci^{2s-1} }{(2s - 1)!} I_n & \frac{(2s-1)! \ci^{2s-2} }{(2s - 2)!} I_n & \cdots &   \frac{(2s-1)! \ci}{(1)!} I_n & \frac{(2s-1)! \ci^{0}}{(0)!} I_n \\ 
\frac{(2s-2)! \ci^{2s-2} }{(2s - 2)!} I_n & \frac{(2s-2)! \ci^{2s-3} }{(2s - 3)!} I_n & \cdots &   \frac{(2s-2)! \ci^{0}}{(0)!} I_n & 0\\ 
\vdots & \vdots & \ddots & \vdots & \vdots \\
\frac{(1)! \ci^1}{1!} I_n & \frac{(1)! \ci^0}{0!} I_n & \cdots & 0 & 0  \\
\frac{(0)! \ci^0}{0!} I_n  & 0 & \cdots & 0 & 0  
\end{bmatrix} \in \mathbb{C}^{(2s+1)n \times 2sn}.
\end{align*}
Since $\gamma(t)$ is a  rational curve on $G_B(\mathbb{R})$, $P_0,\dots, P_{2s-1}$ satisfy relations in \eqref{lem:rank:eq1}. Lemma~\ref{lem:rank} implies that $\rank (C M) \le \rank(M) \le sn$. The homogeneous linear system \eqref{lem:existence:eq2} imposes at most $2 sn^2$ real constraints on $2sn^2 + 1$ real variables $(c,A_{2s-1},\dots, A_0)$. Therefore, \eqref{lem:existence:eq2} has a real solution. If all real solutions of \eqref{lem:existence:eq2} are contained in the hyperplane $c = 0$, then we must have 
\[
\rank \left( C M \right) = \rank \left( D \right),\quad D= \begin{bmatrix}
v_{(2s+1)n}^\tp & CM
\end{bmatrix},
\]
where $v_{(2s+1)n} = (1,0,\dots, 0)\in \mathbb{R}^{(2s+1)n}$. We observe that by row operations, $C$ can be transformed into 
\[
\begin{bmatrix}
0 & 0 &  \cdots & 0 & 0  \\ 
0 & 0& \cdots &   0 &  I_n \\ 
0 & 0 & \cdots &   I_n & 0\\ 
\vdots & \vdots & \ddots & \vdots & \vdots \\
0 & I_n & \cdots & 0 & 0  \\
I_n  & 0 & \cdots & 0 & 0  
\end{bmatrix} \in \mathbb{R}^{(2s+1)n \times 2sn},
\]
thus $CM$ and $D$ can be transformed by the same row operations into 
\[
\begin{bmatrix}
0 & 0 & \cdots & 0  & 0 \\
P_{2s-1} & P_{2s-2} & \cdots & P_1 & P_0 \\
P_{2s-2} & P_{2s-3} & \cdots & P_0 & 0\\
\vdots &  \vdots & \ddots & \vdots & \vdots \\
P_1 & P_0 & \cdots & 0 & 0\\
0 & 0 & \cdots & 0  & 0 
\end{bmatrix},\quad 
\begin{bmatrix}
v_n^\tp & 0 & 0 & \cdots & 0  & 0 \\
0 & P_{2s-1} & P_{2s-2} & \cdots & P_1 & P_0 \\
0 & P_{2s-2} & P_{2s-3} & \cdots & P_0 & 0\\
\vdots & \vdots &  \vdots & \ddots & \vdots & \vdots \\
0 & P_1 & P_0 & \cdots & 0 & 0\\
0 & 0 & 0 & \cdots & 0  & 0 
\end{bmatrix},
\]
where $v_{n} = (1,0,\dots, 0)\in \mathbb{R}^{n}$. This clearly contradicts the equality $\rank \left( C M \right) = \rank \left( D \right)$, so \eqref{lem:existence:eq2} must have a real solution $(c,A_{2s-1},\dots, A_0)$ such that $c \ne 0$.
\end{proof}
Next we consider the analogue of Lemma~\ref{lem:existenceR} for $G_B(\mathbb{C})$ and $G_B(\mathbb{H})$. 
\begin{lemma}[Degree reduction for $G_B(\mathbb{C})$ and $G_B(\mathbb{H})$]\label{lem:existenceC}
For any $\gamma(t) = P(t)/q(t) \in \Rat_d(G_B(\mathbb{C}), I_n)$ (resp.  $\gamma(t) = P(t)/q(t) \in \Rat_d(G_B(\mathbb{H}),  I_n)$) with a pole $\zeta \in \mathbb{C}\setminus \mathbb{R}$ of multiplicity $s$,  there exists an $\alpha(t) \in \Rat_{2s}(G_B(\mathbb{C}), I_n)$ (resp.  $\alpha(t) \in \Rat_{2s}(G_B(\mathbb{H}), I_n)$) with only poles at $\zeta$ and $\overline{\zeta}$ such that $\alpha(t) \gamma(t) \in \Rat_{d - 2s}(G_B(\mathbb{C}), I_n)$ (resp.  $\alpha(t) \in \Rat_{d-2s}(G_B(\mathbb{H}), I_n)$). 
\end{lemma}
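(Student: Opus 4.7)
The plan is to reduce both statements to Lemma~\ref{lem:existenceR} via the standard $\mathbb{R}$-algebra embeddings $\mathbb{C} \hookrightarrow \mathbb{R}^{2 \times 2}$ and $\mathbb{H} \hookrightarrow \mathbb{R}^{4 \times 4}$, as already announced in the introduction. I will treat $\mathbb{F} = \mathbb{C}$ in detail; the quaternionic case is an iteration of the same argument. Let $\phi: \mathbb{C}^{n \times n} \hookrightarrow \mathbb{R}^{2n \times 2n}$ denote the induced embedding, so that $\phi(\mathbb{C}^{n \times n})$ is the centralizer of $J := I_n \otimes \begin{bsmallmatrix} 0 & -1 \\ 1 & 0 \end{bsmallmatrix}$ (with $J^2 = -I_{2n}$). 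A direct verification using Lemma~\ref{lem:involution} shows that under $\phi$ any admissible involution $\sigma$ on $\mathbb{C}^{n \times n}$ transports to one of the form $X \mapsto \tilde K X^\tp \tilde K^{-1}$, and that $\phi$ identifies $G_B(\mathbb{C})$ with $G_{B'}(\mathbb{R}) \cap \phi(\mathbb{C}^{n \times n})$ for some explicit $B' \in \GL_{2n}(\mathbb{R})$ satisfying $(B')^\tp = \pm B'$. Consequently $\gamma' := \phi \circ \gamma$ is a rational curve on $G_{B'}(\mathbb{R})$ of the same degree $d$ with the same poles, and Lemma~\ref{lem:existenceR} supplies $\alpha' \in \Rat_{2s}(G_{B'}(\mathbb{R}), I_{2n})$ with poles only at $\zeta, \bar\zeta$ such that $\alpha' \gamma' \in \Rat_{d - 2s}(G_{B'}(\mathbb{R}), I_{2n})$.

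The main obstacle is that the $\alpha'$ produced by Lemma~\ref{lem:existenceR} need not descend to $\mathbb{C}^{n \times n}$, i.e., its values need not commute with $J$. I plan to overcome this by a $J$-averaging step: writing $\alpha'(t) = A'(t)/((t-\zeta)(t-\bar\zeta))^s$ with $A'(t) = cI_{2n} t^{2s} + \sum_{j=0}^{2s-1} A'_j t^j$ and $c \in \mathbb{R} \setminus \{0\}$, set $\tilde A(t) := \tfrac{1}{2}\bigl(A'(t) + J A'(t) J^{-1}\bigr)$. Three properties together make this work. First, because $P'(t) := \phi(P(t))$ commutes with $J$ as a polynomial identity in $t$, the homogeneous linear constraints $(A' P')^{(l)}(\zeta) = 0$ from the proof of Lemma~\ref{lem:existenceR} are $\mathrm{Ad}_J$-invariant and hence inherited by $\tilde A$. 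Second, the scalar $cI_{2n}$ is fixed by $\mathrm{Ad}_J$, so $\tilde A$ still has leading coefficient $cI_{2n}$. Third, a short computation using $J^2 = -I_{2n}$ gives $\tilde A(t) J = J \tilde A(t)$, so $\tilde A(t) \in \phi(\mathbb{C}^{n \times n})$ for every $t$.

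The nonlinear group identity $\tilde A(t) B' \tilde A(t)^{\sigma'} = c^2 ((t-\zeta)(t-\bar\zeta))^{2s} B'$ then follows automatically by the degree-and-leading-coefficient argument at the end of the proof of Lemma~\ref{lem:existenceR}: the divisibility of $\tilde A P'$ by $((t-\zeta)(t-\bar\zeta))^{2s}$ inherited from the linear constraints, combined with the matching of leading coefficients on both sides of a polynomial identity of degree $4s$, forces equality. Rescaling by $1/c$ and applying $\phi^{-1}$ produces the required $\alpha \in \Rat_{2s}(G_B(\mathbb{C}), I_n)$ with only poles at $\zeta, \bar\zeta$ and $\alpha \gamma \in \Rat_{d - 2s}(G_B(\mathbb{C}), I_n)$.

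For $\mathbb{F} = \mathbb{H}$, I will iterate the construction using $\mathbb{H}^{n \times n} \hookrightarrow \mathbb{R}^{4n \times 4n}$. The image is the common centralizer of two operators $J_1, J_2$ with $J_\ell^2 = -I_{4n}$ and $J_1 J_2 = -J_2 J_1$, representing right multiplication by $\qi$ and $\qj$. A short calculation using the anticommutation relation shows that $\mathrm{Ad}_{J_1}$ and $\mathrm{Ad}_{J_2}$ commute as linear endomorphisms of $\mathbb{R}^{4n \times 4n}$, each preserves the linear constraints of Lemma~\ref{lem:existenceR}, and each fixes the scalar leading coefficient. Their successive application yields a $\tilde A$ commuting with both $J_\ell$, hence landing in $\mathbb{H}^{n \times n}$, and the nonlinear identity is inherited exactly as in the complex case.
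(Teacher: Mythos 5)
Your proof is correct, and while it shares the paper's overall strategy (announced in the introduction) of transporting the problem to $\mathbb{R}^{2n\times 2n}$ and $\mathbb{R}^{4n\times 4n}$ via the regular representations, the mechanism you use for the key step is genuinely different from the paper's. The paper does \emph{not} invoke Lemma~\ref{lem:existenceR} as a black box: it restricts the unknown coefficients to the subalgebra $\psi(\mathbb{C}^{n\times n})$ (resp.\ $\varphi(\mathbb{H}^{n\times n})$) from the outset and re-runs the entire dimension count of Lemma~\ref{lem:existenceR} inside that subalgebra, using the observation $\rank(\psi(Z))=2\rank(Z)$ to get the rank bound $\rank(CM)\le 2sn$ against the $4sn^2+1$ real unknowns, and then repeating the $c\ne 0$ argument. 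You instead take the real solution $\alpha'$ off the shelf and project it into the subalgebra by averaging over $\mathrm{Ad}_J$ (resp.\ over the commuting involutions $\mathrm{Ad}_{J_1},\mathrm{Ad}_{J_2}$). Your three supporting observations are all sound: since $\phi(P)$ commutes with $J$, conjugation by $J$ preserves the divisibility constraint $(t-\zeta)^{2s}\mid A'(t)\phi(P)(t)$; the scalar leading coefficient $cI_{2n}$ is $\mathrm{Ad}_J$-fixed, so averaging cannot destroy the crucial nonvanishing of $c$; and $J^2=-I$ makes $\mathrm{Ad}_J$ an involution whose fixed-point set is exactly $\phi(\mathbb{C}^{n\times n})$, with the anticommutation $J_1J_2=-J_2J_1$ giving $\mathrm{Ad}_{J_1}\mathrm{Ad}_{J_2}=\mathrm{Ad}_{J_2}\mathrm{Ad}_{J_1}$ in the quaternionic case. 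The degree-and-leading-coefficient argument for the quadratic identity then applies verbatim. What your route buys is that the linear algebra of Lemma~\ref{lem:existenceR} is reused unchanged and the rank-doubling fact is never needed; what the paper's route buys is that no averaging or equivariance discussion is required. The only point you should spell out with care is the ``direct verification'' that $\phi$ transports $(B,\sigma)$ to a pair $(B',\sigma')$ with $\sigma'$ the restriction of a genuine involution of all of $\mathbb{R}^{2n\times 2n}$ (e.g.\ for $\sigma=\tp$ on $\mathbb{C}^{n\times n}$ one gets $\phi(X^\tp)=D\phi(X)^\tp D^{-1}$ with $D=\diag(I_n,-I_n)$, so $B'=\phi(B)D$ and $\sigma'=\tp$); this is routine but is where the group identity for $\tilde A$ must be checked to descend back to $\mathbb{C}^{n\times n}$.
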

\begin{proof}
We first deal with the case over $\mathbb{C}$. We recall that $\mathbb{C}^{n\times n}$ is embedded in $\mathbb{R}^{2n \times 2n}$ as an $\mathbb{R}$-subalgebra by 
\[
\psi: \mathbb{C}^{n\times n} \hookrightarrow \mathbb{R}^{2n\times 2n},\quad \psi(A + \ci B) = \begin{bmatrix}
A & B \\
-B & A
\end{bmatrix}.
\]

By a linear change of coordinate, we may assume that $\zeta = \ci$. We write $C(t) = c  I_n t^{2s} +\sum_{j=0}^{2s - 1} (A_j + \ci B_j) t^j$ where $c\in \mathbb{R}, A_{2s - 1},B_{2s-1}, \dots, A_0, B_0\in \mathbb{R}^{n\times n}$ are coefficients to be determined. We write $Z_j = \psi(A_j + \ci B_j), 0 \le j \le 2s-1$. Then we have $Z(t) = c  I_{2n} t^{2s} +\sum_{j=0}^{2s - 1} Z_j t^j$ and
\[
Z^{(m)}(\ci) =\frac{(2s)!}{(2s - m)!} c  I_{2n} \ci^{2s-m}  + \sum_{j=m}^{2s - 1} \frac{j!}{(j - m)!} Z_j \ci^{j-m}\in \mathbb{C}^{2n \times 2n},\quad 0 \le m \le 2s-1.
\]
We consider the homogeneous system of linear equations:
\begin{equation}\label{lem:existenceC:eq1}
(Z \psi(P))^{(l)}(\ci) = \sum_{j = 0}^l Z^{(l-j)}(\ci) P_j^\psi = 0,\quad l = 0,\dots, 2s-1
\end{equation}
where $P_j^\psi \coloneqq \psi(P)^{(j)}(\ci)\in \mathbb{C}^{2n \times 2n}, 0 \le j \le 2s-1$. 

If \eqref{lem:existenceC:eq1} has a solution of the form $(1,A_{2s-1},B_{2s-1}, \dots, A_0,B_0)\in \mathbb{R}\times (\mathbb{R}^{n\times n})^{4s}$, then $\alpha(t) \coloneqq C(t)/( t^2 + 1)^s$ is a desired  rational curve of degree $2s$. Indeed, by \eqref{lem:existenceC:eq1} we clearly have $(t - \ci)^{2s} | Z(t) \psi(P(t))$. Since both $Z(t)$ and $\psi(P(t))$ are real, we further have  $(t^2 + 1)^{2s} | Z(t) \psi(P(t))$. We notice that 
\[
(t^2 + 1)^{4s} | \left( Z \psi (P) \right) \psi(B) \left( Z \psi (P) \right)^\sigma
=
\psi ( (AP) B (AP)^\sigma)  = q^2 \psi(A B A^\sigma).
\]
Therefore, $(t^2 + 1)^{2s} | \psi(A B A^\sigma)$ as $\ci$ is a root of $q(t)$ of multiplicity $s$. By the definition of $\psi$, we derive $(t^2 + 1)^{2s} | A B A^\sigma$. The rest of the argument is the same as the one in the proof of Lemma~\ref{lem:existenceR}.

Thus, it suffices to prove that \eqref{lem:existenceC:eq1} has a solution $(c,A_{2s-1},B_{2s-1},\dots,A_0,B_0)\in \mathbb{R}\times (\mathbb{R}^{n\times n})^{4s}$ such that $c \ne 0$. To this end, 
we re-write \eqref{lem:existenceC:eq1} by   \eqref{lem:existence:eq11} as  
\begin{equation}\label{lem:existenceC:eq2}
\begin{bsmallmatrix}
cI_{2n} & Z_{2s-1} & \cdots & Z_0
\end{bsmallmatrix} C  M
= 0,
\end{equation}
where 
\begin{align*}
M &\coloneqq \begin{bsmallmatrix}
P_0^\psi & 0 & \cdots & 0  & 0 \\
P_1^\psi & P_0^\psi & \cdots & 0 & 0\\
\vdots &  \vdots & \ddots & \vdots & \vdots \\
P_{2s-2}^\psi & P_{2s-3}^\psi & \cdots & P_0^\psi & 0\\
P_{2s-1}^\psi & P_{2s-2}^\psi & \cdots & P_1^\psi & P_0^\psi
\end{bsmallmatrix} \in \mathbb{C}^{4sn \times 4sn}, \\
C &\coloneqq \begin{bsmallmatrix}
\frac{(2s)! \ci^{2s}}{(2s)!} I_{2n} & \frac{(2s)! \ci^{2s-1}}{(2s-1)!} I_{2n} &  \cdots & \frac{(2s)! \ci^{2}}{(2)!} I_{2n} & \frac{(2s)! \ci^{1}}{(1)!} I_{2n}  \\ 
\frac{(2s-1)! \ci^{2s-1} }{(2s - 1)!} I_{2n} & \frac{(2s-1)! \ci^{2s-2} }{(2s - 2)!} I_{2n} & \cdots &   \frac{(2s-1)! \ci^{1}}{(1)!} I_{2n} & \frac{(2s-1)! \ci^{0}}{(0)!} I_{2n} \\ 
\frac{(2s-2)! \ci^{2s-2} }{(2s - 2)!} I_{2n} & \frac{(2s-2)! \ci^{2s-3} }{(2s - 3)!} I_{2n} & \cdots &   \frac{(2s-2)! \ci^{0}}{(0)!} I_{2n} & 0\\ 
\vdots & \vdots & \ddots & \vdots & \vdots \\
\frac{(1)! \ci^1}{1!} I_{2n} & \frac{(1)! \ci^0}{0!} I_{2n} & \cdots & 0 & 0  \\
\frac{(0)! \ci^0}{0!} I_{2n}  & 0 & \cdots & 0 & 0  
\end{bsmallmatrix} \in \mathbb{C}^{(4s+2)n \times 4sn}.
\end{align*}
Since $\gamma(t)$ is a  rational curve on $G_B(\mathbb{C})$, $P_0,\dots, P_{2s-1}$ satisfy relations in \eqref{lem:rank:eq1}, where $P_j \coloneqq P^{(j)}(\ci)$, $0 \le j \le 2s-1$. Lemma~\ref{lem:rank} implies that $\rank (C M) \le \rank(M) \le 2sn$. Here, the last inequality follows from the observation that $\rank (\psi(Z)) = 2 \rank (Z)$ for any $Z\in \mathbb{C}^{n\times n}$. The homogeneous linear system \eqref{lem:existenceC:eq2} imposes at most $4 sn^2$ real constraints on $4sn^2 + 1$ real variables $(c,A_{2s-1},B_{2s-1}, \dots, A_0,B_0)$. Therefore, \eqref{lem:existenceC:eq2} has a real solution. The existence of a solution $(c,A_{2s-1},B_{2s-1},\dots,A_0,B_0)\in \mathbb{R}\times (\mathbb{R}^{n\times n})^{4s}$ where $c \ne 0$ follows by the argument in the proof of Lemma~\ref{lem:existenceR}. 

For the case over $\mathbb{H}$. We may embed $\mathbb{H}^{n\times n}$ into $\mathbb{R}^{4n \times 4n}$ as an $\mathbb{R}$-subalgebra by 
\[
\varphi: \mathbb{H}^{n\times n} \hookrightarrow \mathbb{R}^{4n\times 4n},\quad \varphi(A + \qi B + \qj C + \qk D) = \begin{bsmallmatrix}
A & B  & C & D\\
-B & A & -D & C \\
-C & D  & A & -B \\
-D & -C & B & A
\end{bsmallmatrix}.
\] 
The rest of the proof is the same as the one for the case over $\mathbb{C}$.
\end{proof}
\begin{theorem}[Decomposition of rational curves on $G_B(\mathbb{F})$]\label{thm:KempeGBF}
If $\gamma(t) \in \Rat (G_B(\mathbb{F}),I_n)$ has poles of multiplicities $s_1,\dots, s_l$, then $\gamma (t) = \beta_1(t) \cdots \beta_l(t)$ for some $\beta_j(t) \in \Rat_{2s_j} (G_B(\mathbb{F}),I_n)$,  $1 \le j \le l$.  In particular, if all the poles of $\gamma(t)$ are simple, then $\gamma(t)$ can be decomposed into a product of $d$ quadratic  rational curves.
\end{theorem}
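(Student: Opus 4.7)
The plan is to proceed by strong induction on the degree $d = 2\sum_{j=1}^{l} s_j$ of $\gamma$. The base case $d = 0$ is trivial, since then $\gamma$ is the constant curve $I_n$ and the statement holds vacuously. For the inductive step, I would pick an arbitrary pole $\zeta_1$ of $\gamma$ with multiplicity $s_1$ and invoke the key degree-reduction result: Lemma~\ref{lem:existenceR} if $\mathbb{F} = \mathbb{R}$, or Lemma~\ref{lem:existenceC} if $\mathbb{F} = \mathbb{C}$ or $\mathbb{H}$. This furnishes a curve $\alpha \in \Rat_{2s_1}(G_B(\mathbb{F}), I_n)$ whose only poles are $\zeta_1, \overline{\zeta_1}$ such that $\alpha(t)\gamma(t) \in \Rat_{d - 2s_1}(G_B(\mathbb{F}), I_n)$.

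Next I would analyze the poles of $\alpha\gamma$. Since $\alpha$ has no poles outside $\{\zeta_1, \overline{\zeta_1}\}$, and the degree drops by exactly $2s_1$, the remaining poles of $\alpha\gamma$ are precisely $\zeta_2, \overline{\zeta_2}, \ldots, \zeta_l, \overline{\zeta_l}$ with multiplicities $s_2, \ldots, s_l$ respectively. Applying the inductive hypothesis to $\alpha\gamma$ yields a factorization $\alpha\gamma = \beta_2 \cdots \beta_l$ with $\beta_j \in \Rat_{2s_j}(G_B(\mathbb{F}), I_n)$ for each $2 \le j \le l$. Setting $\beta_1 \coloneqq \alpha^{-1}$, Proposition~\ref{prop:inverse} ensures that $\beta_1 \in \Rat_{2s_1}(G_B(\mathbb{F}), I_n)$ with the same poles $\{\zeta_1, \overline{\zeta_1}\}$ as $\alpha$. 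Multiplying through gives $\gamma = \beta_1 \beta_2 \cdots \beta_l$, completing the induction. The simple-pole case is an immediate specialization with $s_1 = \cdots = s_l = 1$ and $l = d$, so that every $\beta_j$ is quadratic.

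The heavy lifting has already been isolated into Lemmas~\ref{lem:existenceR} and \ref{lem:existenceC}: once a pole-cancelling cofactor $\alpha$ of the minimal possible degree $2s_1$ is known to exist inside $G_B(\mathbb{F})$, the rest is formal manipulation, and Proposition~\ref{prop:inverse} — crucially guaranteeing $\deg(\alpha^{-1}) = \deg(\alpha)$ with the same poles — provides the last ingredient needed to recast $\alpha\gamma$ into the form $\alpha^{-1}(\alpha\gamma)$ without inflating the degree of the first factor. The main obstacle I anticipate is therefore not in the induction itself, but rather in the verification that $\alpha$ genuinely exists as a real (respectively complex or quaternionic) rational curve whose leading term is scalar (so $\alpha(\infty) = I_n$); this is exactly the rank-count argument underlying the degree-reduction lemmas, and it is what necessitates the separate real/complex/quaternionic embeddings $\mathbb{C}^{n\times n} \hookrightarrow \mathbb{R}^{2n\times 2n}$ and $\mathbb{H}^{n\times n} \hookrightarrow \mathbb{R}^{4n\times 4n}$ flagged in the introduction.
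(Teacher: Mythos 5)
Your proposal is correct and follows essentially the same route as the paper: repeated application of the degree-reduction Lemmas~\ref{lem:existenceR} and \ref{lem:existenceC} to obtain cofactors $\alpha_j$ with $\alpha_l\cdots\alpha_1\gamma = I_n$, followed by Proposition~\ref{prop:inverse} to convert each $\alpha_j^{-1}$ into a factor $\beta_j$ of the same degree and poles. Your phrasing as a strong induction on $d$ versus the paper's one-shot iteration is a purely cosmetic difference (the paper's introduction itself describes the argument as an induction on $d$).
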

\begin{proof}
By Lemmas~\ref{lem:existenceR} and \ref{lem:existenceC}, there exist  rational curves $\alpha_1(t),\dots, \alpha_l(t)$ of degrees $2s_1,\dots, 2s_l$ respectively such that $\alpha_l(t) \cdots \alpha_1(t)  \gamma(t) = 1$. For each $1 \le j \le l$, we let $\beta_j(t) = \alpha_j(t)^{-1}$. Proposition~\ref{prop:inverse} indicates that $\beta_j(t)$ is a  rational curve on $G_B$ of degree $2s_j$ and this completes the proof.
\end{proof}
\begin{remark}
One can easily construct a rational curve on $G_B(\mathbb{F})$ with multiple poles,  which can be further decomposed into a product of low degree rational curves.  However, Example~\ref{ex:KempeGBF} indicates the existence of quartic rational curves with multiple poles,  which can not be decomposed into a product of two quadratic rational curves.
\end{remark}
\begin{example}\label{ex:KempeGBF}
We first consider
\[
\gamma(t) = I_3 +\frac{1}{t^2+1} W_1 +\frac{1}{2(t^2+1)^2} W_2,   \quad W_1 \coloneqq
\begin{bsmallmatrix}
 0 & 1  &   -1  \\
 -1 &  0  &   0 \\
 -1 &  0  &   0  
\end{bsmallmatrix}, 
\quad W_2\coloneqq 
\begin{bsmallmatrix}
 0 & 0  &   0  \\
 0 &  -1  &   1 \\
 0 &  -1  &  1  
\end{bsmallmatrix}.  
\]
It is straightforward to verify that $\gamma \in \Rat_4(\SO^+_{2,1}(\mathbb{R}), I_3)$.  We prove that $\gamma$ is not a product of two quadratic rational curves on $\SO^+_{2,1}(\mathbb{R},I_3)$.  Assume on the contrary that $\gamma(t) = \alpha(t) \beta(t)$ for some $\alpha,  \beta \in  \Rat_2(\SO^+_{2,1}(\mathbb{R}), I_3)$ which are parametrized as 
\[
 \alpha(t)=\frac{t^2 I_3 +t A_1+A_0}{t^2+1}, \quad  \beta(t)=\frac{t^2 I_3 + tB_1 + B_0}{t^2+1}.
\]
Since $\alpha(t)^\tp I_{2,1} \gamma(t) =I_{2,1} \beta(t)$,  the numerator $N(t)$ of $\alpha(t)^\tp I_{2,1} \gamma(t)$ must be divisible by $(t^2 + 1)^2$,  where 
\[
N(t) = \left(t^2 I_3 + tA_1^{\tp}+A_0^{\tp}\right)I_{2,1}\left(2(t^2+1)^2 I_3 + 2(t^2+1) W_1 + W_2\right).
\]
The remainder of $N(t)$ divided by $(t^2 + 1)^2$ is 
\[
\left(tA_1^{\tp}+A_0^{\tp}- I_3 \right)I_{2,1}(2(t^2+1) W_1+W_2)+(t^2+1) I_{2,1} W_2 = 0. 
\]
Since $\alpha \in \Rat_{2}(\SO_{2,1}(\mathbb{R}), I_2)$,  we obtain the equations for $A_1$:
\[
A_1 I_{2,1}+I_{2,1} A_1^{\tp}=A_1^{\tp} I_{2,1} W_1=A_1^{\tp} I_{2,1} W_2=0.
\]
This implies $A_1 = 0$ and we have $A_0= I_3$ by $\gamma_A(t)^{T} I_{2,1} \gamma_A(t)= I_{2,1}$.  This leads to a contradictory equality $0 = N(t) = (t^2 + 1)I_{2,1} W_2$.

Next we consider the rational curve on $\O_4(\mathbb{C})$ defined by 
\[
\gamma(t) =  
I_4 + \frac{t}{(t^2+1)^2}  U,\quad 
U \coloneqq \begin{bsmallmatrix}
 0 & -\ci  &   1  & 0 \\
 \ci &  0  &   0  & 1 \\
 -1 &  0  &   0  & \ci \\
 0 & -1  &  -\ci  & 0
\end{bsmallmatrix}.
\]
It is straightforward to verify $U +U^\tp=0$ and $U^2=0$.  We claim that $\gamma(t)\neq \alpha(t)\beta(t)$,  where $\alpha,\beta \in \Rat_2(\O_4(\mathbb{C}), I_4)$.  Otherwise, we write 
\[
\alpha(t)=\frac{t^2 I_4 + tA_1+A_0}{t^2+1}, \quad  \beta(t)= \frac{t^2 I_4 +tB_1+B_0}{t^2+1},          
\]
where $A_0,A_1,B_0,B_1$ are matrices such that $\alpha(t)^{\tp}\alpha(t)=\beta(t)^{\tp}\beta(t)= I_4 $. We obtain 
\[
\frac{t^2 I_4 +tB_1+B_0}{t^2+1} = \beta(t) = \alpha(t)^\tp \gamma(t) = \frac{t^2 I_4 + tA_1^\tp+A_0^\tp}{t^2+1}
\frac{(t^2+1)^2 I_4 + t U}{ (t^2+1)^2 },
\]
from which we conclude that $(t^2 + 1)^2$ divides $( t^2 I_4 + tA_1^\tp+A_0^\tp ) \left( 
(t^2+1)^2 I_4 + t U \right)$. However, this leads to a contradiction that $(t^2 + 1)^2$ must divide $t U (t^2 I_4 + t A_1^\tp + A_0^\tp)$.
\end{example}
\subsection{Decomposition of rational curves on \texorpdfstring{$\ISO^+_{p,n-p}(\mathbb{R})$}{IGB}}\label{subsec:KempeEn}
We recall that
\[
\ISO^+_{p,n-p}(\mathbb{R}) \coloneqq \left\lbrace
\begin{bsmallmatrix}
Q & u \\
0 & 1
\end{bsmallmatrix} \in \GL_{n+1}(\mathbb{R}): Q\in \SO^+_{p,n-p}(\mathbb{R}),  u\in \mathbb{R}^n
\right\rbrace.
\]
A  rational curve $\gamma(t)$ on $\ISO^+_{p,n-p}(\mathbb{R})$ can be uniquely written as 
\begin{equation}\label{eq:curveSE}
\gamma(t) = 
\begin{bsmallmatrix}
\frac{Q(t)}{q_1(t)} & \frac{u(t)}{q_2(t)} \\
0 & 1
\end{bsmallmatrix},
\end{equation}
where $q_1(t)$ (resp. $q_2(t)$, $Q(t) = (Q_{ij}(t))_{i,j=1}^n$ and $u(t) = (u_i(t))_{i=1}^n$) is a real polynomial (resp. polynomial, $\mathbb{R}^{n\times n}$-valued polynomial and $\mathbb{R}^n$-valued polynomial) such that 
\begin{itemize}
\item $q_1(t), q_2(t)$ are monic with no real roots;
\item $\gcd(q_1(t),Q_{11}(t),\dots, Q_{nn}(t))  = \gcd(q_2(t), u_1(t),\dots, u_n(t)) = 1$;
\item $Q(t)^\tp I_{p,n-p} Q(t) = q_1(t)^2 I_{p,n-p}$;
\item $\lim_{t \to \infty} Q(t)/q_1(t) = I_n$;
\item $\lim_{t \to \infty} u(t)/q_2(t) = 0$.
\end{itemize}

\begin{lemma}\label{lem:existenceSE}
Let $\gamma(t) \in \Rat(\ISO_{p,n-p}^+(\mathbb{R}), I_n)$ be parametrized as in \eqref{eq:curveSE}.  Suppose that $q_1$ has $l$ roots of multiplicities $s_1,\dots, s_l$, respectively. Then there exist  rational curves $\alpha_1,\dots, \alpha_l$ on $\SO^+_{p,n-p}(\mathbb{R})$ of degrees $2s_1,\dots, 2s_l$ respectively such that 
\[
\gamma(t) = \begin{bsmallmatrix}
I_n & u(t)/q_2(t) \\
0 & 1 
\end{bsmallmatrix} 
\begin{bsmallmatrix}
\alpha_1(t) & 0 \\
0 & 1 
\end{bsmallmatrix} \cdots \begin{bsmallmatrix}
\alpha_l(t) & 0 \\
0 & 1 
\end{bsmallmatrix}.
\]
\end{lemma}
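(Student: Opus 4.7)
The plan is to exploit the semidirect-product structure of $\ISO^+_{p,n-p}(\mathbb{R}) = \mathbb{R}^n \ltimes \SO^+_{p,n-p}(\mathbb{R})$ in order to peel off the translation factor and reduce to the known decomposition theorem for $G_B(\mathbb{F})$. Define
\[
\tau(t) \coloneqq \begin{bsmallmatrix} I_n & u(t)/q_2(t) \\ 0 & 1 \end{bsmallmatrix}, \qquad \rho(t) \coloneqq \begin{bsmallmatrix} Q(t)/q_1(t) & 0 \\ 0 & 1 \end{bsmallmatrix}.
\]
A direct matrix multiplication shows $\gamma(t) = \tau(t)\rho(t)$, and $\tau(t)$ is a rational curve on $\ISO^+_{p,n-p}(\mathbb{R})$ taking value $I_{n+1}$ at $\infty$ because $q_2$ has no real roots and $u(t)/q_2(t) \to 0$ as $t \to \infty$ by the normalization of the parametrization.

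Next I would observe that $Q(t)/q_1(t)$ is a rational curve on $\O_{p,n-p}(\mathbb{R}) = G_{I_{p,n-p}}(\mathbb{R})$ passing through $I_n$ at $\infty$, hence its image is contained in the identity component $\SO^+_{p,n-p}(\mathbb{R})$ because $\mathbb{P}_{\mathbb{R}}^1$ is connected. Now Theorem~\ref{thm:KempeGBF} applied with $\mathbb{F} = \mathbb{R}$ and $B = I_{p,n-p}$ yields a factorization
\[
Q(t)/q_1(t) = \alpha_1(t) \cdots \alpha_l(t),
\]
with $\alpha_j \in \Rat_{2s_j}(\SO^+_{p,n-p}(\mathbb{R}),I_n)$, each $\alpha_j$ carrying the pole of $q_1$ of multiplicity $s_j$. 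Under the block embedding $X \mapsto \diag(X,1)$ of $\SO^+_{p,n-p}(\mathbb{R})$ into $\ISO^+_{p,n-p}(\mathbb{R})$, which is a group homomorphism, the product structure is preserved, giving
\[
\rho(t) = \begin{bsmallmatrix} \alpha_1(t) & 0 \\ 0 & 1 \end{bsmallmatrix} \cdots \begin{bsmallmatrix} \alpha_l(t) & 0 \\ 0 & 1 \end{bsmallmatrix}.
\]
Combining with $\gamma(t) = \tau(t)\rho(t)$ completes the proof.

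The argument is essentially formal: no substantive obstacle arises beyond invoking Theorem~\ref{thm:KempeGBF}. The only mild subtlety is verifying that the rotation block lands in the identity component $\SO^+_{p,n-p}(\mathbb{R})$ rather than a disconnected component of $\O_{p,n-p}(\mathbb{R})$, which is handled by the connectedness of $\mathbb{P}_{\mathbb{R}}^1$ together with the hypothesis $\gamma(\infty) = I_{n+1}$. No degree gets created or destroyed by the translation factor, so the multiplicity statement transfers directly from Theorem~\ref{thm:KempeGBF}.
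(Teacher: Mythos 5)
Your proposal is correct and follows essentially the same route as the paper: write $\gamma = \tau\cdot\rho$ using the semidirect-product structure, observe that the rotation block $Q(t)/q_1(t)$ is a rational curve on $\SO^+_{p,n-p}(\mathbb{R})$ through $I_n$, and apply Theorem~\ref{thm:KempeGBF} to factor it, transferring the factorization via the block embedding. The extra remark on connectedness forcing the image into the identity component is a worthwhile detail the paper leaves implicit.
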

\begin{proof}
We denote $\eta(t) \coloneqq Q(t)/q_1(t)$ and $x(t) \coloneqq u(t)/q_2(t)$. 
It is straightforward to verify that $\eta$ is a  rational curve on $\SO^+_{p,n-p}(\mathbb{R})$ and $\gamma(t) =
\begin{bsmallmatrix}
I_n & x(t) \\
0 & 1 
\end{bsmallmatrix}
\begin{bsmallmatrix}
\eta(t) & 0 \\
0 & 1
\end{bsmallmatrix}$. The desired decomposition of $\gamma(t)$ follows immediately from the decomposition of $\eta$ whose existence is guaranteed by Theorem~\ref{thm:KempeGBF}.
\end{proof}
\begin{lemma}\label{lem:circular}
Assume that the image of $x(t) \in \Rat_{2d}(\mathbb{R}^n,0)$ lies in a two dimensional subspace $\mathbb{V} \subseteq \mathbb{R}^{n}$.  Then  there are 
 rotations 
$\tau_1,\dots, \tau_{4d} \in \Rat_{2}(\SE_{2}(\mathbb{R}) ,I_2)$ and $Q \in \SO_n(\mathbb{R})$ such that
\begin{equation}\label{lem:circular:eq1}
\begin{bsmallmatrix}
I_n & x(t) \\
0 & 1
\end{bsmallmatrix} = \begin{bsmallmatrix}
Q^\tp & 0 \\
0 & 1
\end{bsmallmatrix} 
\iota_n(\tau_1) \cdots \iota_n(\tau_{4d})
\begin{bsmallmatrix}
Q & 0 \\
0 & 1
\end{bsmallmatrix}.
\end{equation}
Here $\iota_n: \SE_2(\mathbb{R}) \hookrightarrow \SE_n(\mathbb{R})$ is defined by 
\[
\begin{bsmallmatrix}
A & u \\
0 & 1
\end{bsmallmatrix} \mapsto 
\begin{bsmallmatrix}
A & 0 & u \\
0 & I_{n-2} & 0 \\
0 &  0 & 1
\end{bsmallmatrix}.
\]
\end{lemma}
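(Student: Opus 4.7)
The plan is to reduce the claim to the planar case $n=2$ by an orthogonal change of basis, and then to invoke the factorization theory of planar motion polynomials developed in \cite{GKLRSV17}, which is the algebraic form of Kempe's Universality Theorem for planar motions.

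\emph{Step 1 (reduction to $n=2$).} Since $\mathbb{V}\subseteq \mathbb{R}^n$ is two-dimensional, I choose $Q\in \SO_n(\mathbb{R})$ such that $Q(\mathbb{V}) = \mathbb{R} e_1 \oplus \mathbb{R} e_2$. Then $Q\, x(t)=(y(t),0,\dots,0)^\tp$ for some planar rational curve $y(t)\in \Rat_{2d}(\mathbb{R}^2,0)$. Conjugating the desired identity \eqref{lem:circular:eq1} by $\begin{bsmallmatrix} Q & 0 \\ 0 & 1\end{bsmallmatrix}$ and using that $\iota_n$ embeds an $\SE_2(\mathbb{R})$-element into the first two coordinate directions of $\SE_n(\mathbb{R})$, the task reduces to writing the planar translation
\[
T(t)=\begin{bsmallmatrix} I_2 & y(t) \\ 0 & 1 \end{bsmallmatrix}\in \Rat_{2d}(\SE_2(\mathbb{R}),I_3)
\]
as a product of $4d$ quadratic rational rotation curves on $\SE_2(\mathbb{R})$.

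\emph{Step 2 (planar factorization).} Identifying $\mathbb{R}^2\simeq \mathbb{C}$, I encode $T(t)$ as a motion polynomial over the planar dual-complex ring $\mathbb{C}[\epsilon]/(\epsilon^2)$ whose primal part has all real roots and whose dual part has degree $2d$. The factorization theorem of \cite{GKLRSV17} asserts that such a motion polynomial factors into linear motion polynomials, each corresponding to a quadratic rational rotation curve on $\SE_2(\mathbb{R})$. To track the combinatorics of this factorization, I first apply partial-fraction decomposition to write $y(t)=\sum_{j=1}^{l} y_j(t)$, where each $y_j$ has denominator a power $s_j$ of a single irreducible real quadratic and $\sum_j s_j = d$; correspondingly $T(t)=\prod_j T_j(t)$ with $T_j$ the translation by $y_j$. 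Each $T_j$ then factors, via the motion-polynomial factorization applied to the pole pair $\{\zeta_j,\overline{\zeta}_j\}$, into $4 s_j$ quadratic rotations with poles confined to $\{\zeta_j,\overline{\zeta}_j\}$. Multiplying through yields the required $4d$ factors. Finally, re-embedding via $\iota_n$ and conjugating back by $Q$ gives \eqref{lem:circular:eq1}.

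\emph{Main obstacle.} The genuine difficulty is not the orthogonal reduction but extracting exactly $4s_j$ quadratic rotation factors from each multiplicity-$s_j$ pole pair while preserving quadratic degrees throughout. Unlike Theorem~\ref{thm:KempeGBF}, which distributes poles among factors of higher degree, here the rotation parts must cancel in pairs so that the cumulative product remains a translation, and each intermediate rotation must have its pole pair fixed within $\{\zeta_j,\overline{\zeta}_j\}$. This requires an inductive construction on $s_j$: at each step one multiplies on the left by a pair of rotations with opposite angles whose translation contribution cancels the leading term of the accumulated residue modulo a lower power of the irreducible quadratic. Verifying that this yields quadratic curves in $\Rat_2(\SE_2(\mathbb{R}),I_3)$—and the count $4s_j$ per pole-pair—is the heart of the argument and is precisely what the planar motion-polynomial factorization of \cite{GKLRSV17} provides.
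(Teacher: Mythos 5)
Your proposal is correct and follows essentially the same route as the paper: an orthogonal change of basis $Q$ reduces the statement to factoring the planar translation by $y(t)$, which is then handled by citing the motion-polynomial factorization of \cite{GKLRSV17}, and the result is transported back via $\iota_n$ and conjugation by $Q$. The paper simply invokes \cite{GKLRSV17} as a black box for the count of $4d$ quadratic rotations, whereas you sketch its internal mechanics; the substance of the argument is the same.
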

\begin{proof}
We denote $\beta(t) \coloneqq \begin{bsmallmatrix}
I_n & x(t) \\
0 & 1
\end{bsmallmatrix}$.  Let $Q\in \SO_n(\mathbb{R})$ be such that $Q \mathbb{V} = \mathbb{R}^2 \times \{ 0\} \subseteq \mathbb{R}^n$. We have 
\[
\begin{bsmallmatrix}
Q & 0 \\
0 & 1
\end{bsmallmatrix} 
\beta(t)
\begin{bsmallmatrix}
Q^\tp & 0 \\
0 & 1
\end{bsmallmatrix} = 
\begin{bsmallmatrix}
I_n & Q x(t) \\
0 & 1
\end{bsmallmatrix}.
\]
Since $x(t)$ lies in $\mathbb{V}$,  $Q x(t) = (y(t),  0)^\tp$ lies in $\mathbb{R}^2 \times \{0\} \subseteq \mathbb{R}^n$ and by \cite{GKLRSV17}
, there are rotations 
$\tau_1,\dots, \tau_{4d} \in \Rat_{2}(\SE_2(\mathbb{R}),  I_2)$ such that 
\[
\begin{bsmallmatrix}
I_2 & y(t)^\tp \\
0 & 1
\end{bsmallmatrix} = \tau_1(t) \cdots \tau_{4d}(t).
\]
The proof is complete by applying $\iota_n$ to both sides.  
\end{proof}

\begin{lemma}\label{lem:circular SO21}
For each $x(t) \in \Rat_{2d}(\mathbb{R}^3,0)$,  there exist $P\in \ISO_{2,1}(\mathbb{R})$ and rotations 
$\tau_1,\dots, \tau_{8d} \in \Rat(\SE_2(\mathbb{R})), I_3)$ such that 
\begin{equation}\label{lem:circular SO21:eq1}
\begin{bsmallmatrix}
I_3 & x(t) \\
0 & 1
\end{bsmallmatrix} = 
\iota_3(\tau_1) \cdots 
\iota_3(\tau_{4d})
P
\iota_3(\tau_{4d+1}) \cdots \iota_3(\tau_{8d})P^{-1},
\end{equation}
where $\iota_3: \SE_2(\mathbb{R}) \hookrightarrow \SE_3(\mathbb{R}) \cap \ISO_{2,1}(\mathbb{R})$ is the map defined in Lemma~\ref{lem:circular}.
\end{lemma}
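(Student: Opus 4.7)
The plan is to decompose $x(t)$ as a sum $y(t)+z(t)$ with $y(t)$ and $z(t)$ taking values in two distinct spacelike $2$-planes, then apply Lemma~\ref{lem:circular} separately to each component. Since the translation subgroup of $\ISO_{2,1}(\mathbb{R})$ is abelian, the factorization $\begin{bsmallmatrix} I_3 & x(t) \\ 0 & 1 \end{bsmallmatrix}=\begin{bsmallmatrix} I_3 & y(t) \\ 0 & 1 \end{bsmallmatrix}\begin{bsmallmatrix} I_3 & z(t) \\ 0 & 1 \end{bsmallmatrix}$ is automatic. The natural spacelike plane is $\mathbb{V}_0:=\operatorname{span}(e_1,e_2)$, which is precisely the plane of translations inside the subgroup $\iota_3(\SE_2(\mathbb{R}))\subset\ISO_{2,1}(\mathbb{R})$. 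Consequently, Lemma~\ref{lem:circular} applied with $Q=I_3$ gives $\begin{bsmallmatrix} I_3 & y(t) \\ 0 & 1 \end{bsmallmatrix}=\iota_3(\tau_1)\cdots\iota_3(\tau_{4d})$ whenever $y(t)\in\mathbb{V}_0$.

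For the $z$-part I need a second spacelike plane $\mathbb{V}_1$ with $\mathbb{V}_0+\mathbb{V}_1=\mathbb{R}^3$ together with an element of $\SO^+_{2,1}(\mathbb{R})$ carrying $\mathbb{V}_0$ onto $\mathbb{V}_1$. A convenient choice is the hyperbolic rotation
\[
Q=\begin{bsmallmatrix} \cosh\theta & 0 & \sinh\theta \\ 0 & 1 & 0 \\ \sinh\theta & 0 & \cosh\theta \end{bsmallmatrix}\in\SO^+_{2,1}(\mathbb{R}),\quad \theta\neq 0,
\]
for which $\mathbb{V}_1:=Q\mathbb{V}_0=\operatorname{span}(\cosh\theta\,e_1+\sinh\theta\,e_3,\,e_2)$ is again spacelike and meets $\mathbb{V}_0$ only in $\operatorname{span}(e_2)$. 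Setting $P=\begin{bsmallmatrix} Q & 0 \\ 0 & 1 \end{bsmallmatrix}\in\ISO_{2,1}(\mathbb{R})$, one checks $P\begin{bsmallmatrix} I_3 & w \\ 0 & 1 \end{bsmallmatrix}P^{-1}=\begin{bsmallmatrix} I_3 & Qw \\ 0 & 1 \end{bsmallmatrix}$. Writing $x(t)=(x_1,x_2,x_3)$, I would take $z(t):=(x_3(t)\coth\theta,\,0,\,x_3(t))\in\mathbb{V}_1$ and $y(t):=x(t)-z(t)\in\mathbb{V}_0$, so that $w(t):=Q^{-1}z(t)=(x_3(t)/\sinh\theta,\,0,\,0)\in\mathbb{V}_0$. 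All three rational curves $y(t)$, $z(t)$, $w(t)$ share the denominator of $x(t)$, hence have degree at most $2d$.

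A second application of Lemma~\ref{lem:circular} (again with $Q=I_3$) to $w(t)\in\mathbb{V}_0$ produces rotations $\tau_{4d+1},\dots,\tau_{8d}\in\Rat_2(\SE_2(\mathbb{R}),I_3)$ with $\iota_3(\tau_{4d+1})\cdots\iota_3(\tau_{8d})=\begin{bsmallmatrix} I_3 & w(t) \\ 0 & 1 \end{bsmallmatrix}$; if $y(t)$ or $w(t)$ has degree strictly less than $2d$ I pad with identity rotations. Conjugating the second product by $P$ converts the translation by $w(t)$ into the translation by $Qw(t)=z(t)$, and multiplying the two pieces yields precisely the identity claimed in Lemma~\ref{lem:circular SO21}.

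The main obstruction, and the reason a single $P$-conjugation enters rather than a direct appeal to Lemma~\ref{lem:circular}, is that $P\in\ISO_{2,1}(\mathbb{R})$ forces the linear part $Q$ to preserve the form $I_{2,1}$. A single $\SO_{2,1}$-conjugate of $\iota_3(\SE_2(\mathbb{R}))$ therefore realizes translations only along one spacelike $2$-plane, and a timelike direction such as $e_3$ is unreachable alone. Two distinct spacelike planes through the origin in $\mathbb{R}^{3}$ already span $\mathbb{R}^{3}$, which is what both dictates the need for an auxiliary $P$ and explains the jump from $4d$ factors in Lemma~\ref{lem:circular} to $8d$ factors here.
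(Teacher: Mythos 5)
Your proof is correct and follows essentially the same route as the paper: split the translation into a component lying in the standard spacelike plane spanned by $e_1,e_2$ and a component along a second spacelike direction, then conjugate the latter back into $\iota_3(\SE_2(\mathbb{R}))$ by a fixed element of $\SO^+_{2,1}(\mathbb{R})$ before applying Lemma~\ref{lem:circular} to each piece. The only difference is cosmetic: the paper takes the auxiliary direction $(1,1,1)^\tp$ together with an explicit matrix $Q$ with entries involving $\sqrt{2},\sqrt{3}$, whereas you use a hyperbolic boost in the $e_1$--$e_3$ plane; both choices work equally well.
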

\begin{proof}
We parametrize $x(t)$ as $x(t) = \begin{bsmallmatrix}
x_1(t) & x_2(t) & x_3(t)
\end{bsmallmatrix}^\tp$ and observe that 
\[
x(t) = \beta_1(t) + \beta_2(t),\quad 
\beta_1(t)  \coloneqq \begin{bsmallmatrix}
x_1(t) - x_3(t) \\
x_2(t) - x_3(t) \\ 
0
\end{bsmallmatrix},\quad
\beta_2(t) \coloneqq \begin{bsmallmatrix}
x_3(t) \\
x_3(t) \\ 
x_3(t)
\end{bsmallmatrix},
\]
which implies 
\begin{equation}\label{lem:circular SO21:eq2}
\begin{bsmallmatrix}
I_3 & x(t) \\
0 & 1
\end{bsmallmatrix}
= 
\begin{bsmallmatrix}
I_3 & \beta_1(t) \\
0 & 1
\end{bsmallmatrix}
\begin{bsmallmatrix}
I_3 & \beta_2(t) \\
0 & 1
\end{bsmallmatrix}.
\end{equation}
By Lemma~\ref{lem:circular},  the first factor of the right side of \eqref{lem:circular SO21:eq2} admits a decomposition of the form \eqref{lem:circular:eq1} with $Q = I_3$.  Therefore,  it suffices to decompose the second factor.  To this end,  we let 
\[
Q = \begin{bsmallmatrix}
-\sqrt{3} u, &  \sqrt{3}v & \sqrt{2}  \\
 v &  u &  0 \\
\sqrt{2}u &  -\sqrt{2}v  &  -\sqrt{3}
\end{bsmallmatrix},\quad u \coloneqq \frac{1 + \sqrt{3}}{2\sqrt{2}},\quad v \coloneqq \frac{1 - \sqrt{3}}{2\sqrt{2}}.
\]
It is straightforward to verify that $Q\in \SO_{2,1}^+(\mathbb{R})$ and 
\[
\begin{bsmallmatrix}
1 & 0 & 0  & x_3(t) \\
0 & 1 & 0 &  x_3(t) \\
0 & 0 & 1 & x_3(t) \\
0 & 0 & 0 & 1
\end{bsmallmatrix}
= 
P
\begin{bsmallmatrix}
1 & 0 & 0  & -\frac{\sqrt{2}}{2} x_3(t)  \\
0 & 1 & 0 &  \frac{\sqrt{2}}{2} x_3(t) \\
0 & 0 & 1 & 0 \\
0 & 0 & 0 & 1
\end{bsmallmatrix} P^{-1},\quad P \coloneqq \begin{bsmallmatrix}
I_{2,1}Q^\tp I_{2,1} & 0 \\
0 & 1
\end{bsmallmatrix}.
\]
By Lemma~\ref{lem:circular},  we obtain a decomposition of the second factor of the right side of \eqref{lem:circular SO21:eq2} and this completes the proof.
\end{proof}

\begin{theorem}[Decomposition of rational curves on $\ISO^+_{p,n-p}(\mathbb{R})$]\label{thm:KempeSE}
Let $p\le n$ be non-negative integers and let $\gamma(t) \in \Rat(\ISO_{p,n-p}^+(\mathbb{R}), I_n)$ be parametrized as in \eqref{eq:curveSE}.  Suppose that $\deg(q_2) = 2d_2$ and that $q_1$ has $l$ roots of multiplicities $s_1,\dots, s_l$, respectively.  Then there exist $N \coloneqq 4d_2 ( \lceil p/2 \rceil + \lceil (n-p)/2 \rceil)$ quadratic rational curves $\beta_1,\dots, \beta_N \in \Rat_2(\SO_2,I_2)$,  $N$ matrices $P_1,\dots, P_N \in \ISO_{p,n-p}^+(\mathbb{R})$ and $l$  rational curves $\alpha_1,\dots, \alpha_l$ on $\SO^+_{p,n-p}(\mathbb{R})$ of degrees $2s_1,\dots, 2s_l$ respectively such that  
\[
\gamma(t) = \left( P_1 
\begin{bsmallmatrix} 
\beta_1(t) & 0 \\
0 & I_{n-1}
\end{bsmallmatrix}
 P_1^{-1} \right) \cdots \left(P_N 
\begin{bsmallmatrix} 
\beta_N(t) & 0 \\
0 & I_{n-1}
\end{bsmallmatrix}
 P_N^{-1}\right) 
 \begin{bsmallmatrix}
\alpha_1(t) & 0 \\
0 & 1 
\end{bsmallmatrix} \cdots \begin{bsmallmatrix}
\alpha_l(t) & 0 \\
0 & 1 
\end{bsmallmatrix}.
\]
\end{theorem}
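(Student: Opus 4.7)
The plan is first to peel off the rotational contribution using Lemma~\ref{lem:existenceSE}, thereby reducing the problem to decomposing a pure translation, and then to decompose that translation by splitting it along the signature of $I_{p,n-p}$. Concretely, applying Lemma~\ref{lem:existenceSE} to $\gamma(t)$ produces the rational curves $\alpha_1,\dots,\alpha_l$ on $\SO^+_{p,n-p}(\mathbb{R})$ of the required degrees $2s_1,\dots,2s_l$, together with a single pure-translation factor $\begin{bsmallmatrix} I_n & x(t) \\ 0 & 1 \end{bsmallmatrix}$, where $x(t) \coloneqq u(t)/q_2(t) \in \Rat_{2d_2}(\mathbb{R}^n,0)$. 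It therefore suffices to write this translation as a product of $N$ conjugates of embedded $\SO_2$-rotations of the prescribed form.

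Next, split $x(t) = x^+(t) + x^-(t)$ according to the signature, with $x^+(t) \in \mathbb{R}^p \times \{0\}$ and $x^-(t) \in \{0\} \times \mathbb{R}^{n-p}$; since translations in $\mathbb{R}^n$ commute, the translation factor splits multiplicatively. Cover $\mathbb{R}^p$ (resp.~$\mathbb{R}^{n-p}$) by $\lceil p/2\rceil$ (resp.~$\lceil(n-p)/2\rceil$) two-dimensional positive-definite (resp.~negative-definite) subspaces whose sum is the full factor, and accordingly decompose $x^+(t) = \sum_j x_j^+(t)$ and $x^-(t) = \sum_j x_j^-(t)$ into rational curves of degree at most $2d_2$, each valued in a single two-dimensional definite subspace. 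The corresponding translation blocks then factor multiplicatively into the individual subspace pieces.

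For each such piece, apply Lemma~\ref{lem:circular}: since the target subspace is either positive or negative definite with respect to $I_{p,n-p}$, the conjugating matrix $Q$ supplied by the lemma may be chosen inside the block-diagonal subgroup $\SO_p \times \SO_{n-p} \subseteq \SO^+_{p,n-p}(\mathbb{R})$. Consequently each piece of degree $2d_j \le 2d_2$ produces at most $4d_j \le 4d_2$ quadratic factors of the form $P \begin{bsmallmatrix} \beta(t) & 0 \\ 0 & I_{n-1}\end{bsmallmatrix} P^{-1}$ with $\beta \in \Rat_2(\SO_2,I_2)$ and $P \in \ISO^+_{p,n-p}(\mathbb{R})$. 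Summing across all $\lceil p/2\rceil + \lceil(n-p)/2\rceil$ pieces yields the bound of $N$ such factors, which combined with the $\alpha_j$'s obtained in the first step gives the claimed decomposition.

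The main obstacle lies in the odd-parity cases. When $p$ or $n-p$ is odd but at least $3$, the $\lceil p/2\rceil$ covering subspaces of $\mathbb{R}^p$ must overlap, so the decomposition $x^+(t) = \sum_j x_j^+(t)$ needs to be realized by projecting onto a spanning family of definite two-dimensional subspaces; this is harmless but requires an explicit choice. The more delicate situation is $\min(p,n-p)=1$: a one-dimensional positive (or negative) part cannot host an $\SO_2$-rotation at all, so Lemma~\ref{lem:circular} does not apply directly. In that case one invokes Lemma~\ref{lem:circular SO21} (or its mirror analogue for signature $(1,2)$) to absorb the rogue one-dimensional slice into a three-dimensional piece of signature $(2,1)$ using a Lorentz boost $P\in\ISO^+_{p,n-p}(\mathbb{R})$, obtaining a factorization into two two-dimensional definite pieces. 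Verifying that this substitution respects the count $\lceil p/2\rceil + \lceil(n-p)/2\rceil$ of two-dimensional blocks, and that in the residual degenerate case $\ISO_{1,1}^+(\mathbb{R})$ the theorem continues to make sense, is the most delicate bookkeeping in the proof.
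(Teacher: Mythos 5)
Your proposal is correct and follows essentially the same route as the paper's proof: Lemma~\ref{lem:existenceSE} to strip off the rotational part, a signature-respecting splitting of the translation into $\lceil p/2\rceil + \lceil (n-p)/2\rceil$ two-dimensional pieces each handled by Lemma~\ref{lem:circular}, and Lemma~\ref{lem:circular SO21} to absorb the problematic one-dimensional slice when $\min\{p,n-p\}=1$. The only cosmetic difference is that for odd $p\ge 3$ the paper does not use overlapping subspaces: it simply places the leftover coordinate alone in a two-dimensional coordinate plane (with zero second component), to which Lemma~\ref{lem:circular} still applies.
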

\begin{proof}
Denote $x(t) \coloneqq u(t)/q_2(t)$.  By Lemma~\ref{lem:existenceSE},  it is sufficient to decompose the curve 
\[
\beta (t) \coloneqq \begin{bsmallmatrix}
I_n & x(t) \\
0 & 1 
\end{bsmallmatrix},\quad x(t) \coloneqq \begin{bsmallmatrix}
x_1(t) \\
\vdots \\
x_n(t)
\end{bsmallmatrix}.
\] 

If $\min \{p, n - p\} \ne 1$,  we observe that
\[
\beta (t) =  \begin{bsmallmatrix}
I_n & y_1(t) \\
0 & 1 
\end{bsmallmatrix} \cdots \begin{bsmallmatrix}
I_n & y_{\lceil p/2 \rceil}(t) \\
0 & 1 
\end{bsmallmatrix}
\begin{bsmallmatrix}
I_n & z_1(t) \\
0 & 1 
\end{bsmallmatrix} \cdots \begin{bsmallmatrix}
I_n & z_{\lceil n-p/2 \rceil}(t) \\
0 & 1 
\end{bsmallmatrix},
\]
where 
\begin{align*}
y_i(t) &=\begin{cases}
 \begin{bsmallmatrix}
0_{2(i-1)} &\ x_{2i -1}(t) &\ x_{2i}(t) &\  0_{n - 2i} 
\end{bsmallmatrix}^\tp,&~\text{if}~2i \le p \\
 \begin{bsmallmatrix}
0_{p-2} &\ 0 &\  x_p (t) &\  0_{n-p}
\end{bsmallmatrix}^\tp,&~\text{if}~2i-1 = p 
\end{cases} \\ 
z_j(t) &=\begin{cases}
 \begin{bsmallmatrix}
0_{p + 2(j-1)} &\ x_{p + 2j -1}(t) &\ x_{p + 2j}(t) &\  0_{n - p - 2j} 
\end{bsmallmatrix}^\tp,&~\text{if}~2j \le n - p \\
 \begin{bsmallmatrix}
0_{n-2} &\ 0 &\  x_{n}(t)
\end{bsmallmatrix}^\tp,&~\text{if}~2j-1 = n-p
\end{cases}
\end{align*}
Here for each positive integer $k$, $0_k$ denotes the zero vector in $\mathbb{R}^k$.  By Lemma~\ref{lem:circular}, each $\begin{bsmallmatrix}
I_n & y_i(t) \\
0 & 1 
\end{bsmallmatrix}$ (resp.  $\begin{bsmallmatrix}
I_n & z_j(t) \\
0 & 1 
\end{bsmallmatrix}$) admits a decomposition of the form \eqref{lem:circular:eq1}.  
Moreover,  every rotation $\tau \in \Rat_2(\SE_2(\mathbb{R}),  I_2)$ has a decomposition \cite{Selig2007geometric}
\[
\tau(t) = Q \begin{bsmallmatrix}
\beta(t) & 0 \\
0 & 1
\end{bsmallmatrix} Q^{-1},\quad Q\in \SE_2(\mathbb{R}),\quad \beta \in \Rat_2(\SO_2(\mathbb{R}),I_2).
\]
Since neither $p$ nor $n-p$ is equal to $1$,  constant matrices appeared in these decompositions are ensured to be contained in $\ISO_{p,n-p}(\mathbb{R})$ and the desired decomposition of $\gamma (t)$ follows immediately.

If $\min\{p,n-p\} = 1$,  we assume without loss of generality that $p = n-1$.  We notice that 
\[
\beta (t) =  \begin{bsmallmatrix}
I_n & y_1(t) \\
0 & 1 
\end{bsmallmatrix} \cdots \begin{bsmallmatrix}
I_n & y_{\lceil (n-3)/2 \rceil}(t) \\
0 & 1 
\end{bsmallmatrix}
\begin{bsmallmatrix}
I_n & z(t) \\
0 & 1 
\end{bsmallmatrix},
\]
where 
\begin{align*}
y_i(t) &=\begin{cases}
 \begin{bsmallmatrix}
0_{2(i-1)} &\ x_{2i -1}(t) &\ x_{2i}(t) &\  0_{n - 2i} 
\end{bsmallmatrix}^\tp,&~\text{if}~2i \le n-3 \\
 \begin{bsmallmatrix}
0_{n-5} &\ 0 &\  x_{n-3} (t) &\ 0_{3}
\end{bsmallmatrix}^\tp,&~\text{if}~2i -1 = n-3 
\end{cases}\\ 
z(t) &=
 \begin{bsmallmatrix}
0_{n-3} &\ x_{n-2}(t) &\ x_{n-1}(t) &\  x_{n}(t)
\end{bsmallmatrix}^\tp
\end{align*}
By Lemma~\ref{lem:circular},  each $\begin{bsmallmatrix}
I_n & y_i(t) \\
0 & 1 
\end{bsmallmatrix}$ has a decomposition of the form \eqref{lem:circular:eq1}.  According to Lemma~\ref{lem:circular SO21},  $\begin{bsmallmatrix}
I_n & z_j(t) \\
0 & 1 
\end{bsmallmatrix}$ admits a decomposition of the form \eqref{lem:circular SO21:eq1}.  
Therefore, in summation, we obtain the desired decomposition of $\gamma(t)$.
\end{proof}
\section{Generalizations of Kempe's Universality Theorem}\label{sec:gekempe}
As an application of Theorems~\ref{thm:KempeGBF} and \ref{thm:KempeSE},  we generalize Kempe's Universality Theorem in a  different way from the existing ones \cite{Abbott08, GKLRSV17, Gao01, KM96, KM02, Kourganoff16, LJS18}. The underlying idea of our generalization is analogous to that of the Erlangen program \cite{Klein1893}. Let $G$ be a real linear algebraic group and let $X$ be a real algebraic variety. Suppose that $X$ is a homogeneous space of $G$.  For ease of reference,  we state below the problem we will address in this section.  
\begin{problem}[Kempe's problem for homogeneous spaces]\label{prob:KempeProb}
Given a rational curve $\gamma$ on $X$ passing through $x_0\in X$,  are there low degree rational curves $\alpha_1,\dots, \alpha_s$ on $G$ such that $\alpha_1(t) \cdots \alpha_s(t) x_0 = \gamma (t)$?
\end{problem}
Unlike the commonly adopted formulation in \cite{GKLRSV17,KM02,LJS18},  the statement of Problem~\ref{prob:KempeProb} neither involves linkages nor their realizations. However, it turns out that rational curves on $G$ play the role of linkages and their orbits on $X$ are analogues of realizations of linkages.  Before we proceed,  we elaborate on the connection between Problem~\ref{prob:KempeProb} and the original Kempe's Universality Theorem.  
\begin{example}[Revisit of Kempe's Universality Theorem for rational planar curves]\label{ex:prob:KempeProb:SE2}
Let $X = \mathbb{R}^2$ and $G = \SE_2(\mathbb{R})$.  Clearly $X$ is a homogeneous space of $G$. For $x_0 = (0,0)^\tp$, we have a map $p: \SE_2(\mathbb{R}) \to \mathbb{R}^2$ defined by $p (g) \coloneqq g x_0$. Since $\pi$ has a section $s$ defined by sending each $x\in \mathbb{R}^2$ to the Euclidean translation by $x$, every rational curve $\gamma$ on $\mathbb{R}^2$ can be lifted to a rational curve $\widetilde{\gamma} = s \circ \gamma$ on $\SE_2(\mathbb{R})$.  Moreover,  Theorem~\ref{thm:KempeSE} implies that $\widetilde{\gamma}$ admits a decomposition
\[
\widetilde{\gamma}(t) = \prod_{i=1}^{4d} P_i \begin{bsmallmatrix} 
\theta_i (t) & 0 \\
0 & 1 \\
\end{bsmallmatrix}
P_i^{-1}, 
\]
where $\theta_i\in \Rat_2(\SO_2(\mathbb{R}),I_2)$ and $P_i\in \SE_2(\mathbb{R}), 1\le i \le 4d$. As a consequence, we have 
\begin{equation}\label{ex:prob:KempeProb:SE2:eq1}
\gamma(t) = p (\widetilde{\gamma}(t)) = \prod_{i=1}^{4d} P_i \begin{bsmallmatrix} 
\theta_i (t) & 0 \\
0 & 1 \\
\end{bsmallmatrix}
P_i^{-1} x_0.
\end{equation}
Therefore,  every  rational curve of degree $2d$ on $\mathbb{R}^2$ can be traced by a product of $4d$ quadratic rotations, each of which is conjugated by some element in $\SE_2(\mathbb{R})$. Since each quadratic rotation can be realized by a simple linkage \cite{GKLRSV17, LJS18}, Kempe's Theorem for plannar rational curves is a direct consequence of the decomposition \eqref{ex:prob:KempeProb:SE2:eq1}.
\end{example}

We observe that Problem~\ref{prob:KempeProb} can be solved by two steps: The first step is to find a rational curve $\widetilde{\gamma}:\mathbb{P}_{\mathbb{R}}^1 \to G$ such that $p \circ \widetilde{\gamma} = \gamma$.  The second step is to decompose $\widetilde{\gamma}$ into a product of low degree rational curves.  In particular,  the desired $\widetilde{\gamma}$ must be a lift of $\gamma$. The two steps are pictorially summarized in the diagram below.
\[\begin{tikzcd}
	& G \\
	\mathbb{P}_{\mathbb{R}}^1 & X
	\arrow[" p ", from=1-2, to=2-2]
	\arrow["{\widetilde{\gamma} = \prod_{j=1}^s \alpha_j }", dashed, from=2-1, to=1-2]
	\arrow["\gamma"', from=2-1, to=2-2]
\end{tikzcd}\]
\subsection{Generalized Kempe's Universality Theorem for loops}\label{subsec:top lift}
Since $\mathbb{P}^1_{\mathbb{R}}$ is homeomorphic to $\mathbb{S}^1$,  there is no harm to identify $\mathbb{P}^1_{\mathbb{R}}$ with $\mathbb{S}^1$ in this subsection.  Let $X$ be a topological space.   A continuous map $\gamma: \mathbb{S}^1 \to X$ is called a \emph{loop} on $X$.  First we establish a criterion for the existence of a lift of a loop,  which is similar to the well-known lifting criterion for covering spaces \cite[Proposition~1.33]{Hatcher02}.
\begin{lemma}[Topological lifting criterion]\label{lem:lifting criterion-1}
Let $H$ be a topological group and let $p : P \to X$ be a principal $H$-bundle.  A continuous loop $\gamma: \mathbb{S}^1 \to X$ admits a lift if and only if $[\gamma] \in p_\ast (\pi_1(P)) \subseteq \pi_1(X)$.  In particular,  any $\gamma$ admits a lift if either $X$ is simply connected or $H$ is connected.
\end{lemma}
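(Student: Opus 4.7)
The plan is to adapt the classical lifting criterion for covering spaces, replacing unique path-lifting by the homotopy lifting property enjoyed by any principal $H$-bundle. Fix basepoints $x_0 = \gamma(1) \in X$ and $e_0 \in p^{-1}(x_0) \subseteq P$, so that $p_\ast : \pi_1(P,e_0) \to \pi_1(X,x_0)$ is defined.

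For the necessity, I will start from an arbitrary lift $\beta : \mathbb{S}^1 \to P$ of $\gamma$. Since the $H$-action on the fiber $p^{-1}(x_0)$ is simply transitive, there is a unique $h \in H$ with $\beta(1) = e_0 \cdot h$; replacing $\beta$ by $\beta \cdot h^{-1}$ gives a lift still projecting to $\gamma$, but now based at $e_0$. Then $[\gamma] = [p \circ \beta] = p_\ast([\beta]) \in p_\ast(\pi_1(P,e_0))$, as required.

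For the sufficiency, write $[\gamma] = p_\ast([\beta_0])$ for some based loop $\beta_0 : (\mathbb{S}^1,1) \to (P,e_0)$, so that $\gamma$ and $p \circ \beta_0$ are based-homotopic in $X$ via some continuous $F : \mathbb{S}^1 \times [0,1] \to X$ with $F(\cdot,0) = p \circ \beta_0$ and $F(\cdot,1) = \gamma$. The crucial step is to lift $F$ along $p$: since $p$ is locally trivial and $\mathbb{S}^1 \times [0,1]$ is paracompact Hausdorff, the standard theorem that locally trivial bundles over paracompact bases are Hurewicz fibrations furnishes a lift $\widetilde{F} : \mathbb{S}^1 \times [0,1] \to P$ of $F$ with $\widetilde{F}(\cdot,0) = \beta_0$. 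The time-$1$ slice $\beta \coloneqq \widetilde{F}(\cdot,1)$ is then the desired continuous lift of $\gamma$.

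The two special cases in the final assertion drop out immediately. If $X$ is simply connected then $\pi_1(X,x_0) = 0$ and the inclusion $[\gamma] \in p_\ast(\pi_1(P,e_0))$ is vacuous. If $H$ is connected then the long exact sequence of homotopy groups of the fibration $H \hookrightarrow P \xrightarrow{p} X$ terminates in $\pi_1(P,e_0) \xrightarrow{p_\ast} \pi_1(X,x_0) \to \pi_0(H) = 0$, so $p_\ast$ is surjective and again every class lies in its image. The one technical point throughout is the homotopy lifting property for principal $H$-bundles; outside of this invocation the argument is purely formal manipulation of fundamental groups, so the paracompactness of $\mathbb{S}^1 \times [0,1]$ is really what does all the work.
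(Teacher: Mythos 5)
Your proof is correct, but it takes a genuinely different route from the paper's. The paper argues through classifying spaces: it writes $P \simeq f^\ast(EH)$ for a classifying map $f: X \to BH$, observes that $\gamma$ lifts precisely when the pulled-back bundle $\gamma^\ast P \to \mathbb{S}^1$ is trivial, and identifies the obstruction with $f_\ast([\gamma]) \in \pi_1(BH) \simeq \pi_0(H)$, which vanishes whenever $[\gamma] \in p_\ast(\pi_1(P))$ because $\pi_1(EH) = 0$. You instead run the classical covering-space argument with unique path lifting replaced by the homotopy lifting property: for necessity you translate a given lift into the chosen fiber basepoint using the $H$-action (a careful touch the paper omits), and for sufficiency you lift a based homotopy from $p \circ \beta_0$ to $\gamma$ and take its time-one slice. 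Your argument is more elementary, avoiding $BH$ and $EH$ entirely; the paper's version has the advantage of exhibiting the obstruction as a concrete class in $\pi_0(H)$, which is what makes the case of connected $H$ immediate there, whereas you recover that case from the long exact sequence of the fibration --- both derivations are valid. One technical correction: the uniformization theorem you invoke requires the \emph{base of the bundle} (here $X$, which is an arbitrary topological space) to be paracompact, not the domain $\mathbb{S}^1 \times [0,1]$ of the homotopy. The lifting step is still justified, either because every fiber bundle is a Serre fibration and hence has the homotopy lifting property with respect to the CW complex $\mathbb{S}^1$, or by first pulling the bundle back along $F$ to the paracompact space $\mathbb{S}^1 \times [0,1]$ and extending the section from $\mathbb{S}^1 \times \{0\}$; so this is a misattribution in your closing remark rather than a gap in the proof.
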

\begin{proof}
Clearly,  $\gamma$ has a lift implies that $[\gamma] \in p_\ast (\pi_1(P)) \subseteq \pi_1(X)$.  For the converse,  we consider the following diagram 
\[\begin{tikzcd}
	{\gamma^\ast(P)} & {P \simeq f^\ast(EH)} & EH \\
	{\mathbb{S}^1} & X & BH
	\arrow["{\iota_\gamma}", from=1-1, to=1-2]
	\arrow["\theta"', from=1-1, to=2-1]
	\arrow["{\iota_f}", from=1-2, to=1-3]
	\arrow["p", from=1-2, to=2-2]
	\arrow["\eta", from=1-3, to=2-3]
	\arrow["s", curve={height=-12pt}, dashed, from=2-1, to=1-1]
	\arrow["\beta", dashed, from=2-1, to=1-2]
	\arrow["\gamma"', from=2-1, to=2-2]
	\arrow["f"', from=2-2, to=2-3]
\end{tikzcd}\]
where $BH$ is the classifying space of $H$,  $\eta: EH \to BH$ is the universal principal $H$-bundle,  $f$ is a continuous map such that $P \simeq f^\ast (EH)$,  $\theta: \gamma^\ast(P) \to \mathbb{S}^1$ is the pull-back of $p: P \to X$ by $\gamma$,  $\iota_\gamma$ and $\iota_f$ are maps induced by $\gamma$ and $f$ respectively.  The commutativity of this diagram implies
\begin{align*}
\text{$\gamma$ has a lift $\beta$} &\iff \text{the principal $H$-bundle $\theta: \gamma^\ast(P) \to \mathbb{S}^1$ has a section $s$} \\
&\iff \text{$\theta: \gamma^\ast(P) \to \mathbb{S}^1$ is a trivial principal $H$-bundle} \\
&\iff \text{$f\circ \gamma$  is null-homotopic} \\
&\iff f_{\ast} ([\gamma]) = 0 \in \pi_1 (BH) \xrightarrow{\delta}  \pi_0(H).
\end{align*}
Here the map $\delta$ in the last line is the first boundary map in the long exact sequence of homotopy groups for the fibration $\eta: EH \to BH$.  Since $EH$ is contractible,  $\pi_1(EH) = 0$ and $\delta$ is an isomorphism.  In particular,  $f_\ast([\gamma]) = 0$ is always satisfied if either $X$ is simply connected or $H$ is connected,  from which we conclude that $\gamma$ has a lift.

In general,  if $[\gamma] = p_\ast ([\alpha])$ for some $[\alpha]\in \pi_1(P)$,  then we have $f_\ast ([\gamma]) = f_\ast \circ p_\ast ([\alpha]) = \eta_\ast \circ (\iota_f)_\ast ([\alpha]) = 0$ as $\pi_1(EH)=0$.  Therefore,  $\gamma$ admits a lift.
\end{proof}

\begin{proposition}\label{prop:approx-lift}
Let $G$ be a real linear algebraic group and let $X$ be a homogeneous variety of $G$.  Assume $x_0\in X$ is a fixed point and $p:G \to X$ is the map defined by $p(g) = g x_0$.  If each class in $\pi_1(G)$ is represented by a rational curve on $G$,  then for any loop $\gamma: \mathbb{S}^1 \to X$ such that $[\gamma] \in p_\ast (\pi_1(G))$,  there exists a sequence of rational curves $\{\beta_n\}_{n=1}^\infty$ on $G$ such that $\{  \beta_n x_0  \}_{n=1}^\infty$ converges to $\gamma$ uniformly. 
\end{proposition}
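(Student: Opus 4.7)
The plan is to first produce a continuous lift of $\gamma$ to $G$, then invoke the regular-approximation theorem to approximate that lift by rational curves, and finally push everything back down to $X$ by $p$.

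First, I would observe that $p: G \to X$ is a principal $H$-bundle, where $H = \Stab_G(x_0)$. Since $[\gamma] \in p_\ast(\pi_1(G))$, Lemma~\ref{lem:lifting criterion-1} applies and furnishes a continuous lift $\widetilde{\gamma}: \mathbb{S}^1 \to G$ satisfying $p \circ \widetilde{\gamma} = \gamma$. At this stage $\widetilde{\gamma}$ is merely continuous, so the next task is to replace it by something algebraic while controlling the uniform error.

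The second step uses the standing hypothesis on $\pi_1(G)$. Let $\alpha: \mathbb{P}^1_{\mathbb{R}} \to G$ be a rational curve whose homotopy class in $\pi_1(G)$ equals $[\widetilde{\gamma}]$; identifying $\mathbb{P}^1_{\mathbb{R}} \simeq \mathbb{S}^1$ topologically, $\widetilde{\gamma}$ is homotopic to the regular map $\alpha$. Since $G$, viewed as acting on itself by left translation, is a homogeneous space for a linear algebraic group, Theorem~\ref{thm:approx by regular maps} yields a sequence of regular maps $\beta_n: \mathbb{P}^1_{\mathbb{R}} \to G$ converging to $\widetilde{\gamma}$ in the compact-open topology. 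Each $\beta_n$ is by definition a rational curve on $G$, and because $\mathbb{S}^1$ is compact the compact-open topology coincides with the uniform topology, so $\beta_n \to \widetilde{\gamma}$ uniformly on $\mathbb{S}^1$.

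Finally, the map $p$ is a restriction to $G$ of an $\mathbb{R}$-linear (hence globally Lipschitz on bounded sets) map $A \mapsto A x_0$ on the ambient matrix space. Uniform convergence $\beta_n \to \widetilde{\gamma}$ on the compact set $\mathbb{S}^1$ therefore upgrades to uniform convergence $\beta_n \cdot x_0 = p \circ \beta_n \to p \circ \widetilde{\gamma} = \gamma$, which is the required conclusion.

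The only nontrivial point in this chain is verifying the homotopy hypothesis needed to invoke Theorem~\ref{thm:approx by regular maps}, and this is exactly what the assumption ``every class in $\pi_1(G)$ is represented by a rational curve'' is designed to supply; everything else is a formal consequence of the three cited results (Lemma~\ref{lem:lifting criterion-1}, Theorem~\ref{thm:approx by regular maps}, and the continuity of the orbit map $p$).
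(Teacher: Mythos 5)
Your proposal is correct and follows essentially the same route as the paper: lift $\gamma$ continuously via Lemma~\ref{lem:lifting criterion-1}, use the hypothesis on $\pi_1(G)$ to see the lift is homotopic to a rational curve, and apply Theorem~\ref{thm:approx by regular maps} to approximate the lift uniformly by rational curves. The only difference is that you spell out the final (routine) step of pushing the approximation down through $p$, which the paper leaves implicit.
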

\begin{proof}
Since $[\gamma] \in p_\ast (\pi_1(G))$,  Lemma~\ref{lem:lifting criterion-1} ensures that $\gamma$ has a lift $\beta: \mathbb{S}^1 \to G$.  By assumption,  $\beta$ is homotopic to a rational curve on $G$.  According to Theorem~\ref{thm:approx by regular maps},  $\beta$ is uniformly approximated by rational curves on $G$.
\end{proof}
\begin{remark}\label{ex:prop:approx-lift}
If $\gamma: \mathbb{S}^1 \to X$ can be uniformly approximated by $\{ \beta_n x_0 \}_{n=1}^\infty$ for a sequence $\{\beta_n\}_{n=1}^\infty$ of rational curves on $G$,  then Theorem~\ref{thm:approx by regular maps} implies that $\gamma$ is homotopic to a rational curve $\alpha$ on $X$.  In fact,  we must have $[\gamma] = [\alpha] \in p_\ast(\pi_1(G))$.  However,  it is not true that for any $\gamma: \mathbb{S}^1 \to X$ which is homotopic to a rational curve,  there exists a sequence of rational curves $\{\beta_n\}_{n=1}^\infty$ on $G$ such that $ \{\beta_n x_0\}_{n=1}^\infty$ uniformly converges to $\gamma$.  As an example,  we consider $(G,X) = (\mathbb{R}, \mathbb{S}^1)$ and $\gamma = \Id_{\mathbb{S}^1}$.  It is clear that $\gamma$ is a rational curve on $\mathbb{S}^1$,  but it has no lift since $[\gamma] = 1\in \mathbb{Z} \simeq \pi_1(\mathbb{S}^1)$.
\end{remark}

\begin{corollary}\label{cor:approx-lift}
If both $G$ and $X$ are simply connected,  then for every loop $\gamma: \mathbb{S}^1 \to X$,  there exists a sequence of rational curves $\{\beta_n\}_{n=1}^\infty$ on $G$ such that $\{ \beta_n x_0 \}_{n=1}^\infty$ uniformly converges to $\gamma$.  
\end{corollary}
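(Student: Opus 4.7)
The plan is to derive Corollary~\ref{cor:approx-lift} as a direct specialization of Proposition~\ref{prop:approx-lift}. Under the simple-connectivity assumptions, both hypotheses of that proposition become trivial to verify, so the argument is essentially a bookkeeping exercise.

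First I would observe that since $G$ is simply connected, $\pi_1(G) = 0$, so the only homotopy class in $\pi_1(G)$ is the trivial one, which is represented by the constant rational curve $t \mapsto I \in G$. Hence the rational-representability hypothesis of Proposition~\ref{prop:approx-lift} is vacuously satisfied. Next, since $X$ is simply connected, $\pi_1(X) = 0$, so $[\gamma] = 0$ for any loop $\gamma$ on $X$, and in particular $[\gamma] \in p_\ast(\pi_1(G)) \subseteq \pi_1(X)$ trivially. With both hypotheses in hand, Proposition~\ref{prop:approx-lift} delivers directly a sequence $\{\beta_n\}_{n=1}^{\infty}$ of rational curves on $G$ such that $\beta_n x_0$ converges uniformly to $\gamma$.

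If one wishes to make the mechanism explicit rather than cite Proposition~\ref{prop:approx-lift} as a black box, the steps collapse as follows: Lemma~\ref{lem:lifting criterion-1} (whose hypothesis $[\gamma] \in p_\ast(\pi_1(G))$ is automatic here) produces a continuous lift $\widetilde{\gamma}: \mathbb{S}^1 \to G$ with $p \circ \widetilde{\gamma} = \gamma$. Because $G$ is simply connected, $\widetilde{\gamma}$ is null-homotopic by Lemma~\ref{lem:null homotopic}, hence in particular homotopic to the constant regular map $t \mapsto I$. Theorem~\ref{thm:approx by regular maps} then yields a sequence of regular (equivalently, rational) maps $\beta_n: \mathbb{S}^1 \to G$ approximating $\widetilde{\gamma}$ in the compact-open topology, and post-composing with the continuous map $p$ gives $\beta_n x_0 \to p \circ \widetilde{\gamma} = \gamma$ uniformly.

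There is essentially no obstacle here: the content is entirely in the two earlier inputs (the topological lifting criterion and the regular-approximation theorem). The only thing to be careful about is that ``rational'' on $\mathbb{S}^1 \simeq \mathbb{P}^1_{\mathbb{R}}$ coincides with ``regular'' in the sense of Theorem~\ref{thm:approx by regular maps}, which is built into the setup of Section~\ref{sec:rat}.
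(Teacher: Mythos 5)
Your proposal is correct and matches the paper's route exactly: the corollary is obtained by specializing Proposition~\ref{prop:approx-lift}, whose two hypotheses become automatic when $\pi_1(G)=0$ (the trivial class is represented by a constant rational curve) and $\pi_1(X)=0$ (so $[\gamma]\in p_\ast(\pi_1(G))$ trivially). Your unpacked version via Lemma~\ref{lem:lifting criterion-1}, Lemma~\ref{lem:null homotopic} and Theorem~\ref{thm:approx by regular maps} is precisely the mechanism inside the proposition's proof, so nothing further is needed.
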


Given non-negative integers $p < n$ and $0 < n_1 <  \cdots <  n_k < n$,  we denote 
\begin{align*}
\H_{p+1,n-p} &\coloneqq \{ x = (x_0,\dots, x_n) \in \mathbb{R}^{n+1}: x^\tp I_{p+1,n-p} x = 1,  x_0 \text{ for $p=0$}\},  \\
\V_{p,n}(\mathbb{R}) &\coloneqq \{ X \in \mathbb{R}^{n \times p}:  X^\tp X = I_p \},  \\
\Flag^o(n_1,\dots,  n_k; \mathbb{R}^n) &\coloneqq \lbrace
(\mathbb{V}_1,  \cdots,  \mathbb{V}_k): \mathbb{V}_j \subseteq \mathbb{V}_{j+1} \subseteq \mathbb{R}^n,  \dim \mathbb{V}_j  = n_j,\text{$\mathbb{V}_j$ is an oriented subspace}
\rbrace,  \\
\V_{p,n}(\mathbb{C}) &\coloneqq \{ X \in \mathbb{C}^{n \times p}:  X^\ast  X = I_p \},  \\
\Flag(n_1,\dots,  n_k; \mathbb{C}^n) &\coloneqq \lbrace
(\mathbb{V}_1,  \cdots,  \mathbb{V}_k): \mathbb{V}_j \subseteq \mathbb{V}_{j+1} \subseteq \mathbb{C}^n,  \dim \mathbb{V}_j  = n_j,\text{$\mathbb{V}_j$ is a subspace}
\rbrace,  \\
\V_{p,n}(\mathbb{H}) &\coloneqq \{ X \in \mathbb{H}^{n \times p}:  X^\ast X = I_p \}.
\end{align*}
We recall that all of these are homogeneous spaces:
\begin{align*}
\H_{p+1,n-p} &\simeq \SO_{p+1,n-p}^+/\SO_{p,n-p}^+,\quad  \V_{p,n}(\mathbb{H}) \simeq \Sp_n(\mathbb{H})/\Sp_{n-p}(\mathbb{H}),  \\
\V_{p,n}(\mathbb{R}) &\simeq \SO_n(\mathbb{R})/\SO_{n-p}(\mathbb{R}),\quad \Flag^o(n_1,\dots,  n_k; \mathbb{R}^n) \simeq \SO_{n}(\mathbb{R}) / \prod_{j=0}^k \SO_{n_{j+1} - n_{j}}(\mathbb{R}),\\ \V_{p,n}(\mathbb{C}) &\simeq \SU_n/\SU_{n-p},\quad \Flag(n_1,\dots,  n_k; \mathbb{C}^n) \simeq \SU_{n}/\prod_{j=0}^k \SU_{n_{j+1} - n_{j}}.
\end{align*} 
Moreover,   let $\mathbb{R}^{p,n-p-1}$ be $\mathbb{R}^{n-1}$ equipped with the standard pseudo Riemannian metric of signature $(p,n-p-1)$.  Then the conformal group $\Conf(\mathbb{R}^{p,n-p-1})$ is isomorphic to $\SO_{p+1,n-p}^+(\mathbb{R})$ \cite[Chapter 2]{Schottenloher97mathematical}.  In particular,  $\mathbb{R}^{p,n-p-1}$ is a homogeneous space of $\SO_{p+1,n-p}^+(\mathbb{R})$.
\begin{theorem}[Generalized Kempe's Universality Theorem I]\label{thm:approx-lift}
Let $(G,X)$ be one of the following pairs:
\begin{enumerate}[(i)]
\item\label{prop:ex-approx-lift:item1} $(G,X) = (\SO_{n}(\mathbb{R}),   \V_{p,n}(\mathbb{R})),  n\ge 3,  n - p \ge 2$.  
\item\label{prop:ex-approx-lift:item2} $(G,X) = (\SE_{n}(\mathbb{R}),  \mathbb{R}^n)$.
\item\label{prop:ex-approx-lift:item3} $(G,X) = (\SO_{n}(\mathbb{R}),  \Flag^o(n_1,\dots,  n_k; \mathbb{R}^n)),  n\ge 3$. 
\item\label{prop:ex-approx-lift:item4} $(G,X) = ( \SO^+_{p+1,  n-p}(\mathbb{R}),  \H_{p+1,  n-p})$.
\item\label{prop:ex-approx-lift:item4'} $(G,X) = ( \SO^+_{p+1,  n-p}(\mathbb{R}),  \mathbb{R}^{p,n-p-1})$.
\item\label{prop:ex-approx-lift:item5}  $(G,X) = (\ISO_{p+1,n-p}(\mathbb{R}),  \mathbb{R}^{n+1})$
\item\label{prop:ex-approx-lift:item6} $(G,X) = (\SU_n,  \V_{p,n}(\mathbb{C}))$ for $n \ge 2$.
\item\label{prop:ex-approx-lift:item7} $(G,X) = ( \SU_n,  \Flag(n_1,\dots,  n_k; \mathbb{C}^n))$.
\item\label{prop:ex-approx-lift:item8} $(G,X) = (\Sp_{n}(\mathbb{H}),  \V_{p,n}(\mathbb{H}))$.
\end{enumerate}
Then for every loop $\gamma: \mathbb{S}^1 \to X$,  there exists a sequence of rational curves $\{\beta_n\}_{n=1}^\infty$ on $G$ such that $\{ \beta_n x_0 \}_{n=1}^\infty$ uniformly converges to $\gamma$.  Moreover,  each $\beta_n$ can be decomposed as $\beta_n = \alpha_{n,1} \cdots \alpha_{n,s_n}$, where $\alpha_{n,j}\in \Rat(G)$ only has poles at $\{c_{n,j}, \overline{c}_{n,j} \}$ such that $\deg(\beta_n) = \sum_{j=1}^{s_n} \deg(\alpha_{n,j})$ and $\{c_{n,j}, \overline{c}_{n,j} \}$ $\ne$ $\{c_{n,k}, \overline{c}_{n,k} \}$ if $j\ne k$.
\end{theorem}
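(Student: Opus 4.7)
The strategy is to combine Proposition~\ref{prop:approx-lift} with the decomposition theorems established in Section~\ref{sec:dec}. For each pair $(G,X)$ on the list, the proof proceeds in two stages: first we verify that the pair satisfies the hypotheses of Proposition~\ref{prop:approx-lift}, yielding a sequence $\{\widetilde{\gamma}_n\}$ of rational curves on $G$ such that $\widetilde{\gamma}_n(\cdot)x_0$ converges uniformly to $\gamma$; and second we apply Theorem~\ref{thm:KempeGBF} (for the matrix classical groups) or Theorem~\ref{thm:KempeSE} (for the inhomogeneous cases) to each $\widetilde{\gamma}_n$ to obtain the stated decomposition into factors with distinct pole pairs.

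For the first stage we need (I) that every loop $\gamma:\mathbb{S}^1\to X$ lifts to $G$, and (II) that every class in $\pi_1(G)$ is represented by a rational curve. Condition~(I) follows from the homotopy long exact sequence of the fibration $\Stab_G(x_0)\to G\to X$: it suffices that the stabilizer be connected, since then $p_\ast:\pi_1(G)\to \pi_1(X)$ is surjective and Lemma~\ref{lem:lifting criterion-1} applies. In cases (ii), (v), (vi) the space $X$ is contractible (it is $\mathbb{R}^n$, $\mathbb{R}^{n-1}$ as a smooth manifold, or $\mathbb{R}^{n+1}$), so~(I) is automatic. In the remaining cases the stabilizer is a direct product of groups in the list $\{\SO_m(\mathbb{R}),\SO^+_{q,m-q}(\mathbb{R}),\SU_m,\Sp_m(\mathbb{H})\}$, all of which are connected by definition, so~(I) holds.

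Condition~(II) is immediate when $G$ is simply connected, which covers $\SU_n$ and $\Sp_n(\mathbb{H})$. For the remaining groups $\SO_n(\mathbb{R})$ with $n\ge 3$, $\SE_n(\mathbb{R})$, $\SO^+_{p+1,n-p}(\mathbb{R})$, and $\ISO^+_{p+1,n-p}(\mathbb{R})$, the maximal-compact deformation retraction identifies $\pi_1(G)$ with the $\pi_1$ of a product of $\SO_m(\mathbb{R})$ factors. Each generating class is realized by a full $2\pi$-rotation in a $2$-plane, and such a rotation admits a quadratic rational parametrization of the form displayed in Example~\ref{ex:O_n}: as $t$ ranges over $\mathbb{R}\cup\{\infty\}$, the angle $2\arctan(t)+\pi$ winds exactly once around the circle. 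Including this quadratic rotation into the ambient group via the maximal-compact retract yields the required rational representative, verifying~(II). Proposition~\ref{prop:approx-lift} then produces the desired approximating sequence $\{\widetilde{\gamma}_n\}\subset \Rat(G)$.

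For the second stage, note that in cases (i), (iii), (iv), (v), (vii), (viii), (ix), the group $G$ is of the form $G_B(\mathbb{F})$, so Theorem~\ref{thm:KempeGBF} applied to each $\widetilde{\gamma}_n$ directly produces a factorization $\widetilde{\gamma}_n=\alpha_{n,1}\cdots\alpha_{n,s_n}$ with $\alpha_{n,j}\in\Rat(G)$ having poles only at a single conjugate pair, distinct pairs for distinct $j$, and degrees summing to $\deg(\widetilde{\gamma}_n)$. In cases (ii) and (vi) the group is $\SE_n(\mathbb{R})$ or $\ISO^+_{p+1,n-p}(\mathbb{R})$, and Theorem~\ref{thm:KempeSE} supplies a factorization of exactly the same structure. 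Setting $\beta_n=\widetilde{\gamma}_n$ concludes the proof. The principal obstacle is the case-by-case bookkeeping of (I) and (II): identifying the stabilizer in each homogeneous structure (with particular care for the oriented flag manifold $\Flag^o(n_1,\dots,n_k;\mathbb{R}^n)$, whose stabilizer is the product of the connected groups $\SO_{m_j}(\mathbb{R})$) and producing a uniform description of generating rational loops across the four non-simply-connected families.
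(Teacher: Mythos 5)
Your proposal is correct and follows essentially the same route as the paper: reduce to producing the approximating sequence via Proposition~\ref{prop:approx-lift} (checking liftability through Lemma~\ref{lem:lifting criterion-1} and representing $\pi_1(G)$ by quadratic rotations from Example~\ref{ex:O_n} via the maximal compact), then invoke Theorems~\ref{thm:KempeGBF} and~\ref{thm:KempeSE} for the factorization. The only cosmetic difference is that you verify the lifting condition uniformly through connectedness of the stabilizer, while the paper uses simple connectedness of $X$ where available and the stabilizer argument only for the flag manifolds; both are just the two sufficient conditions of Lemma~\ref{lem:lifting criterion-1}.
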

\begin{proof}
According to Theorems~\ref{thm:KempeGBF} and \ref{thm:KempeSE},  it suffices to prove the existence of $\{\beta_n\}_{n=1}^\infty$.  

In \eqref{prop:ex-approx-lift:item1}--\eqref{prop:ex-approx-lift:item3},  we have $\pi_1(G) = \mathbb{Z}_2$.  It is clear that the non-trivial class of $\pi_1(G)$ is represented by the non-trivial quadratic rational curve on $\SO_2(\mathbb{R})$ (cf. ~Example~\ref{ex:O_n}) via the natural embedding $\SO_2(\mathbb{R}) \hookrightarrow G$.  Since $X$ in \eqref{prop:ex-approx-lift:item1} and \eqref{prop:ex-approx-lift:item2} are simply connected,  the result immediately follows from Proposition~\ref{prop:approx-lift}.  For \eqref{prop:ex-approx-lift:item3},  we have $X = G/H$ where 
\[
H \coloneqq  \SO_{n_1}(\mathbb{R}) \times \SO_{n_2-n_1}(\mathbb{R}) \cdots \times \SO_{n_k - n_{k-1}}(\mathbb{R}) \times \SO_{n - n_k}(\mathbb{R}).
\]
Since $H$ is connected,  the result is obtained by Lemma~\ref{lem:lifting criterion-1} and Proposition~\ref{prop:approx-lift}.

For \eqref{prop:ex-approx-lift:item4} and \eqref{prop:ex-approx-lift:item4'}  we observe that $\SO^+_{p+1, n-p}(\mathbb{R})$ is homotopy equivalent to its subgroup $\SO_{p+1}(\mathbb{R}) \times \SO_{n-p}(\mathbb{R})$.  Thus,  $\pi_1(\SO^+_{p+1, n-p}(\mathbb{R})) = \pi_1(\SO_{p+1}(\mathbb{R})) \times \pi_1(\SO_{n-p}(\mathbb{R}))$ and each class can be represented by a quadratic rational curve on $\SO_{p+1}(\mathbb{R})$ or $\SO_{n-p}(\mathbb{R})$.  Since $\mathbb{R}^{n-1}$ and $H_{p+1, n-p}$ are simply connected,  the proof is complete by Proposition~\ref{prop:approx-lift}.  The proof for \eqref{prop:ex-approx-lift:item5} is similar,  as $\ISO^+_{p+1,n-p}(\mathbb{R})$ is homotopic equivalent to $\SO^+_{p+1,n-p}(\mathbb{R})$.

Lastly,  we notice that $G$ and $X$ in \eqref{prop:ex-approx-lift:item6}--\eqref{prop:ex-approx-lift:item8} are all simply connected.  Thus,  Corollary~\ref{cor:approx-lift} applies.  
\end{proof}
\begin{remark}
On the one hand,  homogeneous spaces considered in Theorem~\ref{thm:approx-lift} are of great importance in mathematics and physics.  For instance,  Stiefel manifolds $\V_{p,n}(\mathbb{R})$ and oriented flag manifolds $\Flag^o(n_1,\dots, n_k;\mathbb{R}^n)$ are important computational platforms in algebraic topology \cite{Milnor74,Steenrod62} and manifold optimization \cite{EAS99,YWL22}.  The hyperbolic space $H_{1,n} \simeq \SO_{1,n}^+/\SO_{n}$ is the model space for hyperbolic geometry \cite{Ratcliffe1994foundations}.  The pseudo-Euclidean space $\mathbb{R}^{p,q}$ plays a fundamental role in both Lorentzian geometry \cite{BEE96} and the study of general relativity \cite{HE23}.  The de Sitter spacetime (resp.  anti de Sitter spacetime) $H_{n,1} \simeq \SO^+_{n,1}/\SO^+_{n-1,1} $ (resp.  $H_{2,n-1} \simeq \SO^+_{2,n-1}/\SO^+_{1,n-1} $) is extensively studied in cosmology and quantum field theory \cite{AM84,Gazeau07}.  

On the other hand,  different choices of $G$ for the same $X$ allow us to study curves on $X$ with respect to different geometries.  Take $X = \mathbb{R}^n$ for example.  Theorem~\ref{thm:approx-lift} for $G = \SE_n(\mathbb{R})$ (cf.  item~\eqref{prop:ex-approx-lift:item2}) means any continuous loop in $\mathbb{R}^n$ can be approximately traced out by rational curves of rigid transformations,  while Theorem~\ref{thm:approx-lift} for $G = \SO^+_{p+1,n-p + 1}(\mathbb{R})$ (cf. item~\eqref{prop:ex-approx-lift:item4'}) (resp.  $G = \ISO_{n-1,1}(\mathbb{R})$ (cf. item~\eqref{prop:ex-approx-lift:item5}) implies continuous loops can be approximately traced out by rational curves of conformal (resp. spacetime preserving) transformations.
\end{remark}
\subsection{Generalized Kempe's Universality Theorem} Let $G$ be a real linear algebraic group and let $X$ be a homogeneous $G$-variety.  Assume $x_0 \in X$ is a fixed point and $p: G \to X$ is the map $p(g) = g x_0$.  According to Lemma~\ref{lem:lifting criterion-1},  the existence of a continuous lift (in Euclidean topology) of a rational curve $\gamma: \mathbb{P}_{\mathbb{R}}^1 \to X$ passing through $x_0$ is determined by its class $[\gamma] \in \pi_1(X)$.  However,  Problem~\ref{prob:KempeProb} requires the lift to be rational.  This subsection is devoted to a discussion of the rationality of a lift,  from which we obtain a generalized Kempe's Universality Theorem.

Let $G$ be a real linear algebraic group and let $X$ be a homogeneous $G$-variety.  Assume that $x_0 \in X$ is a fixed point,  $p: G \to X$ is the map defined by $p(g) = g x_0$.  We denote  $H \coloneqq \Stab_{x_0}(G)$ and consider the following diagram
\[\begin{tikzcd}
	{\gamma^\ast G} & G \\
	{\mathbb{P}_{\mathbb{R}}^1} & X \simeq G/H
	\arrow["\theta"', from=1-1, to=2-1]
	\arrow["p", from=1-2, to=2-2]
	\arrow["s", curve={height=-12pt}, dashed, from=2-1, to=1-1]
	\arrow["\beta", dashed, from=2-1, to=1-2]
	\arrow["\gamma"', from=2-1, to=2-2]
\end{tikzcd}\]
where $\gamma$ is a rational curve on $X$ passing through $x_0$,  $\theta$ is the projection map of the principal $H$-bundle $\gamma^\ast G$ over $\mathbb{S}^1$.  Clearly,  we have 
\begin{align*}
\text{$\gamma$ has a rational lift $\beta$} &\iff \text{$\gamma^\ast G$ admits a rational section $s$} \\ 
&\iff \text{$\gamma^\ast G$ is a trivial algebraic principal $H$-bundle}.
\end{align*}
We notice that $\mathbb{P}_{\mathbb{R}}^1$ is a smooth affine curve over $\mathbb{R}$.  The lemma that follows is a direct consequence of Proposition~\ref{prop:rational triviality}. 
\begin{lemma}[Rational lifting criterion]\label{lem:lifting criterion-2}
Assume that $H$ is semisimple and simply connected.  If $\theta: \gamma^\ast G \to \mathbb{P}_{\mathbb{R}}^1$ is Zariski locally trivial,  then $\gamma$ admits a rational lift.  In particular,  if $p:G \to X$ is Zariski locally trivial,  then every rational curve on $X$ has a rational lift.
\end{lemma}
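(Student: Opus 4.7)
The plan is to reduce the statement directly to Proposition~\ref{prop:rational triviality} by passing from $\mathbb{P}_{\mathbb{R}}^1$ to a suitable smooth affine open subcurve on which that proposition applies.

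First, I would observe that Zariski local triviality implies generic triviality, so the principal $H$-bundle $\theta:\gamma^{\ast}G\to \mathbb{P}_{\mathbb{R}}^1$ is generically trivial. Next, I would choose an affine open $U\subseteq \mathbb{P}_{\mathbb{R}}^1$ whose complement is a single closed point with residue field $\mathbb{C}$; concretely, $U=\mathbb{P}_{\mathbb{R}}^1\setminus\{c,\overline{c}\}$ for some $c\in \mathbb{C}\setminus \mathbb{R}$. Such a $U$ is a smooth affine curve over $\mathbb{R}$ (as a nonempty proper open subscheme of $\mathbb{P}_{\mathbb{R}}^1$) and it contains every real point of $\mathbb{P}_{\mathbb{R}}^1$, including $\infty$. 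The restriction $\theta|_{U}$ is still a generically trivial principal $H$-bundle.

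Now I would apply Proposition~\ref{prop:rational triviality} with $k=\mathbb{R}$, $C=U$, and $G=H$: since $H$ is semisimple and simply connected, $\theta|_{U}$ is globally trivial, hence admits a regular section $s:U\to \gamma^{\ast}G|_{U}$. Composing $s$ with the canonical morphism $\gamma^{\ast}G\to G$ produces a regular morphism $\beta:U\to G$ with $p\circ\beta=\gamma|_{U}$. Writing $\beta$ as a tuple of rational functions on $\mathbb{P}_{\mathbb{R}}^1$ in the ambient affine space containing $G$, the denominators can be taken as a power of the irreducible real quadratic defining $\{c,\overline{c}\}$, so they have no real roots. By Lemma~\ref{lem:equiv_def}, $\beta$ is (the real-point restriction of) a genuine rational curve on $G$, and by construction it lifts $\gamma$. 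Finally, for the "in particular" clause, the pullback of any Zariski locally trivial bundle is Zariski locally trivial, so if $p:G\to X$ enjoys this property, then $\gamma^{\ast}G\to\mathbb{P}_{\mathbb{R}}^1$ does for every $\gamma\in\Rat(X)$, and the first part applies.

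The only delicate point, and the main obstacle I expect, is matching the section from Proposition~\ref{prop:rational triviality} to the paper's Definition~\ref{def: curve} of a rational curve: one must ensure both that $\beta$ has no real poles (handled by choosing $U$ to omit a non-real closed point) and that the common denominator dominates the numerators so that the limits at $\pm\infty$ exist and agree (which is forced since $\gamma=p\circ\beta$ is already a rational curve and $G$ is affine, so $\beta$ extends over the remaining real point $\infty\in U$). Both conditions are secured by the careful choice of $U$, so the reduction to Proposition~\ref{prop:rational triviality} is indeed direct.
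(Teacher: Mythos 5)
Your reduction to Proposition~\ref{prop:rational triviality} is exactly the paper's route: the paper's entire proof is the observation that $\mathbb{P}_{\mathbb{R}}^1$ may be regarded as a smooth affine curve over $\mathbb{R}$ (via the standard identification of real projective varieties with affine ones), after which the proposition applies verbatim. Your variant — realizing the affine curve as an open subscheme of $\mathbb{P}_{\mathbb{R}}^1$ obtained by deleting non-real closed points — is a legitimate and in fact more scheme-theoretically explicit way to do the same thing, and your verification that the resulting section is a rational curve in the sense of Lemma~\ref{lem:equiv_def} (no real poles, regular at $\infty$ since $\infty\in U$) is the right bookkeeping.

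One slip should be corrected. You take $U=\mathbb{P}_{\mathbb{R}}^1\setminus\{c,\overline{c}\}$ for a \emph{single} arbitrary conjugate pair. But $\gamma$ is a scheme morphism to the affine variety $X$ only away from its poles, so $\gamma^{\ast}G$ is defined as a bundle over the scheme $U$ only if $U$ avoids \emph{all} poles of $\gamma$; with your choice the restriction $\theta|_{U}$ need not make sense. The same issue surfaces in your claim that the denominators of $\beta$ are powers of the single quadratic cutting out $\{c,\overline{c}\}$: since $p\circ\beta=\gamma$, this would force every rational curve on $X$ to have all its poles at one conjugate pair, which is false. The fix is immediate: let $S$ be the full (finite, conjugation-stable, real-point-free) set of poles of $\gamma$, enlarged by one conjugate pair if $\gamma$ is constant, and set $U=\mathbb{P}_{\mathbb{R}}^1\setminus S$. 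This $U$ is still a smooth affine curve over $\mathbb{R}$ containing every real point including $\infty$, Proposition~\ref{prop:rational triviality} applies to $\theta|_{U}$, and the section has denominators dividing a product of the irreducible real quadratics attached to $S$, hence no real roots. With that adjustment your argument is complete.
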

\begin{theorem}[Generalized Kempe's Universality Theorem II]\label{thm:Kempe G-space}
Let $(G,X)$ be one of the nine pairs listed in Theorem~\ref{thm:approx-lift}.  For every $\gamma\in \Rat(X,x_0)$,  there exist $\alpha_1,\dots,  \alpha_s \in \Rat(G,  I)$ such that 
\begin{enumerate}[(a)]
\item Each $ \alpha_j$only has poles at $\{c_j,\overline{c}_j\}$,  $1 \le j \le s$.  
\item If $j \ne k$ then $\{c_j,\overline{c}_j\} \ne \{c_k,\overline{c}_k\}$. 
\item $\prod_{j=1}^s \alpha(t) x_0  = \gamma(t)$.
\end{enumerate}
Here $I$ denotes the identity element in $G$.
\end{theorem}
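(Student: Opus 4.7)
The plan is two-stage: first lift $\gamma: \mathbb{P}_\mathbb{R}^1 \to X$ to a rational curve $\widetilde{\gamma}: \mathbb{P}_\mathbb{R}^1 \to G$ with $\widetilde{\gamma}(0) = I$, then decompose $\widetilde{\gamma}$ using Theorem~\ref{thm:KempeGBF} when $G$ is a classical group of the form $G_B(\mathbb{F})$ (cases (i), (iii), (iv), (iv'), (vi), (vii), (viii)) or Theorem~\ref{thm:KempeSE} when $G$ is inhomogeneous (cases (ii) and (v)), and finally push the factorization along the orbit map $p: g \mapsto g x_0$.

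\textbf{Stage 1: rational lifting.} Setting $H = \Stab_{x_0}(G)$, the projection $p: G \to X \simeq G/H$ is a principal $H$-bundle, and its pull-back $\theta: \gamma^\ast G \to \mathbb{P}_\mathbb{R}^1$ is generically trivial because $p$ admits local analytic sections. Proposition~\ref{prop:rational triviality} (Harder's criterion) then yields a rational section, and hence a rational lift $\widetilde{\gamma}$, whenever $H$ is semisimple and simply connected. This holds directly for (vi)--(viii), where $H$ is a product of factors $\SU_m$ or $\Sp_m(\mathbb{H})$. For (ii) and (v), the semidirect-product structure of $\SE_n(\mathbb{R})$ and $\ISO^+_{p+1,n-p}(\mathbb{R})$ supplies the tautological polynomial section $x \mapsto \begin{bsmallmatrix} I_n & x \\ 0 & 1 \end{bsmallmatrix}$, so the lift is explicit. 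For the remaining cases (i), (iii), (iv), (iv'), whose stabilizers involve $\SO_m$-factors (semisimple but not simply connected), I replace $G$ by its universal (spin) cover $\widetilde{G}$, invoke Harder's criterion on $\widetilde{G}$ where the stabilizer becomes simply connected, and push the resulting lift down via the covering $\mathbb{R}$-morphism $\widetilde{G} \to G$. A left translation by $\widetilde{\gamma}(0)^{-1}$ then normalizes $\widetilde{\gamma}(0) = I$.

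\textbf{Stage 2: decomposition and normalization.} With $\widetilde{\gamma} \in \Rat(G, I)$ in hand, Theorem~\ref{thm:KempeGBF} or Theorem~\ref{thm:KempeSE} produces a factorization $\widetilde{\gamma}(t) = \alpha_1(t) \cdots \alpha_s(t)$ in which each $\alpha_j$ has poles exactly at a single conjugate pair $\{c_j, \overline{c}_j\}$---this pole-localization is precisely what Lemmas~\ref{lem:existenceR} and \ref{lem:existenceC} deliver when applied iteratively, stripping off one pole pair at a time---and the pairs are pairwise distinct. To enforce the normalization $\alpha_j(0) = I$, I set $g_k \coloneqq \alpha_1(0) \cdots \alpha_k(0)$ (with $g_0 = I$) and replace $\alpha_k$ by $\widetilde{\alpha}_k(t) \coloneqq g_{k-1} \alpha_k(t) g_k^{-1}$; since $g_s = \widetilde{\gamma}(0) = I$, the total product is unchanged and the pole structure is preserved under the constant conjugations. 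Applying $p$ to both sides yields $\gamma(t) = \widetilde{\alpha}_1(t) \cdots \widetilde{\alpha}_s(t) \cdot x_0$.

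\textbf{Main obstacle.} The delicate step is the rational lifting in the $\SO$-type cases. Proposition~\ref{prop:rational triviality} is stated over a smooth \emph{affine} curve, whereas $\mathbb{P}_\mathbb{R}^1$ is projective; I bypass this via the affine $\mathbb{R}$-model of $\mathbb{P}_\mathbb{R}^1$ guaranteed by \cite[Proposition 2.4.1]{AK92}, or equivalently by working on the affine open obtained by removing a rational point not meeting the locus of interest. A second subtlety is ensuring that the lift produced through the spin double cover descends to a \emph{rational} (rather than merely continuous) curve on the original orthogonal group---this is automatic because $\widetilde{G} \to G$ is a morphism of real algebraic groups, but it must be checked so that the decomposition machinery of Theorems~\ref{thm:KempeGBF} and \ref{thm:KempeSE} can be applied directly in $G$ itself.
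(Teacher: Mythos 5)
Your overall strategy --- lift $\gamma$ rationally to $G$ (via Lemma~\ref{lem:lifting criterion-2}/Proposition~\ref{prop:rational triviality}, passing to the spin cover when the stabilizer has $\SO$-factors), then factor the lift with Theorems~\ref{thm:KempeGBF} and \ref{thm:KempeSE} and push the factorization down along $p$ --- is exactly the paper's proof, and your treatment of cases (i)--(iv) and (vi)--(viii) matches it; your explicit translation section for (ii) and (v) and your normalization $\widetilde{\alpha}_k = g_{k-1}\alpha_k g_k^{-1}$ are fine. There is, however, a concrete error in your handling of case (iv'). For $(G,X)=(\SO^+_{p+1,n-p}(\mathbb{R}),\mathbb{R}^{p,n-p-1})$ the action is the \emph{conformal} action, and $\Stab_{x_0}(G)$ is not a product of $\SO$-factors: it is the parabolic-type subgroup generated by $\O_{p,n-p-1}(\mathbb{R})$ and the dilations together with the special conformal transformations fixing $x_0$. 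This group has a nontrivial unipotent radical and a central $\mathbb{R}_{>0}$, so it is not semisimple, and passing to the universal cover of $G$ does not change that; Harder's criterion therefore cannot be invoked for (iv') as you propose. The paper circumvents this by lifting $\gamma$ into the subgroup $\ISO^+_{p,n-p-1}(\mathbb{R})\subseteq\SO^+_{p+1,n-p}(\mathbb{R})$, which still acts transitively on $\mathbb{R}^{n-1}$ but with stabilizer $\SO^+_{p,n-p-1}(\mathbb{R})$, i.e., it reduces (iv') to (v); equivalently, the tautological translation section you already use for (ii) and (v) works verbatim here, since the translations of $\mathbb{R}^{p,n-p-1}$ sit algebraically inside the conformal group.

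A smaller point: ``generically trivial because $p$ admits local analytic sections'' does not justify the hypothesis of Proposition~\ref{prop:rational triviality}; analytic local triviality does not imply Zariski-local or generic triviality. What is needed, and what the paper asserts and uses, is that $p:G\to X$ is Zariski locally trivial for these particular homogeneous spaces (e.g., via Gram--Schmidt on Stiefel manifolds and Bruhat-type cells on flag manifolds), whence $\gamma^\ast G\to\mathbb{P}^1_{\mathbb{R}}$ is Zariski locally trivial as well. With the (iv') case repaired and this justification corrected, your argument coincides with the paper's.
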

\begin{proof}
By Theorems~\ref{thm:KempeGBF} and \ref{thm:KempeSE},  it is sufficient to prove the existence of a rational lift of $\gamma$. 

For \eqref{prop:ex-approx-lift:item4'},  we consider 
\[\begin{tikzcd}
	{\ISO^+_{p,n-p-1}(\mathbb{R})} & {\SO^+_{p+1,n-p}(\mathbb{R})} \\
	\\
	{\mathbb{P}_{\mathbb{R}}^1} & {\mathbb{R}^{n-1}}
	\arrow["j", hook, from=1-1, to=1-2]
	\arrow["{p \circ j}"', from=1-1, to=3-2]
	\arrow["p", from=1-2, to=3-2]
	\arrow["\beta", dashed, from=3-1, to=1-1]
	\arrow["\gamma", from=3-1, to=3-2]
\end{tikzcd}\]
where $j$ is the inclusion of $ \ISO^+_{p,n-p-1}(\mathbb{R})$ into $\SO^+_{p+1,n-p}(\mathbb{R})$  and $p$ is the projection map defined by the action of $\SO^+_{p+1,n-p}(\mathbb{R})$ on $\mathbb{R}^{n-1}$.  Hence it is reduced to prove \eqref{prop:ex-approx-lift:item5}.

For \eqref{prop:ex-approx-lift:item1}--\eqref{prop:ex-approx-lift:item4} and \eqref{prop:ex-approx-lift:item5},  we let $\widetilde{G}$ the universal covering of $G$.  Since $G$ in each of these cases is a semi-direct product of some $\SO^+_{p,q}$ and $\mathbb{R}^m$,  the corresponding $\widetilde{G}$ is also a semi-direct product of $\Spin_{p,q}(\mathbb{R})$  and $\mathbb{R}^m$.  In particular,  $\widetilde{G}$ is a real linear algebraic group.  Thus we may consider the following diagram.:
\[\begin{tikzcd}
	& {\widetilde{G}} \\
	& G \\
	{\mathbb{P}_{\mathbb{R}}^1} & {X \simeq G/H \simeq \widetilde{G}/\widetilde{H}}
	\arrow["\pi", from=1-2, to=2-2]
	\arrow["p", from=2-2, to=3-2]
	\arrow["{\widetilde{\beta}}", curve={height=-12pt}, dashed, from=3-1, to=1-2]
	\arrow["\beta", dashed, from=3-1, to=2-2]
	\arrow["\gamma"', from=3-1, to=3-2]
\end{tikzcd}\]
where $\pi: \widetilde{G} \to G$ is the covering map and $\widetilde{H} \coloneqq \Stab_{\widetilde{G}}(x_0)$.  Obviously,  if $\widetilde{\beta}$ is a rational lift of $\gamma$ to $\widetilde{G}$,  then $\beta \coloneqq \pi \circ \widetilde{\beta}$ is a rational lift of $\gamma$ to $G$.  Since $H$ is the product of $\SO^+_{p,q}$,  $\widetilde{H}$ is a product of $\Spin_{p,q}(\mathbb{R})$,  which is a semisimple and simply connected algebraic group.  It is straightforward to verify that $p: G \to X$ is Zariski locally trivial.  Thus,  $p \circ \pi: \widetilde{G} \to X$ is also Zariski locally trivial.  The existence of $\widetilde{\beta}$ and $\beta$ follows from Lemma~\ref{lem:lifting criterion-2}.

For \eqref{prop:ex-approx-lift:item6}--\eqref{prop:ex-approx-lift:item8},  we notice that $p: G \to X$ is Zariski locally trivial and $H$ is a semi-simple and simply connected algebraic group.  Hence Lemma~\ref{lem:lifting criterion-2} is applicable.
\end{proof}
\subsection{Examples of small dimensions}
In this subsection,  we briefly discuss some low dimensional examples,  which have been well-studied in geometric algebra and theoretical mechanism.  We notice that in the literature \cite{Hegedus2013factorization},  rational curves are sometimes allowed to have poles in the real line.  In this context,  rational curves considered in this paper correspond to bounded  \emph{motion polynomials} \cite{GKLRSV17,Hegedus2013factorization,LJS18}. 
\subsubsection*{Rational curves on \texorpdfstring{$\SO_3(\mathbb{R})$}{SO3} and their geometric algebra model} Let $\mathbb{H}_1$ be the group of unit quaternions in $\mathbb{H}$.  We consider the 2-1 covering map $p: \mathbb{H}_1 \to \SO_3(\mathbb{R})$ given by 
\[
p(a + b \qi + c \qj + d \qk) \coloneqq \begin{bsmallmatrix}
1 - 2c^2 - 2d^2 & 2bc  - 2 ad & 2 bd + 2 ac  \\ 
2bc  + 2 ad &  1 - 2b^2 - 2d^2 &   2cd - 2 ab \\
2bd - 2 ac & 2cd + 2 ab &  1 - 2b^2 - 2c^2
\end{bsmallmatrix}.
\] 
Let $\gamma \in \Rat_2(\SO_3(\mathbb{R}), I_2)$ be given by
\[
\gamma(t) = \begin{bmatrix}
\frac{t^2 - 1}{t^2 + 1} & \frac{2t}{t^2 + 1} & 0 \\[2pt]
-\frac{2t}{t^2 + 1} & \frac{t^2 - 1}{t^2 + 1} & 0 \\[2pt]
0 & 0 & 1
\end{bmatrix}.
\]
Since $[\gamma] = 1 \in \mathbb{Z}_2 =  \pi_1(\SO_3(\mathbb{R}))$,  Lemma~\ref{lem:lifting criterion-1} implies that there is no $\beta \in \Rat(\mathbb{H}_1)$ such that $p \circ \beta = \gamma$.  Thus,  $\Rat(\SO_3(\mathbb{R}))$ is a strictly bigger set than $\Rat(\mathbb{H}_1)$.  

However,  by the path lifting property for a covering space \cite[Proposition~1.30]{Hatcher02},  there must exist some $f: [0,1] \to \mathbb{H}_1$ such that $p \circ f = \gamma$ with $f(0) \ne f(1)$.  Indeed,  it is straightforward to verify that $f(x)=(cos(-\pi/2+x\pi)-sin(-\pi/2+x\pi)\qk)$ is such a map.  We notice that $\gamma(t)=p \circ g$ with $g(t)=(t-\qk)/\sqrt{t^2+1}$ and $t=cos(-\pi/2+x\pi)/sin(-\pi/2+x\pi)$ is the normalization of the motion polynomial $t-\qk$ discussed in \cite{Hegedus2013factorization}.

\subsubsection*{Rational planar curves in Euclidean geometry} We consider  $(G,X) = (\SE_2(\mathbb{R}), \mathbb{R}^2)$.  As in Example~\ref{ex:prob:KempeProb:SE2}, we have Kempe's Universality Theorem for rational planar curves \cite{GKLRSV17}.

\subsubsection*{Rational space curves in Euclidean geometry} We consider $(G,X) = (\SE_3(\mathbb{R}), \mathbb{R}^3)$.  By Theorems~\ref{thm:KempeSE} and \ref{thm:Kempe G-space} (cf. Example~\ref{ex:prob:KempeProb:SE2}),  we obtain Kempe's Universality Theorem for rational space curves \cite{LJS18}.

\subsubsection*{Rational planar curves in conformal geometry}\label{subsubsec:planar curve conformal}
Let $(G,X,  x_0) = (\SO^+_{3,1}(\mathbb{R}), \mathbb{R}^2, (0,0)^\tp)$.  By Example~\ref{ex:prob:KempeProb:SE2}, every $\gamma \in \Rat_{2d}(\mathbb{R}^2, x_0)$ can be written as 
\begin{equation}\label{subsubsec:planar curve conformal:eq1}
\gamma(t)  = \prod_{i=1}^{4d} P_i \begin{bsmallmatrix} 
\theta_i (t) & 0\\
0 & 1
\end{bsmallmatrix}
P_i^{-1} x_0,
\end{equation}
for some $P_i\in \SE_2(\mathbb{R})$ and $\theta_i \in \Rat_2(\SO_2(\mathbb{R}),I_2)$,  $1 \le i \le 4d$.  Since $\SE_2(\mathbb{R})$ is a subgroup of $\SO_{3,1}^+(\mathbb{R})$ and the induced inclusion $\Rat(\SE_2(\mathbb{R}),I_2) \subseteq \Rat(\SO^+_{3,1}(\mathbb{R}),I_4)$ preserves the degree of $\begin{bsmallmatrix} 
\theta_i (t) & 0\\
0 & 1
\end{bsmallmatrix}$,  we conclude that 
\begin{equation}\label{subsubsec:planar curve conformal:eq2}
\gamma(t)  = \prod_{j=1}^s \alpha_j(t) x_0
\end{equation}
for some $s \le 4d$ and $\alpha_j \in \Rat_2(\SO^+(3,1),  I_4)$,  $1 \le j \le s$.  Moreover,  by Examples~\ref{ex:O_n} and \ref{ex:O_n1},  each $\alpha_j$ has one of the following two forms: 
\[
P \begin{bsmallmatrix}
\frac{(t-a)^2 - b^2}{(t-a)^2 + b^2} & \frac{2b(t-a)}{(t-a)^2 + b^2} & 0 & 0 \\
-\frac{2b(t-a)}{(t-a)^2 + b^2} & \frac{(t-a)^2 - b^2}{(t-a)^2 + b^2} & 0 & 0 \\
0 & 0 & 1 & 0 \\
0 & 0 & 0 & 1 \\
\end{bsmallmatrix} P^{-1},  \quad P 
\begin{bsmallmatrix}
1 & 0 & 0 & 0 \\
0& 0 & 1 &  0 \\
0 & \frac{\sqrt{2}}{2} & 0 & -\frac{\sqrt{2}}{2} \\
0 & -\frac{\sqrt{2}}{2} & 0  & -\frac{\sqrt{2}}{2}
\end{bsmallmatrix}
\begin{bsmallmatrix}
 1 & \frac{b^2 y}{(t-a)^2 + b^2} & 0 & 0  \\
 0 & 1 &  0 & 0  \\ 
0 & \frac{b (t-a)}{(t-a)^2 + b^2}  & 1 & 0\\
\frac{b^2 y}{(t-a)^2 + b^2} & \frac{b^2}{2((t-a)^2 + b^2)}  &  \frac{b(t-a)}{(t-a)^2 + b^2} & 1  
\end{bsmallmatrix}  
\begin{bsmallmatrix}
1 & 0 & 0 & 0 \\
0& 0 & \frac{\sqrt{2}}{2} &  -\frac{\sqrt{2}}{2} \\
0 & 1 & 0 & 0 \\
0 & 0 & -\frac{\sqrt{2}}{2} & -\frac{\sqrt{2}}{2}
\end{bsmallmatrix}
P^{-1},
\]
where $(a,b)\in \mathbb{R} \times (\mathbb{R} \setminus \{0\})$,  $y\in \{-1,1\}$ and $P\in \SO^+_{3,1}(\mathbb{R})$.  We notice that both rotations and \emph{conformal rotations} \cite{Dorst2016construction, Kalkan2022study} are of the first type,  while \emph{circular translations} \cite{Hegedus2013factorization,Li2024quadratic} in $\mathbb{R}^2$ are of the second type.  In particular,  by comparing \eqref{subsubsec:planar curve conformal:eq1} and \eqref{subsubsec:planar curve conformal:eq2},  there is no essential distinction between 2D kinematics in Euclidean geometry and Conformal geometry,  in the sense of rational curves. 
\subsubsection*{Rational space curve in conformal geometry} 
Let $(G,X, x_0) = (\SO^+_{4,1}(\mathbb{R}),  \mathbb{R}^3, (0,0,0)^\tp)$.  Since $\SE_3(\mathbb{R})$ is a subgroup of $\SO^+_{4,1}(\mathbb{R})$,  the same argument as for $(\SO^+_{3,1}(\mathbb{R}), \mathbb{R}^2, (0,0)^\tp)$ implies that every $\gamma \in \Rat_{2d}(\mathbb{R}^3, x_0)$ can be written as 
\[
\gamma(t) = \prod_{j=1}^s \alpha_j(t) x_0
\]
for some $s \le 4d$ and $\alpha_1,\dots, \alpha_s \in \Rat_2(\SO^+_{4,1}(\mathbb{R}), I_5)$.  Furthermore,  Theorem~\ref{thm:classification O_n1} implies that each $\alpha_j$ must have one of the following three forms:
\begin{align*}
&P \begin{bsmallmatrix}
\frac{(t-a)^2 - b^2}{(t-a)^2 + b^2} & \frac{2b(t-a)}{(t-a)^2 + b^2} & 0 & 0 & 0 &\\
-\frac{2b(t-a)}{(t-a)^2 + b^2} & \frac{(t-a)^2 - b^2}{(t-a)^2 + b^2} & 0 & 0 & 0 \\
0 & 0 & 1 & 0 & 0 \\
0 & 0 & 0 & 1 & 0 \\
0 & 0 & 0 & 0 & 1 \\
\end{bsmallmatrix} P^{-1},  
P \begin{bsmallmatrix}
\frac{(t-a)^2 + b^2(1-\lambda^2/2)}{(t-a)^2 + b^2}  & \frac{b^2 \lambda \sqrt{1 - \frac{\lambda^2}{4}} h}{(t-a)^2 + b^2}  & \frac{b \lambda (t-a)}{(t-a)^2 + b^2} & \frac{b^2 \lambda \sqrt{1 - \frac{\lambda^2}{4}} g}{(t-a)^2 + b^2} & 0  \\
-\frac{b^2 \lambda \sqrt{1 - \frac{\lambda^2}{4}} h}{(t-a)^2 + b^2}   & \frac{(t-a)^2 + b^2(1-\lambda^2/2)}{(t-a)^2 + b^2}   &  -\frac{b^2\lambda \sqrt{1 - \frac{\lambda^2}{4}} g}{(t-a)^2 + b^2} & \frac{b\lambda (t-a)}{(t-a)^2 + b^2} & 0  \\
- \frac{b \lambda (t-a)}{ (t-a)^2 + b^2 } & \frac{b^2 \lambda \sqrt{1 - \frac{\lambda^2}{4}} g}{(t-a)^2 + b^2} & \frac{(t-a)^2 + b^2(1-\lambda^2/2) }{(t-a)^2 + b^2}  & -\frac{b^2 \lambda \sqrt{1 - \frac{\lambda^2}{4}} h}{(t-a)^2 + b^2}  & 0  \\
-\frac{b^2 \lambda \sqrt{1 - \frac{\lambda^2}{4}} g}{(t-a)^2 + b^2} & -\frac{b \lambda (t-a)}{(t-a)^2 + b^2} & \frac{b^2 \lambda \sqrt{1 - \frac{\lambda^2}{4}} h}{(t-a)^2 + b^2} & \frac{(t-a)^2 + b^2 (1-\lambda^2/2) }{(t-a)^2 + b^2} & 0  \\
0 & 0 & 0 & 0 & 1
\end{bsmallmatrix} P^{-1}, \\
&P \begin{bsmallmatrix}
1 & 0 & 0 &  0 & 0 \\
0 & 1 & 0 & 0 & 0 \\
0 & 0 & 0 &  1 & 0 \\
0 & 0 &  \frac{\sqrt{2}}{2} & 0 & - \frac{\sqrt{2}}{2} \\
0 & 0 & - \frac{\sqrt{2}}{2} & 0 & -  \frac{\sqrt{2}}{2} 
\end{bsmallmatrix}
\begin{bsmallmatrix}
1  & 0 &  \frac{b^2 g}{(t-a)^2 + b^2} & 0 & 0  \\
0 & 1  & \frac{b^2 h}{(t-a)^2 + b^2} & 0 & 0 \\
0 & 0  & 1 & 0 & 0  \\
0 & 0 & \frac{b(t-a)}{(t-a)^2 + b^2} & 1 & 0 \\
\frac{b^2 g}{(t-a)^2 + b^2} & \frac{b^2 h}{(t-a)^2 + b^2} & \frac{b^2}{2((t-a)^2 + b^2 )} & \frac{b(t-a)}{(t-a)^2 + b^2} & 1
\end{bsmallmatrix} 
\begin{bsmallmatrix}
1 & 0 & 0 &  0 & 0 \\
0 & 1 & 0 & 0 & 0 \\
0 & 0 & 0 &   \frac{\sqrt{2}}{2} & - \frac{\sqrt{2}}{2}  \\
0 & 0 &  1 & 0 & 0 \\
0 & 0 & 0 & - \frac{\sqrt{2}}{2} & -  \frac{\sqrt{2}}{2} 
\end{bsmallmatrix}
P^{-1},
\end{align*}
where $(a,b)\in \mathbb{R} \times (\mathbb{R} \setminus \{0\})$,  $\lambda \in (0,2]$,  $(g,h)\in \mathbb{S}^1$ and $P\in \SO^+_{4,1}(\mathbb{R})$.  We remark that  \emph{the conformal Villarceau motion} \cite{Dorst2019conformal, Li2024quadratic} is a product of two curves of the first type and \emph{the circular translation} in $\mathbb{R}^3$ \cite{Li2024quadratic} is a special case of the third type by setting $(g,h) = (0,1)$ (cf.  Example~\ref{ex:O_n1}).  

We notice that on $\SO^+_{4,1}(\mathbb{R})$,  there are (up to a conjugation and a linear change of variable) infinitely many quadratic rational curves.  For comparison,  there are only three (up to a conjugation and a linear change of variable) quadratic rational curves on $\SO^+_{3,1}(\mathbb{R})$.  Thus,  from the perspective of rational curves,  3D kinematics is more complicated than 2D kinematics in conformal geometry.  However,  as we have already seen, 3D kinematics in Euclidean geometry,  2D kinematics in Euclidean geometry and 2D kinematics in Conformal geometry are essentially the same. 
\appendix
\section{Proof of Lemma~\ref{lem:lower triangular}}\label{append:proof of lem:lower triangular}
\begin{proof} 
We prove \eqref{lem:lower triangular:item1}--\eqref{lem:lower triangular:item6} case by case.
\begin{enumerate}[(a)]
\item We observe that $J_m(\lambda) Y + Y J_n(-\lambda) = 0$ is equivalent to $J_m(0) Y + Y J_n(0) = 0$. Thus we may assume $\lambda = 0$. Let $Y = (y_{ij})_{i,j=1}^{m,n}$. Then the equation can be written as
\[
y_{i,j+1} + y_{i-1,j} = 0.
\]
This implies that $Y$ is a lower triangular alternating Toeplitz matrix. 
\item The proof is the same as that of \eqref{lem:lower triangular:item1}.
\item If $\lambda \ne 0$ then the solution $Y = I_m + J_m(\lambda)^2/2$ is unique. If $\lambda = 0$ then we have 
\[
J_m(0) Y + Y J_m(0) = 2J_m(0)  + J_m(0)^3.
\]
We notice that a solution of this equation must have the form 
\[
Y = I_m + J_m(0)^2/2 + T,
\]
where $T$ satisfies $J_m(0) T + T J_m(0) = 0$, which is lower triangular alternating Toeplitz.
\item We write $Y = (Y_{ij})_{i,j=1}^{m\times n}$ where $Y_{ij}\in \mathbb{F}^{2\times 2}$. Then we have 
\[
\varphi(Y_{ij}) = - (Y_{i,j+1} + Y_{i-1,j}),\quad 1 \le i \le m, 1\le j \le n,
\]
where $\varphi:\mathbb{F}^{2\times 2} \to \mathbb{F}^{2\times 2}$ is the map defined by 
\[
\varphi_b(X) =b\left( \begin{bsmallmatrix}
0 & 1\\
-1 & 0
\end{bsmallmatrix} X + X \begin{bsmallmatrix}
0 & 1\\
-1 & 0
\end{bsmallmatrix} \right).
\]
Here we adopt the convention $Y_{ij} = 0$ if either $i < 1$ or $j>n$. We observe that if $b > 0$ then
\[
\varphi_b(\mathbb{F}^{2\times 2}) = \left\lbrace
\begin{bsmallmatrix}
x & y \\
-y & x
\end{bsmallmatrix}\in \mathbb{F}^{2\times 2}: x,y\in \mathbb{F}
\right\rbrace, \quad
\ker(\varphi_b) = \left\lbrace
\begin{bsmallmatrix}
x & y \\
y & -x
\end{bsmallmatrix}\in \mathbb{F}^{2\times 2}: x,y\in \mathbb{F}
\right\rbrace.
\]
If $b = 0$ then
\[
\varphi_b(\mathbb{F}^{2\times 2}) = \{0\}, \quad
\ker(\varphi_b) = \mathbb{F}^{2\times 2}.
\]
This implies $\varphi_b(\mathbb{F}^{n\times n}) \cap \ker(\varphi_b) = \{0\}$. Since $\varphi_b (Y_{1n}) = 0, \varphi_b(Y_{1,n-1}) = -Y_{1n}, \varphi_b(Y_{2n}) = - Y_{1n}$ we have $Y_{1n} = 0$ and $Y_{1,n-1}, Y_{2n}\in \ker(\varphi_b)$. By induction on $n$ and $m$, we may conclude that $Y$ is a block lower triangular alternating Toeplitz matrix.
\item The proof is similar to that of \eqref{lem:lower triangular:item2}.
\item We write $Y = (Y_{ij})_{i,j=1}^{m\times n}$ where $Y_{ij}\in \mathbb{C}^{2\times 2}$. Then we have 
\[
\varphi(Y_{ij}) = Y_{i-1,j} - Y_{i,j-1},\quad 1 \le i \le m, 1\le j \le n,
\]
where $\varphi:\mathbb{C}^{2\times 2} \to \mathbb{C}^{2\times 2}$ is the map defined by 
\[
\varphi(X) =\begin{bsmallmatrix}
a & b\\
-b & a
\end{bsmallmatrix} X + X \begin{bsmallmatrix}
a & b\\
-b & a
\end{bsmallmatrix}.
\]
Here we adopt the convention $Y_{ij} = 0$ if either $i < 1$ or $j>n$. We notice that $\varphi(\mathbb{C}^{2\times 2}) = \mathbb{C}^{2\times 2}, \ker(\varphi) = \{0\}$. This implies $\varphi(\mathbb{C}^{2\times 2}) \cap \ker(\varphi) = \{0\}$ and the rest of the proof is the same as that of \eqref{lem:lower triangular:item3}. 
\item We write $Y = (Y_{ij})_{i,j=1}^{m\times n}$ where $Y_{ij}\in \mathbb{H}^{2\times 2}$. Then we have 
\[
\varphi_{\lambda}(Y_{ij}) = Y_{i-1,j} - Y_{i,j-1},\quad 1 \le i \le m, 1\le j \le n,
\]
where $\varphi_{\lambda}:\mathbb{H}^{2\times 2} \to \mathbb{H}^{2\times 2}$ is the map defined by 
\[
\varphi_{\lambda}(X_1 + jX_2) = \left( \begin{bsmallmatrix}
\lambda & 0\\
0 & \lambda^\ast
\end{bsmallmatrix} X_1 - X_1\begin{bsmallmatrix}
\lambda^\ast & 0\\
0 & \lambda
\end{bsmallmatrix} \right) + \qj \left( \begin{bsmallmatrix}
\lambda^\ast & 0\\
0 & \lambda
\end{bsmallmatrix} X_2 - X_2\begin{bsmallmatrix}
\lambda^\ast & 0\\
0 & \lambda
\end{bsmallmatrix} \right). 
\]
Here $X_1,X_2\in \mathbb{C}^{2\times 2}$ and we adopt the convention $Y_{ij} = 0$ if either $i < 1$ or $j>n$. We notice that if $\operatorname{Im}(\lambda) > 0$
\begin{align*}
\varphi_{\lambda}(\mathbb{H}^{2\times 2}) &= \left\lbrace
\begin{bsmallmatrix}
x & 0 \\
0 & y
\end{bsmallmatrix} + \qj \begin{bsmallmatrix}
0 & z \\
w & 0
\end{bsmallmatrix}  \in \mathbb{H}^{2\times 2} : x,y,z,w \in \mathbb{C}
\right\rbrace, \\
\ker(\varphi_{\lambda}) &= \left\lbrace
\begin{bsmallmatrix}
0 & z \\
w & 0
\end{bsmallmatrix} + \qj \begin{bsmallmatrix}
x & 0 \\
0 & y
\end{bsmallmatrix}  \in \mathbb{H}^{2\times 2} : x,y,z,w \in \mathbb{C}
\right\rbrace.
\end{align*}
If $\operatorname{Im}(\lambda) = 0$ then $\varphi(\mathbb{H}^{2 \times 2}) = \{0\}$ and $\ker(\varphi) = \mathbb{H}^{2\times 2}$. This implies $\varphi(\mathbb{F}^{2 \times 2}) \cap \ker(\varphi) = \{0\}$ and the rest of the proof is the same as that of \eqref{lem:lower triangular:item3}. 
\end{enumerate}
\end{proof}

\section{Proof of Lemma~\ref{lem:Yij}}\label{append:proof lem:Yij}
\begin{proof}
We recall that \eqref{lem:block:eq1} is
\begin{align}
X_i Y_{ij} + Y_{ij} X_j &= 0, \quad i \ne j\label{lem:Yij:eq1} \\
X_i Y_{ii} + Y_{ii} X_i &= 2X_i + X_i^3. \label{lem:Yij:eq2}
\end{align}
We notice that a solution of \eqref{lem:Yij:eq2} is $Y_{ii} = I_{m_i} + X_i^2/2 + T_i$ where $T_i$ satisfies 
\begin{equation}\label{lem:Yij:eq3}
X_i T_i + T_i X_i = 0.
\end{equation}
It is straightforward to verify that $B_i^{-1} = \varepsilon B_i$ for each $1  \le i \le s$, thus \eqref{lem:block:eq2} implies 
\begin{align*}
i \ne j: Y_{ji}^\sigma &= -\varepsilon B_iY_{ij} B_j,\\
i = j: Y_{ii} &=\varepsilon ( B_i \left( (2 I_{m_i} + X_i^2) -  Y_{ii} \right) B_i)^\sigma.
\end{align*}
A direct calculation implies $\varepsilon (B_i X_i^2 B_i)^\sigma = X_i^2$. Thus, if $Y_{ii} = I_{m_i} + X_i^2/2 + T_i$ for some $T_i$ then 
\begin{equation}\label{lem:Yij:eq0} 
T_i =- \varepsilon (B_i T_i B_i)^\sigma.
\end{equation}
We also observe that $B_i^\sigma = \varepsilon B_i$.

Thus, to solve \eqref{lem:block:eq1} and \eqref{lem:block:eq2}, it is sufficient to consider the following system:
\begin{align}
X_1 Y_{12} + Y_{12} X_2 &= 0, \label{lem:Yij:eq4} \\
Y_{21} + \varepsilon (B_1 Y
_{12} B_2)^\sigma &=0. \label{lem:Yij:eq5}
\end{align}
where $(X_1,B_1), (X_2,B_2)$ are normal forms listed in Table~\ref{Tab:indecomposable} such that $0\in \rho(X_1) + \rho(X_2)$. We split the discussion with respect to the seven cases in Table~\ref{Tab:indecomposable}. 
\begin{enumerate}[No. 1:]
\item\label{lem:Yij:No1} $\varepsilon = 1$ and $\sigma$ is the transpose.
\begin{enumerate}[(a)]
\item $(X_1,X_2) =(J_{2m+1}(0),  J_{2n+1}(0))$. We have $(B_1, B_2) =(F_{2m+1}, F_{2n+1})$ and \eqref{lem:Yij:eq4} becomes
\[
J_{2m+1}(0) Y_{12} + Y_{12} J_{2n+1}(0) = 0.
\]
Thus $Y_{12}\in \mathbb{C}^{(2m+1) \times (2n+1)}$ is  lower triangular alternating Toeplitz by Lemma~\ref{lem:lower triangular} \eqref{lem:lower triangular:item1}. It has one of the following two forms depending on $m \ge n$ or $m < n$:
\[
\begin{bsmallmatrix}
0\\
\ch{_{$\fp$}S}(z_1,\dots, z_{2m+1})
\end{bsmallmatrix}~\quad~\text{or}~\quad~ 
\begin{bsmallmatrix}
\ch{_{$\fp$}S}(z_1,\dots, z_{2n+1}) & 0
\end{bsmallmatrix}
\]
and $Y_{21}$ is 
\[
-\begin{bsmallmatrix}
\ch{_{$\fp$}S}(z_1,\dots, z_{2m+1}) & 0
\end{bsmallmatrix}~\quad~\text{or}~\quad~ 
-\begin{bsmallmatrix}
0\\
\ch{_{$\fp$}S}(z_1,\dots, z_{2n+1})
\end{bsmallmatrix}.
\]
In particular, $Y_{12} = Y_{21}$ implies $Y_{12} = Y_{21} = 0$.
\item $(X_1, X_2)=( J_{2m+1}(0), \diag(J_{2n}(0), -J_{2n}(0)^\tp ) )$. We have $(B_1, B_2) =(F_{2m+1}, I_{2n} \otimes H_2)$ and \eqref{lem:Yij:eq4} becomes
\[
J_{2m+1}(0) Y_{12} + Y_{12} \diag(J_{2n}(0), -J_{2n}(0)^\tp) = 0.
\]
We partition $Y_{12} \in \mathbb{C}^{(2m+1) \times 4n}$ as $Y_{12} = \begin{bmatrix}
Z & W
\end{bmatrix}$ where $Z,W\in \mathbb{C}^{(2m+1) \times 2n}$ to obtain 
\[
J_{2m+1}(0) Z + Z J_{2n}(0) = 0,\quad J_{2m+1}(0) W -  W J_{2n}(0)^\tp = 0.
\]
Therefore, $Z$ (resp. $WH_{2n}$) is lower triangular alternating Toeplitz (resp. lower triangular Toeplitz) by Lemma~\ref{lem:lower triangular} \eqref{lem:lower triangular:item1}. This implies that $Y_{12}$ has one of the following two forms, depending on $m\ge n$ or $m < n$:
\[
\begin{bsmallmatrix}
0 & 0 \\
\ch{_{$\fp$} S(z_1,\dots, z_{2n})} & T_{\fp}(w_1,\dots, w_{2n})
\end{bsmallmatrix}~\quad~\text{or}~\quad~ 
\begin{bsmallmatrix}
\ch{_{$\fp$} S(z_1,\dots, z_{2m+1})} & 0 & 0 & T_{\fp}(w_1,\dots, w_{2m+1})
\end{bsmallmatrix},
\]
and $Y_{21}$ is 
\begin{equation*}
-\begin{bsmallmatrix}
\ch{ _{$\fp$} S} (w_1,\dots, w_{2n}) & 0\\ 
\ch{ ^{$\fp$} T} (z_{1},-z_{2},\dots,z_{2n-1}, -z_{2n}) & 0 \\
\end{bsmallmatrix}~\quad~\text{or}~\quad~
-\begin{bsmallmatrix}
0 \\
\ch{ _{$\fp$} S} (w_1,\dots, w_{2m+1})\\ 
\ch{ ^{$\fp$} T} (z_{1},-z_{2}, \dots, - z_{2m}, z_{2m+1}) \\
0
\end{bsmallmatrix}.
\end{equation*}

\item $(X_1, X_2)=(\diag(J_{m}(\lambda), -J_{m}(\lambda)^\tp ), \diag(J_{n}(\mu), -J_{n}(\mu)^\tp )) )$ where $\lambda^2 = \mu^2$. In this case we have $(B_1, B_2)  =(I_m \otimes H_2, I_n \otimes H_2)$ and \eqref{lem:Yij:eq4} becomes
\[
\diag(J_{m}(\lambda), -J_{m}(\lambda)^\tp )  Y_{12} + Y_{12} \diag(J_{n}(\mu), -J_{n}(\mu)^\tp ) = 0.
\]
We partition $Y_{12} \in \mathbb{C}^{2m \times 2n}$ as $Y_{12} = \begin{bmatrix}
Z & W \\
U & V
\end{bmatrix}$ where $Z, W, U, V\in \mathbb{C}^{m\times n}$ to obtain 
\begin{align*}
J_{m}(\lambda) Z + Z J_{n}(\mu) &= 0,\quad \ J_{m}(\lambda) W - W J_{n}(\mu)^\tp = 0, \\
-J_{m}(\lambda)^\tp U + U J_{n}(\mu) &= 0,\quad -J_{m}(\lambda)^\tp V - V J_{n}(\mu)^\tp = 0.
\end{align*}

If $\lambda = \mu \ne 0$ then $Z =  V = 0$ and $H_m U, WH_n$ are lower triangular Toeplitz. This implies that $Y_{12}$ has one of the following two forms, depending on $m\ge n$ or $m < n$:
\begin{equation*}
\begin{bsmallmatrix}
0 & 0 \\
0 & T_{\fp} (w_1,\dots, w_{2n}) \\
\ch{^{$\fp$} T}(u_1,\dots, u_{2n}) & 0\\
0 & 0
\end{bsmallmatrix}
~\quad~\text{or}~\quad~
\begin{bsmallmatrix}
0 & 0 & 0 & T_{\fp} (w_1,\dots, w_{2m}) \\
\ch{^{$\fp$} T}(u_1,\dots, u_{2m}) & 0 & 0 & 0
\end{bsmallmatrix}
\end{equation*}
and $Y_{21}$ is 
\begin{equation*}
-\begin{bsmallmatrix}
0 & 0 & 0 & T_{\fp} (w_1,\dots, w_{2n}) \\
\ch{^{$\fp$} T}(u_1,\dots, u_{2n}) & 0 & 0 & 0
\end{bsmallmatrix}
~\quad~\text{or}~\quad~
-\begin{bsmallmatrix}
0 & 0 \\
0 & T_{\fp} (w_1,\dots, w_{2m}) \\
\ch{^{$\fp$} T}(u_1,\dots, u_{2m}) & 0\\
0 & 0
\end{bsmallmatrix}.
\end{equation*}
In particular, if $Y_{12} = Y_{21}$ then $Y_{12} = Y_{21} = 0$.

If $\lambda = -\mu \ne 0$ then $W = U = 0$ and $Z, H_mV H_n$ are lower triangular alternating Toeplitz. Therefore $Y_{12}$ has one of the following two forms, depending on $m\ge n$ or $m < n$:
\begin{equation*}
\begin{bsmallmatrix}
0 & 0 \\
\ch{_{$\fp$} S}(z_1,\dots, z_{2n}) & 0 \\
0 & S^{\fp} (v_1,\dots, v_{2n})\\
0 & 0
\end{bsmallmatrix}
~\quad~\text{or}~\quad~
\begin{bsmallmatrix}
\ch{_{$\fp$} S}(z_1,\dots, z_{2m})  & 0 & 0 & 0\\
0& 0 & 0 & S^{\fp} (v_1,\dots, v_{2m}) 
\end{bsmallmatrix}
\]
and $Y_{21}$ is 
\begin{align*}
&-\begin{bsmallmatrix} 
\ch{_{$\fp$} S}(v_1, -v_2,\dots, v_{2n-1}, -v_{2n})  & 0 & 0 & 0\\
0& 0 & 0 & S^{\fp} (z_1, -z_2,\dots, z_{2n-1}, -z_{2n}) 
\end{bsmallmatrix}
~\quad~\text{or}~\quad~ \\
&-\begin{bsmallmatrix}
0 & 0 \\
\ch{_{$\fp$} S}(v_1, -v_2,\dots, v_{2m-1}, -v_{2m}) & 0 \\
0 & S^{\fp} (z_1, -z_2,\dots, z_{2m-1}, -z_{2m})\\
0 & 0
\end{bsmallmatrix}.
\end{align*}

If $\lambda = \mu = 0$ then $m, n$ are even, $Z, H_mV H_n$ are lower triangular alternating Toeplitz and $H_m U, WH_n$ are lower triangular Toeplitz. Thus, $Y$ has one of the following two forms, depending on $m\ge n$ or $m < n$:
\[
\begin{bsmallmatrix}
0& 0 \\
\ch{_{$\fp$} S}(z_1,\dots, z_{n})   & T_{\fp}(w_1,\dots, w_{n})\\
\ch{^{$\fp$} T}(u_1,\dots, u_{n}) & S^{\fp} (v_1,\dots, v_{n})  \\
0& 0
\end{bsmallmatrix}
~\quad~\text{or}~\quad~
\begin{bsmallmatrix}
\ch{_{$\fp$} S}(z_1,\dots, z_{m}) & 0 & 0   & T_{\fp}(w_1,\dots, w_{m})\\
\ch{^{$\fp$} T}(u_1,\dots, u_{m}) & 0 & 0 & S^{\fp} (v_1,\dots, v_{m})
\end{bsmallmatrix}.
\]
Thus $Y_{21}$ is 
\begin{align*}
&-\begin{bsmallmatrix}
\ch{_{$\fp$} S}(v_1,-v_2,\dots, v_{n-1},  -v_{n}) & 0 & 0   & T_{\fp}(w_1,\dots, w_{n})\\
\ch{^{$\fp$} T}(u_1,\dots, u_{n}) & 0 & 0 & S^{\fp} (z_1,-z_2,\dots, z_{n-1},  -z_{n})
\end{bsmallmatrix}~\quad~\text{or}~\quad~\\
&-\begin{bsmallmatrix}
0& 0 \\
\ch{_{$\fp$} S}(v_1, -v_2,\dots, v_{n-1},-v_{n})   & T_{\fp}(w_1,\dots, w_{n})\\
\ch{^{$\fp$} T}(u_1,\dots, u_{n}) & S^{\fp} (z_1, -z_2,\dots, z_{n-1}, -z_{n})  \\
0& 0
\end{bsmallmatrix}.
\end{align*}
In particular, $Y_{12} = Y_{21}$ implies 
\[
Y_{12} = Y_{21} = \begin{bsmallmatrix}
\ch{_{$\fp$} S}(z_1, \dots, z_{n}) & 0 \\
0 & S^{\fp} (-z_1,z_2,\dots, -z_{n - 1},  z_{n})
\end{bsmallmatrix}.
\]
\end{enumerate}
\item\label{lem:Yij:No2} $\varepsilon = -1$ and $\sigma$ is the transpose. 
\begin{enumerate}[(a)]
\item $(X_1,X_2) =(J_{2m}(0),  J_{2n}(0))$. We have $(B_1, B_2) =(F_{2m}, F_{2n})$ and \eqref{lem:Yij:eq4} becomes
\[
J_{2m}(0) Y_{12} + Y_{12} J_{2n}(0) = 0.
\]
Thus $Y_{12}\in \mathbb{C}^{2m \times 2n}$ is  lower triangular alternating Toeplitz by Lemma~\ref{lem:lower triangular} \eqref{lem:lower triangular:item1}. It has one of the following two forms depending on $m \ge n$ or $m < n$:
\[
\begin{bsmallmatrix}
0\\
\ch{_{$\fp$}S}(z_1,\dots, z_{2n})
\end{bsmallmatrix}~\quad~\text{or}~\quad~ 
\begin{bsmallmatrix}
\ch{_{$\fp$}S}(z_1,\dots, z_{2m}) & 0
\end{bsmallmatrix}
\]
and $Y_{21}$ is 
\[
-\begin{bsmallmatrix}
\ch{_{$\fp$}S}(z_1,\dots, z_{2n}) & 0
\end{bsmallmatrix}~\quad~\text{or}~\quad~ 
-\begin{bsmallmatrix}
0\\
\ch{_{$\fp$}S}(z_1,\dots, z_{2m})
\end{bsmallmatrix}.
\]
In particular, $Y_{12} = Y_{21}$ implies $Y_{12} = Y_{21} = 0$.
\item $(X_1, X_2)=( J_{2m}(0), \diag(J_{2n+1}(0), -J_{2n+1}(0)^\tp ) )$. We have $(B_1, B_2) =(F_{2m}, I_{2n+1} \otimes F_2)$ and \eqref{lem:Yij:eq4} becomes
\[
J_{2m}(0) Y_{12} + Y_{12} \diag(J_{2n+1}(0), -J_{2n+1}(0)^\tp) = 0.
\]
We partition $Y_{12} \in \mathbb{C}^{2m \times (4n+2)}$ as $Y_{12} = \begin{bmatrix}
Z & W
\end{bmatrix}$ where $Z,W\in \mathbb{C}^{2m \times (4n+2)}$ to obtain 
\[
J_{2m}(0) Z + Z J_{2n+1}(0) = 0,\quad J_{2m}(0) W -  W J_{2n+1}(0)^\tp = 0.
\]
Therefore, $Z$ (resp. $WH_{2n}$) is lower triangular alternating Toeplitz (resp. lower triangular Toeplitz) by Lemma~\ref{lem:lower triangular} \eqref{lem:lower triangular:item1}. This implies that $Y_{12}$ has one of the following two forms, depending on $m\ge n+1$ or $m \le n$:
\[
\begin{bsmallmatrix}
0 & 0 \\
\ch{_{$\fp$} S(z_1,\dots, z_{2n+1})} & T_{\fp}(w_1,\dots, w_{2n+1})
\end{bsmallmatrix}~\quad~\text{or}~\quad~ 
\begin{bsmallmatrix}
\ch{_{$\fp$} S(z_1,\dots, z_{2m})} & 0 & 0 & T_{\fp}(w_1,\dots, w_{2m})
\end{bsmallmatrix},
\]
and $Y_{21}$ is 
\begin{equation*}
\begin{bsmallmatrix}
\ch{ _{$\fp$} S} (w_1,\dots, w_{2n+1}) & 0\\ 
\ch{ ^{$\fp$} T} (-z_{1},z_2,\dots,z_{2n}, -z_{2n+1}) & 0 \\
\end{bsmallmatrix}~\quad~\text{or}~\quad~
\begin{bsmallmatrix}
0 \\
\ch{ _{$\fp$} S} (w_1,\dots, w_{2m})\\ 
\ch{ ^{$\fp$} T} (-z_{1}, z_{2}, \dots,-z_{2m-1}, z_{2m}) \\
0
\end{bsmallmatrix}.
\end{equation*}
\item $(X_1, X_2)=(\diag(J_{m}(\lambda), -J_{m}(\lambda)^\tp ), \diag(J_{n}(\mu), -J_{n}(\mu)^\tp )) )$ where $\lambda^2 = \mu^2$. In this case we have $(B_1, B_2)  =(I_m \otimes F_2, I_n \otimes F_2)$ and \eqref{lem:Yij:eq4} becomes
\[
\diag(J_{m}(\lambda), -J_{m}(\lambda)^\tp )  Y_{12} + Y_{12} \diag(J_{n}(\mu), -J_{n}(\mu)^\tp ) = 0.
\]
We partition $Y_{12} \in \mathbb{C}^{2m \times 2n}$ as $Y_{12} = \begin{bmatrix}
Z & W \\
U & V
\end{bmatrix}$ where $Z, W, U, V\in \mathbb{C}^{m\times n}$ to obtain 
\begin{align*}
J_{m}(\lambda) Z + Z J_{n}(\mu) &= 0,\quad \ J_{m}(\lambda) W - W J_{n}(\mu)^\tp = 0, \\
-J_{m}(\lambda)^\tp U + U J_{n}(\mu) &= 0,\quad -J_{m}(\lambda)^\tp V - V J_{n}(\mu)^\tp = 0.
\end{align*}

If $\lambda = \mu \ne 0$ then $Z =  V = 0$ and $H_m U, WH_n$ are lower triangular Toeplitz. This implies that $Y_{12}$ has one of the following two forms, depending on $m\ge n$ or $m < n$:
\begin{equation*}
\begin{bsmallmatrix}
0 & 0 \\
0 & T_{\fp} (w_1,\dots, w_{n}) \\
\ch{^{$\fp$} T}(u_1,\dots, u_{n}) & 0\\
0 & 0
\end{bsmallmatrix}
~\quad~\text{or}~\quad~
\begin{bsmallmatrix}
0 & 0 & 0 & T_{\fp} (w_1,\dots, w_{m}) \\
\ch{^{$\fp$} T}(u_1,\dots, u_{m}) & 0 & 0 & 0
\end{bsmallmatrix}
\end{equation*}
and $Y_{21}$ is 
\begin{equation*}
-\begin{bsmallmatrix}
0 & 0 & 0 & T_{\fp} (w_1,\dots, w_{n}) \\
\ch{^{$\fp$} T}(u_1,\dots, u_{n}) & 0 & 0 & 0
\end{bsmallmatrix}
~\quad~\text{or}~\quad~
-\begin{bsmallmatrix}
0 & 0 \\
0 & T_{\fp} (w_1,\dots, w_{m}) \\
\ch{^{$\fp$} T}(u_1,\dots, u_{m}) & 0\\
0 & 0
\end{bsmallmatrix}.
\end{equation*}
In particular, if $Y_{12} = Y_{21}$ then $Y_{12} = Y_{21} = 0$.

If $\lambda = -\mu \ne 0$ then $W = U = 0$ and $Z, H_mV H_n$ are lower triangular alternating Toeplitz. Therefore $Y_{12}$ has one of the following two forms, depending on $m\ge n$ or $m < n$:
\begin{equation*}
\begin{bsmallmatrix}
0 & 0 \\
\ch{_{$\fp$} S}(z_1,\dots, z_{n}) & 0 \\
0 & S^{\fp} (v_1,\dots, v_n)\\
0 & 0
\end{bsmallmatrix}
~\quad~\text{or}~\quad~
\begin{bsmallmatrix}
\ch{_{$\fp$} S}(z_1,\dots, z_{m})  & 0 & 0 & 0\\
0& 0 & 0 & S^{\fp} (v_1,\dots, v_{m}) 
\end{bsmallmatrix}
\]
and $Y_{21}$ is 
\begin{align*}
&\begin{bsmallmatrix} 
\ch{_{$\fp$} S}(v_1, -v_2,\dots, (-1)^{n-2}v_{n-1}, (-1)^{n-1}v_{n})  & 0 & 0 & 0\\
0& 0 & 0 & S^{\fp} (z_1, -z_2,\dots, (-1)^{m-2}z{m-1}, (-1)^{m-1}z_{m}) 
\end{bsmallmatrix}
~\quad~\text{or}~\quad~ \\
&
\begin{bsmallmatrix}
0 & 0 \\
\ch{_{$\fp$} S}(v_1, -v_2,\dots, (-1)^{m-2}v_{m-1}, (-1)^{m-1}v_{m}) & 0 \\
0 & S^{\fp} (z_1, -z_2,\dots, (-1)^{m-2}z_{m-1}, (-1)^{m-1}z_{m})\\
0 & 0
\end{bsmallmatrix}.
\end{align*}
If $\lambda = \mu = 0$ then $m, n$ are odd, $Z, H_mV H_n$ are lower triangular alternating Toeplitz and $H_m U, WH_n$ are lower triangular Toeplitz. Thus, $Y$ has one of the following two forms, depending on $m\ge n$ or $m < n$:
\[
\begin{bsmallmatrix}
0& 0 \\
\ch{_{$\fp$} S}(z_1,\dots, z_{n})   & T_{\fp}(w_1,\dots, w_{n})\\
\ch{^{$\fp$} T}(u_1,\dots, u_{n}) & S^{\fp} (v_1,\dots, v_{n})  \\
0& 0
\end{bsmallmatrix}
~\quad~\text{or}~\quad~
\begin{bsmallmatrix}
\ch{_{$\fp$} S}(z_1,\dots, z_{m}) & 0 & 0   & T_{\fp}(w_1,\dots, w_{m})\\
\ch{^{$\fp$} T}(u_1,\dots, u_{m}) & 0 & 0 & S^{\fp} (v_1,\dots, v_{m})
\end{bsmallmatrix}.
\]
Thus $Y_{21}$ is 
\begin{align*}
&\begin{bsmallmatrix}
\ch{_{$\fp$} S}(v_1,-v_2,\dots, -v_{n-1},  v_{n}) & 0 & 0   & -T_{\fp}(w_1,\dots, w_{n})\\
-\ch{^{$\fp$} T}(u_1,\dots, u_{n}) & 0 & 0 & S^{\fp} (z_1,-z_2,\dots,  -z_{n-1}, z_{n})
\end{bsmallmatrix}~\quad~\text{or}~\quad~\\
&\begin{bsmallmatrix}
0& 0 \\
\ch{_{$\fp$} S}(v_1, -v_2,\dots,-v_{m-1}, v_{m})   & -T_{\fp}(w_1,\dots, w_{m})\\
-\ch{^{$\fp$} T}(u_1,\dots, u_{m}) & S^{\fp} (z_1, -z_2,\dots,-z_{m-1},  z_{m})  \\
0& 0
\end{bsmallmatrix}.
\end{align*}
In particular, $Y_{12} = Y_{21}$ implies 
\[
Y_{12} = Y_{21} = \begin{bsmallmatrix}
\ch{_{$\fp$} S}(z_1,\dots, z_{n}) & 0 \\
0 & S^{\fp} (z_1,-z_2,\dots, -z_{n - 1},  z_n)
\end{bsmallmatrix}.
\]
\end{enumerate}
\item\label{lem:Yij:No3} $\varepsilon = 1$ and $\sigma$ is the conjugate transpose. Observing that 
\[
\sigma \left( J_{m}(\lambda) \right) \cap \left( -\sigma\left( \diag(J_{m}(\mu), -J_{m}(\mu)^\ast ) \right) \right) = \emptyset,
\]
since $\operatorname{Re}(\lambda) = 0$ and $\operatorname{Re}(\mu) > 0$, we only need to consider two sub-cases. 
\begin{enumerate}[(a)]
\item $(X_1,X_2) = (J_{m}(\lambda), J_{n}(-\lambda))$ where $\operatorname{Re}(\lambda) = 0$. We have 
\[
(B_1,B_2) = (\kappa \ci^{m-1} F_{m}, \kappa \ci^{n-1} F_{n})
\] 
and \eqref{lem:Yij:eq4} becomes $J_{m}(\lambda) Y_{12} + Y_{12} J_{n}(-\lambda) = 0$. Therefore $Y_{12}$ is lower triangular Toeplitz by Lemma~\ref{lem:lower triangular} \eqref{lem:lower triangular:item1}. It has one of the following two forms depending on $m \ge n$ or $m < n$:
\[
\begin{bsmallmatrix}
0\\
\ch{_{$\fp$}S}(z_1,\dots, z_{n})
\end{bsmallmatrix}~\quad~\text{or}~\quad~ 
\begin{bsmallmatrix}
\ch{_{$\fp$}S}(z_1,\dots, z_{m}) & 0
\end{bsmallmatrix}
\]
and $Y_{21}$ is 
\[
 \ci^{m+n} (-1)^{n-1} \begin{bsmallmatrix}
\ch{_{$\fp$}S}(\overline{z}_1,\dots,\overline{z}_n) & 0
\end{bsmallmatrix}~\quad~\text{or}~\quad~ 
 \ci^{m+n} (-1)^{n-1}\begin{bsmallmatrix}
0\\
 \ch{_{$\fp$}S}(\overline{z}_1,\dots, \overline{z}_m)
\end{bsmallmatrix}.
\]
In particular, $Y_{12} = Y_{21}$ implies $Y_{12} = \ch{_{$\fp$}S}(z_1,\dots, z_{n})$ where $\operatorname{Re} (z_j) = 0$ for each $1 \le j \le n$.
\item $(X_1,X_2) = ( \diag(J_{m}(\lambda), -J_{m}(\lambda)^\ast ),\diag(J_{n}(\mu), -J_{n}(\mu)^\ast ))$. Here we must have $\lambda = \overline{\mu}$ and $\operatorname{Re}(\lambda) = \operatorname{Re}(\mu) > 0$. In this case we have $(B_1,B_2) = (I_m \otimes H_2, I_n \otimes H_2)$ and \eqref{lem:Yij:eq4} becomes
\[
\diag(J_{m}(\lambda), -J_{m}(\lambda)^\ast )  Y + Y \diag(J_{n}(\overline{\lambda}), -J_{n}(\overline{\lambda})^\ast ) = 0.
\]
We partition $Y\in \mathbb{C}^{2m \times 2n}$ as $Y = \begin{bmatrix}
Z & W \\
U & V
\end{bmatrix}$ where $Z, W, U, V\in \mathbb{C}^{m\times n}$ to obtain 
\begin{align*}
J_{m}(\lambda) Z + Z J_{n}(\overline{\lambda}) &= 0,\quad \ J_{m}(\lambda) W - W J_{n}(\lambda)^\tp = 0, \\
J_{m}(\overline{\lambda})^\tp U - U J_{n}(\overline{\lambda}) &= 0,\quad J_{m}(\overline{\lambda})^\tp V + V J_{n}(\lambda)^\tp = 0.
\end{align*} 
Since $\operatorname{Re}(\lambda) > 0$, we must have $Z = V = 0$ and $H_m U, WH_n$ are lower triangular Toeplitz matrices. This implies that $Y_{12}$ has one of the following two forms, depending on $m\ge n$ or $m < n$:
\[
\begin{bsmallmatrix}
0& 0 \\
0   & T_{\fp}(w_1,\dots, w_{n})\\
\ch{^{$\fp$} T}(u_1,\dots, u_{n}) & 0 \\
0& 0
\end{bsmallmatrix}
~\quad~\text{or}~\quad~
\begin{bsmallmatrix}
0& 0 & 0   & T_{\fp}(w_1,\dots, w_{m})\\
\ch{^{$\fp$} T}(u_1,\dots, u_{m}) & 0 & 0 & 0
\end{bsmallmatrix}
\]
and $Y_{21}$ is 
\[
\begin{bsmallmatrix}
0& 0 & 0   & T_{\fp}(\overline{w}_1,\dots,\overline{w}_{m})\\
\ch{^{$\fp$} T}(\overline{u}_1,\dots, \overline{u}_{m}) & 0 & 0 & 0
\end{bsmallmatrix}
~\quad~\text{or}~\quad~
\begin{bsmallmatrix}
0& 0 \\
0   & T_{\fp}(\overline{w}_1,\dots,\overline{w}_{n})\\
\ch{^{$\fp$} T}(\overline{u}_1,\dots, \overline{u}_{n}) & 0 \\
0& 0
\end{bsmallmatrix}.
\]
If $Y_{12} = Y_{21}$ then $m = n$ and $\operatorname{Im}(\lambda) = 0$. This implies that 
\[
Y_{12} = Y_{21} =\begin{bsmallmatrix}
0   & T_{\fp}(w_1,\dots, w_{n})\\
\ch{^{$\fp$} T}(u_1,\dots, u_{n}) & 0 
\end{bsmallmatrix}
\]
where $u_1,\dots, u_n, w_1,\dots, w_n$ are all real numbers.
\end{enumerate} 
\item\label{lem:Yij:No4}  $\varepsilon = 1$ and $\sigma$ is the transpose. First we observe that for the four types of normal forms, we have 
\begin{align*}
0&\not\in \sigma \left( J_{2m+1}(0) \right) + \sigma\left( J_n\left(\begin{bsmallmatrix}
0 & b \\
-b & 0
\end{bsmallmatrix}\right)
\right),\\
0&\not\in \sigma \left( J_{2m+1}(0) \right)  +\sigma\left( \diag
\left(
J_n\left(\begin{bsmallmatrix}
a & b \\
-b & a
\end{bsmallmatrix}\right),
-J_n\left(\begin{bsmallmatrix}
a & b \\
-b & a
\end{bsmallmatrix}\right)^\tp
\right)
\right), \\
0&\not\in \sigma \left( 
\diag\left( 
J_{m}(\lambda), -J_{m}(\lambda)^\tp  
\right)
\right) + \sigma\left( J_n\left(\begin{bsmallmatrix}
0 & b \\
-b & 0
\end{bsmallmatrix}\right)
\right),\\
0&\not\in \sigma \left( 
\diag\left( 
J_{m}(\lambda), -J_{m}(\lambda)^\tp  
\right)
\right)  +\sigma\left( \diag
\left(
J_n\left(\begin{bsmallmatrix}
a & b \\
-b & a
\end{bsmallmatrix}\right),
-J_n\left(\begin{bsmallmatrix}
a & b \\
-b & a
\end{bsmallmatrix}\right)^\tp
\right)
\right), \\
0&\not\in
\sigma\left( J_m\left(\begin{bsmallmatrix}
0 & c \\
-c & 0
\end{bsmallmatrix}\right)
\right) + 
 \sigma\left( \diag
\left(
J_n\left(\begin{bsmallmatrix}
a & b \\
-b & a
\end{bsmallmatrix}\right),
-J_m\left(\begin{bsmallmatrix}
a & b \\
-b & a
\end{bsmallmatrix}\right)^\tp
\right)
\right),
\end{align*}
where $a,b,c >0$ and $\lambda \ge 0$. Hence we only need to consider five sub-cases.
\begin{enumerate}[(a)]
\item $(X_1,X_2) = (J_{2m+1}(0), J_{2n+1}(0) )$. We have 
\[
(B_1,B_2) = (\kappa (-1)^{m} F_{2m+1}, \kappa (-1)^{n} F_{2n+1})
\] 
and \eqref{lem:Yij:eq4} becomes $J_{2m+1}(0) Y_{12} + Y_{12} J_{2n+1}(0) = 0$. Therefore $Y_{12}$ is lower triangular Toeplitz by Lemma~\ref{lem:lower triangular} \eqref{lem:lower triangular:item1}. It has one of the following two forms depending on $m \ge n$ or $m < n$:
\[
\begin{bsmallmatrix}
0\\
\ch{_{$\fp$}S}(z_1,\dots, z_{2n+1})
\end{bsmallmatrix}~\quad~\text{or}~\quad~ 
\begin{bsmallmatrix}
\ch{_{$\fp$}S}(z_1,\dots, z_{2m+1}) & 0
\end{bsmallmatrix}
\]
and $Y_{21}$ is 
\[
(-1)^{m+n+1} \begin{bsmallmatrix}
\ch{_{$\fp$}S}(z_1,\dots,z_{2n+1}) & 0
\end{bsmallmatrix}~\quad~\text{or}~\quad~ 
 (-1)^{m+n+1}\begin{bsmallmatrix}
0\\
 \ch{_{$\fp$}S}(z_1,\dots, z_{2m+1})
\end{bsmallmatrix}.
\]
Thus $Y_{12} = Y_{21}$ implies $Y_{12} = Y_{21} = 0$ otherwise.
\item$(X_1,X_2) = (J_{2m+1}(0), \diag(J_{2n}(0), -J_{2n}(0)^\tp ) )$. \[
(B_1,B_2) = (\kappa (-1)^{m} F_{2m+1}, I_{2n}\otimes H_2),\quad Y = \begin{bmatrix}
Z & W
\end{bmatrix}
\] 
where $Z, W\in \mathbb{R}^{(2m+1) \times 2n}$ satisfy
\[
J_{2m+1}(0) Z + Z J_{2n}(0) = 0,\quad J_{2m+1}(0) W -  W J_{2n}(0)^\tp = 0.
\]
Therefore, $Z$ (resp. $WH_{2n}$) is lower triangular alternating Toeplitz (resp. lower triangular Toeplitz) by Lemma~\ref{lem:lower triangular} \eqref{lem:lower triangular:item1}. This implies that $Y_{12}$ has one of the following two forms, depending on $m\ge n$ or $m < n$:
\[
\begin{bsmallmatrix}
0 & 0 \\
\ch{_{$\fp$} S(z_1,\dots, z_{2n})} & T_{\fp}(w_1,\dots, w_{2n})
\end{bsmallmatrix}~\quad~\text{or}~\quad~ 
\begin{bsmallmatrix}
\ch{_{$\fp$} S(z_1,\dots, z_{2m+1})} & 0 & 0 & T_{\fp}(w_1,\dots, w_{2m+1})
\end{bsmallmatrix},
\]
and $Y_{21}$ is 
\begin{equation*}
\kappa (-1)^{m+1} \begin{bsmallmatrix}
\ch{ _{$\fp$} S} (w_1, \dots, w_{2n}) & 0\\ 
\ch{ ^{$\fp$} T} (z_{1}, -z_2,\dots,z_{2n-1}, -z_{2n}) & 0 \\
\end{bsmallmatrix}~\quad~\text{or}~\quad~
\kappa (-1)^{m+1} \begin{bsmallmatrix}
0 \\
\ch{ _{$\fp$} S} (w_1, \dots, w_{2m+1})\\ 
\ch{ ^{$\fp$} T} (z_{1}, -z_{2}, \dots, -z_{2m}, z_{2m+1}) \\
0
\end{bsmallmatrix}.
\end{equation*} 
\item $(X_1,X_2) = ( \diag(J_{m}(\lambda), -J_{m}(\lambda)^\tp ),\diag(J_{n}(\lambda), -J_{n}(\lambda)^\tp ))$, $\lambda \ge 0$. In this case we have $(B_1,B_2) = (I_m \otimes H_2, I_n \otimes H_2)$. We partition $Y\in \mathbb{R}^{2m \times 2n}$ as $Y = \begin{bmatrix}
Z & W \\
U & V
\end{bmatrix}$ where $Z, W, U, V\in \mathbb{R}^{m\times n}$. Then \eqref{lem:Yij:eq4} becomes
\begin{align*}
J_{m}(\lambda) Z + Z J_{n}({\lambda}) &= 0,\quad \ J_{m}(\lambda) W - W J_{n}(\lambda)^\tp = 0, \\
J_{m}({\lambda})^\tp U - U J_{n}({\lambda}) &= 0,\quad J_{m}({\lambda})^\tp V + V J_{n}(\lambda)^\tp = 0.
\end{align*} 
If $\lambda \ne 0$ then $Z = V = 0$ and $H_m U, WH_n$ are lower triangular Toeplitz matrices. This implies that $Y_{12}$ has one of the following two forms, depending on $m\ge n$ or $m < n$:
\[
\begin{bsmallmatrix}
0& 0 \\
0   & T_{\fp}(w_1,\dots, w_{n})\\
\ch{^{$\fp$} T}(u_1,\dots, u_{n}) & 0 \\
0& 0
\end{bsmallmatrix}
~\quad~\text{or}~\quad~
\begin{bsmallmatrix}
0& 0 & 0   & T_{\fp}(w_1,\dots, w_{m})\\
\ch{^{$\fp$} T}(u_1,\dots, u_{m}) & 0 & 0 & 0
\end{bsmallmatrix}
\]
and $Y_{21}$ is 
\[
-\begin{bsmallmatrix}
0& 0 & 0   & T_{\fp}({w}_1,\dots,{w}_{m})\\
\ch{^{$\fp$} T}({u}_1,\dots, {u}_{m}) & 0 & 0 & 0
\end{bsmallmatrix}
~\quad~\text{or}~\quad~
-\begin{bsmallmatrix}
0& 0 \\
0   & T_{\fp}({w}_1,\dots,{w}_{n})\\
\ch{^{$\fp$} T}({u}_1,\dots, {u}_{n}) & 0 \\
0& 0
\end{bsmallmatrix}.
\]
If $\lambda  = 0$ then $m, n$ are even, $Z, H_mV H_n$ are lower triangular alternating Toeplitz and $H_m U, WH_n$ are lower triangular Toeplitz. Thus, $Y$ has one of the following two forms, depending on $m\ge n$ or $m < n$:
\[
\begin{bsmallmatrix}
0& 0 \\
\ch{_{$\fp$} S}(z_1,\dots, z_{n})   & T_{\fp}(w_1,\dots, w_{n})\\
\ch{^{$\fp$} T}(u_1,\dots, u_{n}) & S^{\fp} (v_1,\dots, v_{n})  \\
0& 0
\end{bsmallmatrix}
~\quad~\text{or}~\quad~
\begin{bsmallmatrix}
\ch{_{$\fp$} S}(z_1,\dots, z_{m}) & 0 & 0   & T_{\fp}(w_1,\dots, w_{m})\\
\ch{^{$\fp$} T}(u_1,\dots, u_{m}) & 0 & 0 & S^{\fp} (v_1,\dots, v_{m})
\end{bsmallmatrix}.
\]
Thus $Y_{21}$ is 
\begin{align*}
&-\begin{bsmallmatrix}
\ch{_{$\fp$} S}(v_1,-v_2,\dots, v_{n-1},  -v_{n}) & 0 & 0   & T_{\fp}(w_1,\dots, w_{n})\\
\ch{^{$\fp$} T}(u_1,\dots, u_{n}) & 0 & 0 & S^{\fp} (z_1, -z_2, \dots, z_{n-1}, -z_{n}) 
\end{bsmallmatrix} 
~\quad~\text{or}~\quad~\\
&-\begin{bsmallmatrix}
0& 0 \\
\ch{_{$\fp$} S}(v_1,-v_2,\dots,v_{m-1}, -v_{m})   & T_{\fp}(w_1,\dots, w_{m})\\
\ch{^{$\fp$} T}(u_1,\dots, u_{m}) & S^{\fp} (z_1, -z_2, \dots, z_{m-1}, -z_{m})  \\
0& 0
\end{bsmallmatrix}.
\end{align*}
In particular, $Y_{12} = Y_{21}$ implies 
\[
Y_{12} = Y_{21} = 
\begin{bsmallmatrix}
\ch{_{$\fp$} S}(z_1,\dots, z_{n})   & 0 \\
0 & -S^{\fp} (z_1,-z_2, \dots,z_{n-1}, -z_{n}) 
\end{bsmallmatrix}.
\] 
\item $(X_1, X_2) = \left( J_m\left( \begin{bmatrix}
0 & b \\
-b & 0
\end{bmatrix}
\right), J_n\left( \begin{bmatrix}
0 & b \\
-b & 0
\end{bmatrix}
\right) \right)$, $b > 0$. Thus we have \[
(B_1, B_2) = (\kappa F_2^{m-1}\otimes F_m,\kappa F_2^{n-1}\otimes F_n)
\]
and \eqref{lem:Yij:eq4} becomes 
\[
J_m\left( \begin{bmatrix}
0 & b \\
-b & 0
\end{bmatrix}
\right) Y_{12} + Y_{12} J_n\left( \begin{bmatrix}
0 & b \\
-b & 0
\end{bmatrix}
\right) = 0.
\]
According to Lemma~\ref{lem:lower triangular} \eqref{lem:lower triangular:item3}, $Y_{12}$ is a block lower triangular matrix where each block is $2\times 2$. Hence $Y$ has one of the following two forms, depending on $m\ge n$ or $m < n$:
\[
\begin{bsmallmatrix}
0\\
\ch{_{$\fp$}S}(Z_1,\dots, Z_{n})
\end{bsmallmatrix}~\quad~\text{or}~\quad~ 
\begin{bsmallmatrix}
\ch{_{$\fp$}S}(Z_1,\dots, Z_{m}) & 0
\end{bsmallmatrix}
\]
and $Y_{21}$ is 
\begin{align*}
&\begin{bsmallmatrix}
(-1)^{m} \ch{_{$\fp$}S}((F_2^{m-1} Z_1 F_2^{n-1})^\tp,\dots, (F_2^{m-1} Z_{n} F_2^{n-1})^\tp) & 0
\end{bsmallmatrix}
~\quad~\text{or}~\quad~  \\
&\begin{bsmallmatrix}
0 \\ 
(-1)^{m} \ch{_{$\fp$}S}((F_2^{m-1} Z_1 F_2^{n-1})^\tp,\dots, (F_2^{m} Z_{m-1} F_2^{n-1})^\tp) 
\end{bsmallmatrix}
\end{align*}
If $Y_{12} = Y_{21}$ then $Y_{12} = Y_{21} = \ch{_{$\fp$}S}(Z_1,\dots, Z_{n})$ where $Z_j = (-1)^{m} (F_2^{m-1} Z_j F_2^{n-1})^\tp$ for each $1 \le j \le n$.
\item Let
\begin{align*}
X_1 &=  
\diag \left( 
J_m\left( 
\begin{bmatrix}
a & b \\
-b & a
\end{bmatrix}
\right), -J_m\left( \begin{bmatrix}
a & b \\
-b & a
\end{bmatrix}
\right)^\tp
\right), \\
X_2 &= 
\diag \left( 
J_n\left( 
\begin{bmatrix}
a & b \\
-b & a
\end{bmatrix}
\right), -J_n\left( \begin{bmatrix}
a & b \\
-b & a
\end{bmatrix}
\right)^\tp
\right),
\end{align*}
 where $a,b > 0$. We have $
(B_1, B_2) = (I_{2m} \otimes H_2,I_{2n} \otimes H_2)$. We partition $Y_{12}$ as $Y_{12} = \begin{bmatrix}
Z & W \\
U & V
\end{bmatrix}\in \mathbb{R}^{4m \times 4n}$ where $Z,W,U, V\in \mathbb{R}^{2m\times 2n}$ so that \eqref{lem:Yij:eq1} becomes
\begin{align*}
J_m\left( 
\begin{bmatrix}
a & b \\
-b & a
\end{bmatrix}
\right) Z + Z J_n\left( 
\begin{bmatrix}
a & b \\
-b & a
\end{bmatrix}
\right) &= 0, \\ 
J_m\left( 
\begin{bmatrix}
a & b \\
-b & a
\end{bmatrix}
\right) W - W J_n\left( 
\begin{bmatrix}
a & b \\
-b & a
\end{bmatrix}
\right)^\tp &= 0,  \\
- J_m\left( 
\begin{bmatrix}
a & b \\
-b & a
\end{bmatrix}
\right)^\tp U + U J_n\left( 
\begin{bmatrix}
a & b \\
-b & a
\end{bmatrix}
\right) &= 0,\\ 
J_m\left( 
\begin{bmatrix}
a & b \\
-b & a
\end{bmatrix}
\right)^\tp V + V J_n\left( 
\begin{bmatrix}
a & b \\
-b & a
\end{bmatrix}
\right)^\tp &= 0.
\end{align*}
Since $a, b >0$, we conclude that $Z = V = 0$ by Lemma~\ref{lem:block}. Moreover, according to Lemma~\ref{lem:lower triangular} \eqref{lem:lower triangular:item5}, $W(I_2 \otimes H_n)$ and $(I_2 \otimes H_m)U$ are block lower triangular matrices where each block is $2\times 2$. This implies that $Y_{12}$ has one of the following two forms, depending on $m\ge n$ or $m < n$:
\[
\begin{bsmallmatrix}
0& 0 \\
0   & T_{\fp}(W_1,\dots, W_{n})\\
\ch{^{$\fp$} T}(U_1,\dots, U_{n}) & 0 \\
0& 0
\end{bsmallmatrix}
~\quad~\text{or}~\quad~
\begin{bsmallmatrix}
0& 0 & 0   & T_{\fp}(W_1,\dots, W_{m})\\
\ch{^{$\fp$} T}(U_1,\dots, U_{m}) & 0 & 0 & 0
\end{bsmallmatrix}
\]
and $Y_{21}$ is 
\[
-\begin{bsmallmatrix}
0& 0 & 0   & T_{\fp}(W_1^\tp,\dots, W_{n}^\tp)\\
\ch{^{$\fp$} T}(U_1^\tp,\dots, U_{n}^\tp) & 0 & 0 & 0
\end{bsmallmatrix}
~\quad~\text{or}~\quad~
-\begin{bsmallmatrix}
0& 0 \\
0   & T_{\fp}(W_1^\tp,\dots, W_{m}^\tp)\\
\ch{^{$\fp$} T}(U_1^\tp,\dots, U_{m}^\tp) & 0 \\
0& 0
\end{bsmallmatrix}.
\]
In particular, $Y_{12} = Y_{21}$ implies 
\[
Y_{12} = Y_{21} = 
\begin{bsmallmatrix}
0   & T_{\fp}(W_1,\dots, W_{n})\\
\ch{^{$\fp$} T}(U_1,\dots, U_{n}) & 0 
\end{bsmallmatrix},
\]
where $W_j^\tp = - W_j$ and $U_j^\tp = U_j$ for each $1 \le j \le n$.
\end{enumerate}  
\item $\varepsilon = -1$ and $\sigma$ is the transpose. By the same observation as in No.~\ref{lem:Yij:No4}, we only need to consider five sub-cases.
\begin{enumerate}[(a)]
\item $(X_1,X_2) = (J_{2m}(0),J_{2n}(0))$. We have $(B_1,B_2) = (\kappa F_{2m},\kappa F_{2n})$ and \eqref{lem:Yij:eq4} becomes $J_{2m}(0) Y_{12} + Y_{12} J_{2n}(0) = 0$. By Lemma~\ref{lem:lower triangular} \eqref{lem:lower triangular:item1}, $Y_{12}$ has one of the following two forms depending on $m \ge n$ or $m < n$:
\[
\begin{bsmallmatrix}
0\\
\ch{_{$\fp$}S}(z_1,\dots, z_{2n})
\end{bsmallmatrix}~\quad~\text{or}~\quad~ 
\begin{bsmallmatrix}
\ch{_{$\fp$}S}(z_1,\dots, z_{2m}) & 0
\end{bsmallmatrix}
\]
and $Y_{21}$ is 
\[
-\begin{bsmallmatrix}
\ch{_{$\fp$}S}(z_1,\dots, z_{2n}) & 0
\end{bsmallmatrix}
~\quad~\text{or}~\quad~ 
-\begin{bsmallmatrix}
0\\
\ch{_{$\fp$}S}(z_1,\dots, z_{2m})
\end{bsmallmatrix}.
\]
If $Y_{12} = Y_{21}$ then $Y_{12} = Y_{21} = 0$. 
\item$(X_1,X_2) = (J_{2m}(0),\diag(J_{2n+1}(0), -J_{2n+1}(0)^\tp ) )$. We have 
\[
(B_1,B_2) = (\kappa  F_{2m}, I_{2n+1}\otimes F_2),\quad Y_{12} = \begin{bmatrix}
Z & W
\end{bmatrix}
\] 
where $Z, W\in \mathbb{R}^{2m \times (2n+1)}$ satisfy
\[
J_{2m}(0) Z + Z J_{2n+1}(0) = 0,\quad J_{2m}(0) W -  W J_{2n+1}(0)^\tp = 0.
\]
Therefore, $Z$ (resp. $WH_{2n}$) is lower triangular alternating Toeplitz (resp. lower triangular Toeplitz) by Lemma~\ref{lem:lower triangular} \eqref{lem:lower triangular:item1}. This implies that $Y_{12}$ has one of the following two forms, depending on $m\ge n$ or $m < n$:
\[
\begin{bsmallmatrix}
0 & 0 \\
\ch{_{$\fp$} S}(z_1,\dots, z_{2n+1}) & T_{\fp}(w_1,\dots, w_{2n+1})
\end{bsmallmatrix}~\quad~\text{or}~\quad~ 
\begin{bsmallmatrix}
\ch{_{$\fp$} S} (z_1,\dots, z_{2m}) & 0 & 0 & T_{\fp}(w_1,\dots, w_{2m})
\end{bsmallmatrix}.
\]
Thus $Y_{21}$ is 
\[
\kappa \begin{bsmallmatrix}
\ch{_{$\fp$} S}(w_1,\dots, w_{2n+1}) & 0 \\ 
-\ch{^{$\fp$} T} (z_1, -z_2\dots, -z_{2n}, z_{2n+1}) & 0
\end{bsmallmatrix}
~\quad~\text{or}~\quad~ 
\kappa \begin{bsmallmatrix}
0 \\
\ch{_{$\fp$} S} (w_1,\dots, w_{2m}) \\ -\ch{^{$\fp$}T} (z_1, - z_{2},\dots, z_{2m-1}, -z_{2m}) \\
0
\end{bsmallmatrix}.
\]
\item $(X_1,X_2) =\left( \diag(J_{m}(\lambda), -J_{m}(\lambda)^\tp), \diag(J_{n}(\lambda), -J_{n}(\lambda)^\tp ) \right)$, $\lambda \ge 0$. In this case we have $(B_1,B_2) = ( I_m \otimes F_2, I_n \otimes F_2)$. We partition $Y\in \mathbb{R}^{2m \times 2n}$ as $Y = \begin{bmatrix}
Z & W \\
U & V
\end{bmatrix}$ where $Z, W, U, V\in \mathbb{R}^{m\times n}$, then \eqref{lem:Yij:eq4} becomes
\begin{align*}
J_{m}(\lambda) Z + Z J_{n}({\lambda}) &= 0,\quad \ J_{m}(\lambda) W - W J_{n}(\lambda)^\tp = 0, \\
J_{m}({\lambda})^\tp U - U J_{n}({\lambda}) &= 0,\quad J_{m}({\lambda})^\tp V + V J_{n}(\lambda)^\tp = 0.
\end{align*} 
If $\lambda > 0$ then $Z = V = 0$ and $H_m U$, $WH_n$ are lower triangular Toeplitz matrices. This implies that $Y_{12}$ has one of the following two forms, depending on $m\ge n$ or $m < n$:
\[
\begin{bsmallmatrix}
0& 0 \\
0   & T_{\fp}(w_1,\dots, w_{n})\\
\ch{^{$\fp$} T}(u_1,\dots, u_{n}) & 0 \\
0& 0
\end{bsmallmatrix}
~\quad~\text{or}~\quad~
\begin{bsmallmatrix}
0& 0 & 0   & T_{\fp}(w_1,\dots, w_{m})\\
\ch{^{$\fp$} T}(u_1,\dots, u_{m}) & 0 & 0 & 0
\end{bsmallmatrix}.
\]
Thus $Y_{21}$ is 
\[
-\begin{bsmallmatrix}
0& 0 & 0   & T_{\fp}(w_1,\dots, w_{n})\\
\ch{^{$\fp$} T}(u_1,\dots, u_{n}) & 0 & 0 & 0
\end{bsmallmatrix}
~\quad~\text{or}~\quad~
-\begin{bsmallmatrix}
0& 0 \\
0   & T_{\fp}(w_1,\dots, w_{m})\\
\ch{^{$\fp$} T}(u_1,\dots, u_{m}) & 0 \\
0& 0
\end{bsmallmatrix}.
\]
Hence $Y_{12} = Y_{21}$ implies $Y_{12} = Y_{21} = 0$. If $\lambda = 0$ then $m, n$ are odd and $Z, H_mV H_n$ are lower triangular alternating Toeplitz and $H_m U, WH_n$ are lower triangular Toeplitz. Thus, $Y_{12}$ has one of the following two forms, depending on $m\ge n$ or $m < n$:
\[
\begin{bsmallmatrix}
0& 0 \\
\ch{_{$\fp$} S}(z_1,\dots, z_{n})   & T_{\fp}(w_1,\dots, w_{n})\\
\ch{^{$\fp$} T}(u_1,\dots, u_{n}) & S^{\fp} (v_1,\dots, v_{n})  \\
0& 0
\end{bsmallmatrix}
~\quad~\text{or}~\quad~
\begin{bsmallmatrix}
\ch{_{$\fp$} S}(z_1,\dots, z_{m}) & 0 & 0   & T_{\fp}(w_1,\dots, w_{m})\\
\ch{^{$\fp$} T}(u_1,\dots, u_{m}) & 0 & 0 & S^{\fp} (v_1,\dots, v_{m})
\end{bsmallmatrix}.
\]
Thus $Y_{21}$ is 
\begin{align*}
&\begin{bsmallmatrix}
\ch{_{$\fp$} S}(v_1,-v_2,\dots, -v_{n-1}, v_{n}) & 0 & 0   & -T_{\fp}(w_1,\dots, w_{n})\\
-\ch{^{$\fp$} T}(u_1,\dots, u_{n}) & 0 & 0 & S^{\fp} (z_1,-z_2, \dots,-z_{n-1},  z_{n})
\end{bsmallmatrix}~\quad~\text{or}~\quad~ \\
&\begin{bsmallmatrix}
0& 0 \\
\ch{_{$\fp$} S}(v_1, -v_2,\dots,-v_{m-1}, v_{m})   & -T_{\fp}(w_1,\dots, w_{m})\\
-\ch{^{$\fp$} T}(u_1,\dots, u_{m}) & -S^{\fp} (z_1,-z_2,\dots,-z_{m-1}, z_{m})  \\
0& 0
\end{bsmallmatrix}.
\end{align*}
Moreover, $Y_{12} = Y_{21}$ implies 
\[
Y_{12} = Y_{21} = \begin{bsmallmatrix}
\ch{_{$\fp$} S}(z_1,\dots, z_{n})   & 0\\
0 & S^{\fp} (z_1,-z_2, \dots,-z_{n-1}, z_{n}) 
\end{bsmallmatrix}.
\]
\item $(X_1, X_2) = \left( J_m\left( \begin{bsmallmatrix}
0 & b \\
-b & 0
\end{bsmallmatrix}
\right), J_n\left( \begin{bsmallmatrix}
0 & b \\
-b & 0
\end{bsmallmatrix}
\right) \right)$, $b > 0$. Thus we have \[
(B_1, B_2) = (\kappa F_2^{m}\otimes F_m,\kappa F_2^n \otimes F_n)
\]
and \eqref{lem:Yij:eq4} becomes 
\[
J_m\left( \begin{bsmallmatrix}
0 & b \\
-b & 0
\end{bsmallmatrix}
\right) Y_{12} + Y_{12} J_n\left( \begin{bsmallmatrix}
0 & b \\
-b & 0
\end{bsmallmatrix}
\right) = 0.
\]
According to Lemma~\ref{lem:lower triangular} \eqref{lem:lower triangular:item3}, $Y_{12}$ is a block lower triangular alternating Toeplitz matrix where each block is $2\times 2$. Hence $Y_{12}$ has one of the following two forms, depending on $m\ge n$ or $m < n$:
\[
\begin{bsmallmatrix}
0\\
\ch{_{$\fp$}S}(Z_1,\dots, Z_{n})
\end{bsmallmatrix}~\quad~\text{or}~\quad~ 
\begin{bsmallmatrix}
\ch{_{$\fp$}S}(Z_1,\dots, Z_{m}) & 0
\end{bsmallmatrix}
\]
and $Y_{21}$ is 
\[
\begin{bsmallmatrix}
(-1)^{m-1} \ch{_{$\fp$}S}((F_2^{m}Z_1F_2^n)^\tp,\dots, (F_2^{m}Z_{n} F_2^n)^\tp) & 0
\end{bsmallmatrix}~\quad~\text{or}~\quad~ 
\begin{bsmallmatrix}
0 \\ 
(-1)^{m-1} \ch{_{$\fp$}S}((F_2^{m}Z_1F_2^n)^\tp,\dots, (F_2^{m}Z_{m}F_2^n)^\tp) 
\end{bsmallmatrix}
\]
If $Y_{12} = Y_{21}$ then $Y_{12} = Y_{21} = \ch{_{$\fp$}S}(Z_1,\dots, Z_{n})$ where $Z_j = (-1)^{m-1} (F_2^{m} Z_j F_2^n)^\tp$ for each $1 \le j \le n$.
\item Let
\begin{align*}
X_1 &=  
\diag \left( 
J_m\left( 
\begin{bsmallmatrix}
a & b \\
-b & a
\end{bsmallmatrix}
\right), -J_m\left( \begin{bsmallmatrix}
a & b \\
-b & a
\end{bsmallmatrix}
\right)^\tp
\right), \\
X_2 &= 
\diag \left( 
J_n\left( 
\begin{bsmallmatrix}
a & b \\
-b & a
\end{bsmallmatrix}
\right), -J_n\left( \begin{bsmallmatrix}
a & b \\
-b & a
\end{bsmallmatrix}
\right)^\tp
\right),
\end{align*}
 where $a,b > 0$. We have $
(B_1, B_2) = (I_{2m} \otimes F_2,I_{2n} \otimes F_2)$. We partition $Y_{12}$ as $Y_{12} = \begin{bsmallmatrix}
Z & W \\
U & V
\end{bsmallmatrix}\in \mathbb{R}^{4m \times 4n}$ where $Z,W,U, V\in \mathbb{R}^{2m\times 2n}$ so that \eqref{lem:Yij:eq1} becomes
\begin{align*}
J_m\left( 
\begin{bsmallmatrix}
a & b \\
-b & a
\end{bsmallmatrix}
\right) Z + Z J_n\left( 
\begin{bsmallmatrix}
a & b \\
-b & a
\end{bsmallmatrix}
\right) &= 0, \\ 
J_m\left( 
\begin{bsmallmatrix}
a & b \\
-b & a
\end{bsmallmatrix}
\right) W - W J_n\left( 
\begin{bsmallmatrix}
a & b \\
-b & a
\end{bsmallmatrix}
\right)^\tp &= 0,  \\
- J_m\left( 
\begin{bsmallmatrix}
a & b \\
-b & a
\end{bsmallmatrix}
\right)^\tp U + U J_n\left( 
\begin{bsmallmatrix}
a & b \\
-b & a
\end{bsmallmatrix}
\right) &= 0,\\ 
J_m\left( 
\begin{bsmallmatrix}
a & b \\
-b & a
\end{bsmallmatrix}
\right)^\tp V + V J_n\left( 
\begin{bsmallmatrix}
a & b \\
-b & a
\end{bsmallmatrix}
\right)^\tp &= 0.
\end{align*}
We conclude that $Z = V = 0$ by Lemma~\ref{lem:block}. Moreover, according to Lemma~\ref{lem:lower triangular} \eqref{lem:lower triangular:item5}, $W(I_2 \otimes H_n)$ and $(I_2 \otimes H_m)U$ are block lower triangular matrices where each block is $2\times 2$. This implies that $Y_{12}$ has one of the following two forms, depending on $m\ge n$ or $m < n$:
\[
\begin{bsmallmatrix}
0& 0 \\
0   & T_{\fp}(W_1,\dots, W_{n})\\
\ch{^{$\fp$} T}(U_1,\dots, U_{n}) & 0 \\
0& 0
\end{bsmallmatrix}
~\quad~\text{or}~\quad~
\begin{bsmallmatrix}
0& 0 & 0   & T_{\fp}(W_1,\dots, W_{m})\\
\ch{^{$\fp$} T}(U_1,\dots, U_{m}) & 0 & 0 & 0
\end{bsmallmatrix}.
\]
Thus $Y_{21}$ is 
\[
-\begin{bsmallmatrix}
0& 0 & 0   & T_{\fp}(W_1^\tp,\dots, W_{n}^\tp)\\
\ch{^{$\fp$} T}(U_1^\tp,\dots, U_{n}^\tp) & 0 & 0 & 0
\end{bsmallmatrix}
~\quad~\text{or}~\quad~
-\begin{bsmallmatrix}
0& 0 \\
0   & T_{\fp}(W_1^\tp,\dots, W_{m}^\tp)\\
\ch{^{$\fp$} T}(U_1^\tp,\dots, U_{m}^\tp) & 0 \\
0& 0
\end{bsmallmatrix}.
\]
If $Y_{12} = Y_{21}$, then 
\[
Y_{12} = Y_{21} = \begin{bsmallmatrix}
0   & T_{\fp}(W_1,\dots, W_{n})\\
\ch{^{$\fp$} T}(U_1,\dots, U_{n}) & 0
\end{bsmallmatrix},
\]
where $U_1,\dots, U_n ,W_1,\dots, W_n$ are skew symmetric .
\end{enumerate}
\item\label{lem:Yij:No6} $\varepsilon = 1$ and $\sigma$ is the conjugate transpose. We observe that for the three types of normal forms, we have 
\begin{align*}
0&\not\in \sigma \left( J_m\left(\begin{bsmallmatrix}
0 & 0 \\
0 & 0
\end{bsmallmatrix}\right) \right) 
+ \sigma\left( J_n\left(\begin{bsmallmatrix}
0 & b \\
-b & 0
\end{bsmallmatrix}\right)
\right),\\
0&\not\in 
\sigma \left( J_m\left(\begin{bsmallmatrix}
0 & 0 \\
0 & 0
\end{bsmallmatrix}\right) \right)  +\sigma\left( \diag
\left(
J_n\left(\begin{bsmallmatrix}
\lambda & 0 \\
0 & \overline{\lambda}
\end{bsmallmatrix}\right),
-J_n\left(\begin{bsmallmatrix}
\lambda & 0 \\
0 & \overline{\lambda}
\end{bsmallmatrix}\right)^\ast
\right)
\right), \\
0&\not\in 
\sigma \left( J_m\left(\begin{bsmallmatrix}
0 & b \\
-b & 0
\end{bsmallmatrix}\right) \right)  +\sigma\left( \diag
\left(
J_n\left(\begin{bsmallmatrix}
\lambda & 0 \\
0 & \overline{\lambda}
\end{bsmallmatrix}\right),
-J_n\left(\begin{bsmallmatrix}
\lambda & 0 \\
0 & \overline{\lambda}
\end{bsmallmatrix}\right)^\ast
\right)
\right), \\
\end{align*}
where $b >0$ and $\lambda \in \mathbb{C}, \operatorname{Re}(\lambda) > 0, \operatorname{Im}(\lambda) \ge 0$. Hence we only need to consider three sub-cases.
\begin{enumerate}[(a)]
\item $(X_1,X_2) = \left( J_m\left(\begin{bsmallmatrix}
0 & 0 \\
0 & 0
\end{bsmallmatrix}\right), J_n\left(\begin{bsmallmatrix}
0 & 0 \\
0 & 0
\end{bsmallmatrix}\right) \right)$. We have 
\[
(B_1,B_2) = (\kappa^{m} F_2^{m-1}\otimes  F_{m}, 
\kappa^{n} F_2^{n-1}\otimes  F_{n} )
\] 
and by Lemma~\ref{lem:lower triangular} \eqref{lem:lower triangular:item3}, $Y_{12}$ is block lower triangular whose blocks are $2\times 2$. Hence $Y_{12}$ has one of the following two forms, depending on $m\ge n$ or $m < n$:
\[
\begin{bsmallmatrix}
0\\
\ch{_{$\fp$}S}(Z_1,\dots, Z_{n})
\end{bsmallmatrix}~\quad~\text{or}~\quad~ 
\begin{bsmallmatrix}
\ch{_{$\fp$}S}(Z_1,\dots, Z_{m}) & 0
\end{bsmallmatrix},
\]
and $Y_{21}$ is 
\begin{align*}
&(-1)^m \kappa^{m+n} \begin{bsmallmatrix}
\ch{_{$\fp$}S}( (F_2^{m-1} Z_1 F_2^{n-1})^\ast,\dots, (F_2^{m-1} Z_n F_2^{n-1})^\ast) & 0
\end{bsmallmatrix}~\quad~\text{or}~\quad~  \\
&(-1)^m \kappa^{m+n} \begin{bsmallmatrix}
0\\
\ch{_{$\fp$}S}( (F_2^{m-1} Z_1 F_2^{n-1})^\ast,\dots, (F_2^{m-1} Z_m F_2^{n-1})^\ast)
\end{bsmallmatrix}.
\end{align*}
If $Y_{12} = Y_{21}$ then $Y_{12} = Y_{21} = \ch{_{$\fp$}S}(Z_1,\dots, Z_{n})
$ where $(-1)^n (F_2^{n-1} Z_j F_2^{n-1})^\ast= Z_j$ for each $1 \le j \le n$.
\item $(X_1,X_2) = \left( J_m\left(\begin{bsmallmatrix}
0 & b \\
-b & 0
\end{bsmallmatrix}\right), J_n\left(\begin{bsmallmatrix}
0 & b \\
-b & 0
\end{bsmallmatrix}\right) \right)$, $b > 0$. We have 
\[
(B_1,B_2) = (\kappa F_2^{m-1}\otimes  F_{m}, 
\kappa F_2^{n-1}\otimes  F_{n} ).
\] 
By Lemma~\ref{lem:lower triangular} \eqref{lem:lower triangular:item3}, $Y_{12}$ has one of the following two forms, depending on $m\ge n$ or $m < n$:
\[
\begin{bsmallmatrix}
0\\
\ch{_{$\fp$}S}(Z_1,\dots, Z_{n})
\end{bsmallmatrix}~\quad~\text{or}~\quad~ 
\begin{bsmallmatrix}
\ch{_{$\fp$}S}(Z_1,\dots, Z_{m}) & 0
\end{bsmallmatrix},
\]
and $Y_{21}$ is 
\begin{align*}
&(-1)^m \begin{bsmallmatrix}
\ch{_{$\fp$}S}( (F_2^{m-1} Z_1 F_2^{n-1})^\ast,\dots, (F_2^{m-1} Z_n F_2^{n-1})^\ast) & 0
\end{bsmallmatrix}~\quad~\text{or}~\quad~  \\
&(-1)^m \begin{bsmallmatrix}
0\\
\ch{_{$\fp$}S}( (F_2^{m-1} Z_1 F_2^{n-1})^\ast,\dots, (F_2^{m-1} Z_m F_2^{n-1})^\ast)
\end{bsmallmatrix}.
\end{align*}
If $Y_{12} = Y_{21}$ then $Y_{12} = Y_{21} = \ch{_{$\fp$}S}(Z_1,\dots, Z_{n})
$ where $(-1)^n (F_2^{n-1} Z_j F_2^{n-1})^\ast= Z_j$ for each $1 \le j \le n$.
\item Let 
\begin{align*}
X_1  &=\left( 
\diag
\left(
J_m\left(
\begin{bsmallmatrix}
\lambda & 0 \\
0 & \overline{\lambda}
\end{bsmallmatrix}\right),
-J_m\left(\begin{bsmallmatrix}
\lambda & 0 \\
0 & \overline{\lambda}
\end{bsmallmatrix}\right)^\ast
\right)
\right), \\
X_2  &=\left( 
\diag
\left(
J_n \left(
\begin{bsmallmatrix}
\lambda & 0 \\
0 & \overline{\lambda}
\end{bsmallmatrix}\right),
-J_n\left(\begin{bsmallmatrix}
\lambda & 0 \\
0 & \overline{\lambda}
\end{bsmallmatrix}\right)^\ast
\right)
\right),
\end{align*}
where $\lambda \in \mathbb{C}, \operatorname{Re}(\lambda) > 0, \operatorname{Im}(\lambda) \ge 0$. Then we have $(B_1,B_2) = ( I_{2m} \otimes H_2,I_{2n} \otimes H_2)$. We partition $Y\in \mathbb{H}^{4m \times 4n}$ as $Y = \begin{bsmallmatrix}
Z & W \\
U & V
\end{bsmallmatrix}$ where $Z, W, U, V\in \mathbb{H}^{2m\times 2n}$. Then \eqref{lem:Yij:eq1} becomes
\begin{align*}
J_m\left(
\begin{bsmallmatrix}
\lambda & 0 \\
0 & \overline{\lambda}
\end{bsmallmatrix}\right) Z + Z J_n\left(
\begin{bsmallmatrix}
\lambda & 0 \\
0 & \overline{\lambda}
\end{bsmallmatrix}\right) &= 0, \\ 
J_m\left(
\begin{bsmallmatrix}
\lambda & 0 \\
0 & \overline{\lambda}
\end{bsmallmatrix}\right) W - W J_n\left(
\begin{bsmallmatrix}
\lambda & 0 \\
0 & \overline{\lambda}
\end{bsmallmatrix}\right)^\ast &= 0,  \\
- J_m\left(
\begin{bsmallmatrix}
\lambda & 0 \\
0 & \overline{\lambda}
\end{bsmallmatrix}\right)^\ast U + U J_n \left(
\begin{bsmallmatrix}
\lambda & 0 \\
0 & \overline{\lambda}
\end{bsmallmatrix}\right) &= 0,\\ 
J_m\left(
\begin{bsmallmatrix}
\lambda & 0 \\
0 & \overline{\lambda}
\end{bsmallmatrix}\right)^\ast V + V J_n\left(
\begin{bsmallmatrix}
\lambda & 0 \\
0 & \overline{\lambda}
\end{bsmallmatrix}\right)^\ast &= 0.
\end{align*}
We conclude that $Z = V = 0$ by Lemma~\ref{lem:block} since $\lambda \ne 0$. Moreover, according to Lemma~\ref{lem:lower triangular} \eqref{lem:lower triangular:item6}, $W(I_2 \otimes H_n)$ and $(I_2 \otimes H_m)U$ are block lower triangular matrices where each block is $2\times 2$. This implies that $Y_{12}$ has one of the following two forms, depending on $m\ge n$ or $m < n$:
\[
\begin{bsmallmatrix}
0& 0 \\
0   & T_{\fp}(W_1,\dots, W_{n})\\
\ch{^{$\fp$} T}(U_1,\dots, U_{n}) & 0 \\
0& 0
\end{bsmallmatrix}
~\quad~\text{or}~\quad~
\begin{bsmallmatrix}
0& 0 & 0   & T_{\fp}(W_1,\dots, W_{m})\\
\ch{^{$\fp$} T}(U_1,\dots, U_{m}) & 0 & 0 & 0
\end{bsmallmatrix}.
\]
Thus $Y_{21}$ is 
\[
-\begin{bsmallmatrix}
0& 0 & 0   & T_{\fp}(W_1^\ast,\dots, W_{n}^\ast)\\
\ch{^{$\fp$} T}(U_1^\ast,\dots, U_{n}^\ast) & 0 & 0 & 0
\end{bsmallmatrix}
~\quad~\text{or}~\quad~
-\begin{bsmallmatrix}
0& 0 \\
0   & T_{\fp}(W_1^\ast,\dots, W_{m}^\ast)\\
\ch{^{$\fp$} T}(U_1^\ast,\dots, U_{m}^\ast) & 0 \\
0& 0
\end{bsmallmatrix}.
\]
If $Y_{12} = Y_{21}$ then $Y_{12} = Y_{21} = \begin{bsmallmatrix}
0   & T_{\fp}(W_1,\dots, W_{n})\\
\ch{^{$\fp$} T}(U_1,\dots, U_{n}) & 0 \\
\end{bsmallmatrix}$ where $-U_j^\ast = U_j $ and $-W_j^\ast = W_j$ for each $1\le j \le n$.
\end{enumerate}  
\item \label{lem:Yij:No7} $\varepsilon = - 1$ and $\sigma$ is the conjugate transpose. By the same argument in No.~\ref{lem:Yij:No6}, it suffices to consider three sub-cases.
\begin{enumerate}[(a)]
\item $(X_1,X_2) = \left( J_m\left(\begin{bsmallmatrix}
0 & 0 \\
0 & 0
\end{bsmallmatrix}\right), J_n\left(\begin{bsmallmatrix}
0 & 0 \\
0 & 0
\end{bsmallmatrix}\right) \right)$. We have 
\[
(B_1,B_2) = (\kappa^{m-1} F_2^{m}\otimes  F_{m}, 
\kappa^{n-1} F_2^{n}\otimes  F_{n} )
\] 
and by Lemma~\ref{lem:lower triangular} \eqref{lem:lower triangular:item3}, $Y_{12}$ is block lower triangular whose blocks are $2\times 2$. Hence $Y_{12}$ has one of the following two forms, depending on $m\ge n$ or $m < n$:
\[
\begin{bsmallmatrix}
0\\
\ch{_{$\fp$}S}(Z_1,\dots, Z_{n})
\end{bsmallmatrix}~\quad~\text{or}~\quad~ 
\begin{bsmallmatrix}
\ch{_{$\fp$}S}(Z_1,\dots, Z_{m}) & 0
\end{bsmallmatrix},
\]
and $Y_{21}$ is 
\begin{align*}
&\kappa^{m+n} (-1)^{m-1} \begin{bsmallmatrix}
\ch{_{$\fp$}S}( (F_2^{m} Z_1 F_2^{n})^\ast,\dots, (F_2^{m} Z_n F_2^{n})^\ast) & 0
\end{bsmallmatrix}~\quad~\text{or}~\quad~  \\
&\kappa^{m+n} (-1)^{m-1} \begin{bsmallmatrix}
0\\
\ch{_{$\fp$}S}( (F_2^{m} Z_1 F_2^{n})^\ast,\dots, (F_2^{m} Z_m F_2^{n})^\ast)
\end{bsmallmatrix}.
\end{align*}
If $Y_{12} = Y_{21}$ then $Y_{12} = Y_{21} = \ch{_{$\fp$}S}(Z_1,\dots, Z_{n})
$ where $\kappa^{m+n} (-1)^{m-1} (F_2^{m} Z_j F_2^{n})^\ast= Z_j$ for each $1 \le j \le n$.
\item $(X_1,X_2) = \left( J_m\left(\begin{bsmallmatrix}
0 & b \\
-b & 0
\end{bsmallmatrix}\right), J_n\left(\begin{bsmallmatrix}
0 & b \\
-b & 0
\end{bsmallmatrix}\right) \right)$, $b > 0$. We have 
\[
(B_1,B_2) = (\kappa F_2^{m}\otimes  F_{m}, 
\kappa F_2^{n}\otimes  F_{n} ).
\] 
By Lemma~\ref{lem:lower triangular} \eqref{lem:lower triangular:item3}, $Y_{12}$ has one of the following two forms, depending on $m\ge n$ or $m < n$:
\[
\begin{bsmallmatrix}
0\\
\ch{_{$\fp$}S}(Z_1,\dots, Z_{n})
\end{bsmallmatrix}~\quad~\text{or}~\quad~ 
\begin{bsmallmatrix}
\ch{_{$\fp$}S}(Z_1,\dots, Z_{m}) & 0
\end{bsmallmatrix},
\]
and $Y_{21}$ is 
\begin{align*}
&(-1)^{m-1} \begin{bsmallmatrix}
\ch{_{$\fp$}S}( (F_2^{m} Z_1 F_2^{n})^\ast,\dots, (F_2^{m} Z_n F_2^{n})^\ast) & 0
\end{bsmallmatrix}~\quad~\text{or}~\quad~  \\
&(-1)^{m-1} \begin{bsmallmatrix}
0\\
\ch{_{$\fp$}S}( (F_2^{m} Z_1 F_2^{n})^\ast,\dots, (F_2^{m} Z_m F_2^{n})^\ast)
\end{bsmallmatrix}.
\end{align*}
If $Y_{12} = Y_{21}$ then $Y_{12} = Y_{21} = \ch{_{$\fp$}S}(Z_1,\dots, Z_{n})
$ where $(-1)^{m-1} (F_2^{m} Z_j F_2^{n})^\ast= Z_j$ for each $1 \le j \le n$.
\item Let 
\begin{align*}
X_1  &=\left( 
\diag
\left(
J_m\left(
\begin{bsmallmatrix}
\lambda & 0 \\
0 & \overline{\lambda}
\end{bsmallmatrix}\right),
-J_m\left(\begin{bsmallmatrix}
\lambda & 0 \\
0 & \overline{\lambda}
\end{bsmallmatrix}\right)^\ast
\right)
\right), \\
X_2  &=\left( 
\diag
\left(
J_n \left(
\begin{bsmallmatrix}
\lambda & 0 \\
0 & \overline{\lambda}
\end{bsmallmatrix}\right),
-J_n\left(\begin{bsmallmatrix}
\lambda & 0 \\
0 & \overline{\lambda}
\end{bsmallmatrix}\right)^\ast
\right)
\right),
\end{align*}
where $\lambda \in \mathbb{C}, \operatorname{Re}(\lambda) > 0, \operatorname{Im}(\lambda) \ge 0$. Then we have $(B_1,B_2) = ( I_{2m} \otimes F_2,I_{2n} \otimes F_2)$. We partition $Y\in \mathbb{H}^{4m \times 4n}$ as $Y = \begin{bsmallmatrix}
Z & W \\
U & V
\end{bsmallmatrix}$ where $Z, W, U, V\in \mathbb{H}^{2m\times 2n}$. Then \eqref{lem:Yij:eq1} becomes
\begin{align*}
J_m\left(
\begin{bsmallmatrix}
\lambda & 0 \\
0 & \overline{\lambda}
\end{bsmallmatrix}\right) Z + Z J_n\left(
\begin{bsmallmatrix}
\lambda & 0 \\
0 & \overline{\lambda}
\end{bsmallmatrix}\right) &= 0, \\ 
J_m\left(
\begin{bsmallmatrix}
\lambda & 0 \\
0 & \overline{\lambda}
\end{bsmallmatrix}\right) W - W J_n\left(
\begin{bsmallmatrix}
\lambda & 0 \\
0 & \overline{\lambda}
\end{bsmallmatrix}\right)^\ast &= 0,  \\
- J_m\left(
\begin{bsmallmatrix}
\lambda & 0 \\
0 & \overline{\lambda}
\end{bsmallmatrix}\right)^\ast U + U J_n \left(
\begin{bsmallmatrix}
\lambda & 0 \\
0 & \overline{\lambda}
\end{bsmallmatrix}\right) &= 0,\\ 
J_m\left(
\begin{bsmallmatrix}
\lambda & 0 \\
0 & \overline{\lambda}
\end{bsmallmatrix}\right)^\ast V + V J_n\left(
\begin{bsmallmatrix}
\lambda & 0 \\
0 & \overline{\lambda}
\end{bsmallmatrix}\right)^\ast &= 0.
\end{align*}
We conclude that $Z = V = 0$ by Lemma~\ref{lem:block} since $\lambda \ne 0$. Moreover, according to Lemma~\ref{lem:lower triangular} \eqref{lem:lower triangular:item6}, $W(I_2 \otimes H_n)$ and $(I_2 \otimes H_m)U$ are block lower triangular matrices where each block is $2\times 2$. This implies that $Y_{12}$ has one of the following two forms, depending on $m\ge n$ or $m < n$:
\[
\begin{bsmallmatrix}
0& 0 \\
0   & T_{\fp}(W_1,\dots, W_{n})\\
\ch{^{$\fp$} T}(U_1,\dots, U_{n}) & 0 \\
0& 0
\end{bsmallmatrix}
~\quad~\text{or}~\quad~
\begin{bsmallmatrix}
0& 0 & 0   & T_{\fp}(W_1,\dots, W_{m})\\
\ch{^{$\fp$} T}(U_1,\dots, U_{m}) & 0 & 0 & 0
\end{bsmallmatrix}.
\]
Thus $Y_{21}$ is 
\[
-\begin{bsmallmatrix}
0& 0 & 0   & T_{\fp}(W_1^\ast,\dots, W_{n}^\ast)\\
\ch{^{$\fp$} T}(U_1^\ast,\dots, U_{n}^\ast) & 0 & 0 & 0
\end{bsmallmatrix}
~\quad~\text{or}~\quad~
-\begin{bsmallmatrix}
0& 0 \\
0   & T_{\fp}(W_1^\ast,\dots, W_{m}^\ast)\\
\ch{^{$\fp$} T}(U_1^\ast,\dots, U_{m}^\ast) & 0 \\
0& 0
\end{bsmallmatrix}.
\]
If $Y_{12} = Y_{21}$ then $Y_{12} = Y_{21} = \begin{bsmallmatrix}
0   & T_{\fp}(W_1,\dots, W_{n})\\
\ch{^{$\fp$} T}(U_1,\dots, U_{n}) & 0 \\
\end{bsmallmatrix}$ where $U_j = -U_j^\ast$ and $W_j = -W_j^\ast$ for each $1\le j \le n$.
\end{enumerate}  
\end{enumerate}
\end{proof}
\section{Proof of Theorem~\ref{thm:classification O_n2}}\label{append:proof of O_n2}
\begin{proof}
By the same argument as in the proof of Theorem~\ref{thm:classification O_n1}, we may write
\begin{equation}\label{thm:classification O_n2:eq1}
\alpha(t) =R\left( \frac{t^2  I_{n+2} + t \diag(X_1,\dots, X_{s}) + Y }{t^2 + 1} \right)R^{-1}, \quad I_{n,2}  = R \diag(B_1,\dots, B_{s}) R^{\tp},
\end{equation}
where $(X_1,\dots, X_{s})$, $(B_1,\dots, B_{s})$, $R\in \GL_{n+2}(\mathbb{R})$ and $Y = (Y_{pq})_{p,q=1}^{s}$ are those in Table~\ref{Tab:indecomposable} and Table~\ref{Tab:CandidatesYij}~No.~4, respectively. If we denote by $(p_j,q_j)$ the signature of $B_j$ for each $1 \le j \le s$, then $(n,2) = (\sum_{j=1}^s p_j, \sum_{j=1}^s q_j)$ and one of the following two cases must hold:
\begin{enumerate}[(a)]
\item\label{thm:classification O_n2:item-a} $(p_{s-1}, q_{s-1}), (p_s, q_s) \in  \{ 
(0,1), (1,1), (2,1), (3,1) \}$ and $(p_j,q_j)  \in  \{ 
(1,0), (2,0)\}$, $1 \le j \le s-2$.
\item\label{thm:classification O_n2:item-b} $(p_s, q_s) \in  \{ 
(0,2), (1,2), (2,2), (3,2), (4,2) \}$ and $(p_j,q_j)  \in  \{ 
(1,0), (2,0)\}$, $1 \le j \le s-1$.
\end{enumerate}
For simplicity, we suppose that for \eqref{thm:classification O_n2:item-a},
\[
(p_1,q_1) = \cdots = (p_m,q_m) = (1,0),\quad (p_{m+1},q_{m+1}) = \cdots = (p_{s-2},q_{s-2}) = (2,0),
\] 
while for \eqref{thm:classification O_n2:item-b},
\[
(p_1,q_1) = \cdots = (p_m,q_m) = (1,0),\quad (p_{m+1},q_{m+1}) = \cdots = (p_{s-1},q_{s-1}) = (2,0).
\] 
Moreover, we observe that in case~\eqref{thm:classification O_n2:item-a}, if $0 \not\in \rho(X_{s-1}) + \rho(X_s)$ then clearly $\alpha$ is obtained by the natural inclusion $\O_{m,1} \times \O_{n-m,1} \subseteq \O_{n,2}$. Thus we may assume $0 \in \rho(X_{s-1}) + \rho(X_s)$ in \eqref{thm:classification O_n2:item-a}. 

Our subsequent discussion is split into ten sub-cases. The first five are obtained from \eqref{thm:classification O_n2:item-a}:
\begin{enumerate}[(\ref{thm:classification O_n2:item-a}1)]
\item $(p_{s-1},q_{s-1}) = (p_{s},q_{s}) = (0,1)$, $(X_{s-1},X_s) = (0,0)$, $(B_{s-1},B_s) = (-1,-1)$ and $(\kappa_{s-1},\kappa_s) = (-1,-1)$: the matrix $R$ in \eqref{thm:classification O_n2:eq1} is in $\O_{n,2}$. By Theorem~\ref{thm:structure}, it suffices to consider $Y' \coloneqq (Y_{pq})$ for $p,q\in \{1,\dots, m, s-1, s\}$, which can be written as $Y' = \begin{bsmallmatrix}
I_{m} + \Lambda & Z \\
Z^\tp & I_2 + x F_2
\end{bsmallmatrix}$ for some $Z\in \mathbb{R}^{m \times 2}$, $\Lambda \in \mathfrak{o}_{m}(\mathbb{R})$ and $x\in \mathbb{R}$. According to \eqref{thm:structure:eq1}, we have 
\[
\begin{bsmallmatrix}
I_m + \Lambda & Z \\
Z^\tp & I_2 + x F_2
\end{bsmallmatrix} 
\begin{bsmallmatrix}
I_m  & 0 \\
0 & -I_2
\end{bsmallmatrix}
\begin{bsmallmatrix}
I_m + \Lambda & Z \\
Z^\tp & I_2 + x F_2
\end{bsmallmatrix} ^\tp 
= \begin{bsmallmatrix}
I_m  & 0 \\
0 & -I_2
\end{bsmallmatrix}.
\]
This implies 
\[
\Lambda^2 = -ZZ^\tp,\quad Z^\tp Z = x^2 I_2,\quad \Lambda Z + x Z F_2 = 0.
\]
Observing that $\rank (\Lambda)\le 2$, we may write $\Lambda^2 =-\lambda^2 Q \diag (I_2,0) Q^\tp$ for some $Q\in \O_m(\mathbb{R})$ and $\lambda \ge 0$. Thus, $\lambda^2 = x^2$ and $Q^\tp Z = \begin{bsmallmatrix}
Z_1 \\ 0  
\end{bsmallmatrix}$, where $Z_1\in \mathbb{R}^{2\times 2}$ and $Z_1 Z_1^\tp = \lambda^2 I_2$. Thus we obtain 
\[
(\Lambda, Z) = \begin{cases}
\left( 
Q \diag(\lambda F_2, 0) Q^\tp
,Q \begin{bsmallmatrix}
a & b \\ 
b & -a \\
0 & 0 \\
\vdots & \vdots \\
0 & 0
\end{bsmallmatrix} \right),&~\text{if}~ x = \lambda \vspace*{4pt} \\ 
\left( 
Q \diag(\lambda F_2, 0) Q^\tp,
Q \begin{bsmallmatrix}
a & b \\ 
-b & a \\
0 & 0 \\
\vdots & \vdots \\
0 & 0
\end{bsmallmatrix}\right), &~\text{if}~x = -\lambda
\end{cases}.
\]
Here $a,b\in \mathbb{R}$ satisfy $a^2 + b^2 = \lambda^2$.
\item $(p_{s-1},q_{s-1}) = (0,1)$, $(p_s,q_s) = (2,1)$, $(X_{s-1},X_s) = (0, J_3(0)), (B_{s-1},B_s) = (-1, -F_3)$ and $(\kappa_{s-1},\kappa_s) = (-1,1)$: the matrix $R$ in \eqref{thm:classification O_n2:eq1} satisfies $R \diag(I_{n-2},Q_{1,3}) \in \O_{n,2}(\mathbb{R})$. Let $Y' \coloneqq (Y_{pq})$ for $p,q\in \{1,\dots, m, s-1, s\}$. According to Theorem~\ref{thm:structure}, we write 
\[
Y' = \begin{bsmallmatrix}
I_{m} + \Lambda & z & w & 0 & 0 \\
z^\tp & 1 & y & 0 & 0 \\
0 & 0 & 1 & 0 & 0 \\
0 & 0 & 0 & 1 & 0 \\
w^\tp & -y  & \frac{1}{2} & 0 & 1 
\end{bsmallmatrix},\quad \Lambda \in \mathfrak{o}_m(\mathbb{R}), z,w \in \mathbb{R}^m, y\in \mathbb{R}.
\]
By \eqref{thm:structure:eq1} we have $Y' \diag(I_m, - 1, -F_3) {Y'}^\tp = \diag(I_m, - 1, -F_3)$, which implies 
\[
I_m - \Lambda^2 - z z^\tp = I_m,\quad  z^\tp z -1 = -1, \quad y = 0, \quad w^\tp w  = 1.
\]
Thus we obtain $\Lambda = 0, z = 0, y = 0$ and $w\in \mathbb{S}^{m-1}$.
\item $(p_{s-1},q_{s-1}) = (p_s,q_s) = (1,1)$, $(X_{s-1},X_s) = (\diag(\lambda,-\lambda)$, $\diag(\mu,-\mu)), \lambda,\mu >0$, $B_{s-1} = B_s = H_2$: equation~\eqref{thm:structure:eq1} implies $Y' \diag(H_2,H_2) {Y'}^\tp = \diag(H_2,H_2)$ where 
\[
Y' = \begin{bsmallmatrix}
1 + \frac{\lambda^2}{2} & 0  & 0  & w \\
0 & 1 + \frac{\lambda^2}{2} &  z & 0  \\
0 & -w  & 1 + \frac{\mu^2}{2} & 0 \\
-z & 0 & 0 &  1 + \frac{\mu^2}{2}
\end{bsmallmatrix},\quad z = w = 0 \text{ if } \lambda = \mu.
\]
A direct calculation leads to a contradiction that $\lambda = 0$.
\item $(p_{s-1},q_{s-1}) = (p_s,q_s) = (2,1)$, $(X_{s-1},X_s) = (J_3(0), J_3(0))$, $B_{s-1} = B_s = -F_3$ and $\kappa_{s-1} = \kappa_s = 1$: the matrix $R$ in \eqref{thm:classification O_n2:eq1} satisfies $R \diag(I_{n-4}, Q_{3,3}) \in \O_{n,2}(\mathbb{R})$. Let $Y' = (Y_{pq})$ for $p,q\in \{1,\dots, m,s-1,s\}$. By Theorem~\ref{thm:structure} we may write 
\[
Y' = \begin{bsmallmatrix}
I_m + \Lambda & x & 0 & 0 & y & 0 & 0 \\
0 &  &  &  & &  & \\ 
0 & & I_3 + \frac{1}{2} J_3(0)^2 &  &  & \ch{_{$\fp$} $S(z)$} &\\
x^\tp & & &  &  &  &  \\
0 & & &  &  &  & \\
0  & &  -\ch{_{$\fp$} $S(z)$}  & &  & I_3 + \frac{1}{2} J_3(0)^2  &\\
y^\tp & & &  &  &  &
\end{bsmallmatrix},\quad \Lambda\in \mathfrak{o}_m(\mathbb{R}), x,y\in \mathbb{R}^m, z = (z_1,z_2,z_3)\in \mathbb{R}^3.
\]
Now \eqref{thm:structure:eq1} indicates that $\Lambda =0, z_3 = 0$, $x^\tp y = 0$, $z_2 \in [-1,1]$ and $x^\tp x =  y^\tp y = 1 - z_2^2$.
\item $(p_{s-1},q_{s-1}) = (p_s,q_s) = (3,1)$: according to Table~\ref{Tab:indecomposable}~No.~4, we must have $3 - 1 = \kappa (1 - (-1)^2) = 0$ which is impossible. 
\end{enumerate}
Next we deal with the other five sub-cases from \eqref{thm:classification O_n2:item-b}.
\begin{enumerate}[(\ref{thm:classification O_n2:item-b}1)]
\item $(p_s,q_s) = (0,2)$, $X_s = \begin{bsmallmatrix}
0 & b \\
-b & 0
\end{bsmallmatrix}, b > 0$, $\kappa_s = -1$, $B_s = -I_2$: the matrix $R$ in \eqref{thm:classification O_n2:eq1} lies in $\O_{n,2}(\mathbb{R})$. Without loss of generality, we assume $X_p = X_s$ for all $p\in \{m+1,\dots, s\}$. Let $Y' = (Y_{pq})$ where $p, q \in \{m+1,\dots, s\}$. According to Theorem~\ref{thm:structure}, we can write 
\[
Y' = 
\begin{bsmallmatrix}
(1 - b^2/2) I_{2(s - m -1)} + \Lambda & Z \\
Z^\tp & (1 - b^2/2) I_2
\end{bsmallmatrix},\quad \Lambda\in \mathfrak{o}_{2(s - m -1)}(\mathbb{R}), 
Z = \begin{bsmallmatrix}
Z_1 \\
\vdots \\
Z_{s - m -1}
\end{bsmallmatrix}
 \in \mathbb{R}^{2(s-m-1) \times 2},
\]
where each $Z_p$ is of the form $Z_p =\begin{bsmallmatrix}
x_p & y_p \\
y_p & -x_p
\end{bsmallmatrix}$ for some $x_p,y_p \in \mathbb{R}$. By \eqref{thm:structure:eq1}, we obtain 
\[
b^2 (b^2/4 - 1) I_{2(s-m-1)} - \Lambda^2 =  Z Z^\tp,\quad \Lambda Z = 0,\quad Z^\tp Z =b^2 (b^2/4 - 1) I_2.
\]
In particular, $b \ge 2$ and $\rank (b^2 (b^2/4 - 1) I_{2(s-m-1)} - \Lambda^2) \le 2$. We recall that there exist some $Q\in \O_{2(s-m-1)}(\mathbb{R})$, and $\lambda_1 \ge \cdots \ge \lambda_{s-m-1} \ge 0$ such that $\Lambda = Q^\tp \diag(\lambda_1 F_2, \dots, \lambda_{s-m-1} F_2) Q$. Hence we have $b^2 (b^2/4 - 1) + \lambda_p^2 = 0$ whenever $p \ge 2$. If $s-m-1 \ge 2$ then
\[
\lambda_2 =\cdots = \lambda_{s-m-1} = b \left( \frac{b^2}{4} - 1\right) = 0.
\]
Since $Z^\tp Z = b^2 (b^2/4 - 1) I_2 = 0$, we conclude that $Z = 0$, $b = 2$ and $\lambda_1 = 0$
 which implies $\Lambda = 0$. If $s - m - 1 = 1$ then $Z = \begin{bsmallmatrix}
x & y \\
y & -x
\end{bsmallmatrix}$ and we have $Z Z^\tp = (\lambda_1^2 + b^2 (b^2/4 - 1) ) I_2$. It is clear that we again have $\lambda_1 = 0$, $\Lambda = 0$ and $x^2 + y^2 = b^2 (b^2/4 - 1)$.
\item $(p_s,q_s) = (1,2)$, $X_s = J_3(0)$, $\kappa_s = -1$, $B_s = F_3$: let $Y' = (Y_{pq})$ for $p,q\in \{1,\dots,m, s\}$. By Theorem~\ref{thm:structure}, $Y'$ has the form: 
\[
Y' = \begin{bsmallmatrix}
I_m + \Lambda & x & 0 & 0 \\
0 & &  &\\
0 &  & I_3 + J_3(0)^2 &\\
-x^\tp &  & & 
\end{bsmallmatrix},\quad \Lambda \in \mathfrak{o}_m(\mathbb{R}),\quad x\in \mathbb{R}^m.
\]
Then \eqref{thm:structure:eq1} leads to a contradiction:
\[
\begin{bsmallmatrix}
0 & 0 & 0 \\
0 & 0  & 0 \\
0 & 0 & 1 + x^\tp x
\end{bsmallmatrix} = \begin{bsmallmatrix}
0 & 0 & 1 \\
0 & -1  & 0 \\
1 & 0 & 0
\end{bsmallmatrix}.
\]
\item $(p_s,q_s) = (2,2)$: by Table~\ref{Tab:indecomposable}~No.~4, there are three possibilities for $(X_s,B_s)$:  
\small{
\[
(\diag(J_2(\lambda), - J_2(\lambda)^\tp), I_2 \otimes H_2),\quad (J_2\left( \begin{bsmallmatrix}
0 & b \\
-b & 0
\end{bsmallmatrix}
 \right), \kappa_s F_2 \otimes F_2),\quad 
\left( \diag \left( \begin{bsmallmatrix}
a & b \\
-b & a
\end{bsmallmatrix}, \begin{bsmallmatrix}
a & b \\
-b & a
\end{bsmallmatrix} \right), I_2 \otimes H_2 \right),
\]}\normalsize
where $\lambda \ge 0, a,b > 0, \kappa_s = \pm 1$. We claim that the latter two are impossible. Indeed, if $(X_s, B_s) = (J_2\left( \begin{bsmallmatrix}
0 & b \\
-b & 0
\end{bsmallmatrix}
 \right), \kappa_s F_2 \otimes F_2)$, then we let $Y' = (Y_{pq})$ for $p,q\in \{m+1,\dots, s\}$ and Theorem~\ref{thm:structure} implies 
\[
Y' = \begin{bsmallmatrix}
(1-b^2/2)I_{2(s-m-1)} + \Lambda & Z & 0 \\
0 & ((1-b^2/2)) I_2 &  0 \\
-\kappa_s (Z F_2)^\tp & b F_2 & (1-b^2/2) I_2 
\end{bsmallmatrix} ,
\]
where $\Lambda\in \mathfrak{o}_{2(s-m-1)}(\mathbb{R})$, $Z =\begin{bsmallmatrix}
Z_1 \\
\vdots \\
Z_{s-m-1}
\end{bsmallmatrix} \in \mathbb{R}^{2(s-m-1) \times 2}$ and each $Z_p = \begin{bsmallmatrix}
x_p & y_p  \\
y_p & -x_p
\end{bsmallmatrix}$. Then \eqref{thm:structure:eq1} implies $\kappa_s (1 - b^2/2)^2 F_2 = \kappa_s F_2$ which forces $b = 0$. If $(X_s,B_s) = \left( \diag \left( \begin{bsmallmatrix}
a & b \\
-b & a
\end{bsmallmatrix}, 
-\begin{bsmallmatrix}
a & b \\
-b & a
\end{bsmallmatrix}^\tp \right), I_2 \otimes H_2 \right)$, Theorem~\ref{thm:structure} implies $(I_4 + 1/2 X_s^2) (I_2 \otimes H_2) (I_4 + 1/2 X_s^2)^\tp = I_2 \otimes H_2$, from which we obtain 
\[
(1 + (a^2-b^2)/2)^2 - a^2 b^2 = 1,\quad (1 + (a^2 - b^2)/2) a b = 0.
\]
This forces $a = 0$ contradicting to the assumption $a > 0$. 

Thus, it is sufficient to consider $(X_s,B_s) = (\diag(J_2(\lambda), - J_2(\lambda)^\tp), I_2 \otimes H_2)$ where $\lambda \ge 0$. Theorem~\ref{thm:structure} again implies that $\lambda > 0$ is not possible. Therefore, we let $\lambda = 0$ and $Y' = (Y_{p,q})$ for $p,q\in \{1,\dots, m,s\}$. Moreover, the matrix $R$ in \eqref{thm:classification O_n2:eq1} satisfies $R \diag (I_{n-2}, Q_4) \in \O_{n,2}(\mathbb{R})$. Theorem~\ref{thm:structure} ensures that we can write 
\[
Y' = \begin{bsmallmatrix}
I_m + \Lambda & z & 0 & 0 & w \\
0 & 1 + b & 0   & 0 & 0 \\
-w^\tp & a  & 1 - b & 0 & 0 \\
-z^\tp & 0 & 0  & 1-b  & -a  \\
0 & 0 & 0 & 0 & 1+b
\end{bsmallmatrix},\quad a,b\in \mathbb{R},\quad z,w\in \mathbb{R}^m.
\]
By \eqref{thm:structure:eq1} we obtain $\Lambda = 0$, $w = z = 0$ and $b = 0$.
\item $(p_s,q_s) = (3,2)$, $X_s = J_5(0)$, $\kappa_s = 1$, $B_s = F_5$: let $Y' = (Y_{p,q})$ for $p,q\in \{1,\dots, m, s\}$. By Theorem~\ref{thm:structure}, we write 
\[
Y' = \begin{bsmallmatrix}
I_m + \Lambda & z & 0 & 0 & 0 & 0 \\
0 & 1 & 0 & 0 & 0 & 0\\
0 & \frac{1}{2} & 1 & 0 & 0 & 0 \\
0 & 0 &  \frac{1}{2} & 1 & 0 & 0 \\
0 & 0 & 0 &  \frac{1}{2} & 1 & 0 \\
-z^\tp & 0 & 0 & 0 &  \frac{1}{2} & 1
\end{bsmallmatrix},\quad \Lambda \in \mathfrak{o}_m(\mathbb{R}), \quad z \in \mathbb{R}^m.
\]
We obtain a contradictory relation $z^\tp z + 1/4 = 0$ from \eqref{thm:structure:eq1}, thus $(p_s,q_s) = (3,2)$ is not possible.
\item $(p_s,q_s) = (4,2)$, $X_s = J_3\left( \begin{bsmallmatrix} 0 & b \\
-b & 0
\end{bsmallmatrix} \right)$, $b > 0$, $\kappa_s = 1$, $B_s = -I_2 \otimes F_3$: let $Y' = (Y_{p,q})$ for $p,q\in \{m+1,\dots, s\}$. Theorem~\ref{thm:structure} implies 
\[
Y' = \begin{bsmallmatrix}
\left( 1 - \frac{b^2}{2} \right) I_{2(s-m-1)} + \Lambda & Z & 0 & 0 \\
0 & \left( 1 - \frac{b^2}{2} \right) I_{2}  & 0 & 0  \\
0 & b F_2  & \left( 1 - \frac{b^2}{2} \right) I_{2} & 0   \\
Z^\tp  & \frac{1}{2} I_2  & b F_2 & \left( 1 - \frac{b^2}{2} \right) I_{2}
\end{bsmallmatrix},
\]
where $\Lambda\in \mathfrak{o}_{s - m - 1}(\mathbb{R})$, $Z = \begin{bsmallmatrix} 
Z_1 \\
\vdots \\
Z_{s-m-1}
\end{bsmallmatrix} \in \mathbb{R}^{2(s-m-1) \times 2}$ and each $Z_p = \begin{bsmallmatrix} 
x_p & y_p \\
y_p & - x_p
\end{bsmallmatrix}$. By \eqref{thm:structure:eq1}, we obtain $-(1-b^2/2)^2 I_2 = -I_2$ which indicates $b = 0$.  
\end{enumerate}
\end{proof}
\bibliographystyle{abbrv}
\bibliography{new}

\end{document}